\definecolor{stefou}{rgb}{0.0, 0.47, 0.44}
\definecolor{cristina}{rgb}{0.8, 0.36, 0.36}
\definecolor{djgreen}{rgb}{0.0, 0.29, 0.29}
\numberwithin{equation}{section}
\renewcommand{\@pnumwidth}{2.3em} 
\theoremstyle{plain}
\newtheorem{thm}{Theorem}[section]
\newtheorem{prop}[thm]{Proposition}
\newtheorem{lemma}[thm]{Lemma}
\newtheorem{cor}[thm]{Corollary}
\newtheorem{defi}[thm]{Definition}
\newtheorem{assump}{Assumption}
\newtheorem{fait}[thm]{Fact}
\newtheorem{rmk}{Remark}
\newtheorem*{nota}{Notation}
\theoremstyle{plain}
\newtheorem{thm-princ}{Theorem}
\newcommand{\BR}{\mathcal{B}}
\newcommand{\CR}{\mathcal{C}}
\newcommand{\HR}{\mathcal{H}}
\newcommand{\OR}{\mathcal{O}}
\newcommand{\UR}{\mathcal{U}}
\newcommand{\VR}{\mathcal{V}}
\newcommand{\WR}{\mathcal{W}}
\newcommand{\N}{\mathbb{N}}
\newcommand{\Z}{\mathbb{Z}}
\newcommand{\R}{\mathbb{R}}
\newcommand{\C}{\mathbb{C}}
\newcommand{\prodscal}[2]{\left\langle#1,#2\right\rangle}
\newcommand{\abs}[1]{\left\lvert #1 \right\rvert}
\newcommand{\norm}[1]{\left\lVert #1 \right\rVert}
\DeclareMathOperator{\sgn}{sgn}
\DeclareMathOperator{\rk}{\operatorname{rank}}
\DeclareMathOperator{\tr}{\operatorname{Tr}}
\DeclareMathOperator{\id}{\operatorname{Id}}
\DeclareMathOperator{\ev}{ev}
\DeclareMathOperator{\dist}{dist}
\DeclareMathOperator{\loc}{loc}
\newcommand{\indicatrice}[1]{\mathds{1}\left(#1\right)}
\newcommand{\normLp}[3]{\lVert #1 \rVert_{L^{#2}#3}}
\newcommand{\schatten}{\mathfrak{S}}
\newcommand{\normSch}[3]{\left\lVert #1 \right\rVert_{\mathfrak{S}^{#2}#3}}
\newcommand{\test}[1]{\mathcal{C}_c^{\infty} (#1)}
\newcommand{\schwartz}{\mathscr{S}}
\newcommand{\schwartzprime}{\mathscr{S}'}
\newcommand{\fourier}{\mathcal{F}}
\newcommand{\crochetjap}[1]{\left\langle#1\right\rangle}
\DeclareMathOperator{\supp}{supp}
\DeclareMathOperator{\op}{Op}
\newcommand{\w}{\text{w}}
\newcommand{\qr}{\text{R}}
\newcommand{\comm}[2]{\left[ #1, #2\right]}
\newcommand{\bra}[1]{\left| #1 \right\rangle}
\newcommand{\ket}[1]{\left\langle #1 \right|}
\author{Ngoc Nhi Nguyen\thanks{Laboratoire de Mathématiques d'Orsay, CNRS, Université Paris-Saclay, 91405 Orsay, France.} \thanks{Current affilation: Università degli Studi di Milano, Dipartimento di Matematica, Via Cesare Saldini 50, 20133 Milano. Email: \href{mailto:ngoc.nguyen@unimi.it}{\texttt{ngoc.nguyen@unimi.it}}}}
\title{\scshape\LARGE\bfseries Fermionic semiclassical Lp estimates}
\date{\footnotesize Date: October 2, 2023. To appear in Journal of Functional Analysis.}
\begin{document}
	
	\maketitle

	\begin{abstract}
		We generalize the semiclassical $L^p$ estimates of Koch, Tataru and Zworski in the setting of Schr\"odinger operators with confining potentials to density matrices.  This is motivated by the problem of the concentration of free fermionic particles in a trapping potential. Our proof relies on semiclassical and many-body tools.
		As an application, we provide bounds on spectral clusters. We also discuss the optimality of the one-body and many-body bounds through explicit examples of quasimodes.
	\end{abstract}

	\tableofcontents

	\section{Introduction}\label{sec:intro}

	This article is devoted to estimates related to the spatial concentration of orthonormal families of eigenfunctions of Schr\"odinger operators	
	\begin{equation*}
		P := -h^2 \Delta + V
	\end{equation*}
	in the semiclassical regime $h\to0$. Here $-\Delta:=\sum_{j=1}^d\partial_{x_j}^2$ is the standard Laplacian, $V:\R^d\to\R$ is the potential and $d\geq 1$ is the spatial dimension. We work in the convenient setting where $V$ is smooth and $V(x)\to\infty$ as $\abs{x}\to\infty$, so that $P$ is a well-defined self-adjoint, bounded from below operator on $L^2(\R^d)$ with a discrete spectrum going to $+\infty$. This model trappes quantum systems, where particles are confined to live in a essentially bounded region of space. We are interested in the concentration properties of $L^2$ normalized eigenfunctions $\{u_h\}$ of $P$ associated to an eigenvalue $E$ (we have in mind the case where $E$ is essentially $h$-independent or bounded in $h$). More precisely, we aim to determine how ``concentrate'' such an eigenfunction may be. To do so, we choose to measure concentration through the possible growth of the norms $\normLp{u_h}{q}{(\R^d)}$ as $h\to 0$, for various choices of $q\in[2,\infty]$. Indeed, in the extreme case where $u_h$ is essentially constant (meaning that it is close to an $h$-independent function), then the norms $\normLp{u_h}{q}{(\R^d)}$ do not grow in $h$. On the contrary, if all the $L^2$-mass of $u_h$ is concentrated in a region $\Omega_h\subset\R^d$ with $\abs{\Omega_h}\to 0$ as $h\to 0$, then we typically have $\normLp{u_h}{q}{(\R^d)}\sim\abs{\Omega_h}^{1/q-1/2}\to\infty$ as $h\to 0$, when $q>2$. In the following, we will consider estimates of the type
	\begin{equation}\label{eq-intro:Lp-est}
		\normLp{u_h}{q}{(\R^d)}\leq C h^{-s},
	\end{equation}
	for some $s>0$ and $C>0$ depending only on $V$ and $E$, which we interpret as a measure of the highest ``rate'' of concentration of eigenfunctions of $P$ as $h\to 0$. 
	Notice that Sobolev embeddings imply the previous estimate with $s=d(\tfrac 12-\tfrac 1q )$, and we will see several cases where this exponent can be improved. Of particular importance is the determination of the optimal value of the exponent $s$, which amounts to construct explicit examples of $u_h$ for which the upper bound \eqref{eq-intro:Lp-est} is also a lower bound (with possibly a different value of $C$).
	Ideally, it would be much clearer if we had a pointwise description of the function $u_h$. But it is very hard to obtain for general potentials in any dimensions. That is why we settle for this rawer version of the concentration with the estimation of $L^q$ norms. 
	
	This strategy to study concentration of functions via $L^q$ norms was invented by Sogge, first in the context of spherical harmonics \cite{sogge1985oscillatory}, and then in the context of general compact Riemannian manifolds without boundary (where $P$ is replaced by $-h^2\Delta_g$, the Laplace-Beltrami operator) \cite{sogge1988concerning}. He not only considered eigenfunctions but more generally functions $u_h$ in spectral clusters, i.e.\ that satisfy $u_h=\indicatrice{\abs{P-E}\leq h}u_h$. Furthermore, he managed to find the optimal exponent $s=s(q)$ on any manifold, and proved that for high values of $q$, the highest rate of concentration was attained for specific functions concentrating around a point (generalizing the zonal spherical harmonics) while for low values of $q$, it was attained for functions concentrating around a curve (generalizing the Gaussian beams on spheres). These results were later extended to the case of Schr\"odinger operators with confining potentials on $\R^d$ (which is the case that we consider here) by Koch and Tataru in \cite{koch2005p}, and their method was revisited from the point of view of semiclassical analysis by Koch, Tataru and Zworski in \cite{koch2007semiclassical}. This last article also treats the more general case of quasimodes $u_h$, i.e.\ that satisfy $(P-E)u_h=\OR_{L^2}(h)$, and we will follow on their approach. Notice that many works were devoted to the improvement of Sogge's estimates for eigenfunctions, in specific geometries (typically with negative curvature, see for instance \cite{hassel2015improvedcrit,hezani2016Lpnorms,sogge2017improvedcrit,blair2019log}). We will not pursue this direction here.
	The case of confining potentials is more complicated than the one of compact manifolds without potentials, due to the presence of a transition region between the classically allowed region $\{V\leq E\}$ and the classically forbidden region $\{V>E\}$. Indeed, Koch and Tataru discovered specific concentration phenomena in the transition region that did not appear in Sogge's work. This can be first understood in the one dimensional case, as we will explain below.
	
	In this article, we investigate the more general situation of concentration of orthonormal families of quasimodes. This is motivated by the study of fermionic systems in quantum mechanics, where a well-known example of systems of $N$ uncorrelated fermionic states are Slater determinants $u_h^1\wedge\cdots\wedge u_h^N(x^1,\ldots,x^N)=\frac{1}{\sqrt{N!}}\det((u_h^j(x_i))_{1\leq i,j\leq N})$, which are associated to $N$ orthonormal functions $\{u^j_h\}_{1\leq j\leq N}$ in $L^2(\R^d)$.
	 The orthonormality is a manifestation of Pauli's exclusion principle, which states that two fermions cannot occupy the same quantum state. Intuitively, it means that two fermions cannot concentrate in the same region in space. Hence, while a single particle may be localized in a small region, many particles will tend to delocalize by this ``repulsion'' induced by Pauli's principle. To measure quantitatively the concentration of several particles, it is useful to introduce the spatial density of particles
	\begin{equation*}
		\rho_h=\sum_{j=1}^N \abs{u^j_h}^2 ,
	\end{equation*}
	and estimate the growth in $h$ of its $L^{q/2}$-norms, if each of the $u^j_h$ is a quasimode of $P$. For $N=1$, we recover the question mentioned above. This quantity is interesting because it provides, up to a normalization, the spatial repartition of the fermionic system. Note that this density corresponds to the density of the one-body operator of the Slater determinant $\Psi_N=u_h^1\wedge\cdots\wedge u_h^N\in L^2((\R^d)^N)$, defined by its associated integral kernel
	\begin{equation*}
		(x,y)\in\R^d\times\R^d\mapsto N\int_{\R^{d(N-1)}} \Psi_N(x,x^2,\ldots,x^N)\overline{\Psi_N(y,x^2,\ldots,x^N)} dx^2\ldots dx^N.
	\end{equation*}
	Actually, as we will see below, we instead look at more general densities of the form \eqref{eq-intro:density}. As for the one-body case, the pointwise expression of these objects is not always easy to study, even in the case without interaction\footnote{The interacting case is even more complicated to deal with. Moreover, it is far from obvious that we can deduce the same type of $L^{q/2}$-estimates. Indeed, one can expect the interactions affect the concentration of the particles.
	}. 
	That is why we study instead its $L^{q/2}$-norms. We can also see the measure of the spatial concentration as the quotient of the $L^{q/2}$-norms by the $L^2$ one.
	One can estimate trivially using the triangle inequality and the $N=1$ estimate \eqref{eq-intro:Lp-est},
	\begin{equation*}
		\normLp{\rho_h}{q/2}{(\R^d)}\leq \sum_{j=1}^N \normLp{u^j_h}{q}{(\R^d)}^2 \leq C^2 h^{-2s} N.
	\end{equation*}
	Our goal in this work is to prove estimates of the type\footnote{See below Theorems \ref{thm-intro:res-spectral-clusters} and \ref{thm-intro:red-microloc} for more precise and general statements. We have actually also an other factor $1/\log(1/h)$ at some power $t\geq 0$, but we omit it in the first part of the introduction.
}
	\begin{equation}\label{eq-intro:Lp-est-manybody}
		\normLp{\rho_h}{q/2}{(\Omega)} \leq Ch^{-2s} N^\theta,
	\end{equation}
	for some $\theta\in[0,1]$, in some regions $\Omega\subset\R^d$. Notice that this estimate reduces to \eqref{eq-intro:Lp-est} in the case $N=1$, and that it is a strict improvement of \eqref{eq-intro:Lp-est} only if $\theta<1$ by the above argument.
	In this paper, we have made the choice to prove many-body estimates so that we recover the best possible exponent $s$ (in our case the existing exponents $s$ in \cite{koch2007semiclassical}) for $N=1$, and then we try to obtain the smallest $\theta$ possible with respect to this constraint. However, one can have different values of $s$ and $\alpha$, as pointed out in Remark \ref{rmk:ELp-gene-matdens-bis} (when we can have a better $\alpha$ but a worse $s$). An other approach would be to prove the estimates for a large $N_h$ (for instance in power of $h^{-1}$) with the smallest exponent $\theta$ possible, and then interpolate these estimates with for instance the one obtained for smaller range $N=1$. It is worth to mentioning in our proof we do not really prove the case $N=1$, except for the exponent $q=\infty$. But in this case, it is because the proof for any given $N$ does not need anything new compared to the existing estimates \eqref{eq-intro:Lp-est}. 
	In the case of the Laplace-Beltrami operator on compact manifolds, it was done in \cite{frank2017spectral}, where the sharp exponent $\theta=\theta(q)$ was found. Here, we generalize their work to the case of confining potentials.
	From the point of view of physics, the statistical properties of systems of non-interacting trapped fermions and in particular of their possible scales of concentration has attracted some attention recently \cite{dean2015universal,dean2018wigner,dean2019noninteracting,dean2021impurities}, and our work goes in a similar direction.

	The fact that enough fermions tend to delocalize can be understood by the pointwise Weyl law, which informally states that 
	\begin{equation}\label{eq-intro:weyl-law}
		\rho_h(x) \sim_{h\to 0} \frac{\abs{B_{\R^d}(0,1)}}{(2\pi h)^d} (E-V(x))_+^{d/2},
	\end{equation}
	when the $u^j_h$ are chosen to be an orthonormal family of eigenfunctions associated to all the eigenvalues less that $E$ of $P$. For this choice of $\{u^j_h\}_j$, the $L^{q/2}$-norms of $\rho$ are of the same order $Ch^{-d}$ for all $q$, which underlines delocalization. Actually, this delocalization also occurs for a much lower number of functions. Indeed, if one does not consider all the eigenvalues less than $E$ of $P$, but only the eigenvalues between $E-h$ and $E>\min V$, one can show (for $d\geq 2$, see Section \ref{subsec:optim-manybody}) that all the $L^{q/2}$-norms are also of the same order ($Ch^{-(d-1)}$ in this case), so that delocalization is also true for this much smaller spectral window. We will see below that this example is very important to prove the sharpness of the exponent $\theta$ that we obtain in our estimates of the type \eqref{eq-intro:Lp-est-manybody}. This is why \eqref{eq-intro:Lp-est-manybody} measures the transition between the localization for small $N$ and the delocalization for $N$ large enough: when $N=1$, it is saturated by concentrated functions while for $N$ large, it is saturated by a delocalized system of functions. On compact manifolds with $V=0$, \eqref{eq-intro:weyl-law} was made rigorous by Avakumovic \cite{avakumovic1956uber}, Levitan \cite{levitan1952asymp} and H\"ormander \cite[Thm. 1.1]{hormander1968spectral}. For confining potentials, this asymptotic fails close to the transition region $\{V=E\}$ and a pointwise Weyl law was proved for $V(x)=\abs{x}^2$ in \cite{karad1989} and more recently for general potentials in \cite{deleporte2021universality}.
	
	To understand what happens in the transition region $\{V=E\}$, and also to illustrate the transition between localization and delocalization, it is useful to consider the case of the harmonic oscillator $V(x)=x^2$ in $d=1$, for which many explicit computations are available. For instance, asymptotics as $h\to 0$ of individual eigenfunctions $u_h$ associated to an eigenvalue $E>0$ (independent of $h$) are very well understood using Wentzel-Kramers-Brillouin (WKB) methods (see for instance \cite{olver1997asymptotics}) as depicted in Figure \ref{fig:oh-hr-eigenfunction}: in the classically allowed region $\{V<E\}$, $u_h$ has size 1 (and oscillates, which is not measured by $L^q$-norms) and in the classically forbidden region $\{V>E\}$, it is exponentially decaying (both in $h$ and $\abs{x}$). An interesting phenomenon appears in the transition region $\{V=E\}$, since $u_h$ has size $h^{-1/6}$ in a neighborhood of size $h^{2/3}$ of this region. One can thus see a concentration phenomenon which does not happen in the absence of a potential. Notice also that the concentration is only visible at the level of $L^q$-norms for large $q$ because $\normLp{u_h}{q}{(\R^d)}\sim1$ for $2\leq q\leq 4$ and $\normLp{u_h}{q}{(\R^d)}\sim h^{-\frac 16+\frac 2{3q}}$ for $q\geq 4$. Asymptotics of $\rho_h$, when $u^j_h$ fill all the energy levels up to $E$, are also well-known by the same method as depicted in Figure \ref{fig:oh-spectral-proj}: in the classically allowed region $\{V<E\}$, $\rho_h$ has size $h^{-1}$ and in the classically forbidden region $\{V>E\}$, it is also exponentially decaying. In the transition region, it displays some concentration, but contrary to the case of individual eigenfunctions, it is too small compared to the bulk $\{V<E\}$, so it is invisible in the $L^{q/2}$-norms. In this case, all the  $L^{q/2}$-norms are of the same order $h^{-1}$.
	Of course, such a precise pointwise information is very specific to the one-dimensional case and one cannot hope to extend it to higher dimensions. The results of \cite{koch2005p,koch2007semiclassical} cover the higher dimension case using $L^q$-norms, at the level of eigenfunctions/quasimodes. We extend their results to the case of several functions. These one-dimensional examples also show the different behavior according to the different regions $\{V<E\}$, $\{V>E\}$ and $\{V=E\}$, and the higher dimensional results will also take into account these differences.
	
	
	\begin{figure}[!h]
		\centering
		\includegraphics[scale=0.2]{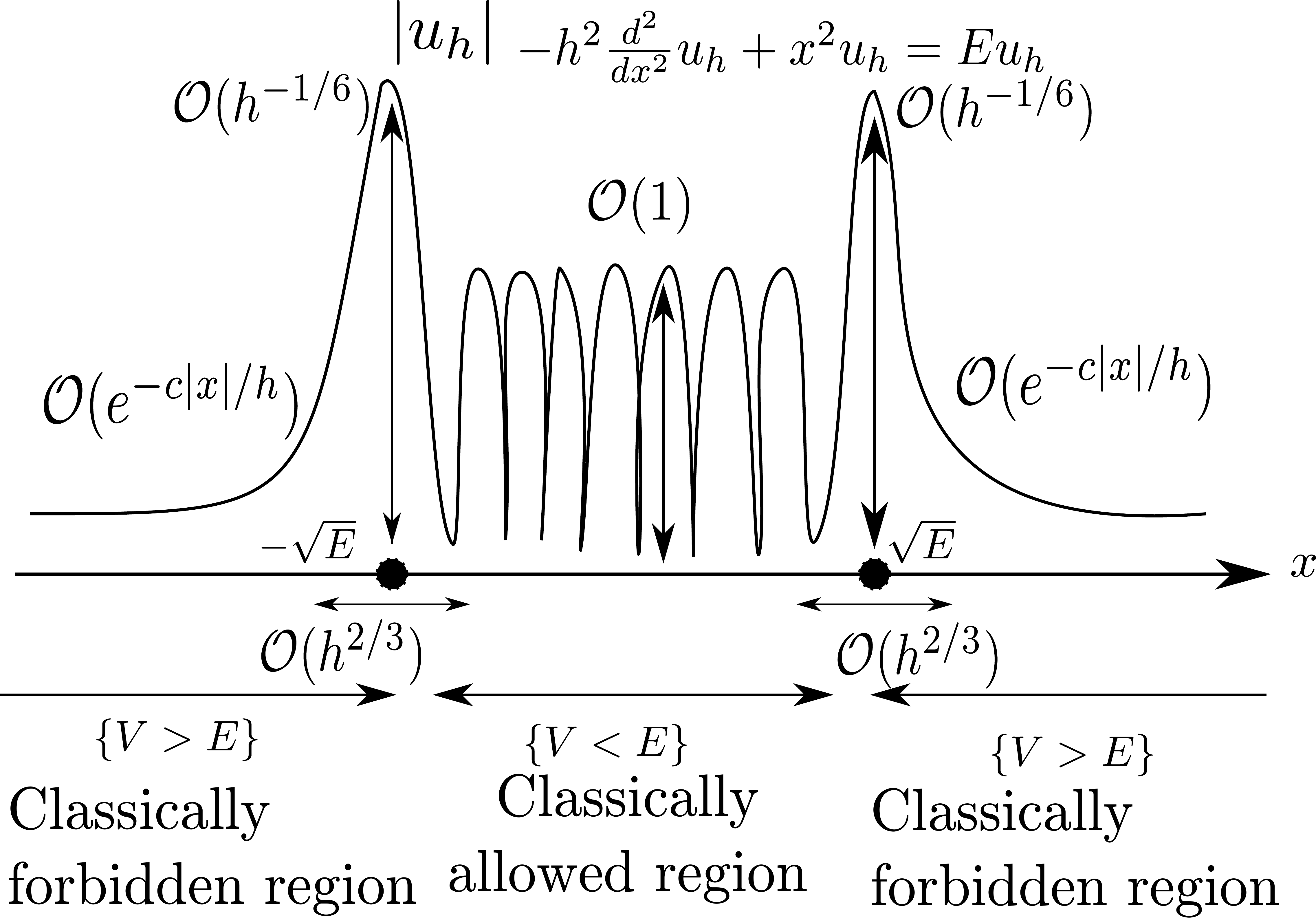}
		\caption{Eigenfunction of the scalar harmonic oscillator associated to the eigenvalue $E$.}
		\label{fig:oh-hr-eigenfunction}
	\end{figure}

	\begin{figure}[!h]
		\centering
		\includegraphics[scale=0.2]{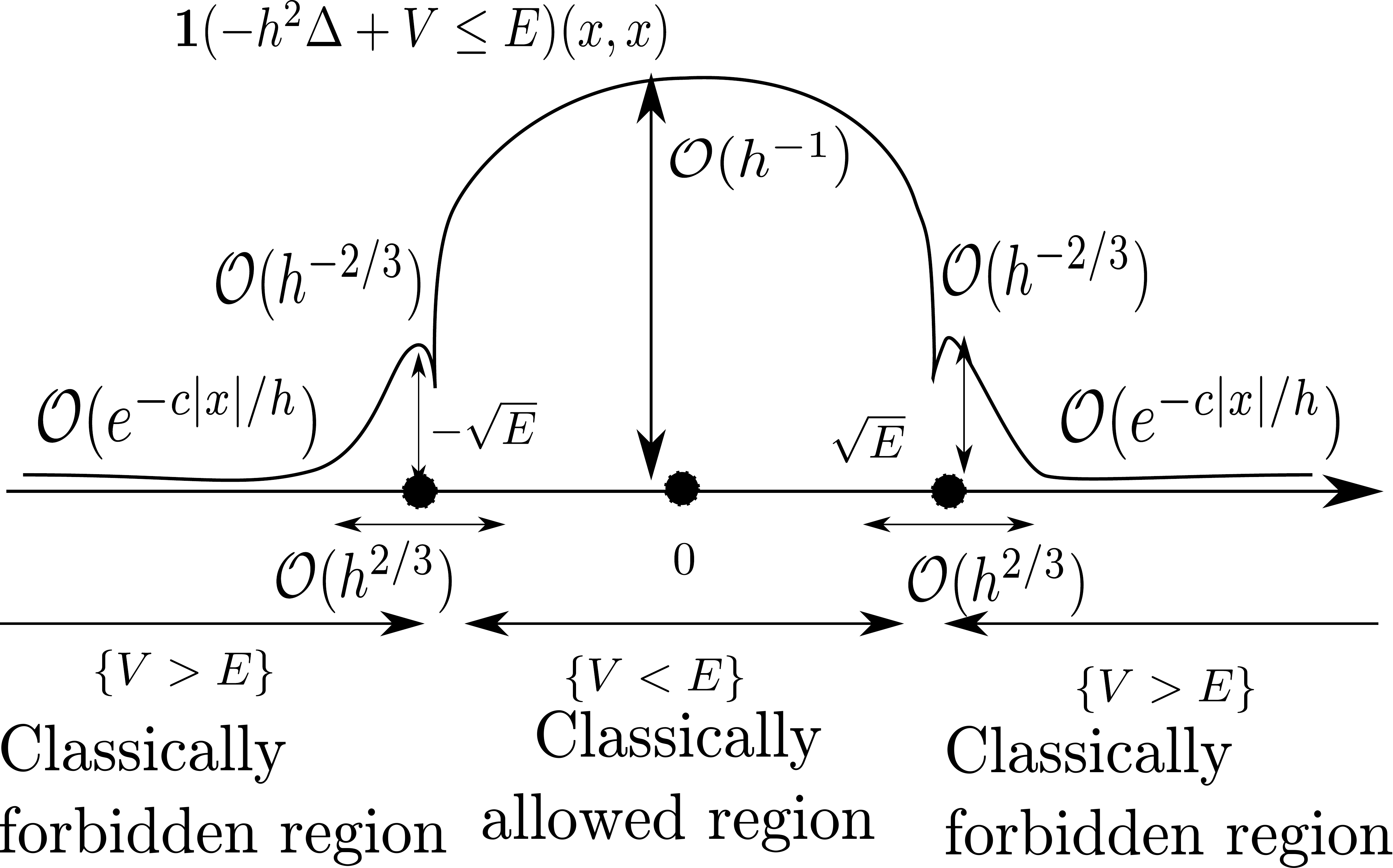}
		\caption{Spectral projector of the scalar harmonic oscillator.}
		\label{fig:oh-spectral-proj}
	\end{figure}
	

	Let us now summarize (in a simplified way) our main results, which precise statements are in Theorem \ref{thm:spectral-clusters}.
	First, we show that for any $E>\min V$ and for any $\varepsilon\in(0,E-\min V)$, one has
	\begin{equation*}
		\normLp{\rho_h}{q/2}{(\{V<E-\varepsilon\})} \leq Ch^{-2s}N^\theta
		,
	\end{equation*}
	for any orthonormal systems $\{u^j_h\}_{1\leq j\leq N}$ of eigenfunctions associated to eigenvalues in $[E-h,E]$, for any $N$, with sharp values of $s$ and $\theta$ (which are the same as on compact manifolds without potential). The sharp value of $s$ is obtained for $N=1$, while the sharp value of $\theta$ is obtained choosing the maximal number of such $\{u^j_h\}_{j}$. This case is the same as what happens on compact manifolds since we are far from the transition region. 
	Around the transition region, we obtain a similar estimate
	\begin{equation*}
		\normLp{\rho_h}{q/2}{(\{\abs{V-E}\leq \varepsilon\})} \leq Ch^{-2s}N^\theta
	,
	\end{equation*}
	with different values of exponents $s$ and $\theta<1$, and under the important assumption that $\nabla_x V\neq 0$ on $\{V=E\}$. These estimates are typically not sharp, even for $N=1$, as noticed in \cite{koch2005p}, because there are obtained by summing rescaled estimates on multiple scales intermediate between a neighborhood of size $h^{2/3}$ of the transition region and the bulk. It is rather the estimates on each of these individual intermediate scales that are sharp (that is, the value of $s$ is sharp), as proved in \cite{koch2005p}. The sharpness of the exponent $\theta$ in these scales is an open question. Finally, we obtain estimates of the type
	\begin{equation*}
		\normLp{\rho_h}{q/2}{(\R^d)} \leq Ch^{-2s}N^\theta
		,
	\end{equation*}
	without any assumption on $E$ or on the behavior of $V$ on $\{V=E\}$. This is useful for instance in the case where $E=\min V$, in which there is no bulk. The exponent $s$ is also sharp using again $N=1$ (the saturation happening for the ground state of $P$), while the sharpness of the exponent $\theta$ is also open.

	Let us now comment on the methods of proof and detail the structure of the paper.
	As we already said, we use the strategy of \cite{koch2007semiclassical} based on microlocal analysis, mixed with the many-body tools of \cite{frank2017spectral}. First, we notice (as we will see in Section \ref{sec:app-spectral-clusters}) that it is enough to estimate functions $u_h$ or $u^j_h$ that are microlocalized, meaning that they ``live'' in a compact region in the phase space $\R^d\times\R^d$. This is because spectral localization implies microlocalization for elliptic operators (see for instance \eqref{eq:spectral-loc}). By compactness, it is thus enough to treat functions that are microlocalized around a point. Then, the properties of the classical symbol of $P$,
	\begin{equation*}
		p_E(x,\xi)=\abs{\xi}^2+V(x)-E,\quad (x,\xi)\in\R^d\times\R^d,
	\end{equation*} 
	at this point intervene.
	In Section \ref{sec:elliptic-est}, we will treat elliptic points where $p_E\neq 0$. There, the main tools are Sobolev embeddings in the one-body case and the Kato-Seiler-Simon inequalities in the many-body case. In the region where $p_E=0$, several cases are distinguished:
that satisfy some geometric assumptions $(cond)$ with respect to the energy level sets of $p_E(x,\xi)=p(x,\xi)-E$. That can be points out of $\{p_E=0\}$ or in the level set $\{p_E=0\}$ under one of the three conditions stated (see below Table \ref{table:geom-conditions} for the references to corresponding geometric assumptions)
	\begin{itemize}
		\item In Section \ref{sec:gene}, we give a general estimate which is valid for any potential $V$ and any energy $E$. The proof relies on adding an artificial time variable and use many-body Strichartz estimates in the spirit of \cite{frank2014, frank2017restriction}.
		\item In Section \ref{sec:sogge-est}, we treat the bulk case for which $p_E=0$ but $V\neq E$. The proof relies on seeing one of the $d$ space variables as a time variable and again use many-body Strichartz estimates.
		\item In Section \ref{sec:TP-est}, we treat the turning point region $p_E=0$ and $V=E$, under the no-ndegeneracy assumption that $\nabla_xV\neq 0$ on this set. In a neighborhood of size $h^{2/3}$ of this region, we use a $H^1$-estimate of \cite{koch2007semiclassical} together with the Kato-Seiler-Simon inequality. The remaining region is split into multiple scales $2^{j}h^{2/3}$, and each of them is treated using the estimates of Section \ref{sec:sogge-est} by rescaling.
	\end{itemize}

		\begin{table}[!h]
		\centering
		\begin{tabular}{|c|c|c|}
			\hline
			\multicolumn{2}{|c|}{\bf Conditions on the symbol $p_E:=p(x,\xi)-E$}& Statement \\
			\hline\hline
			{\it Elliptic}  & (ellip) & $p_E\neq 0$  \\
			\hline
			{\it General}  & (gene) & Assumption \ref{cond:gene}  \\
			\hline
			{\it Sogge} (for the bulk)  & (Sogge)& Assumption \ref{cond:sogge} \\
			\hline
			{\it Turning points} & (TP)& Assumption \ref{cond:TP} \\
			\hline
		\end{tabular}
		\caption{Geometric conditions and their abbreviation in the article.}
		\label{table:geom-conditions}
	\end{table}
	
	In Section \ref{sec:app-spectral-clusters}, we gather all the previous estimates to obtain our main results on spectral clusters. Finally, we discuss their optimality in Section \ref{sec:optim}.

	In the following, we will consider a more general description of many-body states than orthonormal functions. Actually, many more fermionic states are described by a one-body operator than Slater determinants.
	Namely, we will consider one-body density matrices (nonnegative compact operators on $L^2(\R^d)$). Such an operator $\gamma$ can be diagonalized in an orthonormal basis $\{u^j\}_{j}$ with associated eigenvalues $\{\lambda_j\}_j\subset\R_+$, and each $\lambda_j$ is interpreted as the average number of particles described by the state $\gamma$ which have wavefunction $u^j$.  In this formalism, the case described above of $N$ orthonormal functions $\{u^j\}_{1\leq j\leq N}$ corresponds to $\gamma=P_N$, the orthogonal projection on the space generated by the $\{u^j\}_{1\leq j\leq N}$. The $N^\theta$ factor in the right-side of \eqref{eq-intro:Lp-est-manybody} is then interpreted as the Schatten norm $\normSch{P_N}{\alpha}{}$ where $\alpha=1/\theta$ (see below for the definition of Schatten spaces). Furthermore, to any one-body density matrix $\gamma$, one can associate a density of particles
	\begin{equation}\label{eq-intro:density}
		\rho_\gamma = \sum_{j=1}^{+\infty} \lambda_j \abs{u^j}^2,
	\end{equation}
	which measures the spatial repartition of the particles described by $\gamma$. We will prove estimates similar to \eqref{eq-intro:Lp-est-manybody}, where $\rho_h$ in the left-side is replaced by $\rho_\gamma$ and $N^\theta$ in the right-side is replaced by $\normSch{\gamma}{\alpha}{}$.

	The exponents $s$ and $\alpha$ have actually a dependence on $d$ and $q$, that we sum up in Table \ref{table:ref-exponents}.
	As mentioned above, we will consider estimates on microlocalized objects. In the one-body setting, it means that one estimates $\normLp{\chi^\w u}{q}{}$ instead of $\normLp{u}{q}{}$ for a fixed $\chi\in\test{\R^d\times\R^d}$, where $\chi^\w$ denotes the Weyl quantization of the localization function $\chi$ (see below for the definition). In the many-body setting, it means that one estimates $\normLp{\rho_{\chi^\w\gamma\chi^\w}}{q/2}{}$ instead of $\normLp{\rho_\gamma}{q/2}{}$. Furthermore, we also mentioned that one could more generally estimate quasimodes $u_h$ (meaning that $\normLp{u_h}{2}{(\R^d)}=1$ and $(P-E)u_h=\OR_{L^2}(h)$). An equivalent way to consider estimates for microlocalized quasimodes is to replace \eqref{eq-intro:Lp-est} by
	\begin{equation*}
		\normLp{\chi^\w u_h}{q}{}\leq Ch^{-s}\left(\normLp{u_h}{2}{(\R^d)}+\frac 1h\normLp{(P-E)u_h}{2}{(\R^d)}\right).
	\end{equation*}
	The generalization to the many-body setting is given by estimates of the type
	\begin{equation}\label{eq-intro:Lp-est-micr-manybody}
		\normLp{\rho_{\chi^\w\gamma\chi^\w}}{q/2}{}\leq Ch^{-2s}\left(\normSch{\gamma}{\alpha}{}+\frac{1}{h^2}\normSch{(P-E)\gamma(P-E)}{\alpha}{}\right).
	\end{equation}
	The advantage of such a formulation is that the microlocalization is imposed by $\chi^\w$ and the property to be a quasimode is related to the choice of the norm in the right-side. Hence, we may prove \eqref{eq-intro:Lp-est-micr-manybody} for general $\gamma$, the restriction to be a microlocalized quasimode being included in the form of the inequality.
	We will prove such estimates in Sections \ref{sec:elliptic-est} to \ref{sec:TP-est}, with different values of $s$ and $\alpha$ according to the properties of $p_E$ on $\supp\chi$.

	Let us group and summarize more precisely the main results informally mentioned before.
	\medskip
	\begin{table}[!h]
		\centering
		\begin{tabular}{|c|c|c|c|}
			\hline
			&  \multicolumn{3}{|c|}{\bf Exponents}\\
			\hline
			\bf Condition & $s_{\rm cond}$ & $t_{\rm cond}$ & $\alpha_{\rm cond}$ \\
			\hline
			Comparison when $d\geq 3$ & Figure \ref{fig:comp-exp-s} & Figure \ref{fig:comp-exp-t} & Figure \ref{fig:comp-exp-alpha}\\
			\hline\hline
			(ellip) &  \multicolumn{3}{|l|}{defined in Eq. \eqref{eq-def:exp-s-alpha-ELp-ellip}, see Figure \ref{fig:exp-s-alpha_ellip}}  \\
			\hline
			(gene) &  \multicolumn{3}{|l|}{defined in Theorems \ref{thm:ELp-gene-1body} and \ref{thm:ELp-gene-matdens}}\\
			\hline
			(Sogge)& defined in Eq. \eqref{eq-def:s-ELp-sogge}  & $t_{\rm Sogge}=0$ &  defined in Eq. \eqref{eq-def:alpha-ELp-sogge-matdens}\\
			\hline
			(TP)& \multicolumn{3}{|l|}{defined in Theorems \ref{thm:ELp-TP-1body} and \ref{thm:ELp-TP-matdens}}\\
			\hline
		\end{tabular}
		\caption{Reference to all appearing exponents.}
		\label{table:ref-exponents}
	\end{table}
	\begin{thm-princ}[Spectral cluster $L^q$ estimates, see Theorem \ref{thm:spectral-clusters} and Section \ref{subsec:optim-manybody}]
		\label{thm-intro:res-spectral-clusters}
		Let the symbol $p(x,\xi)=|\xi|^2+V(x)$ with a confining\footnote{to be defined in Section \ref{sec:app-spectral-clusters}} potential $V:\R^d\to\R$, $E\in\R$, $\varepsilon>0$ be a small error. Let us denote by $P$ by the Schr\"odinger operator $-h^2\Delta+V$ and by $\Pi_{E,h}$ the spectral projector on the window $[E-h, E+h]$
		\begin{equation*}
			\Pi_{E,h}:=\indicatrice{P\in[E-h,E+h]}.
		\end{equation*}
		\item[\quad\underline{\emph{Upper bounds.}}] 
		Assume that	$\Omega=\R^d$, or $\Omega=\{V>E+\varepsilon\}$ in the classical forbidden region, $\Omega=\{V<E-\varepsilon\}$ in the bulk, or in a neighborhood of the turning points $\Omega=\{|V-E|<\varepsilon \}$ under the additional assumption that all points $x\in\R^d$ in $\{V=E\}$ must satisfy the condition $\nabla_xV(x)\neq 0$. 
		Then, there exist
		\begin{itemize}
			\item $h_0=h_0(E,\varepsilon)>0$,
			\item fixed exponents $s\in[-\infty,\infty)$, $t\geq 0$ and $\alpha\in[1,\infty]$ (to be defined below),
			\item a multiplicative constant $C=C(\Omega,V,E,\varepsilon)>0$,
		\end{itemize}
		such that for any $2\leq q\leq\infty$, any $h\in(0,h_0]$ and any $\gamma_h$ such that $\gamma_h=\Pi_{E,h}\gamma_h=\gamma_h\Pi_{E,h}$
		\begin{equation*}\label{eq-intro-eng:res_spec-clust-ELp-matdens}
		\fbox{$
			\normLp{\rho_{\gamma_h}}{q/2}{(\Omega)}
			\leq C h^{-2s(q,d)}\log(1/h)^{2t(q,d)} \normSch{\gamma_h}{\alpha(q,d)}{(L^2(\R^d))}.
			$}
		\end{equation*}
		with
		\begin{equation*}
			(s(q,d),t(q,d),\alpha(q,d))=
			\begin{cases}
			(s_{\rm gene}(q,d),t_{\rm gene}(q,d),\alpha_{\rm gene}(q,d)) &\text{for any}\ \Omega\subset\R^d_x, \\
			(-\infty,0,\infty) &\text{if}\ \Omega=\{V>E+\varepsilon\}, \\
			(s_{\rm Sogge}(q,d),t_{\rm Sogge}(q,d),\alpha_{\rm Sogge}(q,d))&\text{if}\ \Omega=\{V<E-\varepsilon\},\\
			(s_{\rm TP}(q,d),t_{\rm TP}(q,d),\alpha_{\rm TP}(q,d)) &\text{if}\ \Omega=\{|V-E|<\varepsilon\} 
			.
			\end{cases}
		\end{equation*}
		\item[\quad\underline{\emph{Optimality.}}] 
		Moreover, under additive assumption of ``flatness''\footnote{see Assumption \eqref{eq:prop:optim-highr:ELp-gene-1body}} around the energy $E\in\R$, one has the optimality of the exponent $\alpha_{\rm Sogge}$ in a classically allowed region.
		There exist a multiplicative constant $C=C(d,h_0,V,\varepsilon)>0$, $h_0>0$, a sequence of energies $E_h\in\R$ in a neighborhood of $E$ and of density matrice $\gamma_h$ such that for any $2\leq q\leq\infty$, any $h\in(0,h_0]$
		\begin{equation*}
			\fbox{$
				\normLp{\rho_{\gamma_h}}{q/2}{(\{V<E_h-\varepsilon\})}\geq C h^{-2s_{\rm Sogge}(q,d)}\log(1/h)^{2t_{\rm Sogge}(q,d)} \normSch{\gamma_h}{\alpha_{\rm Sogge}(q,d)}{(L^2(\R^d))}
				.
			$}
		\end{equation*}
	\end{thm-princ}
	\medskip
	We also sum up Theorems \ref{thm:ELp-elliptic-matdens}, \ref{thm:ELp-gene-matdens}, \ref{thm:ELp-sogge-matdens} and \ref{thm:ELp-TP-matdens} in the case of Schr\"odinger operators. Actually, the results are stated for a more general class of pseudodifferential operators.
	\begin{thm-princ}[Microlocalized $L^q$ estimates]\label{thm-intro:red-microloc}
		Let the symbol $p(x,\xi)=|\xi|^2+V(x)$ with a confining\footnote{We will later that the condition of ``almost polynomial growth'' (Definition \ref{cond:am-potential-pol-growth}) is enough.} potential $V:\R^d\to\R$, $E\in\R$, $\varepsilon>0$ be a small error. Let us denote by $P$ by the operator $-h^2\Delta+V$.
		For any point  $(x_0,\xi_0)\in\R^d\times\R^d$ that satisfies one of the geometric conditions in Table \ref{table:geom-conditions}: $(cond)=(ellip)$, $(gene)$, $(Sogge)$ or $(TP)$, there exist fixed $s_{\rm cond}\geq 0$, $t_{\rm cond}\geq 0$ and $\alpha_{\rm cond}\in[1,\infty]$ associated to $(cond)$ (see Table \ref{table:ref-exponents} above) and
		\begin{itemize}
			\item an open bounded neighborhood $\UR\times\VR$ of $(x_0,\xi_0)$,
			\item $h_0>0$,
		\end{itemize}
		such that for any $\chi\in\test{\R^d\times\R^d}$ supported into $\UR\times\VR$, there exists a multiplicative constant $C=C(d,\chi,h_0)>0$, such that for any $2\leq q\leq\infty$, any $h\in(0,h_0]$ and any non-negative self-adjoint operator $\gamma$ on $L^2(\R^d)$, we have
		\begin{equation*}\label{eq-intro-eng:res_microloc-ELp-matdens}
		\fbox{$
			\begin{split}
			\normLp{\rho_{\chi^\w(x,hD)\gamma\chi^\w(x,hD)}}{q/2}{(\R^d)}
			&
			\leq C h^{-2s_{\rm cond}(q,d)}\log(1/h)^{2t_{\rm cond}(q,d)} 
			\times\\&\quad\times
			\left(\normSch{\gamma}{\alpha_{\rm cond}(q,d)}{(L^2(\R^d))}+\frac 1{h^2}\normSch{(P-E)\gamma(P-E)}{\alpha_{\rm cond}(q,d)}{(L^2(\R^d))}\right).
			\end{split}
			$}
		\end{equation*}
	\end{thm-princ}

\bigskip
	Implicitly, we can deduce $L^q$ bounds for density of family of non-negative bounded operators $\{\gamma_h\}_{h\in (0,h_0]}$ with an integral kernel, that are ``many-body quasimode'' of $P$ in nuclear type norm for any $h\in(0,h_0]$
	\begin{equation*}\forall \alpha\geq 1,\quad
		\gamma_h=\OR_{\schatten^\alpha}(1) \quad\text{ and }\quad
		(P-E)\gamma_h, \: \gamma_h(P-E) = \OR_{\schatten^\alpha}(h)
		,
	\end{equation*}
	with a spectral or a phase-space localization assumption
	\begin{itemize}
		\item $\gamma_h=\Pi_{E,h}\gamma_h\Pi_{E,h}+\OR_{\schatten^\alpha_{P,E}}(h^\infty)$,
		\item or such that there exists a compact $K\subset\R^d\times\R^d$ such that for any $\chi\in\test{\R^d\times\R^d}$ supported in $K$, $\gamma_h=\chi^\w(x,hD)\gamma_h\chi^\w(x,hD)+\OR_{\schatten^\alpha_{P,E}}(h^\infty)$.
	\end{itemize}
	Here, we denote $\gamma=\OR_{\schatten^\alpha_{P,E}}(h^\infty)$ if the weighted norm $\normSch{\gamma}{\alpha}{(L^2(\R^d))}+\frac 1{h^2}\normSch{(P-E)\gamma(P-E)}{\alpha}{}$ or $\normSch{(1+(P-E)^2/h^2)^{1/2}\gamma(1+(P-E)^2/h^2)^{1/2}}{\alpha}{}$
	are $\OR(h^\infty)$. Note that one can also replace $\OR_{\schatten^\alpha_{P,E}}(h^\infty)$ with $\OR_{\schatten^\alpha_{P,E}}(h^N)$ for a large enough fixed $N>0$.
	For now, the question of getting rid of the localization property is still opened, even in the one-body case.

	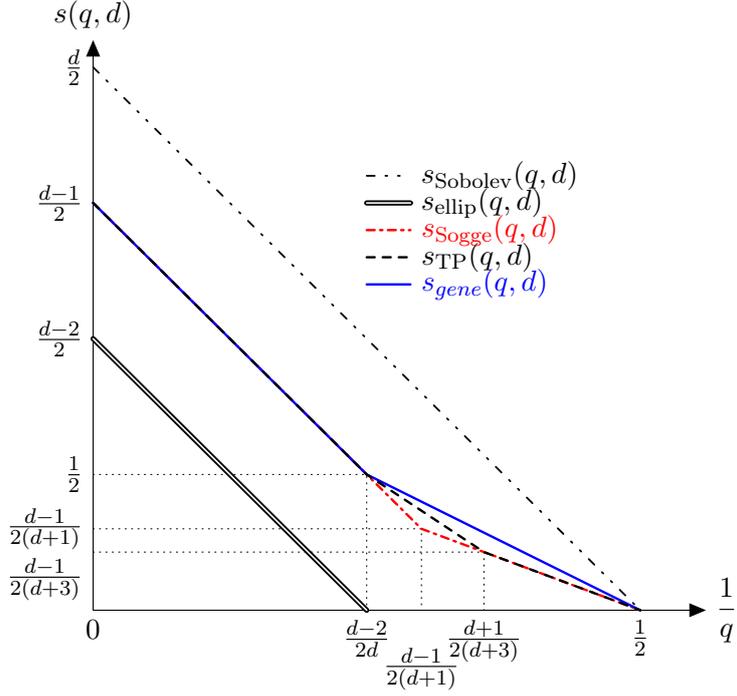
\begin{figure}[h!]
		\centering
		\begin{center}\begin{tikzpicture}[line cap=round,line join=round,>=triangle 45,x=12 cm,y=3 cm,scale=1.2]
			d=4
			\draw[->] (0.,0.) -- (0.56,0.);
			\draw (0.56,0.) node[right] {$\displaystyle{\frac{1}{q}}$};
			\draw[->] (0.,0.) -- (0.,2+0.1);
			\draw (0.,2+0.1) node[above] {$\displaystyle{s(q,d)}$};
			%
			%
			\draw (0,0) node[below]{$0$};
			\draw (1/4,0.) node[below]{$\frac{d-2}{2d}$};
			\draw (3/10,-0.1) node[below]{$\frac{d-1}{2(d+1)}$}; 
			\draw (5/14,0.) node[below]{$\frac{d+1}{2(d+3)}$}; 
			\draw (0.5,0.) node[below]{$\frac{1}{2}$};
			%
			\draw (0,0.5) node[left]{$\frac{1}{2}$};
			\draw (0,3/10+0.) node[left]{$\frac{d-1}{2(d+1)}$};
			\draw (0.,3/14-0.1) node[left]{$\frac{d-1}{2(d+3)}$}; 
			\draw (0.,3/2) node[left]{$\frac{d-1}{2}$};
			%
			%
			\draw[dotted] (0., 0.5)-|(1/4, 0.); 
			\draw[dotted] (0., 3/10)-|(3/10, 0.); 
			\draw[dotted] (0., 3/14)-|(5/14, 0.); 
			%
			%
			\draw (0.,2) node[left]{$\frac{d}{2}$};
			\draw[line width=0.7pt,loosely dashdotdotted](0,2)--(0.5,0);
			\draw (0,1) node[left]{$\frac{d-2}2$};
			\draw[line width=0.7pt,double](0,1)--(1/4,0);
			\draw[ line width=0.9pt, color=red,dash dot] (0.,3/2) --(1/4,0.5)-- (3/10,3/10)--(0.5,0.);
			\draw[ line width=0.9pt, color=blue] (0.,3/2) -- (1/4,0.5)--(0.5,0.); 
			\draw[dashed, line width=0.9pt] (0.,3/2) -- (1/4,0.5)--(5/14,3/14)--(0.5,0.);
			%
			%
			\draw[line width=0.7pt,loosely dashdotdotted] (1/4,3/2+0.1)-|(1/4+.04,3/2+0.1)  node[right]{$ s_{\text{Sobolev}}(q,d)$};
			\draw[line width=0.7pt,double] (1/4,3/2)-|(1/4+.04,3/2)  node[right]{$ s_{\text{ellip}}(q,d)$};
			\draw[line width=0.9pt, color=red,dash dot] (1/4,3/2-0.1)-|(1/4+.04,3/2-0.1)  node[right]{$s_{\text{Sogge}}(q,d)$};
			\draw[line width=0.9pt, dashed]
			(1/4,3/2-0.2)-|(1/4+.04,3/2-0.2) node[right]{$s_{\text{TP}}(q,d)$};
			\draw[line width=0.9pt, color=blue]
			(1/4,3/2-0.3)-|(1/4+.04,3/2-0.3)  node[right]{$ s_{gene}(q,d)$};
			\end{tikzpicture}\end{center}
		\caption{Exponent $s(q,d)$ of microlocalized estimates when $d\geq 2$.}
		\label{fig:comp-exp-s}
	\end{figure}


	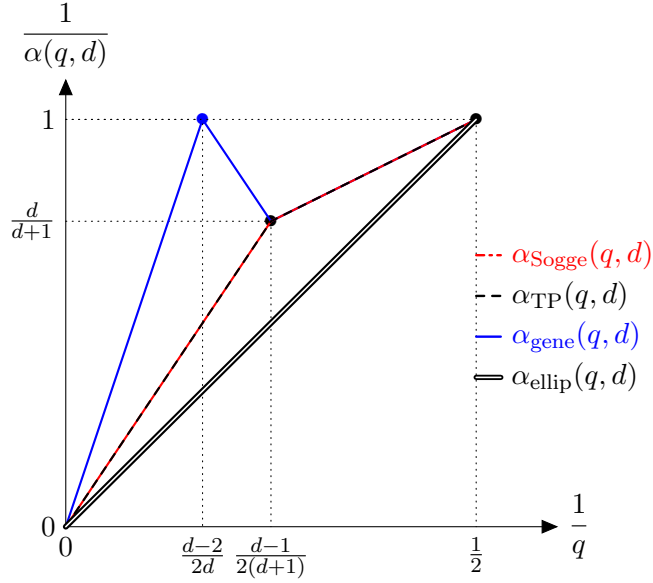
\begin{figure}[!h]
	\begin{center}\begin{tikzpicture}[line cap=round,line join=round,>=triangle 45,x=6.0cm,y=3.0 cm,scale=1.8]
		\draw[->] (0.,0.) -- (0.6,0.);
		\draw (0.6,0.) node[right] {$\displaystyle{\frac{1}{q}}$};
		\draw[->] (0.,0.) -- (0.,1.1);
		\draw (0.,1.1) node[above] {$\displaystyle{\frac 1{\alpha(q,d)}}$};
		%
		%
		\draw (0.,0) node[below]{0};
		\draw (0.5,0.) node[below]{$\frac{1}{2}$};
		%
		%
		\draw (0.,0) node[left]{0};
		\draw (0.,1) node[left]{1};
		%
		\draw (1/6,0.) node[below]{$\frac{d-2}{2d}$};
		\draw[color=blue] (1/6,1) node {$\bullet$};
		\draw[dotted] (0, 1)-|(1/6, 0);
		\draw (1/4,0.) node[below]{$\frac{d-1}{2(d+1)}$};
		\draw (0.,3/4) node[left]{$\frac{d}{d+1}$};
		\draw[color=black] (1/4,3/4) node {$\bullet$};
		\draw[dotted] (0, 3/4)-|(1/4,0);
		\draw[dotted] (0, 1)-|(0.5, 0);
		\draw[color=black] (0.5,1) node {$\bullet$};
		\draw[line width=0.8pt, color=blue] (0.,0) --(1/6,1)--(1/4,3/4)--(0.5,1);
		\draw[color=red,line width=0.8pt] (0.,0) --(1/4, 3/4)--(0.5,1);
		\draw[dashed,line width=0.8pt] (0.,0) --(1/4, 3/4)--(0.5,1);
		\draw[double,line width=0.8pt] (0.,0) --(0.5,1);
		%
		\draw[line width=0.9pt, color=red,dash dot] (1/2,2/3)-|(1/2+.03,2/3)  node[right]{$\alpha_{\text{Sogge}}(q,d)$};
		\draw[line width=0.9pt, dashed]
		(1/2,2/3-0.1)-|(1/2+.03,2/3-0.1) node[right]{$\alpha_{\text{TP}}(q,d)$};
		\draw[line width=0.9pt, color=blue]
		(1/2,2/3-0.2)-|(1/2+.03,2/3-0.2)  node[right]{$ \alpha_{\text{gene}}(q,d)$};
		\draw[line width=0.7pt,double](1/2,2/3-0.3)-|(1/2+.03,2/3-0.3) node[right]{$\alpha_{\text{ellip}}(q,d)$};
		\end{tikzpicture}\end{center}
	\caption{Schatten exponent $\alpha(q,d)$ when $d\geq 3$.}
	\label{fig:comp-exp-alpha}
\end{figure}

	\section*{Acknowledgements}
	
	I would like to thank my PhD advisor Julien Sabin for his advices and support. This project has also been partially supported (during the correction period) by the European Research Council (ERC) through the Starting Grant {\sc FermiMath}, grant agreement nr. 101040991. Finally,
	would like to thank the anonymous referee for her or his comments, which led to improvements of this presentation's paper.
	
	\section{Review of semiclassical analysis and density matrices}\label{sec:useful-review}
	
	Before going to the main results and their proofs, we recall the results of semiclassical analysis and density matrix analysis that we will use. We refer to \cite{dimassi1999spectral}, \cite{zworski2012semiclassical} and \cite{simon2005trace} for further details.
	
	\subsection{Symbol classes and quantization}
	
	Let us recall the definitions of order functions and symbol classes. 
	In the following, we will fix $d\in\N^*$ and we will use the notation $\crochetjap{x}:=(1+\abs{x}^2)^{1/2}$, for $x\in\R^d$. 
	
	\begin{defi}[Order functions {\cite[Sec. 4.4.1]{zworski2012semiclassical}}]
		A function $m\in\CR^\infty(\R^d\times\R^d,[0,\infty[)$ is called an \emph{order function} on $\R^d\times\R^d$ if there exist $C,N>0$ such that
		\begin{equation*}
			\forall(x,\xi),(y,\eta)\in\R^d\times\R^d  \quad m(x,\xi)\leq C ( 1+ \abs{x-y}+\abs{\xi-\eta})^N m(y,\eta)  .
		\end{equation*}
	\end{defi}

	\begin{rmk}
		Relevant examples of order functions are $(x,\xi)\mapsto\crochetjap{x}^{k}\crochetjap{\xi}^{\ell}$ for any $k,\ell\in\R$.
	\end{rmk}

	\begin{defi}[Symbols {\cite[Sec. 4.4.1]{zworski2012semiclassical}}]
		Let $m$ be an order function on $\R^d\times\R^d$.
		A function $a\in\CR^\infty(\R^d\times\R^d)$ is a \emph{symbol} in the class $S(m)$ if for all $\alpha\in\N^d\times\N^d$, there exists $C_\alpha>0$ such that
		\begin{equation*}
			\forall (x,\xi)\in\R^d\times\R^d,\quad \abs{\partial^\alpha_{x,\xi} a(x,\xi)}\leq C_\alpha m(x,\xi) .
		\end{equation*}
	\end{defi}

	\begin{rmk}
		In the following, we will consider symbols $a$ which will depend on an external parameter (which can be $h$). In that case, it will be important that the constants $C_\alpha$ are independent of the parameter.
	\end{rmk}
	
	\begin{rmk}
		Below, we will encounter symbols belonging to the Schwartz space $\schwartz(\R^d\times\R^d)$, which is equivalent to belong to the symbol classes $S(\crochetjap{x,\xi}^{-k})$ for all $k\in\N$.
	\end{rmk}

	\begin{nota}
		Let $N\in\R$ and $a\in S(m)$.
		\begin{itemize}
		\item
		We write $a=\OR_{S(m)}(h^N)$ if for any $\alpha\in\N^{2d}$, there exists $C_{\alpha,N}>0$, independent of $h$, such that
		\begin{equation*}\forall (x,\xi)\in\R^d\times\R^d,\quad
			\abs{\partial^\alpha_{x,\xi}a(x,\xi)}\leq C_{\alpha,N}h^Nm(x,\xi).
		\end{equation*}
		\item
		We denote $a=\OR_{S(m)}(h^\infty)$ if $a=\OR_{S(m)}(h^N)$ for any $N\in\N$.
	\end{itemize}
		Similarly, for $a\in\schwartz(\R^d\times\R^d)$, we write $a=\OR_{\schwartz}(h^N)$ (resp. $a=\OR_{\schwartz}(h^\infty)$) if $a=\OR_{S(\crochetjap{x,\xi}^{-k})}(h^N)$ (resp. $a=\OR_{S(\crochetjap{x,\xi}^{-k})}(h^\infty)$) for all $k\in\N$.
	\end{nota}

	It will be important for us that the classical symbol $p(x,\xi)=\abs{\xi}^2+V(x)$ is a symbol in the sense of the above definition. This motivates the following definition of the class of potentials that we consider.
	
	\begin{defi}\label{cond:am-potential-pol-growth}
		A potential $V\in\CR^\infty(\R^d,\R)$ has \emph{at most polynomial growth} if there exists $k\in\N^*$ such that
		\begin{equation}
			\label{eq:cond-potential-symb}
			\forall\alpha\in\N^d,\exists C_\alpha>0,\forall x\in\R^d,\:\abs{\partial^\alpha V(x)}\leq C_\alpha\crochetjap{x}^k 
			.
		\end{equation}
	\end{defi}
	
	\begin{rmk}
		In the above definition, \eqref{eq:cond-potential-symb} implies that $p(x,\xi)=\abs{\xi}^2+V(x)$ is in the symbol class $S(m)$ for $m(x,\xi):=\crochetjap{\xi}^2\crochetjap{x}^k$.
	\end{rmk}

	We will always assume that the potential $V$ is bounded from below. That ensures that the operator $P=h^2\Delta+V$ is also bounded from below.
	\begin{defi}[Boundedness from below]\label{cond:bound-from-below}
		The potential $V$ is bounded from below, more exactly that there exists $C\in\R$ such that for any $x\in\R$, there exists $C\in\R$ such that for any $x\in\R^d$, one has $V(x)\geq C$. 
	\end{defi}
	
	\begin{defi}[Quantization, {\cite[Thm. 4.16]{zworski2012semiclassical}}]\label{def:quantizations}
		Let $m$ be an order function on $\R^d\times\R^d$ and $a\in S(m)$. Let $h>0$.
		Let $\mathfrak{t}\in [0,1]$. The $\mathfrak{t}$-quantization of $a$, denoted by $\op_h^\mathfrak{t}(a)$, is the linear continuous operator $\schwartz(\R^d)\to\schwartz(\R^d)$ defined by the formula
		\begin{equation*}
			\op_h^\mathfrak{t}(a)u(x) = \frac1{(2\pi h)^d} \int_{\R^d}\int_{\R^d} e^{i\frac{\xi\cdot(x-y)}h}  a(\mathfrak{t}x+(1-\mathfrak{t})y,\xi) u(y)d\xi dy
		\end{equation*}
		for any $x\in\R^d$ and any $u\in\schwartz(\R^d)$.
		For $\mathfrak{t}=1/2$, $\op_h^{1/2}(a)$ is called the \emph{Weyl quantization} of $a$ and is also denoted by $a^\w(x,hD)$.
		For $\mathfrak{t}=1$, $\op_h^1(a)$ is called the \emph{right quantization} of $a$ and is also denoted by $a^\qr(x,hD)$.		
	\end{defi}

	\subsection{Semiclassical pseudodifferential calculus}
	
	In the following, we list some standard operations on pseudodifferential operators. We only state them for the Weyl quantization to keep a light notation, but they are still valid for other quantizations (in Definition \ref{def:quantizations}).
	
	\begin{prop}[Composition of pseudodifferential  operators {\cite[Thm. 4.12 and 4.18]{zworski2012semiclassical}}]\label{prop:SA-comp-op}
		Let $m_1$ and $m_2$ two order functions on $\R^d\times\R^d$. Let $a\in S(m_1)$ and $b\in S(m_2)$.
		\begin{itemize}
			\item[1)]
			Then, there exists a symbol in $S(m_1m_2)$, that we denote by $a_\# b$, such that
			\begin{equation*}
				a^\w(x,hD)b^\w(x,hD)=(a_\# b)^\w(x,hD).
			\end{equation*}
			\item[2)]
			Furthermore, there exists a unique family $\{c_j\}_{j\in\N}\subset S(m_1m_2)$ supported in $\supp a \cap\supp b$ such that for any $N\in\N^*$, there exists $r_N\in S(m_1m_2)$ such that
			\begin{equation*}
				a_\# b = \sum_{j=0}^{N-1} h^jc_j+ h^Nr_N.
			\end{equation*}
			Moreover, we have $c_0=ab$.
		\end{itemize}
	\end{prop}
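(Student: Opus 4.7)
The plan is to derive an exact integral formula for the symbol $a\#b$ of the composition, and then extract its asymptotic expansion in powers of $h$ by a stationary phase / Taylor expansion argument.

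First, starting from the Weyl kernel representation in Definition \ref{def:quantizations}, one computes the kernel of the composition $a^\w(x,hD)b^\w(x,hD)$ as a double oscillatory integral and, after the standard change of variables, recognizes it as the kernel of the Weyl quantization of
\begin{equation*}
    a\#b(x,\xi)=\frac{1}{(\pi h)^{2d}}\int_{\R^{2d}}\int_{\R^{2d}} e^{-\tfrac{2i}{h}\sigma(w_1,w_2)}\,a((x,\xi)+w_1)\,b((x,\xi)+w_2)\,dw_1\,dw_2,
\end{equation*}
where $\sigma$ is the standard symplectic form on $\R^{2d}$. Equivalently, this is the Moyal product
\begin{equation*}
    a\#b(x,\xi)=\left.\exp\!\left(\tfrac{ih}{2}\sigma(D_{y,\eta},D_{z,\zeta})\right)a(y,\eta)b(z,\zeta)\right|_{(y,\eta)=(z,\zeta)=(x,\xi)}.
\end{equation*}
That $a\#b\in S(m_1m_2)$ is obtained by differentiating under the integral and applying standard non-stationary phase integration-by-parts on the oscillatory integral, using only the order-function estimates on $a$ and $b$.

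Next, for the asymptotic expansion, I would Taylor expand the symbol $\exp(\tfrac{ih}{2}\sigma(D_{y,\eta},D_{z,\zeta}))$ to order $N-1$: this defines the $c_j$ as the explicit bidifferential operators
\begin{equation*}
    c_j(x,\xi)=\frac{1}{j!}\left(\tfrac{i}{2}\sigma(D_{y,\eta},D_{z,\zeta})\right)^{j}\!a(y,\eta)b(z,\zeta)\Big|_{(y,\eta)=(z,\zeta)=(x,\xi)}.
\end{equation*}
By construction $c_j$ is a finite linear combination of products of derivatives of $a$ and derivatives of $b$ evaluated at the same point, hence $c_j\in S(m_1m_2)$ and $\supp c_j\subset \supp a\cap \supp b$; in particular $c_0=ab$, and uniqueness of the $c_j$ follows from reading off coefficients of $h^j$ in any such expansion. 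The residual term is given by the integral form of Taylor's remainder, which one again writes as an oscillatory integral with a parameter $t\in[0,1]$ (replacing $h$ by $th$ in the Moyal kernel).

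The main obstacle, as usual in this type of result, is to verify that the remainder $r_N$ indeed belongs to $S(m_1m_2)$ uniformly in $h$, with the $h^N$ prefactor extracted. I would handle this by the standard non-stationary phase argument: apply the operator $\langle w_1\rangle^{-2M}(1-\tfrac{h^2}{4}\Delta_{w_2})^{M}$ (and symmetrically) inside the oscillatory integral to absorb the polynomial growth in $w_1,w_2$ coming from the order function estimates on $a,b$, then use the submultiplicative property of order functions $m_i((x,\xi)+w)\leq C\langle w\rangle^{N_i}m_i(x,\xi)$ to bound the resulting integrand by an $L^1$ function times $m_1(x,\xi)m_2(x,\xi)$. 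Taking $M$ large enough ensures absolute convergence, and keeping track of the powers of $h$ produced by the integration by parts combined with the explicit $h^N$ from Taylor's remainder gives exactly $r_N=\OR_{S(m_1m_2)}(1)$, as claimed. Derivatives in $(x,\xi)$ are treated identically by differentiating under the integral before integrating by parts.
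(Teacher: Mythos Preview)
The paper does not give its own proof of this proposition: it is stated as a citation of \cite[Thm.~4.12 and 4.18]{zworski2012semiclassical} and used as a black box. Your sketch reproduces precisely the standard argument given in that reference (exact Moyal formula for $a\#b$, Taylor expansion of $\exp(\tfrac{ih}{2}\sigma(D,D))$ to extract the bidifferential $c_j$, and the non-stationary phase integration-by-parts to control the remainder in $S(m_1m_2)$), so there is nothing to compare.
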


	This result has two important corollaries.
	
	\begin{cor}[Disjoint supports {\cite[Thm. 4.12]{zworski2012semiclassical}}]\label{cor:SA-comp-op-disj-spt}
		Let $a\in S(m_1)$ and $b\in S(m_2)$ be such that $\supp a\cap\supp b=\emptyset$. Then, 
		\begin{equation*}
			a_\# b=\OR_{S(m_1m_2)}(h^\infty).
		\end{equation*}
	\end{cor}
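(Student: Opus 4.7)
The statement is a direct consequence of part 2) of Proposition \ref{prop:SA-comp-op}, which has already established an asymptotic expansion
\begin{equation*}
a\# b = \sum_{j=0}^{N-1} h^j c_j + h^N r_N,
\end{equation*}
with $\{c_j\}_{j\in\N} \subset S(m_1 m_2)$ supported in $\supp a \cap \supp b$ and with $r_N \in S(m_1 m_2)$ for every $N \in \N^*$. My plan is to exploit the support localization of the coefficients $c_j$ delivered by this expansion.

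Specifically, under the hypothesis $\supp a \cap \supp b = \emptyset$, every $c_j$ must vanish identically on $\R^d \times \R^d$, since its support is contained in the empty set. The expansion therefore reduces to $a \# b = h^N r_N$ for every $N \in \N^*$. Since $r_N \in S(m_1 m_2)$ means that for every multi-index $\alpha \in \N^{2d}$ there exists $C_{\alpha,N} > 0$, independent of $h$, such that $|\partial^\alpha_{x,\xi} r_N(x,\xi)| \leq C_{\alpha,N} m_1(x,\xi) m_2(x,\xi)$, we immediately obtain
\begin{equation*}
|\partial^\alpha_{x,\xi} (a\# b)(x,\xi)| \leq C_{\alpha,N} h^N m_1(x,\xi) m_2(x,\xi),
\end{equation*}
for every $(x,\xi) \in \R^d \times \R^d$. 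This is precisely the statement $a\# b = \OR_{S(m_1 m_2)}(h^N)$, and since $N$ was arbitrary, $a\# b = \OR_{S(m_1 m_2)}(h^\infty)$.

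There is essentially no obstacle in this argument because all the work is packaged into Proposition \ref{prop:SA-comp-op}. If one wanted a self-contained proof bypassing the expansion, one could alternatively start from the oscillatory integral representation of $a \# b$ and use the non-overlap of supports to introduce a smooth cutoff separating $\supp a$ from $\supp b$, then perform repeated integrations by parts in the phase variable, each of which produces a factor of $h$, to directly extract arbitrarily many powers of $h$; but invoking the stationary phase expansion in Proposition \ref{prop:SA-comp-op} is the cleaner route, and in the form stated there it already yields the result in one line.
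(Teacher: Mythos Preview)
Your proof is correct and matches the paper's approach exactly: the paper states this result as an immediate corollary of Proposition~\ref{prop:SA-comp-op} without giving an explicit proof, and your argument---reading off that all the $c_j$ vanish since $\supp c_j\subset\supp a\cap\supp b=\emptyset$, leaving $a\#b=h^N r_N$ for every $N$---is precisely the intended one-line deduction.
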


	\begin{cor}[Commutator]\label{cor:SA-comm}
		Let $a\in S(m_1)$ and $ b \in S(m_2)$. Then, there exists $r\in S(m_1m_2)$ such that for any $h>0$
		\begin{equation*}
			\comm{a^\w(x,hD)}{b^\w(x,hD)} = hr^\w(x,hD).
		\end{equation*}
	\end{cor}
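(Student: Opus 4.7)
The plan is to deduce this directly from the asymptotic expansion provided by Proposition \ref{prop:SA-comp-op}. First, I would apply part 2 of that proposition with $N=2$ to both products $a\# b$ and $b\# a$, which live in $S(m_1m_2)$. This yields families $\{c_j\}$, $\{\tilde c_j\}\subset S(m_1m_2)$ and remainders $r_2,\tilde r_2\in S(m_1m_2)$ (with symbol seminorms uniform in $h$) such that
\begin{equation*}
a\# b = ab + h\,c_1 + h^2 r_2, \qquad b\# a = ba + h\,\tilde c_1 + h^2 \tilde r_2.
\end{equation*}

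The crucial point is that the principal terms agree: the pointwise product of symbols is commutative, so $ab=ba$, and the leading order cancels upon subtraction. Hence
\begin{equation*}
a\# b - b\# a = h(c_1-\tilde c_1) + h^2(r_2-\tilde r_2).
\end{equation*}
Setting $r := (c_1-\tilde c_1) + h(r_2-\tilde r_2)$, this defines an element of $S(m_1m_2)$ whose symbol seminorms are controlled uniformly in $h$ (for $h$ in any bounded interval), since each of $c_1,\tilde c_1,r_2,\tilde r_2$ is. By linearity of the Weyl quantization and the composition rule from part 1 of Proposition \ref{prop:SA-comp-op}, we obtain
\begin{equation*}
\comm{a^\w(x,hD)}{b^\w(x,hD)} = (a\# b)^\w(x,hD) - (b\# a)^\w(x,hD) = h\, r^\w(x,hD),
\end{equation*}
which is the desired identity.

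I do not anticipate a genuine obstacle here: the only subtle point is the uniformity of the $S(m_1m_2)$ seminorms of $r$ in the parameter $h$, which is already built into the statement of Proposition \ref{prop:SA-comp-op} (the remainders $r_N$ belong to $S(m_1m_2)$ with seminorms independent of $h$). One could also observe, for flavor, that by the explicit formula for the Weyl composition one has $c_1-\tilde c_1 = \tfrac{1}{i}\{a,b\}$, so that the principal symbol of $h^{-1}\comm{a^\w}{b^\w}$ is the Poisson bracket $\tfrac{1}{i}\{a,b\}$; but this refinement is not needed for the corollary as stated.
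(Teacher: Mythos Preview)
Your proof is correct and is precisely the intended argument: the paper states this result as an immediate corollary of Proposition~\ref{prop:SA-comp-op} without writing out a proof, and what you have done is exactly the natural derivation from that composition expansion (cancel the commutative leading term $ab=ba$ and collect the remaining $h$-factor). There is nothing to add.
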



	The following proposition quantifies the difference between two quantizations of the same symbol.


	\begin{prop}[Change of quantization {\cite[Thm. 4.13]{zworski2012semiclassical}}]\label{cor:SA-quantif-change}
		Let $a\in S(m)$ and $\mathfrak{t},\mathfrak{s}\in[0,1]$. Then, there exists $\tilde{a}_{t,s}\in S(m)$ such that for any $h>0$
		\begin{equation*}
			\op_h^\mathfrak{t}(a)-\op_h^\mathfrak{s}(a) = h \op_h^\mathfrak{t}\left(\tilde{a}_{\mathfrak{t},\mathfrak{s}} \right) .
		\end{equation*}
	\end{prop}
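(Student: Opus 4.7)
The plan is to construct a symbol $b_h \in S(m)$, depending on the parameter $h$, such that $\op_h^\mathfrak{s}(a) = \op_h^\mathfrak{t}(b_h)$ and $b_h = a + h r_h$ with $r_h \in S(m)$ bounded uniformly in $h \in (0,1]$; the conclusion then follows by setting $\tilde{a}_{\mathfrak{t},\mathfrak{s}} := -r_h$, since
\[
\op_h^\mathfrak{t}(a) - \op_h^\mathfrak{s}(a) = \op_h^\mathfrak{t}(a - b_h) = h\,\op_h^\mathfrak{t}(\tilde{a}_{\mathfrak{t},\mathfrak{s}}).
\]
The task thus reduces to (a) deriving an explicit integral formula for $b_h$ and (b) extracting its first-order Taylor expansion in $h$.

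For (a), I would equate the Schwartz kernels of $\op_h^\mathfrak{s}(a)$ and $\op_h^\mathfrak{t}(b_h)$ and pass to the variables $z = \mathfrak{t}x + (1-\mathfrak{t})y$ and $u = x-y$, so that $\mathfrak{s}x + (1-\mathfrak{s})y = z + (\mathfrak{s}-\mathfrak{t})u$. A partial Fourier transform in $u$ then yields the oscillatory-integral formula
\[
b_h(x,\xi) = \frac{1}{(2\pi)^d}\int_{\R^d}\!\!\int_{\R^d} e^{iy\cdot\eta}\, a\bigl(x + (\mathfrak{s}-\mathfrak{t})h y,\ \xi+\eta\bigr)\,dy\,d\eta,
\]
obtained after the rescaling $u = h y$. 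That $b_h \in S(m)$ with symbol seminorms uniform in $h \in (0,1]$ is a standard non-stationary phase estimate: integrations by parts in $y$ convert powers of $\eta$ into $\xi$-derivatives of $a$, integrations by parts in $\eta$ convert powers of $y$ into $x$-derivatives (absorbing the $h$-shift), and the order-function inequality $m(x+\tau,\xi+\eta) \lesssim \langle \tau,\eta\rangle^N m(x,\xi)$ controls the resulting polynomial weights.

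For (b), I would Taylor-expand in the first slot,
\[
a\bigl(x + (\mathfrak{s}-\mathfrak{t})hy,\ \xi+\eta\bigr) = a(x,\xi+\eta) + h(\mathfrak{s}-\mathfrak{t})\int_0^1 y\cdot\nabla_x a\bigl(x + \sigma(\mathfrak{s}-\mathfrak{t})hy,\ \xi+\eta\bigr)\,d\sigma,
\]
and substitute back. The zeroth-order term collapses to $a(x,\xi)$ via $\int_{\R^d} e^{iy\cdot\eta}\,dy = (2\pi)^d\delta(\eta)$. In the first-order remainder, an integration by parts trades the factor $y$ for $-i\partial_\eta$, producing an oscillatory integral of the same form as in (a) applied to the smooth family $\nabla_x\nabla_\xi a \in S(m)$. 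The non-stationary phase bounds from (a) therefore place this remainder in $S(m)$ uniformly in $h$, and $r_h := h^{-1}(b_h - a)$ supplies the required symbol.

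The main obstacle is step (a): ensuring that all symbol seminorms of $b_h$ are bounded uniformly as $h\to 0$ requires careful bookkeeping of the weight $m$ through the integrations by parts. However, this is exactly the content of the general oscillatory-integral lemma already underlying the composition calculus of Proposition~\ref{prop:SA-comp-op}; in fact the whole statement can be obtained as a corollary of that lemma, applied to the free-particle-type phase $(y,\eta)\mapsto y\cdot\eta$, so no genuinely new estimate is needed once the kernel computation above is carried out.
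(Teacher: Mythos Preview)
The paper does not prove this proposition; it is quoted from \cite[Thm.~4.13]{zworski2012semiclassical} as a known result. Your argument is correct and is essentially the standard proof given there: derive the change-of-quantization formula $b_h = e^{i(\mathfrak{s}-\mathfrak{t})h\langle D_x,D_\xi\rangle}a$ via the kernel computation you sketch, then Taylor-expand to isolate the order-$h$ remainder, with the oscillatory-integral estimates (non-stationary phase against the order function $m$) supplying the uniform $S(m)$ bounds. One small remark: the notation $\tilde{a}_{\mathfrak{t},\mathfrak{s}}$ in the statement suppresses the $h$-dependence, but as the paper's own remark after the definition of symbol classes makes clear, symbols are allowed to depend on $h$ with seminorms uniform in $h\in(0,1]$; your $r_h$ is exactly of this type, so there is no discrepancy.
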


	Let us now recall the definition of locally elliptic symbols.

	\begin{defi}[Elliptic symbol]\label{def:elliptic-symb}
		Let $m$ be an order function on $\R^d\times\R^d$. A symbol $a\in S(m)$ is \emph{elliptic} on $U\subset\R^d\times\R^d$ if there exists $C>0$ such that $\abs{a(x,\xi)}\geq m(x,\xi)/C$ for all $(x,\xi)\in U$.		
	\end{defi}
	
	The following lemma gives local left and right inverses for quantization of locally elliptic symbols. These microlocal equalities will be very useful in the proof of Theorem \ref{thm:ELp-TP-matdens}.

	\begin{lemma}[{\cite[Lem. 2.1]{koch2007semiclassical}}]\label{lemma:SA-inversion}
		Let $\chi\in S(1)$, $m$ be an order function and $a\in S(m)$ elliptic on $\supp\chi$.
		Then, for any $\mathfrak{t}\in[0,1]$, there exist $b_{\mathfrak{t}}\in S(1/m)$, $r_{1,\mathfrak{t}},r_{2,\mathfrak{t}}\in S(1)$ such that
		\begin{equation*}
			\begin{split}
			\op_h^{\mathfrak{t}}(b_{\mathfrak{t}}) \op_h^{\mathfrak{t}}(a) \op_h^{\mathfrak{t}}(\chi) &= \op_h^{\mathfrak{t}}(\chi) +\op_h^{\mathfrak{t}}(r_{1,\mathfrak{t}}),\\
			 \op_h^{\mathfrak{t}}(a)\op_h^{\mathfrak{t}}(b_{\mathfrak{t}}) \op_h^{\mathfrak{t}}(\chi) &= \op_h^{\mathfrak{t}}(\chi) +\op_h^{\mathfrak{t}}(r_{2,\mathfrak{t}}),
			\end{split}
		\end{equation*}
		where $r_{1,\mathfrak{t}},r_{2,\mathfrak{t}} =\OR_{S(1)}(h^\infty)$.
		If $\chi\in\test{\R^d\times\R^d}$ then $r_1,r_2 =\OR_{\schwartz}(h^\infty)$.
	\end{lemma}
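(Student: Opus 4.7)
The plan is a standard semiclassical parametrix construction in the $\mathfrak{t}$-quantization, relying only on the composition formula (Proposition \ref{prop:SA-comp-op}), the disjoint-support corollary (Corollary \ref{cor:SA-comp-op-disj-spt}), and Borel summation of an asymptotic series in $h$. First I would fix nested cutoffs $\chi\prec\chi_1\prec\chi_2$ in $S(1)$ (in $\test{\R^d\times\R^d}$ if $\chi\in\test{\R^d\times\R^d}$), meaning $\chi_1=1$ on a neighborhood of $\supp\chi$ and $\chi_2=1$ on a neighborhood of $\supp\chi_1$, all supported in the open set where $a$ is elliptic. The principal parametrix symbol $b_0:=\chi_2/a$, smoothly extended by $0$ outside the elliptic region, then lies in $S(1/m)$ by the ellipticity bound $\abs{a}\geq m/C$ on $\supp\chi_2$.

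By Proposition \ref{prop:SA-comp-op}, both $b_0\#_\mathfrak{t}a$ and $a\#_\mathfrak{t}b_0$ equal $\chi_2+hc^{(1)}$ for some $c^{(1)}\in S(1)$ (different in the two cases) modulo $\OR_{S(1)}(h^\infty)$. Right-composing with $\chi$ and applying Corollary \ref{cor:SA-comp-op-disj-spt}, since $1-\chi_2$ and $\chi$ have disjoint supports, absorbs $\chi_2\#_\mathfrak{t}\chi$ into $\chi$ up to $\OR_{S(1)}(h^\infty)$. I would then inductively construct $b_k\in S(1/m)$ that kill the $h^k$ remainder in both equations simultaneously: the $k$-th recursion amounts to solving an equation of the form ``$a\cdot b_k=f_k$ on $\supp\chi$'' for some $f_k\in S(1)$ determined by $b_0,\ldots,b_{k-1}$, which is solved by $b_k:=\chi_2 f_k/a\in S(1/m)$. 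A single sequence $(b_k)$ serves both equations because on $\supp\chi$, where $ab_0=1$ identically, the identity $\partial_{x,\xi}b_0=-\partial_{x,\xi}a/a^2$ forces the bilinear correction terms in the composition expansions of $b_0\#_\mathfrak{t}a$ and $a\#_\mathfrak{t}b_0$ to coincide pointwise on $\supp\chi$ at every order. Borel summing $b_\mathfrak{t}\sim\sum_k h^k b_k$ in $S(1/m)$ then yields a symbol satisfying both identities of the lemma with remainders in $\OR_{S(1)}(h^\infty)$. If $\chi\in\test{\R^d\times\R^d}$, choosing all $\chi_i$ compactly supported makes Corollary \ref{cor:SA-comp-op-disj-spt} yield Schwartz-class remainders.

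The main obstacle will be confirming that the \emph{same} $b_\mathfrak{t}$ can serve as both left and right parametrix, since the naive one-sided recursions produce a priori distinct sequences of correction terms. A cleaner alternative bypassing this coincidence check is to build one-sided parametrices $b_\mathfrak{t}^L$ and $b_\mathfrak{t}^R$ separately, then verify the ``same symbol'' property via the associative identity
\begin{equation*}
    b_\mathfrak{t}^L\#_\mathfrak{t}\chi_1 \equiv b_\mathfrak{t}^L\#_\mathfrak{t}a\#_\mathfrak{t}b_\mathfrak{t}^R\#_\mathfrak{t}\chi_1 \equiv \chi_1\#_\mathfrak{t}b_\mathfrak{t}^R\#_\mathfrak{t}\chi_1 \equiv b_\mathfrak{t}^R\#_\mathfrak{t}\chi_1 \pmod{\OR_{S(1/m)}(h^\infty)},
\end{equation*}
(using the left parametrix identity with $\chi$ replaced by $\chi_1$, and Corollary \ref{cor:SA-comp-op-disj-spt}), which transfers the right-inverse property of $b_\mathfrak{t}^R$ to $b_\mathfrak{t}:=b_\mathfrak{t}^L$ after right-composing with $\chi$.
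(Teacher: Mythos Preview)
The paper does not give its own proof of this lemma; it is quoted directly from \cite[Lem.~2.1]{koch2007semiclassical} as a standard tool. Your proposal is the standard semiclassical parametrix construction and is correct.

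One comment: your first route---arguing that a \emph{single} recursive sequence $(b_k)$ simultaneously kills the left and right remainders because ``the bilinear correction terms coincide on $\supp\chi$''---is easy to verify at order $h$ (as you note, $\partial b_0=-\partial a/a^2$ makes the first-order terms of $b_0\#_\mathfrak{t}a$ and $a\#_\mathfrak{t}b_0$ agree where $ab_0=1$), but at higher orders the recursion mixes previous $b_j$'s with higher composition terms and the pointwise coincidence is no longer transparent. The clean justification is precisely your alternative associativity argument $b^L\#\chi_1\equiv b^L\#a\#b^R\#\chi_1\equiv b^R\#\chi_1$, which shows that any left parametrix agrees with any right parametrix modulo $\OR_{S(1/m)}(h^\infty)$ on $\supp\chi$; this is the argument to keep, and in fact it is what makes the ``single sequence'' claim true at all orders (associativity of the formal $\#$-product forces the two-sided inverse to be unique). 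For the Schwartz refinement when $\chi\in\test{\R^d\times\R^d}$, note that beyond choosing the $\chi_i$ compactly supported you also use that composing any symbol in $S(1/m)$ with a Schwartz symbol yields a Schwartz symbol with uniformly controlled seminorms, so the $h^N$-tails coming from Borel summation remain $\OR_\schwartz(h^N)$ after right-composition with $a\#_\mathfrak{t}\chi\in\schwartz$.
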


	For any real elliptic $p\in S(m)$ (when $m\geq 1$), the operator $P=p^\w(x,hD)$ is self-adjoint on a suitable domain (\cite[Sec. 10.1.2]{zworski2012semiclassical}) so that $f(P)$ is well-defined by functional calculus, for any $f\in\test{\R}$.
	The next theorem states that such a $f(P)$ is actually a pseudodifferential operator and provides us information on its associated symbol.
	This result is crucial for justifying the application of microlocalized estimates (in Sections \ref{sec:elliptic-est}, \ref{sec:gene}, \ref{sec:sogge-est} and \ref{sec:TP-est}) to spectral clusters in Section \ref{sec:app-spectral-clusters}.	
		
	\begin{thm}[{\cite[Thm. 8.7]{dimassi1999spectral}}]\label{thm:funct-calculus}
		Let $m$ be an order function on $\R^d\times\R^d$ such that $m\geq 1$, $p\in S(m)$ be a symbol such that $p+i$ is elliptic on $\R^d\times\R^d$ and $P:=p^\w(x,hD)$. Let $f\in\test{\R}$.
		Then, there exists $a\in \cap_{k\in\N}S(m^{-k})$ such that $f(P)=a^\w(x,hD)$. 
		Moreover, there exist functions  $ \{a_j\}_{j\in\N} \subset \cap_{k\in\N}S(m^{-k})$ supported in $\supp(f\circ p)$ such that for all $N\geq 1$ there exists $r_N\in\cap_{k\in\N}S(m^{-k})$ such that
		\[ a =\sum_{j=0}^{N-1} h^j a_j + h^N r_N  .\]
		In particular, the principal symbol $a_0$ is equal to $f\circ p$.
	\end{thm}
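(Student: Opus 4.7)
The plan is to prove this via the Helffer-Sjöstrand formula. First I would choose an almost analytic extension $\tilde{f}\in\test{\C}$ of $f$, that is, a compactly supported smooth function on $\C$ with $\tilde f_{|\R}=f$ and such that $\bar\partial\tilde f(z)=\OR(\abs{\im z}^N)$ for every $N\in\N$, with $\supp\tilde f$ contained in a small complex neighborhood of $\supp f$. Then the operator $f(P)$ can be represented as
\begin{equation*}
f(P)=-\frac1\pi\int_\C\bar\partial\tilde f(z)(z-P)^{-1}\,dL(z),
\end{equation*}
where $dL$ denotes Lebesgue measure on $\C$. The strategy is to show that for each $z\notin\R$ the resolvent $(z-P)^{-1}$ is a pseudodifferential operator with Weyl symbol $b(z,\cdot)\in S(m^{-1})$ whose seminorms are controlled by powers of $\abs{\im z}^{-1}$, and then to integrate term by term against $\bar\partial\tilde f$, which contributes enough vanishing on $\R$ to make the integral converge in $\cap_k S(m^{-k})$.

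The core technical step is the parametrix construction for $z-p$ with uniform-in-$z$ symbol tracking. Since $p+i$ is elliptic in $S(m)$ and $\tilde f$ is compactly supported, for $z\in\supp\tilde f$ the symbol $z-p$ belongs to $S(m)$ and satisfies $\abs{z-p(x,\xi)}\gtrsim\abs{\im z}+m(x,\xi)$ outside a compact set in $(x,\xi)$, so $1/(z-p)\in S(m^{-1})$ with seminorms that blow up polynomially in $1/\abs{\im z}$. Proposition \ref{prop:SA-comp-op} then lets me iteratively construct $b(z)\sim\sum_{j\geq 0}h^jb_j(z)$ with $b_0(z)=1/(z-p)$ and successive $b_j(z)$ that are polynomials in $1/(z-p)$ times derivatives of $p$, arranged so that $b(z)\#(z-p)=1+\OR_{S(1)}(h^\infty)$. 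Inverting $1+\OR(h^\infty)$ by Neumann series yields a true Weyl symbol $\tilde b(z)\in S(m^{-1})$ for $(z-P)^{-1}$, with the full asymptotic expansion and with all seminorms bounded by $C_{N,\alpha}\abs{\im z}^{-N_\alpha}$ for some integer $N_\alpha$ depending only on the order of differentiation.

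Plugging this into the Helffer-Sjöstrand formula, the $\OR(\abs{\im z}^N)$ decay of $\bar\partial\tilde f$ absorbs every negative power of $\abs{\im z}$ that appears in the seminorms of $\tilde b(z)$. This produces $a(x,\xi)=-\frac1\pi\int_\C\bar\partial\tilde f(z)\tilde b(z,x,\xi)\,dL(z)$ belonging to $\cap_k S(m^{-k})$ (since $m^{-1}\leq 1$ and seminorm control allows differentiating under the integral), and gives the asymptotic expansion $a\sim\sum h^ja_j$ by integrating each $b_j(z)$ separately. The identity $a_0=f\circ p$ then follows from the Cauchy-Pompeiu formula $-\frac1\pi\int_\C\bar\partial\tilde f(z)(z-\lambda)^{-1}\,dL(z)=f(\lambda)$ applied pointwise with $\lambda=p(x,\xi)$. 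Finally, the support statement comes from the observation that if $(x,\xi)$ lies outside $\supp(f\circ p)$, then $z-p(x,\xi)$ stays away from $0$ uniformly as $z$ ranges in $\supp\tilde f$, so all $b_j(z,x,\xi)$ are holomorphic in $z$ on this support and integrate to zero against $\bar\partial\tilde f$ by Stokes' theorem, leaving only a contribution of order $h^\infty$ that can be absorbed into the remainders.

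The main obstacle is the simultaneous control of the symbol seminorms of the parametrix as $\abs{\im z}\to 0$ and the justification that the Neumann series correcting the $\OR(h^\infty)$ error in $b(z)\#(z-p)$ still yields a genuine symbol with seminorms only polynomially bad in $1/\abs{\im z}$; this requires careful bookkeeping of how many derivatives of $p$ (and hence how many factors of $m$) enter at each step, which is what makes $a$ land in every $S(m^{-k})$ rather than merely in $S(m^{-1})$.
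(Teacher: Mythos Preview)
The paper does not give its own proof of this statement; it is quoted verbatim from \cite[Thm.~8.7]{dimassi1999spectral}. Your outline via the Helffer--Sj\"ostrand formula is precisely the method used in that reference, and your description of the main steps (almost analytic extension, resolvent parametrix with polynomial blow-up in $1/\abs{\im z}$, integration against $\bar\partial\tilde f$, support via holomorphy and Stokes) is correct.

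One step deserves tightening. You write that ``inverting $1+\OR_{S(1)}(h^\infty)$ by Neumann series yields a true Weyl symbol $\tilde b(z)\in S(m^{-1})$ for $(z-P)^{-1}$''. The Neumann series gives an operator inverse, but it does not by itself show that this inverse is again a pseudodifferential operator with symbol in the right class and with seminorms only polynomially bad in $1/\abs{\im z}$. In the Dimassi--Sj\"ostrand argument this gap is closed using Beals' characterization of $\op_h(S(1))$ via iterated commutators (their Chapter~8), which transfers the operator bounds back to symbol bounds. Alternatively one can avoid the issue by keeping the $\OR(h^\infty)$ remainder as an operator error and estimating its contribution to the Helffer--Sj\"ostrand integral directly in trace norm, but then the statement ``$f(P)=a^\w(x,hD)$ exactly'' (rather than modulo a smoothing error) still needs Beals. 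Either way, this is the one place where your sketch understates the work required.
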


	\subsection{Semiclassical bounds}

	In this section, almost all the results are stated accordingly to symbols $a:\R^d\times\R^d\to\R$.
	But, for the reader's convenience we voluntarily write the dimension $n$ instead of $d$ in Lemma \ref{lemma:prop-F(t)}, Theorem \ref{thm:SStrichartz-bounds}, Theorem \ref{thm:complex-interpol-schatten} and Theorem \ref{thm:SStrichartz-matdens} to draw the attention that they can be different objects. Actually, we essentially will apply these results to $n=d$ (see Section \ref{sec:gene}) and $n=d-1$ (see Section \ref{sec:sogge-est}).

		We state now a natural property on quantizations of Schwartz functions, that is very useful to prove that the density $\rho_{\chi^\w\gamma\chi^\w}$ is well-defined (Lemma \ref{lemma:pre-mercer_loc}) and a corollary of Mercer theorem (Remark \ref{rmk:mercer-thm}).
		\begin{lemma}\label{lemma:cond-traceclass}
			If $a\in\schwartz(\R^d\times\R^d)$, then the integral kernel $\op_h^\mathfrak{t}(\chi)$ is also in $\schwartz(\R^d\times\R^d)$. As a consequence, this operator is trace-classe and Hilbert-Schmidt.
		\end{lemma}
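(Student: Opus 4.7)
The plan is to identify the integral kernel of $\op_h^\mathfrak{t}(a)$, up to a linear bijection of $\R^{2d}$ and a rescaling, with a partial Fourier transform of $a$ in its second slot; Schwartz regularity then follows immediately. The Hilbert-Schmidt property is then automatic from inclusion in $L^2(\R^{2d})$, and trace-class will be obtained by factoring through a power of the harmonic oscillator.

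\textbf{Kernel as a partial Fourier transform.} From Definition \ref{def:quantizations}, the operator $\op_h^\mathfrak{t}(a)$ has integral kernel
\[
K(x,y) = \frac{1}{(2\pi h)^d}\int_{\R^d} e^{i\xi\cdot(x-y)/h}\, a(\mathfrak{t}x+(1-\mathfrak{t})y,\xi)\,d\xi .
\]
Consider the linear bijection $\Phi:(x,y)\mapsto (X,Z):=(\mathfrak{t}x+(1-\mathfrak{t})y,\,(x-y)/h)$ of $\R^{2d}$. In the new coordinates, $K\circ\Phi^{-1}(X,Z)$ equals $h^{-d}$ times the inverse partial Fourier transform of $a$ in its second variable, evaluated at $(X,Z)$. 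Since $a\in\schwartz(\R^{2d})$ and since both the Fourier transform and composition with a linear diffeomorphism preserve $\schwartz$, I conclude $K\in\schwartz(\R^d\times\R^d)$.

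\textbf{Hilbert-Schmidt and trace-class.} A Schwartz kernel is in particular in $L^2(\R^{2d})$, so $\op_h^\mathfrak{t}(a)$ is Hilbert-Schmidt with $\normSch{\op_h^\mathfrak{t}(a)}{2}{}=\normLp{K}{2}{(\R^{2d})}$. For trace-class I would factor through the harmonic oscillator $H:=-\Delta+|x|^2+1$ on $L^2(\R^d)$: fixing $N\in\N$ with $N>d/2$, the Hermite spectrum of $H$ grows linearly (with polynomial multiplicities), so $H^{-N}$ is Hilbert-Schmidt. On the other hand, the operator $H^N\circ\op_h^\mathfrak{t}(a)$ has integral kernel $H_x^N K(x,y)$, which is again Schwartz, hence in $L^2(\R^{2d})$, so this composition is Hilbert-Schmidt as well. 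Since $H^{-N}(H^N\op_h^\mathfrak{t}(a))f=\op_h^\mathfrak{t}(a)f$ for every $f\in\schwartz(\R^d)$, density of $\schwartz(\R^d)$ in $L^2(\R^d)$ combined with boundedness of both sides yields $\op_h^\mathfrak{t}(a)=H^{-N}\circ(H^N\op_h^\mathfrak{t}(a))$ as bounded operators on $L^2$. As a product of two Hilbert-Schmidt operators, $\op_h^\mathfrak{t}(a)$ is thus trace-class.

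\textbf{Expected difficulty.} Every step is essentially standard: the Schwartz part is a direct computation, and only the trace-class conclusion requires a genuine argument. The harmonic oscillator is a convenient vehicle here because inverse powers of $H$ of sufficiently high order are known to be Schatten on $L^2(\R^d)$, and because applying polynomial differential operators with polynomial coefficients to a Schwartz kernel preserves the Schwartz class. No estimate is needed quantitatively in $h$, only qualitatively, which makes the whole proof rather soft.
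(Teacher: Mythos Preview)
The paper states this lemma without proof, treating it as a standard background fact. Your argument is correct and complete: the identification of the kernel with a rescaled partial Fourier transform composed with a linear change of coordinates is exactly the right computation, and the harmonic-oscillator factorization $\op_h^\mathfrak{t}(a)=H^{-N}(H^N\op_h^\mathfrak{t}(a))$ with $N>d/2$ is a clean way to upgrade Hilbert--Schmidt to trace-class. One could alternatively obtain trace-class more cheaply by writing $a=b\# c$ for two Schwartz symbols (e.g.\ via the Moyal product, or simply $a=\langle\xi\rangle^{-N}\cdot\langle\xi\rangle^N a$ and using that both factors have Schwartz kernels), but your route is equally standard and perhaps more transparent.
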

	
	When $a$ is in the Schwartz space $\schwartz(\R^d\times\R^d)$, the operator $a^\w(x,hD)$ not only preserves continuously $\schwartz(\R^d)$ (it is still valid for any symbol $a\in S(m)$), but it has the good property of extending to a regularizing operator.
	
	\begin{prop}[{\cite[Thm. 4.1]{zworski2012semiclassical}}]\label{prop:quantiz-schwartz}
		Let $a\in\schwartz(\R^d\times\R^d)$. Then, for any $h>0$,  the operator $a^\w(x,hD)$ maps continuously $\schwartzprime(\R^d)$ to $\schwartz(\R^d)$.
	\end{prop}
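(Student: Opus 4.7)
The plan is to reduce everything to a statement about the integral kernel of $a^\w(x,hD)$. Write the Weyl kernel explicitly as
\begin{equation*}
K_h(x,y)=\frac{1}{(2\pi h)^d}\int_{\R^d}e^{i\xi\cdot(x-y)/h}\,a\!\left(\tfrac{x+y}{2},\xi\right)d\xi,
\end{equation*}
which up to the linear change of variables $(x,y)\mapsto\bigl(\tfrac{x+y}{2},\tfrac{y-x}{h}\bigr)$ is (a rescaling of) the partial Fourier transform $\mathcal{F}_\xi a$. Since $a\in\schwartz(\R^{d}\times\R^d)$, the partial Fourier transform $\mathcal{F}_\xi a$ lies in $\schwartz(\R^d\times\R^d)$, and Schwartz class is preserved by linear bijective changes of variables. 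Hence, for each fixed $h>0$, we obtain $K_h\in\schwartz(\R^d\times\R^d)$.

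Next, I would interpret $a^\w(x,hD)u$, for $u\in\schwartzprime(\R^d)$, as the pairing
\begin{equation*}
\bigl(a^\w(x,hD)u\bigr)(x)=\bigl\langle u(\cdot),\,K_h(x,\cdot)\bigr\rangle_{\schwartzprime,\schwartz},
\end{equation*}
where the pairing is taken in the $y$-variable. For each $x$, $K_h(x,\cdot)\in\schwartz(\R^d)$ so the pairing is well-defined. To show the result is Schwartz in $x$, differentiate under the pairing: $\partial_x^\alpha\bigl(a^\w(x,hD)u\bigr)(x)=\langle u,\partial_x^\alpha K_h(x,\cdot)\rangle$. Since $K_h\in\schwartz(\R^{2d})$, for any multi-index $\alpha$ and any $N\in\N$ the quantity $\langle x\rangle^N \partial_x^\alpha K_h(x,y)$ is bounded, jointly with its $y$-Schwartz seminorms, uniformly in $x$. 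Invoking the continuity of $u\in\schwartzprime$ (there exist $M$ and $C_u>0$ such that $|\langle u,\varphi\rangle|\leq C_u\,p_M(\varphi)$ for the $M$-th Schwartz seminorm), one deduces that each $\langle x\rangle^N\partial_x^\alpha\bigl(a^\w(x,hD)u\bigr)(x)$ is bounded in $x$. This yields $a^\w(x,hD)u\in\schwartz(\R^d)$.

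For continuity $\schwartzprime(\R^d)\to\schwartz(\R^d)$, note that any Schwartz seminorm of $a^\w(x,hD)u$ is controlled by a single Schwartz seminorm of $K_h$ (which is a fixed function) times a single Schwartz seminorm evaluation on $u$, so the estimate is uniform over bounded sets of $\schwartzprime$.

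I do not expect any real obstacle: the only point requiring care is verifying that differentiation in $x$ commutes with the distributional pairing in $y$, which follows from the dominated convergence implicit in the continuity of the pairing against the smoothly $x$-parameterized family $x\mapsto K_h(x,\cdot)\in\schwartz(\R^d)$, itself a consequence of $K_h\in\schwartz(\R^{2d})$.
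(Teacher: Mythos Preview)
The paper does not supply its own proof of this proposition; it is simply quoted from \cite[Thm.~4.1]{zworski2012semiclassical}. Your argument is correct and is essentially the standard one (and the one in Zworski's book): the Weyl kernel of a Schwartz symbol is Schwartz in $(x,y)$, and any operator with Schwartz kernel maps $\schwartzprime$ continuously to $\schwartz$.
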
	
	
	Let us recall the Calderon-Vaillancourt theorem, which implies the $L^2$-boundness of the quantizations of symbols in $S(1)$.
	
	\begin{prop}[Calderon-Vaillancourt {\cite[Thm. 4.23]{zworski2012semiclassical}}]\label{thm:SA-Cald-Vaill}
		Let $a\in S(1)$. Then, for any $h>0$, the operator $a^\w(x,hD)$ extends to a bounded linear operator on $L^2(\R^d)$, with operator norm bounded uniformly in $h\in(0,1]$.
	\end{prop}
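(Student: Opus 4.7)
The plan is the classical two-step strategy: reduce to the non-semiclassical setting $h = 1$ by a unitary rescaling, then apply the Cotlar--Stein almost-orthogonality lemma.

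First, consider the unitary dilation $U_h : L^2(\R^d) \to L^2(\R^d)$, $(U_h v)(x) := h^{d/4} v(h^{1/2} x)$. A direct change of variables in the oscillatory integral of Definition \ref{def:quantizations} (with $\mathfrak{t} = 1/2$) shows that
\[
U_h^{-1} a^\w(x, hD) U_h = \tilde{a}^\w(y, D), \qquad \tilde{a}(y, \eta) := a(h^{1/2} y, h^{1/2} \eta).
\]
Since $\partial_y^\alpha \partial_\eta^\beta \tilde{a}(y,\eta) = h^{(\abs{\alpha} + \abs{\beta})/2}\,(\partial_x^\alpha \partial_\xi^\beta a)(h^{1/2}y, h^{1/2}\eta)$, the $S(1)$-seminorms of $\tilde a$ are bounded by those of $a$ uniformly in $h \in (0, 1]$. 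It therefore suffices to prove that any $b \in S(1)$ defines a bounded operator $b^\w(y, D)$ on $L^2(\R^d)$, with norm controlled by finitely many $S(1)$-seminorms of $b$.

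For this non-semiclassical step, pick $\chi_0 \in \test{\R^{2d}}$ supported in a fixed cube around the origin such that $\sum_{k \in \Z^{2d}} \chi_0(z - k)^2 = 1$ for all $z$. Decompose $b = \sum_k b_k$ with $b_k(z) := b(z)\chi_0(z - k)^2$, and put $B_k := b_k^\w(y, D)$. Each $b_k$ has support in a fixed-size cube about $k$, with all derivatives bounded uniformly in $k$ by seminorms of $b$; in particular each $B_k$ is individually bounded on $L^2(\R^d)$. The crux is the quantitative almost-orthogonality estimate
\[
\norm{B_j^* B_k}_{L^2 \to L^2} + \norm{B_j B_k^*}_{L^2 \to L^2} \leq C_N (1 + \abs{j - k})^{-N} \qquad \text{for every } N \in \N.
\]
Writing the Weyl symbol of $B_j^* B_k$ through the composition formula of Proposition \ref{prop:SA-comp-op} and integrating by parts repeatedly in the phase variables of the oscillatory integral, one uses $\dist(\supp b_j, \supp b_k) \gtrsim \abs{j - k}$ (for $\abs{j-k} \geq 2$) to gain arbitrary polynomial decay in $\abs{j-k}$. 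The Cotlar--Stein almost-orthogonality lemma then yields
\[
\norm{b^\w(y, D)}_{L^2 \to L^2} \leq \sum_{\ell \in \Z^{2d}} C_N^{1/2}\,(1 + \abs{\ell})^{-N/2},
\]
which converges as soon as $N > 4d$, completing the proof.

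The main obstacle is precisely this off-diagonal estimate. Corollary \ref{cor:SA-comp-op-disj-spt} formally captures the disjointness of $\supp b_j$ and $\supp b_k$, but it only produces $\OR(h^\infty)$ decay in the semiclassical parameter (here set to $1$) and no quantitative decay in $\abs{j - k}$. One must therefore redo the non-stationary phase analysis by hand, tracking the separation of supports through the successive integrations by parts so that each yields a factor $\abs{j-k}^{-1}$; everything else—the unitary reduction, the dyadic decomposition, and the final Cotlar--Stein summation—is organizational.
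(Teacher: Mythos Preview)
The paper does not prove this proposition; it is stated as a citation from \cite[Thm.~4.23]{zworski2012semiclassical} in the review section, with no argument given. Your sketch is the standard Cotlar--Stein proof, which is precisely the method used in the cited reference, so in that sense the approaches coincide.

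The outline is correct, including your honest identification of the main technical point: the off-diagonal decay $\norm{B_j^* B_k} \lesssim_N \langle j-k\rangle^{-N}$ does not follow from Corollary~\ref{cor:SA-comp-op-disj-spt} as stated, and must be obtained by direct non-stationary phase in the Moyal composition integral, exploiting that the supports of $b_j$ and $b_k$ are separated by $\gtrsim \abs{j-k}$. One small caveat: rather than literally analyzing the symbol of $B_j^* B_k$, it is slightly cleaner (and is what Zworski does) to bound the Schwartz kernel of $B_j^* B_k$ directly and estimate its $L^1$-norm in each variable, which gives the $L^2\to L^2$ bound via Schur's test; either route works.
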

	
	Let us state now basic semiclassical $L^q$ estimates, from which one can deduce a semiclassical version of Sobolev embedding for microlocalized functions.
	
	\begin{lemma}[Basic $L^q$ estimates, {\cite[Lemma 2.2]{koch2007semiclassical}}]\label{lemma:SA-basic-ELp}
		Let $a\in\schwartz(\R^d\times\R^d)$. Then, there exists $C>0$ such that for any $h>0$ and any $ 1\leq p\leq q\leq \infty$
		\begin{equation*}
			\normLp{a^\w(x,hD) u}{q}{(\R^d)}\leq Ch^{-d\left(\frac 1p-\frac 1q\right)} \normLp{u}{p}{(\R^d)} .
		\end{equation*}
	\end{lemma}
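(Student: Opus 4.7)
The plan is to write the operator $a^{\w}(x,hD)$ as an integral operator, identify the kernel as a rescaled Schwartz function, and then apply Young's convolution inequality. First, I would recall from Definition \ref{def:quantizations} that the integral kernel of $a^{\w}(x,hD)$ is
\begin{equation*}
K_h(x,y)=\frac{1}{(2\pi h)^d}\int_{\R^d}e^{i\xi\cdot(x-y)/h}\,a\!\left(\tfrac{x+y}{2},\xi\right)d\xi
=\frac{1}{h^d}F\!\left(\tfrac{x+y}{2},\tfrac{x-y}{h}\right),
\end{equation*}
where $F(z,w):=(2\pi)^{-d}\int e^{i\xi\cdot w}a(z,\xi)\,d\xi$ is the partial inverse Fourier transform of $a$ in the second variable. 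Since $a\in\schwartz(\R^d\times\R^d)$, one has $F\in\schwartz(\R^d\times\R^d)$; in particular, for every $N\in\N$ there exists $C_N>0$ such that
\begin{equation*}
\sup_{z\in\R^d}\abs{F(z,w)}\leq C_N\crochetjap{w}^{-N},\qquad\forall w\in\R^d.
\end{equation*}

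Next I would deduce the pointwise bound $\abs{K_h(x,y)}\leq C_N h^{-d}\crochetjap{(x-y)/h}^{-N}$, which allows comparing $a^{\w}(x,hD)u$ with a convolution. Setting $G_h(z):=h^{-d}\crochetjap{z/h}^{-N}$, one obtains
\begin{equation*}
\abs{a^{\w}(x,hD)u(x)}\leq C_N(G_h*\abs{u})(x),
\end{equation*}
so by Young's convolution inequality with $1+\tfrac{1}{q}=\tfrac{1}{r}+\tfrac{1}{p}$, i.e.\ $\tfrac{1}{r}=1-(\tfrac{1}{p}-\tfrac{1}{q})\in[0,1]$,
\begin{equation*}
\normLp{a^{\w}(x,hD)u}{q}{(\R^d)}\leq C_N\normLp{G_h}{r}{(\R^d)}\normLp{u}{p}{(\R^d)}.
\end{equation*}

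The final step is to compute $\normLp{G_h}{r}{(\R^d)}$ explicitly by the change of variable $u=z/h$. Choosing $N$ large enough that $Nr>d$, one finds
\begin{equation*}
\normLp{G_h}{r}{(\R^d)}^{r}=h^{-dr}\int_{\R^d}\crochetjap{z/h}^{-Nr}dz=h^{-dr}\cdot h^{d}\int_{\R^d}\crochetjap{u}^{-Nr}du
=C'\,h^{d(1-r)},
\end{equation*}
hence $\normLp{G_h}{r}{(\R^d)}\leq C\,h^{-d(1-1/r)}=C\,h^{-d(1/p-1/q)}$, which plugged into Young's inequality yields the claim. The argument also covers the endpoint cases $p=\infty$ or $q=\infty$ (interpreting $r$ accordingly). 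No serious obstacle is expected: the only minor care is choosing $N$ large enough so that $G_h\in L^r$ uniformly in $h$, and tracking that $F$ being Schwartz in \emph{both} variables is exactly what produces a constant independent of $x$ and $h$ in the pointwise kernel bound.
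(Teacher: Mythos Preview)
Your proof is correct and is essentially the standard argument for this result. Note, however, that the paper does not give its own proof of this lemma: it is quoted directly from \cite[Lemma~2.2]{koch2007semiclassical} without argument, so there is nothing in the paper to compare against. Your kernel computation, the Schwartz bound $\sup_z|F(z,w)|\le C_N\crochetjap{w}^{-N}$, and the application of Young's inequality with $\tfrac{1}{r}=1-(\tfrac{1}{p}-\tfrac{1}{q})$ are all fine; this is exactly the route taken in the cited reference.
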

	
	The exponent $d(1/p-1/q)$ on the semiclassical parameter in the previous estimate can be indeed improved in the elliptic setting for $p=2$.
	
	\begin{lemma}[One-body elliptic estimates, {\cite[Thm. 3]{koch2007semiclassical}}]\label{lemma:SA-elliptic-1body}
			Let $d\geq 1$.
			Let $m$ be an order function on $\R^d\times\R^d$, $p\in S(m)$ and $P:=p^\w(x,hD)$ (or any other quantization). Let $(x_0,\xi_0)\in\R^d\times\R^d$ such that
			\begin{equation*}
			p(x_0,\xi_0)\neq 0.
		\end{equation*}
		Then, there exists a neighborhood $\VR$ of $(x_0,\xi_0)$ and $h_0>0$, such that for all $\chi\in\test{\R^d\times\R^d}$ supported in $\VR$, for any $0< h\leq h_0$, there exists $C>0$ such that for all $\mathfrak{t}\in[0,1]$ and for all $2\leq q\leq\infty$, 
		\begin{equation*}
			\normLp{\op_h^\mathfrak{t} (\chi) u}{q}{(\R^d)} \leq C h^{1-d\left(\frac 12 -\frac 1q\right)}\left(\normLp{u}{2}{(\R^d)}+\frac 1 h \normLp{Pu}{2}{(\R^d)}\right)
			.
		\end{equation*}
		Equivalently, for all $2\leq q\leq \infty$
		\begin{equation}\label{eq:SA-elliptic}
			\op_h^\mathfrak{t}(\chi) (1+P^*P/h^2)^{-1/2} = \OR\left(h^{1-d\left(\frac 12 -\frac 1q\right)}\right): L^2(\R^d)\to L^q(\R^d)
			.
		\end{equation}
	\end{lemma}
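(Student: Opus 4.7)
The plan is to combine the microlocal parametrix from Lemma \ref{lemma:SA-inversion} with the basic $L^q$ bound of Lemma \ref{lemma:SA-basic-ELp}, gaining one extra power of $h$ over Sobolev embedding from a commutator argument. Since $p$ is continuous and $p(x_0,\xi_0)\neq 0$, I first shrink a neighborhood so that $\abs{p}\gtrsim m$ on $\VR$, i.e.\ $p$ is elliptic on $\VR$ in the sense of Definition \ref{def:elliptic-symb}. Fixing $\chi\in\test{\R^d\times\R^d}$ with $\supp\chi\subset\VR$, Lemma \ref{lemma:SA-inversion} applied to $a=p$ produces $b_{\mathfrak{t}}\in S(1/m)$ and $r_{1,\mathfrak{t}}=\OR_\schwartz(h^\infty)$ with
\begin{equation*}
\op_h^{\mathfrak{t}}(\chi) \;=\; \op_h^{\mathfrak{t}}(b_{\mathfrak{t}})\,\op_h^{\mathfrak{t}}(p)\,\op_h^{\mathfrak{t}}(\chi) \;-\; \op_h^{\mathfrak{t}}(r_{1,\mathfrak{t}}).
\end{equation*}

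Next I commute $\op_h^{\mathfrak{t}}(p)$ with $\op_h^{\mathfrak{t}}(\chi)$ and replace $\op_h^{\mathfrak{t}}(p)$ by $P$. By Corollary \ref{cor:SA-comm} there is $r_2\in S(m)$ with $\comm{\op_h^{\mathfrak{t}}(p)}{\op_h^{\mathfrak{t}}(\chi)}=h\op_h^{\mathfrak{t}}(r_2)$, and by Proposition \ref{cor:SA-quantif-change}, $\op_h^{\mathfrak{t}}(p)=P+h\op_h^{\mathfrak{t}}(\tilde p)$ for some $\tilde p\in S(m)$. Combining these,
\begin{equation*}
\op_h^{\mathfrak{t}}(\chi)\,u \;=\; \op_h^{\mathfrak{t}}(b_{\mathfrak{t}})\,\op_h^{\mathfrak{t}}(\chi)\,Pu \;+\; h\,\op_h^{\mathfrak{t}}(b_{\mathfrak{t}})\,\op_h^{\mathfrak{t}}(\tilde r)\,u \;-\; \op_h^{\mathfrak{t}}(r_{1,\mathfrak{t}})\,u,
\end{equation*}
where $\tilde r\in S(m)$ collects the two $\OR(h)$ corrections. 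By the asymptotic expansion of Proposition \ref{prop:SA-comp-op}, truncated at some large order $N$, each product $\op_h^{\mathfrak{t}}(b_{\mathfrak{t}})\op_h^{\mathfrak{t}}(\chi)$ and $\op_h^{\mathfrak{t}}(b_{\mathfrak{t}})\op_h^{\mathfrak{t}}(\tilde r)$ equals $\op_h^{\mathfrak{t}}(c)+h^N\op_h^{\mathfrak{t}}(R_N)$, where $c$ is a finite sum of symbols supported in $\supp\chi$ (hence in $\schwartz(\R^d\times\R^d)$) and $R_N\in S(1/m)$.

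Finally, applying Lemma \ref{lemma:SA-basic-ELp} with $p=2$ to each Schwartz-symbol piece gives an $L^2\to L^q$ bound with constant $\OR(h^{-d(1/2-1/q)})$, while the $h^N\op_h^{\mathfrak{t}}(R_N)$ remainders are controlled by Sobolev embedding combined with Proposition \ref{thm:SA-Cald-Vaill} and made negligible by choosing $N$ large. One obtains
\begin{equation*}
\normLp{\op_h^{\mathfrak{t}}(\chi)u}{q}{(\R^d)} \;\leq\; Ch^{-d(\frac 12-\frac 1q)}\normLp{Pu}{2}{(\R^d)} \;+\; Ch^{1-d(\frac 12-\frac 1q)}\normLp{u}{2}{(\R^d)},
\end{equation*}
which is exactly the claimed inequality after factoring out $h^{1-d(1/2-1/q)}$. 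The operator-norm formulation \eqref{eq:SA-elliptic} is an immediate rephrasing, since $\normLp{u}{2}{}^2+h^{-2}\normLp{Pu}{2}{}^2=\crochet{(1+P^*P/h^2)u}{u}$.

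\textbf{Main obstacle.} The only delicate point is verifying that every term appearing in the expansions of $\op_h^{\mathfrak{t}}(b_{\mathfrak{t}})\op_h^{\mathfrak{t}}(\chi)$ and $\op_h^{\mathfrak{t}}(b_{\mathfrak{t}})\op_h^{\mathfrak{t}}(\tilde r)$ has a Schwartz symbol (so that Lemma \ref{lemma:SA-basic-ELp} is legitimately applicable and yields the optimal $L^2\to L^q$ loss). This is precisely guaranteed by the ``supported in $\supp a\cap\supp b$'' clause of Proposition \ref{prop:SA-comp-op}, which forces every $c_j$ in the expansion to inherit the compact support of $\chi$; the growth of $b_{\mathfrak{t}}$ at infinity plays no role. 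The crucial gain of one power of $h$ over the trivial Sobolev bound comes from the fact that, after the parametrix step, the leading $\op_h^{\mathfrak{t}}(b_{\mathfrak{t}})\op_h^{\mathfrak{t}}(p)\op_h^{\mathfrak{t}}(\chi)$ factor acts on $Pu$ rather than on $u$, so the prefactor $1/h$ on the right-hand side of the inequality is absorbed into an $h^1$ gain on the left.
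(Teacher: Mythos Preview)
The paper does not give its own proof of this lemma; it is quoted verbatim from \cite{koch2007semiclassical} as a background result in the review section, so there is no in-paper argument to compare against.

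Your argument is correct and is the standard elliptic-parametrix proof: invert $p$ microlocally via Lemma~\ref{lemma:SA-inversion}, commute and change quantization to recover $P$, then feed each resulting Schwartz-symbol piece into the basic $L^2\to L^q$ bound of Lemma~\ref{lemma:SA-basic-ELp}. One simplification you could make: the detour through the finite expansion of Proposition~\ref{prop:SA-comp-op} and the separate handling of the $h^N\op_h^{\mathfrak t}(R_N)$ remainders is unnecessary. Since $\chi\in\schwartz(\R^d\times\R^d)=\bigcap_k S(\langle(x,\xi)\rangle^{-k})$, any Moyal product containing a factor of $\chi$ (such as $b_{\mathfrak t}\#\chi$, $\chi\#\tilde p$, or the commutator symbol $r_2=p\#\chi-\chi\#p$) lies in $S(m'\langle(x,\xi)\rangle^{-k})$ for every $k$ and hence in $\schwartz$ outright; Lemma~\ref{lemma:SA-basic-ELp} then applies to the full operator with no truncation needed. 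This also sidesteps the somewhat vague ``Sobolev embedding combined with Calder\'on--Vaillancourt'' step for the remainder, which as written would require an extra argument to get from $L^2$ to $L^q$.
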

	
	\begin{rmk}\label{rmk:comp-s-ellip}
		The bound of Lemma \ref{lemma:SA-elliptic-1body} is one power of $h$ better than the one in Lemma \ref{lemma:SA-basic-ELp} (for $p=2$, c.f. Figure \ref{fig:comp-exp-s}), thanks to the term involving the operator $P$ on the right-side. This is particularly relevant for quasimodes $u$ of $P$, since they satisfy $Pu=\OR_{L^2}(h)\normLp{u}{2}{}$.
	\end{rmk}
	
	Here, we state the integrated form of Weyl's law, which gives an asymptotic of the number of eigenfunctions of a pseudodifferential operator in a fixed interval as $h\to 0$.
	
	\begin{prop}[Integrated Weyl law, {\cite[Chap. 9]{dimassi1999spectral}}]\label{prop:int-weyl-law}
		Let $m$ be an order function such that $m(x,\xi)\to+\infty$ when $\abs{(x,\xi)}\to+\infty$ and let $p\in S(m)$ be real valued such that $p+i$ is elliptic on $\R^d\times\R^d$. Let $a<b$ be two real numbers. For any $h>0$, define $P=p^\w(x,hD)$ and denote by $N_h([a,b])$ the number of eigenvalues of $P$ in the interval $[a,b]$.
		Then, we have
		\begin{equation*}
			N_h\left([a,b]\right) = \frac{1}{(2\pi h)^d} \left[
			\abs{p^{-1}([a,b])}+o_{h\to 0}(1)\right].
		\end{equation*}
	\end{prop}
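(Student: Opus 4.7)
The plan is to prove this by a classical sandwich argument based on the functional calculus (Theorem 2.11) and the trace formula for pseudodifferential operators. Write $N_h([a,b])=\tr(\indicatrice{P\in[a,b]})$ (which is finite because $m\to\infty$ implies that $(P+i)^{-1}$ is compact, so the spectrum of $P$ is discrete with only possible accumulation at $+\infty$, and because the sublevel sets $\{p\leq C\}$ are bounded hence of finite Lebesgue measure). For any $\varepsilon>0$, choose two cutoffs $f_-,f_+\in\test{\R}$ satisfying $f_-\leq\indicatrice{\cdot\in[a,b]}\leq f_+$ such that $\supp f_-\subset(a,b)$, $\supp f_+$ is a small neighborhood of $[a,b]$, and $\int(f_+-f_-)\circ p\, dx\,d\xi<\varepsilon$ (possible by regularity of the pushforward measure $p_*(dx\,d\xi)$ on the compact range of interest). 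The spectral theorem then gives
\begin{equation*}
	\tr(f_-(P))\leq N_h([a,b])\leq \tr(f_+(P)).
\end{equation*}

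Next, I would compute $\tr(f_\pm(P))$ using Theorem \ref{thm:funct-calculus}: there exist symbols $a_\pm\in\cap_k S(m^{-k})$ with $f_\pm(P)=a_\pm^\w(x,hD)$ and asymptotic expansions
\begin{equation*}
	a_\pm=f_\pm\circ p+\sum_{j=1}^{N-1}h^j a_{\pm,j}+h^N r_{N,\pm},
\end{equation*}
where all the $a_{\pm,j}$ and $r_{N,\pm}$ are supported in the compact set $p^{-1}(\supp f_\pm)$. I then invoke the standard trace formula for Weyl-quantized operators with integrable symbols,
\begin{equation*}
	\tr(a^\w(x,hD))=\frac{1}{(2\pi h)^d}\int_{\R^d\times\R^d} a(x,\xi)\, dx\,d\xi,
\end{equation*}
which is proved by representing $a^\w(x,hD)$ as an integral operator whose kernel is expressed via a Fourier transform in $\xi$, then integrating over the diagonal. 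Applied to $a_\pm$ and using the compactness of $\supp(a_{\pm,j})$ to bound each integral by a constant independent of $h$, this yields
\begin{equation*}
	\tr(f_\pm(P))=\frac{1}{(2\pi h)^d}\int f_\pm(p(x,\xi))\, dx\,d\xi+\OR(h^{1-d}).
\end{equation*}

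Finally, multiplying the sandwich by $(2\pi h)^d$ and passing to the limit $h\to 0$, one obtains
\begin{equation*}
	\int f_-\circ p\, dx\,d\xi\leq \liminf_{h\to 0}(2\pi h)^d N_h([a,b])\leq \limsup_{h\to 0}(2\pi h)^d N_h([a,b])\leq \int f_+\circ p\, dx\,d\xi.
\end{equation*}
Letting $f_-\nearrow\indicatrice{\cdot\in(a,b)}$ and $f_+\searrow\indicatrice{\cdot\in[a,b]}$ (using dominated convergence, which is allowed since $p$ is proper on the range of $f_\pm$), both sides converge to $\abs{p^{-1}([a,b])}$ when the boundary level sets $p^{-1}(\{a\})$ and $p^{-1}(\{b\})$ are of measure zero (a mild generic condition on $a,b$; in the general case the same conclusion follows after a standard regularization argument on the interval). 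The main obstacle in this plan is the careful bookkeeping of the trace of the remainder $h^N r_N^\w(x,hD)$ and the verification that the trace formula applies to symbols that are merely in $\cap_k S(m^{-k})$ rather than in $\schwartz(\R^{2d})$; this is handled using the compact support of all the $a_j$ coming from Theorem \ref{thm:funct-calculus} combined with the $L^2\to L^2$ boundedness of Calderon-Vaillancourt (Proposition \ref{thm:SA-Cald-Vaill}) to estimate the Hilbert-Schmidt and trace norms of the remainder.
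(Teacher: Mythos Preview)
The paper does not give its own proof of this proposition; it simply quotes the result from \cite[Chap.~9]{dimassi1999spectral}. Your sketch is precisely the standard sandwich argument used there (smooth cutoffs $f_\pm$ bracketing $\indicatrice{[a,b]}$, functional calculus via Theorem~\ref{thm:funct-calculus}, and the trace identity $\tr a^\w=(2\pi h)^{-d}\int a$), and it is essentially correct. One small point: your claim that one can choose $f_\pm$ with $\int (f_+-f_-)\circ p<\varepsilon$ already presupposes $\abs{p^{-1}(\{a\})}=\abs{p^{-1}(\{b\})}=0$; you flag this but defer it to a ``standard regularization argument,'' which is fine here since in all applications of the paper (e.g.\ Lemma~\ref{lemma:cor-weyl-law}) one only needs the lower bound $\liminf_{h\to0}(2\pi h)^d N_h([a,b])\geq\abs{p^{-1}((a,b))}$ and the upper bound $\limsup\leq\abs{p^{-1}([a,b])}$, both of which your argument delivers without any condition on the endpoints.
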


	We now review well-known results on quantum dynamics and their propagators.
	
	\begin{lemma}[Properties of the propagator $F(t,t_0)$, {\cite[Thm. 10.1]{zworski2012semiclassical}}]\label{lemma:prop-F(t)}
		Let $n\in\N^*$ and $h>0$.
		Let $t_0\in\R$ and $a\in\CR^\infty(\R_t,S_{\R^n\times\R^n}(1))$. 
		The equation
		\begin{equation*}
			\begin{cases}
			[hD_t-a^\w(t,x,hD_{x})] F(t,t_0)=0, \quad t\in\R,
			\\ F(t_0,t_0)= \id,
			\end{cases}
		\end{equation*}
		has a unique solution $\{F(t,t_0)\}_{t\in\R}$ in $\CR(\R,\BR(L^2(\R^n)))$, which is a family of unitary operators.
		 Furthermore,
		\begin{itemize}
			\item[(i)] For any compact $J\subset\R$ and any $k,s\in\N$, there exists $C>0$ (independent of $h\in(0,h_0)$) such that for any $t,t_0\in J$,
			\begin{equation}\label{eq:prop-F(t)}
				\norm{(hD_t)^k F(t,t_0)}_{H_h^s(\R^n)\to H_h^s(\R^n)}\leq C
				.
			\end{equation}
			\item[(ii)] For any $\psi_1\in\test{\R}$, the operator $\psi_1(t)F(t,t_0)$ maps continuously $H_h^s(\R^n)$ into $H_h^s(\R^{n+1})$ for all $s\in\N$ (with an operator norm independent of $h$).
			\item[(iii)] Let us define the operator $T_F$, which acts on functions on $\R^{n+1}$, by
			\begin{equation*}
				T_F: u(t,x) \mapsto \psi_1(t)\int_{t_0}^t( F(t,s) u(s))(x) \: ds .
			\end{equation*}
			Then, $T_F$ maps continuously $H_h^s(\R^{n+1})$ into $H_h^s(\R^{n+1})$ for all $s\in\N$ (with a bound independent of $h$).
		\end{itemize}
	\end{lemma}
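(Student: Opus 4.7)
}
My overall plan is to first construct the propagator by a Dyson--series expansion, then use pseudodifferential commutator estimates to propagate $h$-Sobolev regularity, and finally derive items (ii)--(iii) as direct Duhamel--type consequences of item (i).

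\emph{Step 1: Existence, uniqueness, unitarity.} Since $a(t,\cdot)\in S(1)$ with constants locally uniform in $t$, Calder\'on--Vaillancourt (Proposition \ref{thm:SA-Cald-Vaill}) gives $\|a^\w(t,x,hD_x)\|_{L^2\to L^2}\leq M$ uniformly for $t$ in any compact $J$ and $h\in(0,h_0]$. The plan is to solve $ih\partial_t F=a^\w F$ by the Dyson series
\[
F(t,t_0)=\sum_{k\geq 0}\frac{1}{(ih)^k}\int_{t_0\leq s_k\leq\cdots\leq s_1\leq t} a^\w(s_1)\cdots a^\w(s_k)\,ds_1\cdots ds_k,
\]
which converges in operator norm on $L^2(\R^n)$ on any compact time interval (by the $M^k|t-t_0|^k/(h^k k!)$ bound, although unitarity will immediately upgrade this). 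Uniqueness follows from Gronwall. For unitarity, I note that $a^\w$ is bounded self-adjoint (as $a$ is real; this is implicit in the statement since we claim $F$ is unitary), so $\tfrac{d}{dt}\|F(t,t_0)u\|^2=\tfrac{2}{h}\im\langle a^\w F u,Fu\rangle=0$.

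\emph{Step 2: $h$-Sobolev bounds, item (i).} Let $\Lambda^s:=(1-h^2\Delta_x)^{s/2}$, so that $\|u\|_{H_h^s}=\|\Lambda^s u\|_{L^2}$. Set $G(t,t_0):=\Lambda^s F(t,t_0)\Lambda^{-s}$. Then $G$ satisfies $ih\partial_t G=\tilde a^\w(t)\,G$ with $\tilde a^\w:=\Lambda^s a^\w\Lambda^{-s}$. By the pseudodifferential calculus (Proposition \ref{prop:SA-comp-op} with the order function $\langle\xi\rangle^s$), $\tilde a^\w=a^\w+h\,r^\w$ with $r\in S(1)$, hence $\tilde a^\w$ is bounded on $L^2$ uniformly in $h$ and $t\in J$. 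The Dyson argument of Step 1 applied to $G$ gives $\|G(t,t_0)\|_{L^2\to L^2}\leq C_{J,s}$, i.e.\ the $k=0$ case of \eqref{eq:prop-F(t)}. For $k\geq 1$, I differentiate the equation: $hD_t F=\tfrac{1}{i}a^\w F$, and by induction
\[
(hD_t)^k F(t,t_0)=\sum_{j}P_{k,j}(t)\,F(t,t_0),
\]
where each $P_{k,j}(t)$ is a polynomial in $a^\w(t),\,h\partial_t a^\w(t),\,\ldots$; since $\partial_t^\ell a\in S(1)$ uniformly, each $P_{k,j}(t)$ is bounded on $H_h^s$ by the $s=0$ case applied to $\Lambda^s$-conjugation, giving \eqref{eq:prop-F(t)}.

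\emph{Step 3: Items (ii) and (iii).} For (ii), given $u\in H_h^s(\R^n)$ and $v(t,x):=\psi_1(t)(F(t,t_0)u)(x)$, the $H_h^s(\R^{n+1})$-norm is controlled by $\sum_{j+|\alpha|\leq s}\|(hD_t)^j(hD_x)^\alpha v\|_{L^2(\R^{n+1})}$. Distributing $(hD_t)^j$ via Leibniz on the factor $\psi_1(t)$ and using Step 2 to bound $(hD_t)^{j'}F(t,t_0)$ on $H_h^s$, then integrating in $t$ over the compact $\supp\psi_1$, yields the claim. For (iii), the plan is to apply Minkowski and Cauchy--Schwarz: writing $\supp\psi_1\subset J$,
\[
\|T_F u(t,\cdot)\|_{L^2_x}\leq |\psi_1(t)|\int_J \|F(t,s)u(s,\cdot)\|_{L^2_x}\,ds\leq C\,|J|^{1/2}\|u\|_{L^2(\R^{n+1})},
\]
so $T_F$ is bounded on $L^2$. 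For higher $s$, one differentiates under the integral: each $hD_t$ either falls on $\psi_1$, on the limit of integration (producing the boundary term $\tfrac{h}{i}\psi_1(t)u(t,\cdot)$, controlled by $\|u\|_{L^2_{t,x}}$), or on $F(t,s)$, where the evolution equation converts it into an $a^\w(t)$ factor that is harmless by Step 2 and the boundedness of $a^\w$; each $hD_{x_i}$ commutes with $F(t,s)$ up to $h$ times an $S(1)$-remainder via Corollary \ref{cor:SA-comm}, which can be absorbed by induction on $s$.

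\emph{Main obstacle.} The subtle part is Step 2: I need $F(t,t_0)$ to propagate $h$-Sobolev regularity with a \emph{bound uniform in $h$}. The commutator $[\Lambda^s,a^\w]$ is only $\OR(h)$ in the $H_h^s$-to-$H_h^s$ operator norm, and a naive Gronwall would give a constant depending on $1/h$ unless one carefully conjugates and iterates, as done above. Once item (i) is established, items (ii) and (iii) are essentially bookkeeping.
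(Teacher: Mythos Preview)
Your overall strategy matches the paper's: the paper cites Zworski for the $k=0$ case of (i), obtains $k\geq 1$ by the same induction on the evolution equation that you sketch, and derives (ii)--(iii) by distributing derivatives and using (i). Your conjugation-by-$\Lambda^s$ argument for the $k=0$ case is a reasonable way to unpack what the paper leaves to the citation.

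Two points need correction.

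First, in Step~2 you write ``The Dyson argument of Step~1 applied to $G$ gives $\|G(t,t_0)\|_{L^2\to L^2}\leq C_{J,s}$.'' But in Step~1 you yourself noted that the Dyson series only gives the bound $M^k|t-t_0|^k/(h^k k!)$, which is \emph{not} uniform in $h$; for $F$ you repaired this via unitarity. For $G$ the symbol $\tilde a^\w=a^\w+h\,r^\w$ is no longer self-adjoint, so unitarity is unavailable. What you need is the energy estimate: since $a^\w$ is self-adjoint and $r^\w$ is bounded,
\[
\partial_t\|G(t,t_0)u\|_{L^2}^2=\tfrac{2}{h}\,\im\langle (a^\w+h\,r^\w)Gu,Gu\rangle=2\,\im\langle r^\w Gu,Gu\rangle\leq C\|Gu\|_{L^2}^2,
\]
and Gronwall then gives a bound uniform in $h$. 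You flag this as the ``main obstacle'' but do not actually write the fix; as stated, the sentence is a gap.

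Second, in Step~3(iii) you invoke Corollary~\ref{cor:SA-comm} to claim that $hD_{x_i}$ commutes with $F(t,s)$ up to an $\OR(h)$ remainder. That corollary concerns commutators of two pseudodifferential operators, but $F(t,s)$ is a propagator (a Fourier integral operator), not a pseudodifferential operator, so it does not apply. The correct route---and the one the paper takes---is simply to use item~(i): since $F(t,s):H_h^m\to H_h^m$ is bounded uniformly, one has $\|(hD_x)^\alpha F(t,s)u(s)\|_{L^2_x}\leq C\|u(s)\|_{H_h^{|\alpha|}_x}$ directly, with no commutation needed. The paper then handles the $t$-derivatives via the explicit identity
\[
\partial_t^k\!\int_{t_0}^t v(t,r)\,dr=\int_{t_0}^t\partial_t^k v(t,r)\,dr+\sum_{k_1+k_2=k-1}\partial_t^{k_1}\big(\partial_1^{k_2}v(t,t)\big)-k\,\partial_t^{k-1}v(t,t_0),
\]
which is the precise version of your ``each $hD_t$ falls on $\psi_1$, on the limit, or on $F$''.
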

	
	\begin{rmk}
		The proof of (i) is done in \cite[Thm. 10.1]{zworski2012semiclassical} in the case $k=0$. The bounds for higher values of $k$ can be obtained by induction using the equation satisfied by $F(t,t_0)$.
		The proofs of (ii) and (iii) follow from (i) by elementary arguments. But for more completeness, we detail these points in the proof below.
	\end{rmk}

	\begin{proof}[Proof of Lemma \ref{lemma:prop-F(t)}]
	\item[\quad(i)] Assume that we have the induction until the index $k\in\N$ for \eqref{eq:prop-F(t)}. We now check that the relation is still valid for $k+1$. By using the evolution equation satisfied by the propagator, one has
		\begin{align*}
			(hD_t)^{k+1} F(t,t_0)
			&=  (hD_t)^k (hD_t F(t,t_0))
			= (hD_t)^k \left( a^\w(t,x,hD_x) F(t,t_0)\right)
			\\&= \sum_{\ell+\ell'=k} h^\ell
					D_t^\ell a^\w(t,x,hD_x)
					(hD_t)^{\ell'}F(t,t_0)
			.
		\end{align*}
		Moreover, any operator $D_t^\ell a^\w(t,x,hD_x)$ can be written as the Weyl quantization of a symbol $b_{\ell,t}$, that satisfies the relation $b_{\ell,t}(x/2,\xi)=\partial_t^\ell(a_t(x/2,\xi))$ for any $(x,\xi)\in\R^n\times\R^n$. By the definition of the function $a$, for any $\ell\in\N$ and any $\alpha\in\N^n\times\N^n$, there exists $C_{\ell,\alpha}>0$ such that
		\begin{equation*}
			\forall (x,\xi)\in\R^n\times\R^n\quad
			\abs{\partial_t^k\partial_{x,\xi}^\alpha a_t(x,\xi)} \leq C_{\ell,\alpha}
			.
		\end{equation*}
		As a consequence $b_{\ell,t}\in S(1)$.
		Eventually, one gets the desired uniform bound \eqref{eq:prop-F(t)} for the index $k+1$ by the compacity of the interval $J$, the Calderon-Vaillancourt theorem and the induction relation.
		\item[\quad(ii)] Let $\psi_1\in\test{\R}$ and let $s\in\N$. Let $t_0\in J$ and $v\in L^2(\R^n)$. We write the $H_h^s$-norm of $\psi_1(t)F(t,t_0)v$ as a finite sum of 
		\begin{equation*}
			\normLp{(hD_t)^k(hD_x)^\ell \left(\psi_1(t)F(t,t_0)v(x)\right)}{2}{(\R_{t,x}^{n+1})}
		\end{equation*}
		on all multiindexes $k\in\N$ and $\ell\in\N^n$ such that $\abs{k}+\abs{\ell}=s$. By using the bound \eqref{eq:prop-F(t)}, that the intervals $\supp\psi_1$ and $J$ are compact, there exists $C>0$ such that one has, for any $(k,\ell)\in\N\times\N^n$ as above, any $t,t_0\in J$ and any $v\in L^2(\R^n)$
		\begin{align*}
			\norm{(hD_t)^k(hD_x)^\ell \left(\psi_1(t)F(t,t_0)v(x)\right)}_{L^2_{t,x}(\R^{n+1})}
			&
			\leq \norm{(hD_x)^\ell(hD_t)^k \left(\psi_1(t)F(t,t_0)v(x)\right)}_{L^2_t(\supp\psi_1,L^2_x(\R^n))}
			\\&
			\leq \norm{(hD_t)^k \left(\psi_1(t)F(t,t_0)v(x)\right)}_{L^2(\R,H_h^s(\R^n))}
			\\&\leq	C\norm{v}_{H_h^s(\R^n)}
		.
		\end{align*}
		Finally, for any $s\in\N$, there exists $C>0$ such that for any $t_0\in J$
		\begin{equation*}
			\norm{\psi_1(t)F(t,t_0)}_{H_h^s(\R^n)\to H_h^s(\R^{n+1})} \leq C.
		\end{equation*}
	\item[\quad(iii)] Let $s\in\N$ and $u\in L^2(\R^{n+1})$.
		One has
		\begin{align*}
			\norm{T_F u}_{H_h^s(\R^{n+1})}
			\leq \sum_{0\leq k+\ell\leq s} \norm{(hD_t)^k \left(\int_{t_0}^t \psi_1(t) F(t,r) u(r) dr\right)}_{L^2_t(\R,H_h^{\ell}(\R^n))}
			.
		\end{align*}
		Let $k,\ell\in\N$ such that $k+\ell\leq s$.
		We apply the following equality (that can be proved by induction on $k\in\N$)
		\begin{align*}
			\partial_t^k\left(\int_{t_0}^t v(t,r) dr\right)
			=\int_{t_0}^t \partial_t^k v(t,r) dr +\sum_{k_1+k_2=k-1}\partial_t^{k_1}(t\mapsto \partial_1^{k_2}v(t,t)) -k\partial_t^{k-1}v(t,t_0)
			,
		\end{align*}
		to the function $v(t,r)=\psi_1(t)F(t,r)u(r,\cdot)$ and we have then
		\begin{equation}\label{eq-demo:sogge-prop-T_F}
		\begin{split}
			&\norm{(hD_t)^k \left(\int_{t_0}^t  \psi_1(t)F(t,r) u(r) dr\right)}_{L^2_t(\R,H_h^{\ell}(\R^n))}
			\\&\quad
			\lesssim \sum_{0\leq k+\ell\leq s} \left(
			\norm{\int_{t_0}^t (hD_t)^k  \psi_1(t)F(t,r) u(r) dr}_{L^2_t(\R,H_h^{\ell}(\R^n))}
			\right.
			\\&\qquad\qquad\qquad\qquad
			+ h\sum_{k_1+k_2=k-1}\norm{(hD_t)^{k_1}(t\mapsto (hD_1)^{k_2}\psi_1(t) F(t,t)u(t))}_{L^2_t(\R,H_h^{\ell}(\R^n))}
			\\&\qquad\qquad\qquad\qquad\qquad\left.
			+ h \norm{(hD_t)^{k-1} \psi_1(t)F(t,t_0) u(t_0)}_{L^2_t(\R,H_h^{\ell}(\R^n))}\right)
			.
		\end{split}
		\end{equation}
		First, notice that
		\begin{align*}
			\norm{\int_{t_0}^t (hD_t)^k  \psi_1(t)F(t,r) u(r) dr}_{L^2_t(\R,H_h^{\ell}(\R^n))}\leq \sum_{m=1}^k\norm{\int_{t_0}^t (hD_t)^m F(t,r) u(r) dr}_{L^2_t(\supp\psi_1,H_h^{\ell}(\R^n))}.
		\end{align*}
		 By the bound \eqref{eq:prop-F(t)} applied to $J=\supp\psi_1$, there exists $C>0$ such that for any $t,r\in\supp\psi_1$
		\begin{equation*}
			\norm{\partial_t^k F(t,r)u(r)}_{H_h^\ell(\R^n)}\leq C\norm{u(r)}_{H_h^\ell(\R^n)},
		\end{equation*}
		so that for any $m\leq k$
		\begin{align*}
			&\norm{\int_{t_0}^t (hD_t)^m F(t,r) u(r) dr}_{L^2_t(\supp\psi_1,H_h^{\ell}(\R^n))}
			\\&\qquad
			\leq \abs{\supp\psi_1}^{1/2} \sup_{t\in\supp\psi_1}\abs{\int_{t_0}^t\sup_{\tau\in\supp\psi_1}\norm{(hD_\tau)^mF(\tau,r)u(r)}_{H_h^\ell(\R^n)} dr}
			\\&\qquad
			\leq C\int_{\supp\psi_1}\norm{u(r)}_{H_h^\ell(\R^n)} dr
			\leq C \norm{u}_{H_h^\ell(\R^{n+1})}
			\\&\qquad \leq C \norm{u}_{H_h^s(\R^{n+1})}
			.
		\end{align*}
		Finally, we obtain that for any $s\in\N$, there exists $C>0$ such that for any $ u\in L^2(\R^{n+1})$ 
		\begin{equation*}
			\norm{T_F u}_{H_h^s(\R^{n+1})}\leq C \norm{u}_{H_h^s(\R^{n+1})},
		\end{equation*}
		which is the desired estimate.
	\end{proof}	

	Let us now give a statement of semiclassical dispersive estimates, which are crucial ingredients in the proof of our results.
	The obtention of these dispersive bounds is based on the semiclassical parametrix construction of the propagator $F(t,r)$, using WKB method. The decay estimates then follows from the stationary phase formula. More precisely, this propagator is approximated by a Fourier integral operator. It is done in \cite[Thm. 10.4 and 10.8]{zworski2012semiclassical} or in \cite[Chapter 10]{dimassi1999spectral}.
	
	\begin{thm}[Semiclassical dispersive bounds]\label{thm:SStrichartz-bounds}
		Let $n\in\N$ be such that $n\geq 1$. Let $a=a_t(x,\xi)\in \CR^\infty(\R_t,S_{\R^n\times\R^n}(1))$. Let $r\in\R$. Let $\{F(t,r)\}_{t\in\R}$ be the propagator of the Schr\"odinger evolution equation
		\begin{equation*}
			\left\lbrace\begin{array}{lll}
			[hD_t-a^\w(t,x,hD_x)]F(t,r)=0 \quad t\in\R
			\\ F(r,r)= \id
			.
			\end{array}\right.
		\end{equation*}
		For any $\psi\in\test{\R}$ and $\chi\in\test{\R^n\times\R^n}$, let us define the microlocalized propagator $U(t,r)$ of the previous equation by
		\begin{equation*}
			U(t,r):= \psi(t-r)F(t,r)\chi^\w(x,hD).
		\end{equation*}
		Let $(x_0,\xi_0)\in\R^n\times\R^n$ and $I\subset\R$ a compact interval of $\R$, such that for all $t\in I$
		\begin{equation}\label{cond:SStrichartz-bounds}
			\partial_\xi^2 a_t(x_0,\xi_0) \text{ is non-singular}
			.
		\end{equation}
		Then, for every open interval $J$ such that $J\subset I$, there exist $\delta>0$ independent of $h$ and a neighborhood $U\times V$ of $(x_0,\xi_0)$ such that
		for every $\psi\in\test{\R}$ supported in $(-\delta,\delta)$ and $\chi\in\test{\R^n\times\R^n}$ supported in $U\times V$, we have the uniform bounds for all $t,s\in\R$ 
		\begin{equation}\label{eq:SStrichartz-bounds}
			\left\lbrace\begin{array}{lll}
			\sup_{r\in J}\norm{U(t,r)U(s,r)^*}_{L^2(\R^n)\to L^2(\R^n)} &\leq C
			\\
			\sup_{r\in J}\norm{U(t,r)U(s,r)^*}_{L^1(\R^n)\to L^\infty(\R^n)} &\leq Ch^{-n/2}{(h+\abs{t-s})}^{-n/2}
			.
			\end{array}\right.
		\end{equation}
	\end{thm}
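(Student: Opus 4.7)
The $L^2\to L^2$ bound is immediate: since $F(t,r)$ is unitary on $L^2(\R^n)$ by Lemma \ref{lemma:prop-F(t)} and $\chi^\w(x,hD)$ is bounded on $L^2(\R^n)$ uniformly in $h$ by Calderon--Vaillancourt (Proposition \ref{thm:SA-Cald-Vaill}), the operator $U(t,r)$ is uniformly bounded on $L^2$, and so is $U(t,r)U(s,r)^*$, with the scalar factor $\psi(t-r)\overline{\psi(s-r)}$ bounded by $\norm{\psi}_\infty^2$.

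For the dispersive bound, I would build a WKB (semiclassical Fourier integral) parametrix for $F(t,r)\chi^\w(x,hD)$ following the construction in \cite[Chap.~10]{zworski2012semiclassical} and \cite[Chap.~10]{dimassi1999spectral}. For $|t-r|<\delta$ with $\delta$ sufficiently small, one solves the Hamilton--Jacobi equation $\partial_t\phi + a_t(x,\partial_x\phi)=0$ with initial data $\phi(r,r,x,\xi)=x\cdot\xi$ in a neighborhood of $(x_0,\xi_0)$, and constructs a symbol $b(t,r,x,\xi;h)\sim\sum_{j\geq 0}h^j b_j$ compactly supported near $(x_0,\xi_0)$ solving the associated transport equations with $b_0(r,r,x,\xi)=\chi(x,\xi)$. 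This yields an approximate integral kernel
\begin{equation*}
K_{\rm app}(t,r,x,y)=\frac{1}{(2\pi h)^n}\int_{\R^n} e^{i(\phi(t,r,x,\xi)-y\cdot\xi)/h}\, b(t,r,x,\xi;h)\, d\xi,
\end{equation*}
and $F(t,r)\chi^\w(x,hD)$ differs from the corresponding operator by an $\OR(h^\infty)$ smoothing remainder (whose kernel is Schwartz, uniformly in $t,r$ on compact sets). The smallness condition $|t-r|<\delta$ ensures no caustic forms in the Hamiltonian flow started above $\supp\chi$, so $\phi$ stays smooth and the map $\xi\mapsto\partial_x\phi(t,r,x,\xi)$ remains a diffeomorphism.

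Composing this parametrix with its adjoint and integrating out the intermediate spatial variable by a non-degenerate stationary phase gives a kernel for $U(t,r)U(s,r)^*$ of the form
\begin{equation*}
K(t,s,r,x,y)=\frac{1}{(2\pi h)^n}\int_{\R^n} e^{i\Phi(t,s,r,x,y,\xi)/h}\, B(t,s,r,x,y,\xi;h)\, d\xi,
\end{equation*}
where Taylor expansion at $t=s=r$ gives $\Phi(t,s,r,x,y,\xi)=(x-y)\cdot\xi-(t-s)\,a_r(x,\xi)+\OR((t-s)^2)$, so that the $\xi$-Hessian of $\Phi$ equals $-(t-s)\,\partial_\xi^2 a_r(x,\xi)+\OR((t-s)^2)$. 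The non-degeneracy assumption \eqref{cond:SStrichartz-bounds} guarantees this Hessian is invertible near $(x_0,\xi_0)$ for $|t-s|$ small. Semiclassical stationary phase in $\xi$ then yields the pointwise bound $|K(t,s,r,x,y)|\lesssim h^{-n/2}(h+|t-s|)^{-n/2}$: one estimates trivially using compact $\xi$-support when $|t-s|\lesssim h$, and applies the stationary phase formula when $|t-s|\gtrsim h$, producing the extra factor $(h/|t-s|)^{n/2}$. This is exactly the claimed $L^1\to L^\infty$ bound.

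The main obstacle is uniformity in $r\in J$ and the absence of caustics. One must verify that the radius $\delta$ on which the WKB phase $\phi$ is smooth and free of caustics, together with the neighborhood $U\times V$ on which the symbols remain supported, can be chosen independently of $r\in\overline{J}$. This follows from compactness of $\overline{J}\subset I$ and continuous dependence of the Hamiltonian flow of $a_t$ on $(t,r)$ and on initial data, shrinking $U\times V$ if needed so that the flowout for $|t-r|<\delta$ stays in a region where $\partial_\xi^2 a_t$ is invertible. The $\OR(h^\infty)$ smoothing remainders from the parametrix construction are controlled via Lemma \ref{lemma:prop-F(t)} and produce contributions much smaller than the stated bound.
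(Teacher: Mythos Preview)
Your proposal is correct and follows precisely the approach the paper indicates: the paper does not give a detailed proof of this theorem but states that it follows from the semiclassical WKB/FIO parametrix construction of \cite[Thm.~10.4 and 10.8]{zworski2012semiclassical} and \cite[Chap.~10]{dimassi1999spectral} combined with stationary phase, which is exactly what you sketch. Your treatment of the $L^2$ bound via unitarity and Calderon--Vaillancourt, and of the $L^1\to L^\infty$ bound via the parametrix kernel and non-degenerate stationary phase in $\xi$ (with the trivial estimate for $|t-s|\lesssim h$), matches the standard argument in those references; the uniformity in $r\in J$ by compactness is also the right observation.

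One minor point: in your Taylor expansion of the composed phase $\Phi$, the term $-(t-s)a_r(x,\xi)$ is slightly imprecise, since after integrating out the intermediate variable the phase is $\phi(t,r,x,\xi)-\phi(s,r,y,\xi)$ and its $\xi$-Hessian is $\partial_\xi^2\phi(t,r,x,\xi)-\partial_\xi^2\phi(s,r,y,\xi)$, which vanishes at $t=s$, $x=y$. The correct way to extract the factor $(t-s)$ from the Hessian is to use that $\partial_\xi^2\phi(t,r,x,\xi)$ solves a transport-type equation along the flow with $\partial_\xi^2\phi(r,r,x,\xi)=0$ and $\partial_t\partial_\xi^2\phi|_{t=r}=-\partial_\xi^2 a_r(x,\xi)$ (or, equivalently, to use $F(t,r)F(s,r)^*=F(t,s)$ and apply the parametrix directly to the propagator over the time interval $[s,t]$, as in \cite[Thm.~10.8]{zworski2012semiclassical}). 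This yields $\partial_\xi^2\Phi=-(t-s)\partial_\xi^2 a_r(x_0,\xi_0)+\OR((t-s)^2+|t-r|\cdot|t-s|)$ at the critical point, and your non-degeneracy conclusion then goes through.
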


	\subsection{Density matrices}\label{sub-sec:density-mat}
	
	We finally review some definitions and standard results on Schatten spaces.
	
	\begin{defi}[Schatten spaces]\label{def:Sch-spaces}
		Let $\alpha\geq 1$.
		For any Hilbert spaces $\HR$ and $\HR'$, we define the Schatten space $\schatten^\alpha(\HR,\HR')$ for any $\alpha\in[1,+\infty)$ the set 
		\begin{equation*}
			\schatten^\alpha(\HR,\HR')=\{A:\HR\to\HR'\text{ compact operator } \: :\: \tr_\HR((A^*A)^{\alpha/2})<\infty\}.
		\end{equation*}
		Endowed with the norm
		\begin{equation*}
			\normSch{A}{\alpha}{(\HR,\HR')} := \left(\tr_\HR((A^*A)^{\alpha/2})\right)^{1/\alpha},
		\end{equation*}
		it is a Banach space.
		Let us call $\schatten^\infty(\HR,\HR')$, the space of compact operators. In the following, when it appears, the $\schatten^\infty$-norm denotes the operator norm of compact operators that maps $L^2(\HR)$ into $L^2(\HR')$.
	\end{defi}

	We first state below the good properties of a microlocalized operator of the form $\chi^\w\gamma\chi^\w$.
	\begin{lemma}\label{lemma:pre-mercer_loc}
		Assume that $\gamma$ is a non-negative operator on $L^2(\R^d)$. Then, if $\chi\in\schwartz(\R^d\times\R^d)$ and $\gamma$ is bounded, then the operator $\chi^\w(x,hD)\gamma\chi^\w(x,hD)$ is non-negative, compact, and trace-class. In particular, its kernel is continuous, bounded and is in $L^p(\R^d\times\R^d)$ for any $p\geq 1$.
	\end{lemma}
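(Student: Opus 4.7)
The plan is to establish the three claims—non-negativity, trace-classness (hence compactness), and the regularity of the integral kernel—separately, the main ingredients being Lemma~\ref{lemma:cond-traceclass} and Hölder's inequality for Schatten classes.

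For non-negativity, I would use that $\chi^\w(x,hD)$ is self-adjoint (taking the cutoff $\chi$ real-valued, as is standard for a microlocal localization function) to write, for every $u\in L^2(\R^d)$,
\begin{equation*}
\langle u,\,\chi^\w\gamma\chi^\w u\rangle_{L^2} = \langle \chi^\w u,\,\gamma\,\chi^\w u\rangle_{L^2} \geq 0,
\end{equation*}
since $\gamma\geq 0$. For trace-classness, Lemma~\ref{lemma:cond-traceclass} gives $\chi^\w\in\schatten^2(L^2(\R^d))$ (and in fact in every $\schatten^\alpha$, $\alpha\geq 1$), so Hölder's inequality in Schatten spaces yields
\begin{equation*}
\normSch{\chi^\w\gamma\chi^\w}{1}{} \leq \normSch{\chi^\w}{2}{}\,\norm{\gamma}_{\schatten^\infty}\,\normSch{\chi^\w}{2}{} < \infty,
\end{equation*}
so $\chi^\w\gamma\chi^\w$ is trace-class, and \emph{a fortiori} compact.

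For the integral kernel, I would start from the Schwartz kernel $K_{\chi^\w}\in\schwartz(\R^{2d})$ of $\chi^\w$ (provided by Lemma~\ref{lemma:cond-traceclass}). Since each slice $K_{\chi^\w}(\cdot,y)$ is Schwartz and in particular lies in $L^2(\R^d)$, the vector $\gamma K_{\chi^\w}(\cdot,y)\in L^2(\R^d)$ is well-defined. Testing the sesquilinear form of $\chi^\w\gamma\chi^\w$ against Schwartz functions and invoking Fubini (justified by the $L^2$-boundedness of $\gamma$ and the rapid decay of $K_{\chi^\w}$), one identifies the kernel as
\begin{equation*}
K(x,y) = \left\langle K_{\chi^\w}(\cdot,x),\, \gamma K_{\chi^\w}(\cdot,y)\right\rangle_{L^2(\R^d)}.
\end{equation*}
Continuity of $K$ in $(x,y)$ then follows from the continuity of the map $x\mapsto K_{\chi^\w}(\cdot,x)$ from $\R^d$ to $L^2(\R^d)$—a dominated-convergence argument using the Schwartz bounds on $K_{\chi^\w}$—combined with continuity of the $L^2$ inner product. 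Cauchy-Schwarz gives
\begin{equation*}
\abs{K(x,y)} \leq \norm{\gamma}_{\schatten^\infty}\, \norm{K_{\chi^\w}(\cdot,x)}_{L^2}\,\norm{K_{\chi^\w}(\cdot,y)}_{L^2},
\end{equation*}
and the Schwartz character of $K_{\chi^\w}$ yields $\norm{K_{\chi^\w}(\cdot,x)}_{L^2}\leq C_N\crochetjap{x}^{-N}$ for every $N\in\N$. Hence $K$ is bounded and rapidly decreasing on $\R^{2d}$, so it lies in $L^p(\R^{2d})$ for every $p\geq 1$.

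I do not expect a genuine obstacle: the only point requiring care is the rigorous derivation of the explicit formula for $K$ when $\gamma$ is only assumed to be bounded (and is not itself given by an integral kernel). This is handled by applying $\gamma$ first to the $L^2$-slice $K_{\chi^\w}(\cdot,y)$ and then pairing with the $L^2$-slice $K_{\chi^\w}(\cdot,x)$, reducing the Fubini step to the integration of a rapidly decreasing function against a bounded $L^2$ object.
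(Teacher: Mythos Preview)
Your proof is correct. The paper does not provide its own proof of this lemma: it is stated as a direct consequence of Lemma~\ref{lemma:cond-traceclass} (that $\chi^\w$ has a Schwartz kernel and is Hilbert--Schmidt), and your argument makes this deduction explicit along the natural lines---Hölder in Schatten classes for the trace-class property, and the explicit kernel formula $K(x,y)=\langle K_{\chi^\w}(\cdot,x),\gamma K_{\chi^\w}(\cdot,y)\rangle_{L^2}$ for continuity and decay. Your parenthetical remark that $\chi$ should be real-valued is well placed: the non-negativity claim in the statement tacitly requires $(\chi^\w)^*=\chi^\w$, which the paper's applications (with $\chi\in\test{\R^d\times\R^d,[0,1]}$) always satisfy.
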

	A first consequence is that the $L^q$ norm of the restriction of the integral kernel to the diagonal $\rho_{\chi^\w\gamma\chi^\w}$ of the integral kernel of $\chi^\w\gamma\chi^\w$ is always well-defined.

	\medskip
		In the proofs of the $L^q$ bounds
		\begin{equation}\label{eq:normal-form}
			\normLp{\rho_{A\gamma A^\ast}}{q}{}\leq C  h^{-s}\log(1/h)^{t}\normLp{W}{2(q/2)'}{}\left(\normSch{\gamma}{\alpha}{}+\frac 1{h^2}\normSch{(P-E)\gamma(P-E)}{\alpha}{}\right),
		\end{equation}
		for a compact operator $A$ mapping $L^2(\R^d)$ into $L^{2(q/2)'}(\R^d)$,
		we will see that it is enough to prove a kind of ``dual form''
		\begin{equation}\label{eq:almost-dual-form}
			\normSch{W A\sqrt{\gamma}}{2}{}\leq C  h^{-s}\log(1/h)^{t}\normLp{W}{2(q/2)'}{}\left(\normSch{\gamma}{\alpha}{}+\frac 1{h^2}\normSch{(P-E)\gamma(P-E)}{\alpha}{}\right),
		\end{equation}
		for any $ W\in L^{2(q/2)'}\cap\CR^0(\R^d)$.
		\begin{nota}
			Here and in the following, for an exponent $p\in[1,\infty]$, we define its conjugated exponent $p':=p/(p-1)$. 
		\end{nota}
		Notice that the implication \eqref{eq:almost-dual-form}$\Longrightarrow$\eqref{eq:normal-form} is similar to \cite[Lem. 3]{frank2017restriction} but with the Hilbert-Schmidt norm instead of the $\schatten^{2\alpha'}$ norm and a weighted Schatten norm in the right-hand side. The conjugate exponent of the exponent $\alpha$ will appear naturally when we write H\"older inequality on $\normSch{W A_j\sqrt{\gamma}}{2}{}$ for well-chosen $A_j$ such that $A=\sum_j A_j$
		\begin{equation*}\label{eq:main-dual-form}
			\normSch{W A_j}{2\alpha'}{}\leq C  h^{-s}\log(1/h)^{t}\normLp{W}{2(q/2)'}{}.
		\end{equation*}
		In this paper, $A$ is essentially $\chi^\w$ or $\indicatrice{P\in[E-h,E+h]}$.

	We state now a version of Mercer theorem (originally in \cite{mercer1909} for kernels on compact sets), that allows to write the implication $\eqref{eq:almost-dual-form}\Longrightarrow\eqref{eq:normal-form}$.
	\begin{thm}[Mercer theorem]\label{lemma:mercer-thm}
		Let a bounded non-negative operator $\gamma$ on $L^2(\R^d)$ associated to an integral kernel continuous on $\Omega$. Assume that the restriction of the kernel to the diagonal $\rho_\gamma\in L^1(\R^d)$ and that the kernel is square integrable. Then, there exists a orthonormal basis of continuous eigenfunctions $\{u_j\}_{j\in\N}$ of $\gamma$ with corresponding non-negative eigenvalues $\{\lambda_j\}_{j\in\N}$ such that 
		\begin{equation*}\forall x,y\in\R^d,\quad			\gamma(x,y)=\sum_{j\in\N}\lambda_ju_j(x)\overline{u_j(y)},
		\end{equation*}
		with a convergence of the series on $L^2$-norm and an uniform convergence in all compacts of $\R^d$.
	\end{thm}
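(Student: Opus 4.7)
First, I would use that square integrability of the kernel makes $\gamma$ Hilbert--Schmidt, hence compact; being non-negative and self-adjoint, the spectral theorem provides an orthonormal basis $\{u_j\}_{j\in\N}$ of $L^2(\R^d)$ of eigenvectors with eigenvalues $\lambda_j\geq 0$. Continuity of each $u_j$ with $\lambda_j>0$ follows from the relation $\lambda_j u_j(x)=\int\gamma(x,y)u_j(y)\,dy$ combined with continuity of the kernel and dominated convergence, using that $\|u_j\|_{L^2}=1$ and that $x\mapsto\|\gamma(x,\cdot)\|_{L^2}$ is locally bounded by continuity. Eigenvectors with $\lambda_j=0$ do not enter the series and may be ignored.

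Next, the Hilbert--Schmidt expansion of $\gamma$ in the orthonormal basis $\{u_j\otimes\overline{u_k}\}_{j,k}$ of $L^2(\R^d\times\R^d)$ has only diagonal nonzero coefficients (since $\gamma u_k=\lambda_k u_k$), yielding
\[
\gamma(x,y) = \sum_{j\in\N}\lambda_j\, u_j(x)\overline{u_j(y)}
\]
with convergence in $L^2(\R^d\times\R^d)$, which handles the first claimed convergence.

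For the uniform convergence on compacts, which is the delicate step, I would set $R_N(x,y):=\gamma(x,y)-\sum_{j=1}^N\lambda_j u_j(x)\overline{u_j(y)}$, which is the continuous kernel of the non-negative operator $\gamma-\sum_{j\leq N}\lambda_j\prodscal{u_j}{\cdot}u_j$. Approximating Dirac masses by normalized bumps and passing to the limit using continuity, every finite matrix $(R_N(x_i,x_\ell))_{i,\ell}$ is positive semidefinite; in particular one obtains the pointwise bounds $R_N(x,x)\geq 0$ and $|R_N(x,y)|^2\leq R_N(x,x)R_N(y,y)$. This reduces the problem to showing the diagonal converges uniformly to zero on compacts. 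The partial sum $\sigma_N(x):=\sum_{j\leq N}\lambda_j|u_j(x)|^2$ is continuous, non-decreasing in $N$, and pointwise bounded above by the continuous function $\rho_\gamma(x)$; its pointwise limit $L$ satisfies $L\leq\rho_\gamma$, and integrating $\sigma_N\leq\rho_\gamma$ yields $\sum_j\lambda_j\leq\int\rho_\gamma<\infty$, which incidentally gives $\gamma\in\schatten^1$. Monotone convergence then produces $\int L=\sum_j\lambda_j=\int\rho_\gamma$, so $L=\rho_\gamma$ almost everywhere.

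The main obstacle is to upgrade this a.e.\ identity to pointwise equality on all of $\R^d$. I would use continuity of $\rho_\gamma$ together with the lower semicontinuity of $L$ (as a supremum of the continuous $\sigma_N$) to preclude any strict inequality $L(x_0)<\rho_\gamma(x_0)$: such a gap would persist on an open neighborhood of $x_0$ and contradict $L=\rho_\gamma$ a.e. Once $R_N(x,x)\searrow 0$ pointwise is established, Dini's theorem applied to this continuous monotone sequence converging to a continuous limit yields uniform convergence on compacts of $\R^d$, which propagates to $R_N(x,y)$ uniformly on compacts of $\R^d\times\R^d$ by the pointwise Cauchy--Schwarz bound. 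This concludes the proof.
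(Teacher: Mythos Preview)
The paper does not actually prove this theorem; it is stated as a classical result (with a reference to Mercer's 1909 paper) and then only its consequences are recorded in Remark~\ref{rmk:mercer-thm}. So there is nothing to compare against, and your task is really to give a correct self-contained proof. Most of your outline is the standard Mercer argument and is fine: compactness from the Hilbert--Schmidt property, continuity of eigenfunctions via the integral relation (here it helps to first note $|\gamma(x,y)|^2\le\rho_\gamma(x)\rho_\gamma(y)$ from positive-definiteness of the kernel, which gives the needed domination), the $L^2$ convergence of the bilinear series, the positive-semidefiniteness of the remainder $R_N$, and the reduction of uniform convergence to the diagonal via $|R_N(x,y)|^2\le R_N(x,x)R_N(y,y)$.

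The genuine gap is your step upgrading $L=\rho_\gamma$ a.e.\ to pointwise equality. Lower semicontinuity of $L=\sup_N\sigma_N$ controls $L$ from \emph{below} near $x_0$, not from above: it says $\liminf_{x\to x_0}L(x)\ge L(x_0)$, which is perfectly compatible with $L(x)=\rho_\gamma(x)$ for all $x\neq x_0$ while $L(x_0)<\rho_\gamma(x_0)$. So the claimed ``gap persists on a neighborhood'' is false in general, and the contradiction does not materialise. The standard fix is to argue for each fixed $x$ separately: the partial sums $\sum_{j\le N}\lambda_j u_j(x)\overline{u_j(\cdot)}$ converge \emph{uniformly in the second variable on compacts} (Cauchy--Schwarz bounds the tail by $(\sum_{j>N}\lambda_j|u_j(x)|^2)^{1/2}\rho_\gamma(y)^{1/2}$, and the first factor tends to $0$ for fixed $x$), so the pointwise limit $\tilde\gamma(x,\cdot)$ is continuous in $y$. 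On the other hand, expanding $\gamma(x,\cdot)\in L^2_y$ in the orthonormal basis $\{u_j\}$ and using $\gamma u_j=\lambda_j u_j$ shows $\gamma(x,\cdot)=\tilde\gamma(x,\cdot)$ in $L^2_y$; since both sides are continuous in $y$, they agree everywhere, and in particular $\rho_\gamma(x)=\gamma(x,x)=\tilde\gamma(x,x)=L(x)$. Dini's theorem then applies exactly as you wrote.
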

	\begin{rmk}\label{rmk:mercer-thm}
		A consequence of Lemma \ref{lemma:mercer-thm} is that for any function $\chi\in\schwartz(\R^d\times\R^d)$ and any non-negative bounded $\gamma$, the operator $\chi^\w(x,hD)\gamma\chi^\w(x,hD)$ is non-negative and trace-class.
		It naturally follows from Mercer theorem that for any continuous functions $W:\R^d\to\R$ and $\psi\in L^\infty(\R^d,\R)$ 
		\begin{equation*}
			\tr_{L^2}(W\psi\chi^\w\gamma\chi^\w \psi W)=\int_{\R^d}(W\psi\chi^\w\gamma\chi^\w \psi W)(x,x)dx =\int_{\R^d}\rho_{\psi\chi^\w\gamma\chi^\w\psi}(x)W(x)^2dx.
		\end{equation*}
		Then, for $q=2$
		\begin{equation*}
			\normLp{\psi^2\rho_{\chi^\w\gamma\chi^\w}}{1}{(\R^d)}
			= \tr_{L^2}(\psi\chi^\w\gamma\chi^\w\psi)=\normSch{\psi\chi^\w\sqrt{\gamma}}{2}{}^2.
		\end{equation*}
		As well, for $q\in(2,\infty)$
		\begin{align*}
			&\normLp{\psi^2\rho_{\chi^\w\gamma\chi^\w}}{q/2}{(\R^d)}
			\\&\quad
			\leq \sup_{W\in L^{2(q/2)'}\cap\CR^0(\R^d)}\frac{\int_{\R^d}\rho_{\psi\chi^\w\gamma\chi^\w\psi}(x)W(x)^2 dx}{\normLp{W}{2(q/2)'}{(\R^d)}^2}
			&
			= \sup_{W\in L^{2(q/2)'}\cap\CR^0(\R^d)}\frac{\tr_{L^2}(W\psi\chi^\w\gamma\chi^\w\psi W)}{\normLp{W}{2(q/2)'}{(\R^d)}^2}
			\\&\quad
			\leq \sup_{W\in L^{2(q/2)'}\cap\CR^0(\R^d)}\frac{\normSch{W\psi\chi^\w\sqrt{\gamma}}{2}{}^2}{\normLp{W}{2(q/2)'}{(\R^d)}}
			.
		\end{align*}
		In the following $\psi=1$ or a localization function on a region of $\R^d$.
	\end{rmk}

	We can now state the Kato-Seiler-Simon inequalities, which are very useful tools in the many-body setting.
	
	\begin{lemma}[Kato-Seiler-Simon,{\cite[Thm. 4.1]{simon2005trace}}]\label{lemma:Kato-Seiler-Simon_schatten}
		Let $2\leq\alpha<\infty$.
		 Then, for all functions $f,g\in L^\alpha(\R^d)$, the operator $f(x)g(-i\nabla)$ 
		is in $\schatten^\alpha(L^2(\R^d))$ and
		\begin{equation*}
			\normSch{f(x)g(-i\nabla)}{\alpha}{\left(L^2(\R^d)\right)}\leq (2\pi)^{-d/\alpha} \normLp{f}{\alpha}{(\R^d)}\normLp{g}{\alpha}{(\R^d)} 
			.
		\end{equation*}
	\end{lemma}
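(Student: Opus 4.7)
The plan is to prove the inequality by complex interpolation between the endpoints $\alpha=2$ and $\alpha=\infty$, following the classical Kato--Seiler--Simon argument. For the endpoint $\alpha=2$, the operator $A=f(x)g(-i\nabla)$ has integral kernel $K(x,y)=f(x)\,h(x-y)$, where $h$ is the inverse Fourier transform of $g$, satisfying $\normLp{h}{2}{(\R^d)}=(2\pi)^{-d/2}\normLp{g}{2}{(\R^d)}$ by Plancherel. Since $\normSch{A}{2}{(L^2(\R^d))}^2=\iint|K(x,y)|^2\,dx\,dy$, Fubini yields the Hilbert--Schmidt identity $\normSch{A}{2}{(L^2(\R^d))}=(2\pi)^{-d/2}\normLp{f}{2}{(\R^d)}\normLp{g}{2}{(\R^d)}$, which is the desired inequality at $\alpha=2$. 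The formal endpoint $\alpha=\infty$ (operator norm) is the elementary bound $\normSch{f(x)g(-i\nabla)}{\infty}{}\le\normLp{f}{\infty}{(\R^d)}\normLp{g}{\infty}{(\R^d)}$: multiplication by $f$ has operator norm $\normLp{f}{\infty}{(\R^d)}$ on $L^2(\R^d)$, and by unitarity of the Fourier transform, $g(-i\nabla)$ has operator norm $\normLp{g}{\infty}{(\R^d)}$.

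For the intermediate case $2<\alpha<\infty$, I would apply Stein's complex interpolation theorem for the Schatten ideals. Setting $\theta:=1-2/\alpha\in(0,1)$, so that $\tfrac{1}{\alpha}=\tfrac{1-\theta}{2}+\tfrac{\theta}{\infty}$, I define on the strip $\{z\in\C:0\le\mathrm{Re}\,z\le 1\}$ the analytic family
\begin{equation*}
T(z):=f_z(x)\,g_z(-i\nabla),\qquad f_z:=|f|^{\alpha(1-z)/2}\sgn(f),\qquad g_z:=|g|^{\alpha(1-z)/2}\sgn(g),
\end{equation*}
with the convention that $f_z$ vanishes on $\{f=0\}$ (similarly for $g$). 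A direct check shows $T(\theta)=f(x)g(-i\nabla)$. On the line $\mathrm{Re}\,z=0$ one has $|f_{it}|=|f|^{\alpha/2}$ and $|g_{it}|=|g|^{\alpha/2}$, so the endpoint $\alpha=2$ gives $\normSch{T(it)}{2}{}\le(2\pi)^{-d/2}\normLp{f}{\alpha}{(\R^d)}^{\alpha/2}\normLp{g}{\alpha}{(\R^d)}^{\alpha/2}$ uniformly in $t\in\R$; on the line $\mathrm{Re}\,z=1$ one has $|f_{1+it}|,|g_{1+it}|\le 1$ pointwise, so the endpoint $\alpha=\infty$ gives $\normSch{T(1+it)}{\infty}{}\le 1$ uniformly. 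The three--lines lemma for Schatten classes then produces
\begin{equation*}
\normSch{f(x)g(-i\nabla)}{\alpha}{}\le\bigl[(2\pi)^{-d/2}\normLp{f}{\alpha}{(\R^d)}^{\alpha/2}\normLp{g}{\alpha}{(\R^d)}^{\alpha/2}\bigr]^{1-\theta}\cdot 1^{\theta}=(2\pi)^{-d/\alpha}\normLp{f}{\alpha}{(\R^d)}\normLp{g}{\alpha}{(\R^d)},
\end{equation*}
using the identity $(1-\theta)/2=1/\alpha$.

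The only delicate point is the admissibility of $\{T(z)\}$ for Stein interpolation, namely that $z\mapsto T(z)$ is analytic in the interior of the strip and of uniform polynomial growth in $|\mathrm{Im}\,z|$. This is handled in the standard way: first assume that $f$ and $g$ are bounded simple functions of compact support, for which $T(z)$ is an entire Schatten-valued function with tame growth in $z$, apply the interpolation argument in that setting, and finally extend to arbitrary $f,g\in L^\alpha(\R^d)$ by a density argument in $L^\alpha$.
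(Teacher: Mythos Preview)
Your proof is correct and is precisely the classical Kato--Seiler--Simon argument. Note, however, that the paper does not give its own proof of this lemma: it is stated as a citation from \cite[Thm.~4.1]{simon2005trace} and used as a black box. Your argument is essentially the one found in Simon's book, so there is nothing to compare.
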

	
	As a corollary, this implies a version of semiclassical Sobolev embedding estimates $H_h^m \xhookrightarrow{} L^q $ for operators.
	
	\begin{lemma}[Semiclassical Schatten Sobolev estimates]\label{lemma:Kato-Seiler-Simon_dual}
		Let $m\geq 0$ and $q> 2$.
		Then, if $\frac 1q >\frac 12-\frac md$, we have 
		for all $W\in L^{2(q/2)'}(\R^d)$
		\begin{equation*}
			\normSch{W(1-h^2\Delta)^{-m/2}}{2(q/2)'}{(L^2(\R^d))}
			\leq C h^{-d\left(\frac{1}{2}-\frac{1}{q}\right)}\normLp{W}{2(q/2)'}{(\R^d)}
			.
		\end{equation*}
	\end{lemma}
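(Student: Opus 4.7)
The plan is to apply the Kato--Seiler--Simon inequality (Lemma \ref{lemma:Kato-Seiler-Simon_schatten}) directly, with exponent $\alpha := 2(q/2)'$. First I would check the admissibility: since $q>2$, one computes $\tfrac{1}{\alpha}=\tfrac{1}{2(q/2)'}=\tfrac12-\tfrac1q\in[0,\tfrac12)$, so $\alpha\in(2,\infty]$ and the hypothesis $\alpha\geq 2$ of Lemma \ref{lemma:Kato-Seiler-Simon_schatten} is fulfilled. (The endpoint $\alpha=\infty$, corresponding to $q=\infty$, is trivial since $(1-h^2\Delta)^{-m/2}$ is a bounded Fourier multiplier with operator norm $1$ and $W\in L^\infty$.)

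Next I would write $(1-h^2\Delta)^{-m/2}=g_h(-i\nabla)$ with $g_h(\xi)=(1+h^2|\xi|^2)^{-m/2}$ and apply Kato--Seiler--Simon to the product $W(x)g_h(-i\nabla)$:
\begin{equation*}
\normSch{W(1-h^2\Delta)^{-m/2}}{\alpha}{(L^2(\R^d))}\leq (2\pi)^{-d/\alpha}\normLp{W}{\alpha}{(\R^d)}\normLp{g_h}{\alpha}{(\R^d)}.
\end{equation*}
The key computation is then $\normLp{g_h}{\alpha}{(\R^d)}$: by the change of variables $\eta=h\xi$,
\begin{equation*}
\normLp{g_h}{\alpha}{(\R^d)}^\alpha=\int_{\R^d}(1+h^2|\xi|^2)^{-m\alpha/2}d\xi=h^{-d}\int_{\R^d}(1+|\eta|^2)^{-m\alpha/2}d\eta.
\end{equation*}
The last integral is finite precisely when $m\alpha>d$. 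Using $\tfrac{1}{\alpha}=\tfrac12-\tfrac1q$, this condition rewrites as $\tfrac1q>\tfrac12-\tfrac{m}{d}$, which is exactly the hypothesis of the lemma. Under this assumption we obtain $\normLp{g_h}{\alpha}{(\R^d)}\leq C\,h^{-d/\alpha}=C\,h^{-d(\frac12-\frac1q)}$, and combining with the previous display yields the claim.

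Since the argument is a direct combination of a standard Schatten inequality with a scaling computation, there is no real obstacle; the only point requiring care is bookkeeping of the exponents, in particular verifying that $\tfrac{1}{2(q/2)'}=\tfrac12-\tfrac1q$ so that the condition $m\alpha>d$ matches the stated assumption $\tfrac1q>\tfrac12-\tfrac{m}{d}$ and the power of $h$ comes out as $-d(\tfrac12-\tfrac1q)$.
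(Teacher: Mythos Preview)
Your proof is correct and matches the paper's approach: the paper states the lemma as an immediate corollary of the Kato--Seiler--Simon inequality (Lemma~\ref{lemma:Kato-Seiler-Simon_schatten}) without spelling out the details, and your argument is exactly the intended one-line application with the scaling $\eta=h\xi$.

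One small slip in your parenthetical: the endpoint $\alpha=\infty$ corresponds to $q\to 2^+$ (which is excluded), not to $q=\infty$. When $q=\infty$ one has $\alpha=2$, and your main argument already covers this case via Kato--Seiler--Simon (the integrability condition becomes $m>d/2$, i.e.\ $0>\tfrac12-\tfrac md$, as required). So the parenthetical can simply be dropped.
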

	

	We will use the following complex interpolation result in Schatten spaces, which can be found in \cite[Prop. 1]{frank2017restriction} (see also \cite[Thm. 2.9]{simon2005trace}). 
	
	\begin{thm}[Complex interpolation in Schatten spaces]\label{thm:complex-interpol-schatten}
		Let $n\geq 1$.
		Let $a_0<a_1$ be two real numbers. Let $T$ be an application which maps the strip $S=\{z\in\C,\:a_0\leq \Re z \leq a_1\}$ into bounded operators on $L^2(\R^{n+1})$. Moreover, let us assume that the family of operators $\{T_z\}_{z\in S}$ is analytic in the sense of Stein i.e.
		\[ z\in S \mapsto \prodscal{f}{T_z g}_{L^2(\R^{n+1})} \text{ is continuous for all simple functions }f,g .\]
		and 
		\[ z\in \overset{\circ}{S}\mapsto \prodscal{f}{T_z g}_{L^2(\R^{n+1})} \text{ is analytic for all simple functions }f,g .\]
		If there exist $C_0,C_1,b_0,b_1>0$,
		$1\leq p_0, p_1,q_0, q_1 \leq\infty$, and $1\leq r_0,r_1<\infty$ such that for all $\sigma\in\R$, one has for all simple functions $W_1, W_2$ on $\R^{n+1}$
		\begin{equation*}
			\forall j=0,1,\quad \normSch{W_1 T_{a_j+i\sigma} W_2}{r_j}{(L^2(\R^{n+1}))} \leq C_je^{b_j\abs{\sigma}} \norm{W_1}_{L_t^{p_j}L_x^{q_j}(\R^{n+1})}\norm{W_2}_{L_t^{p_j}L_x^{q_j}(\R^{n+1})}
			.
		\end{equation*}
		Then, for all $0\leq\theta\leq 1$
		\begin{equation*}
			\normSch{W_1 T_{a_\theta} W_2}{r_\theta}{(L^2(\R^{n+1}))} \leq C_0^{1-\theta}C_1^\theta \norm{W_1}_{L_t^{p_\theta}L_x^{q_\theta}(\R^{n+1})}\norm{W_2}_{L_t^{p_\theta}L_x^{q_\theta}(\R^{n+1})} 
			,
		\end{equation*}
		where $a_\theta$, $r_\theta$, $p_\theta$ and $q_\theta$ are defined by
		\begin{equation*}
			a_\theta = (1-\theta)a_0+ \theta a_1,\: \frac{1}{r_\theta}=\frac{1-\theta}{r_0}+\frac{\theta}{r_1},\: \frac{1}{p_\theta}=\frac{1-\theta}{p_0}+\frac{\theta}{p_1} \text{ and } \frac{1}{q_\theta}=\frac{1-\theta}{q_0}+\frac{\theta}{q_1} .
		\end{equation*}
		Actually, when one of the exponents $r_j=\infty$, the $\schatten^\infty$ norm on the left-hand side of the two previous bounds can be replaced by the operator norm.
	\end{thm}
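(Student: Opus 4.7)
The plan is to apply Stein's complex interpolation via the Hadamard three-lines lemma to an appropriately constructed scalar analytic function.

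First, I would reduce the Schatten-norm estimate at the intermediate exponent $r_\theta$ to a trace-pairing bound. When $r_\theta<\infty$, the duality $(\mathfrak{S}^{r_\theta})^*=\mathfrak{S}^{r_\theta'}$ means it is enough to control
\[
|\tr(W_1 T_{a_\theta} W_2 B)|
\]
uniformly over finite-rank $B$ with $\normSch{B}{r_\theta'}{(L^2)}\leq 1$; when $r_\theta=\infty$ the relevant duality pairs the operator norm with $\mathfrak{S}^1$, and one instead bounds matrix elements $|\langle u, W_1 T_{a_\theta} W_2 v\rangle|$ for unit vectors in $L^2(\R^{n+1})$.

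Next I would build analytic families $z\mapsto W_{1,z},W_{2,z}$ of simple functions and $z\mapsto B_z$ of finite-rank operators (or of unit vectors, when $r_\theta=\infty$) on the strip $S$ such that at $z=a_\theta$ these recover the original $W_1,W_2,B$, and on each vertical line $\Re z=a_j$ their norms in the corresponding hypothesized spaces coincide (up to normalization by the original $L^{p_\theta}_tL^{q_\theta}_x$ and $\mathfrak{S}^{r_\theta'}$ norms). The construction for $W_{j,z}$ adapts the classical Calder\'on parametrization to the mixed Lebesgue setting: for a simple function $W$, factor $|W(t,x)|=g(t)h(t,x)$ with $\norm{h(t,\cdot)}_{L^{q_\theta}_x}\equiv 1$, then raise $g$ and $h$ to independent complex powers that restrict on $\Re z=a_j$ to the exponents $(p_j,q_j)$; multiplication by the unimodular phase $W/|W|$ preserves analyticity. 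The family $B_z$ is obtained from the polar decomposition $B=U|B|$ by $B_z=U|B|^{r_\theta'/r(z)'}$, where $1/r(z)'$ is an affine complex interpolation between $1/r_0'$ and $1/r_1'$; in the matrix-element case one analogously builds unit vectors $u_z,v_z$.

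Then I would apply the three-lines lemma to the scalar function
\[
F(z):=\tr\bigl(W_{1,z}T_z W_{2,z}B_z\bigr).
\]
Stein analyticity of $\{T_z\}$ together with the explicit analyticity of the auxiliary families make $F$ continuous on $S$ and holomorphic in its interior, while the boundary hypotheses yield $|F(a_j+i\sigma)|\leq C_j e^{b_j|\sigma|}$ after the normalizing factors cancel. After multiplication by an entire regularizer such as $e^{\varepsilon z^2}$ and letting $\varepsilon\to 0$, the Phragm\'en-Lindel\"of variant of the three-lines lemma gives
\[
|F(a_\theta)|\leq C_0^{1-\theta}C_1^{\theta}\,\norm{W_1}_{L_t^{p_\theta}L_x^{q_\theta}(\R^{n+1})}\norm{W_2}_{L_t^{p_\theta}L_x^{q_\theta}(\R^{n+1})},
\]
and taking the supremum over admissible $B$ (or $u,v$) produces the stated bound.

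The main obstacle I anticipate is the construction of $W_{j,z}$ compatible with the mixed-norm structure $L^p_tL^q_x$: a direct pointwise parametrization $|W|^{p_\theta/p(z)}$ only interpolates between pure $L^p(\R^{n+1})$ spaces, so the slicewise normalization $|W|=gh$ with $\norm{h(t,\cdot)}_{L^{q_\theta}_x}\equiv 1$ is indispensable, and the boundary cases $p_j=\infty$ or $q_j=\infty$ must be handled by limits in the exponents. A secondary technicality is the exponential growth $e^{b_j|\sigma|}$ on the two sides of the strip, which is admissible for Phragm\'en-Lindel\"of but forbids the use of the plain three-lines lemma; handling the degenerate endpoint $r_j=\infty$ in which $\mathfrak{S}^\infty$ is replaced by $\mathcal{B}(L^2)$ likewise requires switching to the matrix-element version of the duality throughout.
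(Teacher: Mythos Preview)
The paper does not supply its own proof of this theorem: it is quoted as a known result, with a reference to \cite[Prop.~1]{frank2017restriction} and \cite[Thm.~2.9]{simon2005trace}. Your outline is precisely the standard Stein-interpolation argument underlying those references---duality to a trace pairing, Calder\'on-type analytic deformation of the weights (with the slicewise normalization needed for mixed $L^p_tL^q_x$ norms), the polar-decomposition deformation $B_z=U|B|^{r_\theta'/r(z)'}$ on the Schatten side, and Phragm\'en--Lindel\"of to absorb the $e^{b_j|\sigma|}$ growth---so there is nothing to compare and your plan is correct.

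One small technical point worth tightening: the Stein-analyticity hypothesis in the statement is only for pairings $\langle f,T_z g\rangle$ with \emph{simple} $f,g$, whereas in $\tr(W_{1,z}T_zW_{2,z}B_z)$ the vectors that arise are $W_{2,z}v_k$ and $W_{1,\bar z}u_k$ with $u_k,v_k$ singular vectors of $B$, which need not be simple. You should therefore either (i) take the supremum over finite-rank $B$ whose singular vectors are simple functions (dense in $\mathfrak{S}^{r_\theta'}$), or (ii) first establish the interpolated bound for such $B$ and pass to the limit. This is routine but should be made explicit.
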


	\subsection{Strichartz estimates for density matrices}\label{subsec:strichartz-est-matdens}
	
	In this section, we provide Strichartz estimates in Schatten spaces, which will be a key ingredient for our proof. They generalize the one-body Strichartz estimates \cite[Prop. 4.3]{koch2007semiclassical}, which were also the key ingredient of the proof of Koch-Tataru-Zworski. Such many-body Strichartz estimates were discovered in \cite{frank2014}, and later generalized in \cite{frank2017restriction}. Our proof is inspired by the one in \cite{frank2017restriction}, and provides a way to obtain the full range of Strichartz estimates in Schatten spaces under the general assumption that the propagator satisfies dispersive estimates such as the one in Theorem \ref{thm:SStrichartz-bounds}. In the one-body case, the fact that Strichartz estimates follow abstractly from dispersive bounds were discovered by Ginibre and Velo \cite{ginibre1992smoothing}, and we generalize the results to the many-body case. Interestingly, our many-body proof uses complex interpolation in the spirit of the original proof of Strichartz \cite{strichartz1977restrictions} rather than the direct approach using the Hardy-Littlewood-Sobolev inequality of \cite{ginibre1992smoothing}.
	
	\begin{thm}\label{thm:SStrichartz-matdens}
		Assume the same hypotheses as in Theorem \ref{thm:SStrichartz-bounds}.
		Let $2\leq q\leq\frac{2(n+1)}{n-1}$. Let $p(q):=\frac{2\left(\frac q2\right)'}{1+\left(\frac 2{q-2}-\frac n2\right)_-}$.
		Then, there exist $C>0$ and $h_0>0$,
		such that for any $0<h\leq h_0$, we have, for any $W\in L^{p(q)}_tL^{2(q/2)'}_x(\R^{n+1})$
		\begin{equation*}
			\begin{split}
			\sup_{r\in J} \normSch{WU(t,r)}{2\left(\frac{2q}{q+2}\right)'}{(L^2(\R^n),L^2(\R^{n+1}))}
			&\leq C \norm{W}_{L^{p(q)}_tL^{2(q/2)'}_x(\R^{n+1})}
			\times\\&\quad\times
			\begin{cases}
			h^{-\frac n2\left(\frac 12-\frac 1q\right)} &\text{if}\  2\leq q<\frac{2(n+1)}{n-1},\\
			\log(1/h)^{\frac 1{n+1}}h^{-\frac 1{n+1}} &\text{if}\  q=\frac{2(n+1)}{n-1}
			.
			\end{cases}
			\end{split}
		\end{equation*}
	\end{thm}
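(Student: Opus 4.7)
The strategy is a $TT^{\ast}$ reduction combined with Stein complex interpolation in Schatten spaces (Theorem \ref{thm:complex-interpol-schatten}), generalizing the many-body argument developed in \cite{frank2017restriction} for the free Schr\"odinger flow. Regard $T_r: u \mapsto W(t,x)(U(t,r)u)(x)$ as a bounded map $L^2(\R^n) \to L^2(\R^{n+1})$, and set $\alpha := (2q/(q+2))'$. Since $\normSch{T_r}{2\alpha}{}^2 = \normSch{T_rT_r^{\ast}}{\alpha}{}$, it suffices to estimate the non-negative operator $T_rT_r^{\ast}$ on $L^2(\R^{n+1})$, whose integral kernel is
\[
K_W(t,x;s,y) = W(t,x)\bigl(U(t,r)U(s,r)^{\ast}\bigr)(x,y)\overline{W(s,y)}.
\]

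I would then introduce a Stein-admissible analytic family $\{\Theta_z\}_{z \in \{0 \leq \Re z \leq 1\}}$ built from $T_rT_r^{\ast}$ by inserting an analytic power of $(h+|t-s|)$ (multiplied by a Gaussian regulator $e^{z^2}$ to ensure admissible growth along vertical lines). At the boundary where the dispersive factor is neutralized, the $L^2 \to L^2$ bound of \eqref{eq:SStrichartz-bounds} gives an operator-norm estimate $\normSch{\Theta_{\bullet}}{\infty}{} \leq C\norm{W}_{L^\infty_t L^\infty_x}^2$; at the opposite boundary, the pointwise kernel control $|U(t,r)U(s,r)^{\ast}(x,y)| \leq Ch^{-n/2}(h+|t-s|)^{-n/2}$ from \eqref{eq:SStrichartz-bounds}, combined with a direct Hilbert--Schmidt computation, yields $\normSch{\Theta_{\bullet}}{2}{} \leq Ch^{-n}\norm{W}_{L^2_t L^2_x}^2$ after integrating the resulting time weight against the compact support imposed by $\psi$. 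Applying Theorem \ref{thm:complex-interpol-schatten} at the interpolation parameter $\theta$ solving $1/\alpha = \theta/2$ produces the sought $\schatten^{\alpha}$ bound on the mixed norm $L^{p(q)}_t L^{2(q/2)'}_x$ with the correct power of $h$ throughout the non-endpoint range $q < 2(n+1)/(n-1)$; the sharper time-exponent $p(q) < \alpha$ in the regime $q > 2 + 4/n$ is recovered by a Young-type convolution estimate on the integrable time weight $(h+|t-s|)^{-\theta n/2}$.

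The main obstacle is the endpoint $q = 2(n+1)/(n-1)$, at which $(h+|t-s|)^{-\theta n/2}$ is borderline non-integrable and the direct interpolation produces a logarithmic loss. I would handle it via a Keel--Tao dyadic decomposition in $|t-s|$: split $\R = \bigsqcup_{j=0}^{J_h} \{2^j h \leq |t-s| < 2^{j+1}h\}$ with $J_h = O(\log(1/h))$ nontrivial scales before $\operatorname{supp}\psi$ is exhausted, apply the previous interpolation block by block (the dispersive factor simplifying to $(2^j h)^{-n/2}$), and sum the $\schatten^{\alpha}$ contributions via the triangle inequality (since $\alpha \geq 1$). A further interpolation between the Hilbert--Schmidt and operator-norm estimates on each dyadic piece, in the spirit of Keel--Tao, refines the summation from $\log(1/h)$ to the stated $\log(1/h)^{1/(n+1)}$. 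The remaining technical care is verifying Stein analyticity of $\{\Theta_z\}$ and the exponential control along vertical lines, both of which can be extracted from the explicit WKB / Fourier integral parametrix underlying Theorem \ref{thm:SStrichartz-bounds}.
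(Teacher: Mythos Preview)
Your overall strategy—a $TT^{\ast}$ reduction followed by Stein complex interpolation in Schatten spaces—is exactly what the paper does, but the analytic family you propose is the wrong one, and this creates a genuine gap.

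The paper does not insert powers of $(h+|t-s|)$. It takes the Riesz-type distribution $(t-s+i0)^z$, setting
\[
T_z(t,s,r)=(t-s+i0)^z\,U(t,r)U(s,r)^{\ast},
\]
and interpolates between $\Re z=-1$ and $\Re z=\beta\geq (n-1)/2$, so that the original operator $WT_0\bar W=WUU^{\ast}\bar W$ is recovered at the \emph{interior} point $z=0$. Your strip $\{0\leq\Re z\leq 1\}$ with $\Theta_0=T_rT_r^{\ast}$ places the object you want to estimate at a \emph{boundary} of the strip, where interpolation returns only the endpoint ($\schatten^\infty$) bound; the $\schatten^\alpha$ conclusion at the interior $\theta$ would bound $\Theta_\theta$, which carries an extra factor $(h+|t-s|)^{c\theta}$ and is not the operator you need. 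If instead you shift the family so that the original sits at an interior point, the left boundary must lie at some $\Re z_0<0$, and there your weight $(h+|t-s|)^{z_0}$ offers no Plancherel structure: an operator-norm bound can only come from Schur/Young on the time-convolution kernel $(h+|\cdot|)^{\Re z_0}$, whose $L^1$-norm over the compact support of $\psi$ is uniform in $h$ only for $\Re z_0>-1$. Interpolating from any such $z_0>-1$ to $\beta\geq(n-1)/2$ and landing at $0$ gives Schatten exponent $2(\beta-z_0)/(-z_0)>2(\beta+1)\geq n+1$ on $T_0$, so you strictly miss the endpoint $q=2(n+1)/(n-1)$. The paper's choice $(t-s+i0)^z$ is precisely what rescues $\Re z=-1$: by the Gelfand--Shilov formula the Fourier transform of $(t+i0)^{-1+i\sigma}$ is $\sqrt{2\pi}\,e^{i(-1+i\sigma)\pi/2}\Gamma(1-i\sigma)^{-1}\omega_+^{-i\sigma}$, which is bounded in $\omega$ with growth $e^{\sigma\pi/2}$ in $\sigma$; writing $T_{-1+i\sigma}$ as a time convolution and passing to frequency then yields the operator-norm bound by Plancherel and the $L^2\to L^2$ estimate on $U(t,r)^{\ast}$.

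Your Keel--Tao proposal for the endpoint is also not what the paper does, and the mechanism by which a further dyadic interpolation would ``refine $\log(1/h)$ to $\log(1/h)^{1/(n+1)}$'' is not explained. In the paper the logarithm appears much more directly: at the $\schatten^2$ boundary $\beta=(n-1)/2$ the Hilbert--Schmidt computation produces the time integral $\int_{-2\delta}^{2\delta}|t|^{n-1}(h+|t|)^{-n}\,dt\lesssim\log(1/h)$, so that $\|W_1T_{\beta+i\sigma}W_2\|_{\schatten^2}\lesssim h^{-n/2}\log(1/h)^{1/2}\|W_1\|\,\|W_2\|$ (with Hardy--Littlewood--Sobolev replacing Young for $(n-1)/2<\beta<n/2$ to capture the sharper time exponent $p(q)$). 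Interpolation at $z=0$ carries $\theta=1/(\beta+1)=2/(n+1)$, and raising the constant to the power $\theta$ yields $\log(1/h)^{1/(n+1)}$ immediately—no dyadic decomposition is needed. Finally, appealing to the WKB parametrix for Stein analyticity is unnecessary: analyticity of $z\mapsto(t-s+i0)^z$ in $\schwartzprime$ together with the explicit $e^{\sigma\pi/2}$ growth from the $\Gamma$-function formula is all that Theorem~\ref{thm:complex-interpol-schatten} requires.
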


	\begin{proof}[\underline{Proof of Theorem \ref{thm:SStrichartz-matdens}}]
	Fix $r\in J$.
	Let $z\in\C$.
	For all $t,s\in\R$, let us define the operator $T_z(t,s,r)$ on $L^2 (\R^n)$ by
	\begin{equation*}\label{def:gene-matdens-T_z(t,s)}
		T_z(t,s,r) := (t-s+i0)^z U(t,r)U(s,r)^*.
	\end{equation*}
	Let the operator $T_z(r)$ acting on functions on $\R^{n+1}$ be defined by
	\begin{equation*}
		\forall (f,g)\quad
		\prodscal{f}{T_z(r)g}_{L^2(\R^{n+1})}
		= \int_{\R\times\R} \prodscal{f(t)}{T_z(t,s,r)g(s)}_{L^2_x(\R^n)} dt ds.
	\end{equation*}
	Defining $A=WU(t,r): L^2(\R^n)\to L^2(\R^{n+1})$, we notice that we have $AA^*= WT_0(r)\bar{W}$ so that the bound in the theorem will follow from estimating $WT_0(r)\bar{W}$ in $\schatten^{\big(\frac{2q}{q+2}\big)'}(L^2(\R^{n+1}))$.
	We will use the following properties of the distribution $m_z(t) =(t+i0)^z$, which can be found in \cite[Chap. I, Sec. 3.6]{gelf'and1964generalized}:
	\begin{itemize}
		\item[$\diamond$] the family
		$\{m_z\}_{z\in\C}\subset\schwartzprime(\R)$ is analytic and for any $z\in\C$ each $m_z$ admits a Fourier transform with this expression
		\begin{equation}\label{eq:gene-matdens-(t+i0)^z}
			\fourier((t+i0)^z))(\omega) = \frac{ \sqrt{2\pi} e^{iz\pi/2} }{\Gamma(-z)}\omega_+^{-z-1} .
		\end{equation}
		\item[$\diamond$] Let $z=-1+i\sigma$ with $\sigma\in\R$, then $\hat{m}_z$ is bounded and
		\begin{equation}\label{eq:gene-matdens-(t+i0)^{-1+is}}
			\normLp{\hat{m}_{-1+i\sigma}}{\infty}{(\R)} \leq\sqrt{\frac 2\pi }e^{\sigma\pi/2} 
			.
		\end{equation}
		\item[$\diamond$]  When the real part of $z$ is strictly greater than $-1$, $m_z$ is in $L^1_{\loc}(\R)$ and
		\begin{equation*}\label{eq:gene-matdens-(t+i0)^z-bis}
			\abs{m_z(t)} \leq \abs{t}^{\Re z} .
		\end{equation*}
	\end{itemize}
	We will obtain bounds on $W_1 T_0(r) W_2$ for all simple functions $W_1$ and $W_2$ using Theorem \ref{thm:complex-interpol-schatten}, estimating the operator $W_1 T_z(r) W_2$ in 
	\begin{itemize}
		\item the operator norm for $\Re z= -1$
		,
		\item the $\schatten^2$-norm for $\Re z=\beta\geq\frac{n-1}{2}$
		.
	\end{itemize}

	\subparagraph[Step 1]{Step 1. Operator norm bounds.}\label{proof:step1:gene-matdens-complex-interpol}
	
	Let us prove that 
	there exists $C>0$ such that for any simple functions $W_1,W_2$ on $\R^{n+1}$ and for any $\sigma\in\R$
	\begin{equation}\label{eq:gene-matdens-sch-infty}
		\sup_{r\in J}
		\norm{W_1T_{-1+i\sigma}(r) W_2}_{L^2(\R^{n+1})\to L^2(\R^{n+1})} \leq C e^{\sigma\pi/2}\normLp{W_1}{\infty}{(\R^{n+1})}\normLp{W_2}{\infty}{(\R^{n+1})}.
	\end{equation}
	
	Let $\sigma\in\R$ and $F,G$ be functions $\test{\R^{n+1}}\subset L^2(\R^{n+1})$. We can write
		\begin{align*}
			&
			\prodscal{F}{T_{-1+i\sigma}(r) G}_{L^2_{t,x}}
			\\&\qquad
			= \int_{\R\times \R}\prodscal{F(t)}{ T_{-1+i\sigma}(t,s,r)  G(s)}_{L^2_x}  dt ds
			\\&\qquad
			= \int_{\R\times \R} \prodscal{ F(t)}{  (t-s +i 0)^{-1+i\sigma} U(t,r) U(s,r)^*  G(s)}_{L^2_x}  dt ds
			\\&\qquad
			= \int_{\R\times \R} (t-s +i 0)^{-1+i\sigma} \prodscal{U(t,r)^* F(t)}{U(s,r)^* G(s)}_{L^2_x}  dt ds
			.
		\end{align*}
		Define the functions $f,g$ by
		\begin{align*}
			f(t,x;r) := (U(t,r)^* F(t,\cdot))(x)
			\quad\text{ and }\quad 
			g(t,x;r) := (U(t,r)^* G(t,\cdot))(x)
			.
		\end{align*}
		By the $L^2\to L^2$ bound of \eqref{eq:SStrichartz-bounds} 
		\begin{equation*}
			\sup_{t\in\R}\sup_{r\in J}\norm{U(t,r)^*}_{L^2_x\to L^2_x} \lesssim 1
			,
		\end{equation*}
		so that the previous functions satisfy the bounds
		\begin{align*}
			\sup_{r\in J}\norm{f(t,x;r)}_{ L^2_{t,x}(\R^{n+1})}
			&\lesssim \norm{F}_{L^2_{t,x}(\R^{n+1})},
			\\\sup_{r\in J}\norm{g(t,x;r)}_{L^2_{t,x}(\R^{n+1})}
			&\lesssim \norm{G}_{L^2_{t,x}(\R^{n+1})}.
		\end{align*}
		We now write everything with the Fourier transform in the time variable, with $m_z(t):=(t+i0)^z$
		\begin{align*}
			&
			\prodscal{F}{T_{-1+i\sigma}(r) G}_{L^2_{t,x}}
			\\&\quad
			= \int_{\R\times\R}  m_{-1+i\sigma}(t-s) \prodscal{f(t;r)}{g(s;r)}_{L^2_x}  dt ds
			\\&\quad
			= \sqrt{2\pi} \int_\R \hat{m}_{-1+i\sigma}(\omega) \prodscal{\hat{f}(\omega;r)}{\hat{g}(\omega;r)}_{L^2_x} d\omega .
		\end{align*}
		Hence, by the Cauchy-Schwarz inequality
		\begin{align*}
			\forall r\in J\quad &
			\abs{ \prodscal{F}{ T_{-1+i\sigma}(r)  G}_{L^2_{t,x}} }
			\\&\quad
			\leq \sqrt{2\pi} \norm{\hat{m}_{-1+i\sigma}}_{L^\infty(\R)} \norm{\fourier_t(f)}_{L^2_{\omega,x}(\R^{n+1})}\norm{\fourier_t(g)}_{L^2_{\omega,x}(\R^{n+1})}
			\\&\quad
			\leq\sqrt{2\pi} \norm{\hat{m}_{-1+i\sigma}}_{L^\infty(\R)} \norm{f}_{L^2_{t,x}(\R^{n+1})}\norm{g}_{L^2_{t,x}(\R^{n+1})}
			\\&\quad
			\leq C \norm{\hat{m}_{-1+i\sigma}}_{L^\infty(\R)} \norm{F}_{L^2_{t,x}(\R^{n+1})}\norm{G}_{L^2_{t,x}(\R^{n+1})}
			.
		\end{align*}
		Finally,
		$\hat{m}_{-1+i\sigma}\in L^\infty(\R)$ and we have the bound \eqref{eq:gene-matdens-(t+i0)^{-1+is}}. Hence, we deduce a bound on $T_{-1+i\sigma}:L^2(\R^{n+1})\to L^2(\R^{n+1})$, from which we deduce \eqref{eq:gene-matdens-sch-infty}.
	
	\subparagraph[Step 2]{Step 2. Schatten $\schatten^2$-bounds.}\label{proof:step2:gene-matdens-complex-interpol}
	
	Let $\beta\geq \tfrac{n-1}2$.
	Let us prove that 
	there exists $C>0$ such that for any $z\in\C$ with $\Re z=\beta$, and any simple funtions $W_1,W_2$ on $\R^{n+1}$
	\begin{equation}\label{eq:gene-matdens-sch-2}
		\begin{split}
		\sup_{r\in J}&
		\normSch{W_1 T_z(r) W_2}{2}{(L^2(\R^{n+1}))}
		\\&\quad
		\leq C
		\norm{W_1}_{L_t^{\frac{2}{1+\left(\beta-\frac n2\right)_-}} L_x^{2}(\R^{n+1})} \norm{W_2}_{L_t^{\frac{2}{1+\left(\beta-\frac n2\right)_-}} L_x^{2}(\R^{n+1})}
		\times\\&\qquad\times
		\begin{cases}
		\log(1/h)^{1/2}h^{-n/2} &\text{if}\  \beta=\frac{n-1}2,
		\\h^{-n/2}&\text{if}\  \frac{n-1}2 <\beta\leq \infty
		.
		\end{cases}
		\end{split}
	\end{equation}

		By the $L^1\to L^\infty$-bound of \eqref{eq:SStrichartz-bounds}, the integral kernel $T_z(t,s,r)(x,y)$ of $T_z(t,s,r)$ satisfies
		\begin{align*}
			\forall t,s\in\R\quad
			\sup_{r\in J}\norm{T_z(t,s,r)(x,y)}_{L_{x,y}^\infty(\R^n\times\R^n)}
			&= \sup_{r\in J}\norm{T_z(t,s,r)}_{L^1(\R^n)\to L^\infty(\R^n)}
			\\&\lesssim h^{-n/2} \abs{t-s}^{\Re z}(h+\abs{t-s})^{-n/2} 
			.
		\end{align*}
		Thus, we obtain a bound on the $\schatten^2$-norm of $W_1T_z(r) W_2$ for any $ \beta:=\Re z\geq 0$
		\begin{align*}
			\forall r\in J,\quad&
			\normSch{W_1 T_{z}(r) W_2}{2}{(L^2(\R^{n+1}))}^2
			\\&= \int_{\R^{n+1}}\int_{\R^{n+1}} 
			\abs{W_1(t,x) T_z(t,x,s,y;r) W_2(s,y) }^2 dt dx ds dy
			\\&
			\lesssim h^{-n} \int_\R \int_\R \indicatrice{\abs{t-s}<2\delta} \frac{\abs{t-s}^{2\beta}}{(h+\abs{t-s})^n}
			\normLp{W_1(t)}{2}{(\R^n)}^2 \normLp{W_2(s)}{2}{(\R^n)}^2  dt ds 
			\\&\lesssim h^{-n}
			\begin{cases}
				\norm{W_1}_{L_{t,x}^{2}(\R^{n+1})}^2\norm{W_2}_{L_{t,x}^{2}(\R^{n+1})}^2 &\text{if}\  \beta\geq \frac n2,\\
				\norm{W_1}_{L_t^{\frac{2}{1+\beta-\frac n2}} L_x^2(\R^{n+1})}^2 \norm{W_2}_{L_t^{\frac{2}{1+\beta-\frac n2}} L_x^2(\R^{n+1})}^2
				&\text{if}\ \frac {n-1}2<\beta< \frac n2,\\
				\log(1/h) \norm{W_1}_{L_t^{\frac{2}{1+\beta-\frac n2}} L_x^{2 }(\R^{n+1})}^2 \norm{W_2}_{L_t^{\frac{2}{1+\beta-\frac n2}} L_x^{2}(\R^{n+1})}^2
				&\text{if}\ \beta=\frac{n-1}2
				.
			\end{cases}
		\end{align*}
		In the first line, we used $\abs{t-s}^{2\beta}(h+\abs{t-s})^{-n}\lesssim 1$ for $\abs{t-s}<2\delta$. 
		In the second line, we used $\abs{t-s}^{2\beta}(h+\abs{t-s})^{-n}\lesssim \abs{t-s}^{2\beta-n}$ and the Hardy-Littlewood-Sobolev 
		inequality (see for instance \cite[Thm. 4.3]{lieb-loss2001analysis} applied to the functions $\abs{W_1}^2$ and $\abs{W_2}^2$, and to the exponents $p=r=2/(2\beta-n)$ and $\lambda=n-2\beta$).
			In the third line, we used the Young inequality 
		(see for instance \cite[Thm. 4.2]{lieb-loss2001analysis} applied to $(f,g,h)=(\abs{W_1}^2,\abs{W_2}^2,t\mapsto\indicatrice{\abs{t}<2\delta})\abs{t}^{2\beta}(h+\abs{t})^{-n})$ and to the corresponding exponents $p=q=1/(1+\beta-n/2)=2$ and $r=1$)
		and that
		\begin{equation*}
		\int_{-2\delta}^{2\delta } \frac{\abs{t}^{n-1}}{(h+\abs{t})^n}dt\lesssim \log(1/h)
		.
		\end{equation*}
		That ends the proof of \eqref{eq:gene-matdens-sch-2}.

	\subparagraph[Step 3]{Step 3. Conclusion.}\label{proof:step3:gene-matdens-complex-interpol}
	
	Interpolating $z=0$ between $\Re z=-1$ and $\Re z=\beta\geq\frac{n-1}2$,
	by Theorem \ref{thm:complex-interpol-schatten}, we get
	\begin{align*}
		\sup_{r\in J}
		\normSch{W_1 T_0(r) W_2}{2(\beta+1)}{(L^2(\R^{n+1}))}
		&\lesssim 
		\norm{W_1}_{L_t^{\frac{2 (\beta+1)}{1+\left(\beta-\frac n2\right)_-}} L_x^{2 (\beta+1) }(\R^{n+1})}  \norm{W_2}_{L_t^{\frac{2 (\beta+1)}{1+\left(\beta-\frac n2\right)_-}} L_x^{2 (\beta+1) }(\R^{n+1})}
		\times\\&\quad\times
		\begin{cases}
			h^{-\frac{n}{2(\beta+1)}} &\text{if}\  \beta>\frac{n-1}2,\\
			\log(1/h)^{\frac 1{n+1}}h^{-\frac 1{n+1}} &\text{if}\  \beta=\frac{n-1}2
			.
			\end{cases}
	\end{align*}
	Defining $q\geq 2$ such that $2(q/2)'=2(\beta+1)$, we have the desired estimates for all $ 2 \leq q \leq\tfrac{2(n+1)}{n-1}$.
	That ends the proof of Theorem \ref{thm:SStrichartz-matdens}.
	\end{proof}
	

	\subsection{Relations between various estimates on quasimodes}\label{sec:abstract-thm}
	
	Below, we will see several estimates of type \eqref{eq-intro:Lp-est-micr-manybody} depending on how the phase space localization is made. Here, we explain how to relate these different estimates.
	
	Let $d\geq 1$. Let $m$ be an order function on $\R^d\times\R^d$, $p\in S(m)$ and $P:= p^\w(x,hD)$ (or any other quantization).
	In the following, we will consider parameters $q\in[2,\infty]$, $s$, $t\geq 0$ and $\alpha\geq 1$ satisfying
	\begin{equation}\label{eq:worse-than-ellip}
		s\geq d\left(\frac 12 -\frac 1q\right) -1
		\quad\text{and}\quad
		\alpha\leq \frac q2.
	\end{equation}

	\begin{rmk}
		The previous assumption states that an estimate with a bound
		\begin{equation*}
			\normLp{\rho_{\chi^\w\gamma\chi^\w}}{q/2}{}\leq Ch^{-2s}\log(1/h)^{2t}\normSch{(1+P^*P/h^2)^{1/2}\gamma(1+P^*P/h^2)^{1/2}}{\alpha}{}
		\end{equation*}
		for $\gamma$ a bounded non-negative operator and $\chi\in\test{\R^d\times\R^d}$,
		 with $(q,s,t,\alpha)$ satisfying \eqref{eq:worse-than-ellip}, is worse than the elliptic one, i.e.\ for which there is equality case of \eqref{eq:worse-than-ellip}. Such an elliptic estimate is proved in Theorem \ref{thm:ELp-elliptic-matdens}.
		We should insist on the fact that the density $\rho_{\chi^\w\gamma\chi^\w}$ is well-defined, thanks to the assumptions of $\gamma$ and $\chi$ (see Lemma \ref{lemma:pre-mercer_loc} below).
		Furthermore, these results will be applied in this paper to values $(s,\alpha)$ that always satisfy the relation \eqref{eq:worse-than-ellip}, as evidenced by Figures \ref{fig:comp-exp-s} and \ref{fig:comp-exp-alpha}.
	\end{rmk}

	Notice also the equivalence of the following weighted $L^2$-norms
	\begin{equation*}
		\normLp{(1+P^*P/h^2)^{1/2}u}{2}{(\R^d)} \leq \normLp{u}{2}{(\R^d)}+\frac 1 h \normLp{Pu}{2}{(\R^d)} \leq \sqrt{2}\normLp{(1+P^*P/h^2)^{1/2}u}{2}{(\R^d)}  .
	\end{equation*}
	can be extended to density matrices.
	\begin{lemma}\label{lemma:equiv-norm-P-matdens}
		Let $m$ be an order function. Let $p\in S(m)$ and $P:=p^\w(x,hD)$. For any $\alpha\geq 1$ and non-negative density matrix $\gamma$ on $ L^2(\R^d)$
		\begin{equation*}
		\begin{split}
		\normSch{(1+P^*P/h^2)^{1/2}\gamma(1+P^*P/h^2)^{1/2}}{\alpha}{}
		&\leq \normSch{\gamma}{\alpha}{}+\frac 1{h^2}\normSch{P^*\gamma P}{\alpha}{}
		\\&
		\leq 2\normSch{(1+P^*P/h^2)^{1/2}\gamma(1+P^*P/h^2)^{1/2}}{\alpha}{}	.
		\end{split}
	\end{equation*}
\end{lemma}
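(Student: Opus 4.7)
Write $A := (1+P^{\ast}P/h^{2})^{1/2}$, a self-adjoint positive operator, and $T := \gamma^{1/2}$, which is self-adjoint non-negative since $\gamma\ge 0$ is assumed bounded. The cornerstone of the proof is the standard identity
\[
\normSch{S}{2\alpha}{}^{2} = \normSch{S^{\ast}S}{\alpha}{} = \normSch{SS^{\ast}}{\alpha}{},
\]
valid for any compact operator $S$. Applying it to $S=AT$, and using that $T$ and $A$ are self-adjoint, gives
\[
\normSch{A\gamma A}{\alpha}{}
= \normSch{(AT)(AT)^{\ast}}{\alpha}{}
= \normSch{AT}{2\alpha}{}^{2}
= \normSch{TA^{2}T}{\alpha}{}
= \normSch{\gamma + T P^{\ast}P T/h^{2}}{\alpha}{}.
\]
This identity will drive both inequalities.

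For the \emph{upper bound}, I apply the triangle inequality for $\normSch{\cdot}{\alpha}{}$ to the last expression, obtaining
\[
\normSch{A\gamma A}{\alpha}{} \le \normSch{\gamma}{\alpha}{} + \tfrac{1}{h^{2}}\normSch{T P^{\ast}P T}{\alpha}{}.
\]
Applying the $\normSch{\cdot}{2\alpha}{}^{2}$-identity once more to $S = PT$ turns $T P^{\ast}P T = (PT)^{\ast}(PT)$ into $(PT)(PT)^{\ast}=P\gamma P^{\ast}$; under the standing self-adjointness of $P$ (the case used in the paper, $P=-h^{2}\Delta+V$ with real $V$), $P\gamma P^{\ast} = P^{\ast}\gamma P$, yielding the stated first inequality.

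For the \emph{lower bound}, I rely on operator monotonicity of Schatten norms: whenever $0\le X\le Y$ are self-adjoint, one has $\normSch{X}{\alpha}{}\le\normSch{Y}{\alpha}{}$ (via the min-max/Weyl comparison of eigenvalues). Since $1\le A^{2}$ and $P^{\ast}P/h^{2}\le A^{2}$, conjugating by the self-adjoint $T$ preserves order, so
\[
\gamma = T\cdot 1\cdot T \le TA^{2}T \qquad\text{and}\qquad TP^{\ast}PT/h^{2}\le TA^{2}T.
\]
By the key identity of the first paragraph, $\normSch{TA^{2}T}{\alpha}{}=\normSch{A\gamma A}{\alpha}{}$. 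Monotonicity therefore gives both $\normSch{\gamma}{\alpha}{}\le\normSch{A\gamma A}{\alpha}{}$ and $\tfrac{1}{h^{2}}\normSch{P^{\ast}\gamma P}{\alpha}{}\le\normSch{A\gamma A}{\alpha}{}$; summing yields the factor $2$.

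\paragraph{Expected obstacles.} The proof is essentially a bookkeeping exercise built on the $\normSch{S}{2\alpha}{}^{2}=\normSch{S^{\ast}S}{\alpha}{}$ identity and monotonicity of Schatten norms under operator ordering. The only mildly delicate point is that one must work with $\gamma^{1/2}$, so some care is needed to make sure that the cyclic manipulations and the operator inequalities make sense at the level of unbounded operators (here it is fine because $\gamma$ is a bounded non-negative operator, hence $\gamma^{1/2}$ is bounded and the products $PT$, $AT$ are well-defined on the dense domain of $P$). There is no genuine analytic obstacle.
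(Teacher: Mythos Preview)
Your proof is correct and follows essentially the same route as the paper's own argument: both rewrite $\normSch{A\gamma A}{\alpha}{}$ as $\normSch{\sqrt{\gamma}A^{2}\sqrt{\gamma}}{\alpha}{}$ via the identity $\normSch{S^{\ast}S}{\alpha}{}=\normSch{SS^{\ast}}{\alpha}{}$ (what the paper calls ``cyclicity of the trace''), then use the triangle inequality for the first bound and operator monotonicity of Schatten norms ($0\le X\le Y\Rightarrow\normSch{X}{\alpha}{}\le\normSch{Y}{\alpha}{}$) for the second. Your observation that the passage from $P\gamma P^{\ast}$ to $P^{\ast}\gamma P$ uses self-adjointness of $P$ is a fair remark; the paper implicitly relies on this as well.
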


	\begin{proof}[Proof of Lemma \ref{lemma:equiv-norm-P-matdens}]
		Let $\gamma$ be a bounded self-adjoint non-negative operator on $L^2(\R^d)$ and let $\alpha\geq 1$.
		On the one hand, by cyclicity of the trace and the triangle inequality
		\begin{align*}
			\normSch{(1+P^*P/h^2)^{1/2}\gamma(1+P^*P/h^2)^{1/2}}{\alpha}{}
			&= \normSch{\sqrt{\gamma}(1+P^*P/h^2)\sqrt{\gamma}}{\alpha}{}
			\\&= \normSch{\gamma+\sqrt{\gamma}\frac{P^*P}{h^2}\sqrt{\gamma}}{\alpha}{}
			\\&\leq \normSch{\gamma}{\alpha}{} +\frac{1}{h^2}\normSch{\sqrt{\gamma}P^*P\sqrt{\gamma}}{\alpha}{}
			%
			=\normSch{\gamma}{\alpha}{}+\frac{1}{h^2}\normSch{P^*\gamma P}{\alpha}{}
			.
		\end{align*}
		On the other hand, since $0\leq \gamma\leq\sqrt{\gamma}(1+P^*P/h^2)\sqrt{\gamma}$ and $0\leq \sqrt{\gamma} P^*P/h^2\sqrt{\gamma} \leq \sqrt{\gamma}(1+P^*P¨/h^2)\sqrt{\gamma}$, we have that
		\begin{align*}
			\normSch{\gamma}{\alpha}{}+\frac{1}{h^2}\normSch{P^*\gamma P}{\alpha}{}
			&=
			\normSch{\gamma}{\alpha}{}+\frac 1{h^2}\normSch{\sqrt{\gamma}P^*P\sqrt{\gamma}}{\alpha}{}
			\\&\leq  2\normSch{\sqrt{\gamma}(1+P^*P/h^2)\sqrt{\gamma}}{\alpha}{}
			= 2\normSch{(1+P^*P/h^2)^{1/2}\gamma(1+P^*P/h^2)^{1/2}}{\alpha}{}.
		\end{align*}
	\end{proof}

	The following is an assumption on $S\subset\R^d\times\R^d$, $q\in[2,\infty]$, $s,t\geq 0$ and $\alpha\geq 1$.
	
	\begin{assump}[Microlocalization around points]\label{ass:microloc}
		The parameters $S\subset\R^d\times\R^d$, $q\in[2,\infty]$, $s,t\geq 0$ and $\alpha\geq 1$ satisfy Assumption \ref{ass:microloc} if 
		they satisfy the hypothesis \eqref{eq:worse-than-ellip} and if
		for all $(x_0,\xi_0)\in S$, there exist a neighborhood $\VR$ of $(x_0,\xi_0)$ and $h_0>0$, such that for all $\chi\in\test{\R^d\times\R^d}$ supported in $\VR$, there exists $C>0$ such that for any $0< h\leq h_0$ and any bounded non-negative operator $\gamma$ on $L^2(\R^d)$
		\begin{equation*}
			\normLp{\rho_{\chi^\w\gamma\chi^\w}}{q/2}{(\R^d)}\leq C \log(1/h)^{2t}h^{-2s}\normSch{(1+P^*P/h^2)^{1/2}\gamma (1+P^*P/h^2)^{1/2}}{\alpha}{}.
		\end{equation*}
	\end{assump}

	\begin{thm}[Microlocalization in a compact]\label{thm:abstract-microloc-extend}
		Let $S\subset\R^d\times\R^d$, $q\in[2,\infty]$, $s,t\geq 0$ and $\alpha\geq 1$ be such that Assumption \ref{ass:microloc} helds.
		Then, for all $\chi\in\test{\R^d\times\R^d}$ supported in $S$, there exists $C>0$ and $h_0>0$ such that for any $0<h\leq h_0$ and any bounded non-negative operator $\gamma$ on $L^2(\R^d)$
		\begin{equation*}
			\normLp{\rho_{\chi^\w\gamma\chi^\w}}{q/2}{(\R^d)}\leq C \log(1/h)^{2t}h^{-2s}\normSch{(1+P^*P/h^2)^{1/2}\gamma (1+P^*P/h^2)^{1/2}}{\alpha}{}.
		\end{equation*}
	\end{thm}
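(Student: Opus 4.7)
The plan is a finite partition-of-unity argument on $\supp \chi$, reducing to the pointwise microlocalization guaranteed by Assumption \ref{ass:microloc}. Since $\supp \chi$ is a compact subset of $S$, for each $(x_0,\xi_0)\in \supp \chi$ Assumption \ref{ass:microloc} supplies a neighborhood $\VR_{(x_0,\xi_0)}$ and an $h_0(x_0,\xi_0)>0$ on which the bound holds for any symbol cut off inside it. By compactness of $\supp\chi$, extract a finite subcover $\VR_1,\dots,\VR_N$, set $h_0:=\min_i h_{0,i}$, and choose a real smooth partition of unity $\{\psi_i\}_{i=1}^N\subset\test{\R^d\times\R^d}$ subordinate to this cover, with $\sum_i\psi_i\equiv 1$ on a neighborhood of $\supp \chi$.

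Set $\chi_i := \psi_i\chi\in\test{\R^d\times\R^d}$, so that $\chi=\sum_i\chi_i$ with $\supp\chi_i\subset\VR_i$. Splitting real and imaginary parts if needed, one can assume each $\chi_i$ is real-valued, so that $\chi_i^\w(x,hD)$ is self-adjoint. Diagonalizing the bounded nonnegative $\gamma=\sum_k\lambda_k\ket{u_k}\bra{u_k}$ with $\lambda_k\geq 0$, the density of $\chi^\w\gamma\chi^\w$ reads
\begin{equation*}
\rho_{\chi^\w\gamma\chi^\w}(x) = \sum_k \lambda_k\,\Bigl|\sum_{i=1}^N\chi_i^\w u_k(x)\Bigr|^2 \leq N\sum_{i=1}^N\sum_k\lambda_k\,|\chi_i^\w u_k(x)|^2 = N\sum_{i=1}^N \rho_{\chi_i^\w\gamma\chi_i^\w}(x),
\end{equation*}
where the inequality is Cauchy--Schwarz applied pointwise in $x$ (well-definedness of each density piece being ensured by Lemma \ref{lemma:pre-mercer_loc}).

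Taking $L^{q/2}$-norms and using the triangle inequality yields
\begin{equation*}
\normLp{\rho_{\chi^\w\gamma\chi^\w}}{q/2}{(\R^d)} \leq N\sum_{i=1}^N \normLp{\rho_{\chi_i^\w\gamma\chi_i^\w}}{q/2}{(\R^d)}.
\end{equation*}
Since each $\chi_i$ is supported in $\VR_i$, Assumption \ref{ass:microloc} applies to $\chi_i$ for every $h\in(0,h_0]$ and every bounded nonnegative $\gamma$, producing constants $C_i$ such that each term is bounded by $C_i\log(1/h)^{2t}h^{-2s}\normSch{(1+P^*P/h^2)^{1/2}\gamma(1+P^*P/h^2)^{1/2}}{\alpha}{}$. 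Summing and setting $C:=N\sum_i C_i$ completes the proof. There is no genuine obstacle here; the only subtlety is the quadratic dependence of $\rho_{A\gamma A}$ on $A$, which is why a Cauchy--Schwarz step is needed to control the cross terms produced by the partition of unity, and which is responsible for the combinatorial factor $N$ entering the final constant.
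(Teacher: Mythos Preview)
Your argument is correct and actually cleaner than the paper's. Both proofs start from the same compactness/partition-of-unity reduction, writing $\chi=\sum_{j}\varphi_j\chi$ with $\supp(\varphi_j\chi)\subset\VR_j$. The difference lies in how the cross terms are handled.

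The paper splits into cases according to $q$. For $q\in[2,\infty)$ it passes to the dual Hilbert--Schmidt formulation of Remark~\ref{rmk:mercer-thm}, bounding $\normSch{W\chi^\w\sqrt{\gamma}}{2}{}$ by $\sum_j\normSch{W(\varphi_j\chi)^\w\sqrt{\gamma}}{2}{}$ via the triangle inequality, and then invokes Assumption~\ref{ass:microloc} on each piece. For $q=\infty$ it expands $\rho_{\chi^\w\gamma\chi^\w}=\sum_{j}\rho_{(\varphi_j\chi)^\w\gamma(\varphi_j\chi)^\w}+2\sum_{\ell<k}\rho_{(\varphi_\ell\chi)^\w\gamma(\varphi_k\chi)^\w}$ and controls the off-diagonal pieces separately via the one-body $L^\infty$ bound applied to $(\varphi_j\chi)^\w(1+P^*P/h^2)^{-1/2}$.

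Your pointwise Cauchy--Schwarz step $\rho_{\chi^\w\gamma\chi^\w}\leq N\sum_i\rho_{\chi_i^\w\gamma\chi_i^\w}$ absorbs the cross terms immediately and treats all $q\in[2,\infty]$ at once; the constant $N$ from Cauchy--Schwarz is harmless. One small caveat: as written you diagonalize a merely bounded nonnegative $\gamma$ as a discrete sum, which is not quite licit. This is easily repaired: either restrict to trace-class $\gamma$ by density (the inequality is vacuous when the right-hand Schatten norm is infinite), or argue kernelwise via $\sqrt{\gamma}$, namely $|\rho_{\chi_i^\w\gamma\chi_j^\w}(x)|\le\rho_{\chi_i^\w\gamma\chi_i^\w}(x)^{1/2}\rho_{\chi_j^\w\gamma\chi_j^\w}(x)^{1/2}$, which yields the same pointwise bound without any spectral decomposition of $\gamma$.
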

	
	\begin{proof}[\underline{Proof of Theorem \ref{thm:abstract-microloc-extend}}]
		Since $\supp\chi$ is compact and is contained on $S$, there exist open sets $\{\VR_j\}_{j=1}^M$ given by Assumption \ref{ass:microloc} such that
		\[ \supp\chi \subset \bigcup_{j=1}^M \VR_j .\]
		Moreover, one can find a partition of unity $1=\sum_{j=1}^M \varphi_j$ on $\supp\chi$ with $\supp\varphi_j\subset \VR_j$.
		Let us treat separatly the different possible cases $q=2$, $q\in(2,\infty)$ and $q=\infty$.
		Note that 
		\begin{align*}
			\rho_{\chi^\w\gamma\chi^\w} &= \sum_{j=1}^M \rho_{(\varphi_j\chi)^\w\gamma(\chi\varphi_j)^\w} +2 \sum_{1\leq \ell< k\leq M}\rho_{(\varphi_\ell\chi)^\w\gamma(\chi\varphi_k)^\w}
			.
		\end{align*}
		By assumption, one has the bound for the $L^\infty$ norm of $\rho_{(\varphi_\ell\chi)^\w\gamma(\chi\varphi_k)^\w}$ when $\ell=k$. Let us show that it is also true when $k\neq\ell$. 
		\begin{align*}
			(\varphi_\ell&\chi)^\w\gamma(\chi\varphi_k)^\w
			\\&= 	(\varphi_\ell\chi)^\w(1+P^*P/h^2)^{-1/2}(1+P^*P/h^2)^{1/2}\gamma(1+P^*P/h^2)^{1/2}(1+P^*P/h^2)^{-1/2}(\chi\varphi_k)^\w
			\\&\leq \norm{(1+P^*P/h^2)^{1/2}\gamma(1+P^*P/h^2)^{1/2}}_{L^2\to L^2} (\varphi_\ell\chi)^\w(1+P^*P/h^2)^{-1}(\chi\varphi_k)^\w
			.
		\end{align*}
		Furthermore, one has for any $x\in\R^d$, by the one-body $L^\infty$ version of Assumption \ref{ass:microloc}
		\begin{align*}
			&\rho_{(\varphi_\ell\chi)^\w(1+P^*P/h^2)^{-1}(\chi\varphi_k)^\w}(x)
			\\&\quad
			= \int_{\R^d}((\varphi_\ell\chi)^\w(1+P^*P/h^2)^{-1/2})(x,y)\overline{((\chi\varphi_k)^\w)(1+P^*P/h^2)^{-1/2}}(x,y)  dy
			\\&\quad
			\leq \norm{(\varphi_\ell\chi)^\w(1+P^*P/h^2)^{-1/2})}_{L^\infty_x L^2_y}\norm{(\varphi_k\chi)^\w(1+P^*P/h^2)^{-1/2})}_{L^\infty_x L^2_y}
			\\&\quad
			\leq \norm{(\varphi_\ell\chi)^\w(1+P^*P/h^2)^{-1/2})}_{L^2\to L^\infty}\norm{(\varphi_k\chi)^\w(1+P^*P/h^2)^{-1/2})}_{L^2\to L^\infty}
			\\&\quad\leq C\log(1/h)^{2t}h^{-2s}\norm{(1+P^*P/h^2)^{1/2}\gamma(1+P^*P/h^2)^{1/2}}_{L^2\to L^2}
			.
		\end{align*}
		Then,
		\begin{equation*}
			\normLp{\rho_{(\varphi_\ell\chi)^\w\gamma(\chi\varphi_k)^\w}}{\infty}{(\R^d)}
			\leq C\log(1/h)^{2t}h^{-2s} \norm{(1+P^*P/h^2)^{1/2}\gamma(1+P^*P/h^2)^{1/2}}_{L^2\to L^2}
			.
		\end{equation*}
		By the triangle inequality, one has the desired bound for the $L^\infty$ norm.
		Furthermore, we have for all $j\in\{1,\ldots,M\}$, bounds on $\normSch{W(\chi\varphi_j)^\w\sqrt{\gamma}}{2}{}$,
		\begin{align*}
		&
		\normSch{W(\chi\varphi_j)^\w\sqrt{\gamma}}{2}{}
		\\&\quad
		\leq C \log(1/h)^{t}h^{-s}\normSch{(1+P^*P/h^2)^{1/2}\gamma (1+P^*P/h^2)^{1/2}}{\alpha}{}^{1/2} \normLp{W}{2(q/2)'}{(\R^d)}
		,
		\end{align*}
		for
		\begin{equation*}
			\begin{cases}
				 W=1 &\text{if}\ q=2,\\
				W\in L^{2(q/2)'}\cap \CR^0(\R^d)&\text{if}\ q\in(2,+\infty)
				.
			\end{cases}
		\end{equation*}
		Hence, by the triangle inequality, we deduce the bound on $\normSch{W\chi^\w\sqrt{\gamma}}{2}{}$ and then we recover the one on $\normLp{\rho_{\chi^\w\gamma\chi^\w}}{q/2}{(\R^d)}$ with the Mercer theorem (Remark \ref{rmk:mercer-thm}).
		That ends the proof of Theorem \ref{thm:abstract-microloc-extend}.
	\end{proof}

	\begin{thm}[Microlocalization and localization in space]\label{thm:abstract-loc-space}
		Let $S\subset\R^d\times\R^d$, $q\in[2,\infty]$, $s,t\geq 0$ and $\alpha\geq 1$ be such that Assumption \ref{ass:microloc} holds.
		Then, for all $\chi\in\test{\R^d\times\R^d}$ and for all set $\Omega\subset\R^d$ such that
		\begin{equation*}
			\supp\chi \cap \Omega\times\R^d \subset \overset{\circ}{S} ,
		\end{equation*}
		there exist $C>0$ and $h_0>0$ such that for any $0<h\leq h_0$ and any bounded non-negative operator $\gamma$ on $L^2(\R^d)$ 
		\begin{equation*}
			\normLp{\rho_{\chi^\w\gamma\chi^\w}}{q/2}{(\Omega)}\leq C \log(1/h)^{2t}h^{-2s}\normSch{(1+P^*P/h^2)^{1/2}\gamma (1+P^*P/h^2)^{1/2}}{\alpha}{}.
		\end{equation*}
	\end{thm}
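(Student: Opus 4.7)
The strategy is to split $\chi$ into a piece $\chi_1$ supported inside $S$, to which Theorem \ref{thm:abstract-microloc-extend} applies directly, and a piece $\chi_2$ whose $x$-projection is disjoint from $\Omega$, so that its contribution to the density on $\Omega$ is $\OR(h^\infty)$ by disjoint-support pseudodifferential calculus. Concretely, the set $A:=\supp\chi\setminus\overset{\circ}{S}$ is a compact subset of $\R^d\times\R^d$ whose $x$-projection $\pi_x(A)\subset\R^d$ is, by hypothesis, disjoint from $\Omega$. I would choose $\eta\in\test{\R^d}$ equal to $1$ on an open neighborhood of $\pi_x(A)$ with $\supp\eta$ still disjoint from $\Omega$ (up to a harmless shrinking of $\Omega$ near any contact points of $\pi_x(A)$ with $\partial\Omega$, which is innocuous in the applications of interest), and set $\chi_1:=(1-\eta(x))\chi$ and $\chi_2:=\eta(x)\chi$. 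By construction $\chi_1$ vanishes in a neighborhood of $A$, so $\supp\chi_1\subset\overset{\circ}{S}\subset S$, while $\pi_x(\supp\chi_2)\subset\supp\eta$ is disjoint from $\Omega$.

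Expanding $\chi^\w\gamma\chi^\w=\sum_{i,j\in\{1,2\}}\chi_i^\w\gamma\chi_j^\w$ and applying the triangle inequality in $L^{q/2}(\Omega)$, I would estimate the four terms separately. The diagonal piece is the main contribution: one has trivially
\[
\normLp{\rho_{\chi_1^\w\gamma\chi_1^\w}}{q/2}{(\Omega)}\leq\normLp{\rho_{\chi_1^\w\gamma\chi_1^\w}}{q/2}{(\R^d)},
\]
and Theorem \ref{thm:abstract-microloc-extend} applied to $\chi_1$ provides the desired right-hand side.

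For the three remaining pieces I would use the Mercer-type dualization of Remark \ref{rmk:mercer-thm}, reducing to bounding, for every continuous $W\in L^{2(q/2)'}(\R^d)$ supported in $\Omega$ (together with the $L^\infty$ diagonal case via Lemma \ref{lemma:pre-mercer_loc} for $q=\infty$), the Hilbert--Schmidt norms $\normSch{W\chi_i^\w\sqrt{\gamma}}{2}{}$ and their Cauchy--Schwarz products. The factor $\normSch{W\chi_1^\w\sqrt{\gamma}}{2}{}$ is controlled by Theorem \ref{thm:abstract-microloc-extend} in its dual form. For $\normSch{W\chi_2^\w\sqrt{\gamma}}{2}{}$, the crucial point is that $\supp W$ and $\pi_x(\supp\chi_2)$ are disjoint; approximating $W$ by smooth compactly supported functions in $\Omega$ and viewing such approximations as symbols in $S(1)$ independent of $\xi$, the pointwise product with $\chi_2$ vanishes, so Corollary \ref{cor:SA-comp-op-disj-spt} yields $W\sharp\chi_2=\OR_{\schwartz}(h^\infty)$. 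Writing $W\chi_2^\w=h^Nb_N^\w$ with $b_N$ bounded in $\schwartz$ uniformly in $h$ and inserting $(1+P^*P/h^2)^{-1/2}(1+P^*P/h^2)^{1/2}$ between $\chi_2^\w$ and $\sqrt{\gamma}$, one applies Lemma \ref{lemma:Kato-Seiler-Simon_dual} (or Lemma \ref{lemma:SA-basic-ELp} in the $\alpha=1$ case) and the Hölder inequality in Schatten spaces to obtain a bound of the form $C_Nh^N\normLp{W}{2(q/2)'}{(\R^d)}\normSch{(1+P^*P/h^2)^{1/2}\gamma(1+P^*P/h^2)^{1/2}}{\alpha}{}^{1/2}$ for every $N$, which is easily absorbed into $Ch^{-2s}\log(1/h)^{2t}$.

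The main obstacle I anticipate is the existence of the cutoff $\eta$ with $\supp\eta$ strictly disjoint from $\Omega$ when $\pi_x(A)$ touches $\overline{\Omega}$: this is resolved either by a small geometric thickening of $\pi_x(A)$ away from $\Omega$ (always possible in the applications treated later, where $\Omega$ is an open sublevel or superlevel set of $V$ whose boundary is transverse to the classical set involved) or by a standard limiting argument on an exhaustion of $\Omega$ by compacts. A secondary technicality is the density step needed to apply the disjoint-support symbol calculus to the rough weight $W$, which is handled by approximation by smooth functions. Beyond these points, the proof is a routine glueing of the compact-microlocalization estimate (Theorem \ref{thm:abstract-microloc-extend}) with the classical semiclassical fact that Moyal products of symbols with disjoint supports are $\OR(h^\infty)$ (Corollary \ref{cor:SA-comp-op-disj-spt}).
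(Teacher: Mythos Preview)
Your approach is correct and rests on the same idea as the paper's: insert a spatial cutoff so that the effective symbol over $\Omega$ is supported in $S$, then invoke Theorem~\ref{thm:abstract-microloc-extend}. The organizational choice differs slightly. You split $\chi=\chi_1+\chi_2$ with $\chi_1$ supported in $S$ and $\pi_x(\supp\chi_2)$ disjoint from $\Omega$, then kill the $\chi_2$ contribution on $\Omega$ by disjoint-support calculus, obtaining an $\OR(h^\infty)$ remainder. The paper instead multiplies by a spatial cutoff $\chi_\Omega\in\test{\R^d}$ equal to $1$ on $\Omega$ and supported in a slight enlargement $\tilde{\Omega}$ still satisfying $\supp\chi\cap(\tilde{\Omega}\times\R^d)\subset S$, and writes $\chi_\Omega\chi^\w=(\chi_\Omega\chi)^\w+hr^\w$ via the composition expansion; the first term is handled by Theorem~\ref{thm:abstract-microloc-extend} and the $\OR(h)$ remainder is absorbed directly via Kato--Seiler--Simon and the standing hypothesis \eqref{eq:worse-than-ellip}. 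The paper's route is marginally cleaner in that it avoids the approximation of the rough weight $W$ (the remainder $hr^\w$ is estimated for arbitrary $W\in L^{2(q/2)'}$ without any smoothness), and it only needs one power of $h$ rather than $h^\infty$; conversely, your $\OR(h^\infty)$ gain would survive even without \eqref{eq:worse-than-ellip}. Both arguments share the same boundary subtlety you flag---the existence of the enlargement $\tilde{\Omega}$ (or of your $\eta$) when $\pi_x(\supp\chi\setminus\overset{\circ}{S})$ meets $\overline{\Omega}$---which the paper also leaves implicit and which is harmless in the intended applications. A small simplification of your remainder step: rather than approximating $W$, insert a fixed smooth $\psi$ with $\psi=1$ on $\Omega$ and $\supp\psi\cap\supp\eta=\emptyset$, so that $W\chi_2^\w=W\psi\chi_2^\w$ and Corollary~\ref{cor:SA-comp-op-disj-spt} applies to $\psi$ and $\chi_2$ once and for all.
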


	\begin{proof}[\underline{Proof of Theorem \ref{thm:abstract-loc-space}}]
		Let $\tilde{\Omega}\subset\R^d$ an open bounded set such that $\Omega\subset\overline{\Omega}\subset\tilde{\Omega}$ and such that
		\[ \{(x,\xi)\in\supp\chi \: : \: x\in\tilde{\Omega}\} \subset S .\]
		Let $\chi_\Omega\in\test{\R^d,[0,1]}$ be a function supported in $\tilde{\Omega}$ such that $\chi_\Omega=1$ on $\Omega$.
		We have by the Mercer theorem (more precisely Remark \ref{rmk:mercer-thm}) when $q\in (2,\infty)$
		\begin{align*}
			\normLp{\rho_{\chi^\w\gamma\chi^\w}}{q/2}{(\Omega)}
			&\leq \normLp{\rho_{\chi_\Omega\chi^\w\gamma\chi^\w\chi_\Omega}}{q/2}{(\R^d)}
			%
			\leq \begin{cases}
			\normSch{\chi_\Omega\chi^\w\sqrt{\gamma}}{2}{}^2&\text{if}\ q=2,
			\\
			\displaystyle\sup_{W\in L^{2(q/2)'}\cap\CR^0(\R^d)}\frac{\normSch{W\chi_\Omega\chi^\w\sqrt{\gamma}}{2}{}^2}{\normLp{W}{2(q/2)'}{(\R^d)}^2}&\text{if}\ q\in (2,\infty)
			.
			\end{cases}
		\end{align*}
		There exists $r\in\schwartz(\R^d\times\R^d)$ such that
		\begin{equation*}
			\chi_\Omega\chi^\w = (\chi_\Omega\chi)^\w +hr^\w.
		\end{equation*}
		On the one hand, by the H\"older and Kato-Seiler-Simon inequalities (Lemma \ref{lemma:Kato-Seiler-Simon_dual}) for $m\in\N$ such that $m>\frac d{2(q/2)'} $, for any $N\in\N$
		\begin{align*}&
			h\normSch{Wr^\w\sqrt{\gamma}}{2}{}
			\\&\quad
			\leq h \normSch{W\chi_\Omega(1-h^2\Delta)^{-m}}{2(q/2)'}{}\norm{(1-h^2\Delta)^{m}r^\w}_{L^2\to L^2}\normSch{\sqrt{\gamma}}{q}{}
			\\&\quad
			\leq C h^{1-d\left(\frac 12-\frac 1q\right)}\normLp{W}{2(q/2)'}{(\R^d)}\normSch{\gamma}{q/2}{}^{1/2}
			.
		\end{align*}	
		On the other hand, by Theorem \ref{thm:abstract-microloc-extend} applied to $S$ and $(q,s,t,\alpha)$, there exist $C>0$ and $h_0$ such that for any $0<h\leq h_0$ and any non-negative operator $\gamma$ on $L^2(\R^d)$
		\begin{align*}
			\sup_{W\in L^{2(q/2)'}\cap\CR^0(\R^d)}\frac{\normSch{W(\chi_\Omega\chi)^\w\sqrt{\gamma}}{2}{}^2}{\normLp{W}{2(q/2)'}{(\R^d)}^2}
			&=
			\normLp{\rho_{(\chi_\Omega\chi)^\w\gamma(\chi\chi_\Omega)^\w}}{q/2}{(\R^d)} \\&\leq C\log(1/h)^{2t}h^{-2s}\normSch{(1+P^*P/h^2)^{1/2}\gamma (1+P^*P/h^2)^{1/2}}{\alpha}{}.
		\end{align*}
		Finally, by the triangle inequality, we get the desired inequality.		
	\end{proof}

	\begin{rmk}\label{rmk:abstract-loc-space}
		The above proof shows that the result of Theorem \ref{thm:abstract-loc-space} also holds when $\normLp{\rho_{\chi^\w\gamma\chi^\w}}{q/2}{(\Omega)}$ is replaced by
		$	\normLp{\rho_{\chi^\w\chi_\Omega\gamma\chi_\Omega\chi^\w}}{q/2}{(\R^d)}$ or $	\normLp{\rho_{\chi_\Omega\chi^\w\gamma\chi^\w\chi_\Omega}}{q/2}{(\R^d)}$.
	\end{rmk}

	
	\section{Elliptic estimates}\label{sec:elliptic-est}

	In this section, we state and prove estimates in the elliptic region where $p\neq 0$. In the one-body case ($\rk\gamma=1$), one recovers Lemma \ref{lemma:SA-elliptic-1body}.

	\begin{thm}[Many-body elliptic estimates]\label{thm:ELp-elliptic-matdens}
		Let $d\geq 2$ and $2\leq q\leq \infty$. Let $m$ be an order function on $\R^d\times\R^d$ and $p\in S(m)$. Let $P:= p^\w(x,hD)$ (or any other quantization).
		Let $(x_0,\xi_0)\in\R^d\times\R^d$ be a point such that
		\begin{equation*}
			p(x_0,\xi_0)\neq 0.
		\end{equation*}
		Then, there exist a neighborhood $\VR$ of $(x_0,\xi_0)$ and $h_0>0$, such that for any $\chi\in\test{\R^d\times\R^d}$ with support contained in $\VR$, there  exists $C>0$ such that for any $0< h\leq h_0$, for any bounded non-negative operator $\gamma$ on $L^2(\R^d)$
		\begin{equation*}
			\normLp{\rho_{\chi^\w\gamma\chi^\w}}{q/2}{(\R^d)} \leq C h^{-2s(q,d)}\normSch{(1+P^*P/h^2)^{1/2}\gamma(1+P^*P/h^2)^{1/2}}{\alpha(q,d)}{}
			,
		\end{equation*}
		where the exponents $s$ and $\alpha$ are given by
		\begin{equation}\label{eq-def:exp-s-alpha-ELp-ellip}
			s(q,d)= d\left(\frac 1 2-\frac 1 q\right)-1,\quad\alpha(q,d)=\frac q 2.
		\end{equation}
	\end{thm}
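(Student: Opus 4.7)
Following the dual/Mercer strategy of Section~\ref{sec:abstract-thm}, I would set
\[ A := \chi^\w(x,hD)\,(1+P^*P/h^2)^{-1/2},\qquad \tilde\gamma := (1+P^*P/h^2)^{1/2}\gamma(1+P^*P/h^2)^{1/2}, \]
so that $\chi^\w\gamma\chi^\w = A\tilde\gamma A^*$. For $q\in(2,\infty)$, Mercer's theorem (Remark~\ref{rmk:mercer-thm}) reduces the target inequality to bounding the Hilbert--Schmidt norm $\normSch{WA\sqrt{\tilde\gamma}}{2}{}$ uniformly in $W\in L^{2(q/2)'}\cap\CR^0(\R^d)$. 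Hölder's inequality in Schatten classes with the split $\tfrac12=\tfrac1{2(q/2)'}+\tfrac1q$ gives
\[ \normSch{WA\sqrt{\tilde\gamma}}{2}{}\leq\normSch{WA}{2(q/2)'}{}\,\normSch{\sqrt{\tilde\gamma}}{q}{}, \]
and $\normSch{\sqrt{\tilde\gamma}}{q}{}^{\,2}=\normSch{\tilde\gamma}{q/2}{}$ contributes exactly the Schatten exponent $\alpha(q,d)=q/2$ that appears on the right-hand side of the theorem.

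The key remaining step is the \emph{Strichartz-in-Schatten} upgrade
\[ \normSch{WA}{2(q/2)'}{(L^2(\R^d))}\leq C\,\norm{A}_{L^2\to L^q}\,\normLp{W}{2(q/2)'}{(\R^d)}, \]
which promotes any one-body $L^2\to L^q$ operator bound on $A$ into a weighted Schatten-class bound on $WA$. I would prove this by Stein complex interpolation in the style of Theorem~\ref{thm:complex-interpol-schatten}, between two easy endpoints. At $q=2$ (so $2(q/2)'=\infty$) it reduces to the trivial multiplication estimate $\norm{WA}_{L^2\to L^2}\leq\normLp{W}{\infty}{}\norm{A}_{L^2\to L^2}$. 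At $q=\infty$ (so $2(q/2)'=2$) a direct computation of the Hilbert--Schmidt norm from the integral kernel gives
\[ \normSch{WA}{2}{}^{\,2}=\int_{\R^d}|W(x)|^2\,\norm{A(x,\cdot)}_{L^2}^2\,dx\leq\normLp{W}{2}{(\R^d)}^{\,2}\,\norm{A}_{L^2\to L^\infty}^2, \]
where the identity $\sup_x\norm{A(x,\cdot)}_{L^2}=\norm{A}_{L^2\to L^\infty}$ follows from Cauchy--Schwarz in the very definition of the $L^2\to L^\infty$ operator norm. The intermediate exponents are then obtained by interpolating with Stein's analytic family of weights $W_z:=|W|^{cz}\sgn(W)$ for the appropriate constant~$c$.

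Once this Schatten upgrade is in hand, the one-body elliptic estimate of Lemma~\ref{lemma:SA-elliptic-1body} applied to $A$ (legitimate upon shrinking the neighborhood $\VR$ around $(x_0,\xi_0)$ so that $p\neq0$ throughout it) gives $\norm{A}_{L^2\to L^q}\leq C\,h^{1-d(1/2-1/q)}=C\,h^{-s(q,d)}$, and collecting the previous steps produces the announced inequality for $q\in(2,\infty)$. The endpoints are treated directly: for $q=2$ the trace identity yields $\normLp{\rho_{\chi^\w\gamma\chi^\w}}{1}{(\R^d)}=\tr(A\tilde\gamma A^*)\leq\norm{A}_{L^2\to L^2}^2\,\normSch{\tilde\gamma}{1}{}\lesssim h^{2}\,\normSch{\tilde\gamma}{1}{}$; for $q=\infty$, the pointwise bound $\rho(x)\leq\norm{A(x,\cdot)}_{L^2}^2\,\normSch{\tilde\gamma}{\infty}{}$ on the diagonal of the kernel of $A\tilde\gamma A^*$, combined with the same supremum identity, gives $\normLp{\rho}{\infty}{(\R^d)}\lesssim h^{2-d}\,\normSch{\tilde\gamma}{\infty}{}$, matching $\alpha(2,d)=1$ and $\alpha(\infty,d)=\infty$ respectively. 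I expect the main technical obstacle to be the careful setup of the complex interpolation step, since Theorem~\ref{thm:complex-interpol-schatten} is stated for two-sided weightings $W_1 T_z W_2$ and must be adapted (for instance via a Frank--Sabin-style truncation of a dummy second weight) to the single-weight Schatten bound required here.
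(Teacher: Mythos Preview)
Your approach is essentially the same as the paper's: Mercer duality to reduce to bounding $\normSch{WA}{2(q/2)'}{}$ with $A=\chi^\w(1+P^*P/h^2)^{-1/2}$, then the two endpoint bounds (operator norm at $\alpha=\infty$ via the one-body $L^2\to L^2$ elliptic estimate, Hilbert--Schmidt at $\alpha=2$ via the kernel identity and the one-body $L^2\to L^\infty$ estimate), followed by interpolation. The paper carries this out with exactly the same endpoints and simply says ``by interpolation'' for the intermediate Schatten exponents, while you make the analytic family $W_z$ explicit.

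One small correction: the ``Strichartz-in-Schatten upgrade'' you state, $\normSch{WA}{2(q/2)'}{}\leq C\norm{A}_{L^2\to L^q}\normLp{W}{2(q/2)'}{}$, is not what the interpolation actually produces and is not true for general $A$. Interpolating between your two endpoints yields the geometric mean $\norm{A}_{L^2\to L^2}^{2/q}\norm{A}_{L^2\to L^\infty}^{1-2/q}$ on the right, which by Riesz--Thorin \emph{dominates} $\norm{A}_{L^2\to L^q}$ rather than being dominated by it. This does not affect the proof: you should simply insert the elliptic one-body bounds $\norm{A}_{L^2\to L^2}\lesssim h$ and $\norm{A}_{L^2\to L^\infty}\lesssim h^{1-d/2}$ directly at the endpoints and interpolate the resulting $h$-dependent constants, exactly as the paper does, rather than passing through $\norm{A}_{L^2\to L^q}$. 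With that adjustment your argument is correct and matches the paper's.
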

	
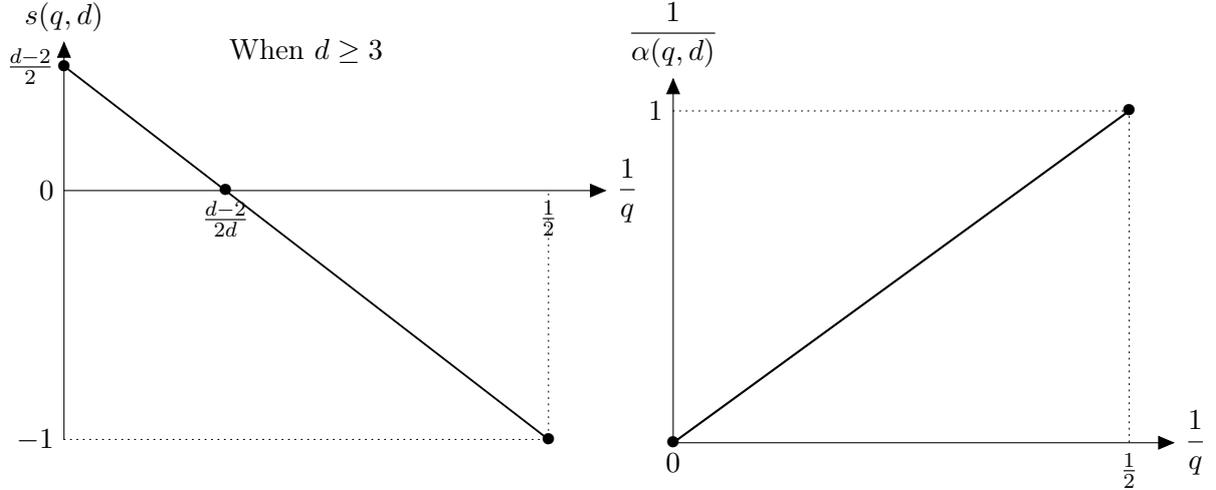
\begin{figure}[!h]
	\begin{multicols}{2}
		
		\begin{center}\begin{tikzpicture}[line cap=round,line join=round,>=triangle 45,x=8.5 cm,y=2.2 cm,scale=1.5]
			d=3
			\draw[->] (0.,0.) -- (0.56,0.);
			\draw (0.56,0.) node[right] {$\displaystyle{\frac{1}{q}}$};
			\draw[->] (0.,-1) -- (0.,1/2+0.1);
			\draw (0.,1/2+0.1) node[above] {$\displaystyle{s(q,d)}$};
			%
			%
			\draw (0,0) node[left]{$0$};
			\draw (1/6,0.) node[below]{$\frac{d-2}{2d}$};
			\draw (0.5,0) node[below]{$\frac 12$};
			%
			\draw (0,-1) node[left]{$-1$};
			\draw (0,0.5) node[left]{$\frac{d-2}2$};
			%
			\draw[line width=0.7pt](0,0.5)--(0.5,-1);
			%
			%
			\draw (0,0.5) node{$\bullet$};
			\draw (1/6,0.) node{$\bullet$};
			\draw (0.5,-1)node{$\bullet$};
			\draw[dotted] (0, -1)-|(0.5, 0.);
			\draw (1/4,0.56) node{When $d\geq 3$} ;
			\end{tikzpicture}\end{center}
		
		\begin{center}\begin{tikzpicture}[line cap=round,line join=round,>=triangle 45,x=6.0cm,y=2.2 cm,scale=2.]
			d=3
			\draw[->] (0.,0.) -- (0.55,0.);
			\draw (0.55,0.) node[right] {$\displaystyle{\frac{1}{q}}$};
			\draw[->] (0.,0.) -- (0.,1.1);
			\draw (0.,1.1) node[above] {$\displaystyle{\frac 1{\alpha(q,d)}}$};
			%
			%
			\draw (0,0.) node[below]{$0$}; %
			\draw (0.5,0.) node[below]{$\frac{1}{2}$};
			%
			%
			\draw (0.,1) node[left]{1};
			%
			\draw[color=black] (0.,0) node {$\bullet$};
			\draw[color=black] (0.5,1) node {$\bullet$};
			\draw[dotted] (0, 1)-|(0.5, 0.);
			%
			%
			\draw[line width=0.8pt] (0.,0) --(0.5,1);
			\end{tikzpicture}\end{center}
	\end{multicols}
	
	\caption{Exponent $s(q,d)$ and $\alpha(q,d)$ for elliptic estimates.}
	\label{fig:exp-s-alpha_ellip}
\end{figure}
	
	\begin{proof}[\underline{Proof of Theorem \ref{thm:ELp-elliptic-matdens}}]
		There exists a neighborhood $\VR$ of $(x_0,\xi_0)$ where $p$ is non-zero. 
		We have by Mercer theorem
		\begin{align*}
			&\normLp{\rho_{\chi^\w\gamma\chi^\w}}{q/2}{(\R^d)}
			%
			\\&\quad
			\leq \sup_{W\in L^{2(q/2)'}\cap\CR^0(\R^d)}\frac{ \tr_{L^2}(W\chi^\w\gamma\chi^\w W) } {\normLp{W}{2(q/2)'}{(\R^d)}^2}
			\\&\quad
			\leq \sup_{W\in L^{2(q/2)'}\cap\CR^0(\R^d)}\frac{ \normSch{W\chi^\w(1+P^*P/h^2)^{-1/2}}{2(q/2)'}{}^2 } {\normLp{W}{2(q/2)'}{(\R^d)}^2} \normSch{(1+P^*P/h^2)^{1/2}\gamma(1+P^*P/h^2)^{1/2}}{q/2}{}
			.
		\end{align*}
		It remains to prove for any $2\leq q\leq\infty$
		\begin{equation*}
			\normSch{W\chi^\w(1+P^*P/h^2)^{-1/2}}{2(q/2)'}{} \lesssim h^{1-d\left(\frac 12-\frac 1q\right)}\normLp{W}{2(q/2)'}{(\R^d)}
		,
		\end{equation*}
		i.e.\ for any $2\leq \alpha\leq\infty$
		\begin{equation*}
			\normSch{W\chi^\w(1+P^*P/h^2)^{-1/2}}{\alpha}{} \lesssim h^{1-d /\alpha}\normLp{W}{\alpha}{(\R^d)}
		.
		\end{equation*}
		Let us show the previous bound with $\alpha=2$ and $\alpha=\infty$.
		The proof of Lemma \ref{lemma:SA-elliptic-1body} indeed shows \eqref{eq:SA-elliptic}, that we recall:
		\begin{equation*}
			\forall 2\leq q\leq\infty,\quad
			\chi^\w(1+P^*P/h^2)^{-1/2} =\OR\left(h^{1-d\left(\frac 12-\frac 1q\right)}\right): L^2(\R^d)\to L^q(\R^d)
			.
		\end{equation*}
		\item[\quad]
		The case $\alpha=\infty$ is given by the one function's estimate \eqref{eq:SA-elliptic} applied to $q=2$
		\begin{align*}
			\norm{W\chi^\w(1+P^*P/h^2)^{-1/2}}_{L^2\to L^2}
			&= \norm{W\chi^\w(1+P^*P/h^2)^{-1/2}}_{L^2\to L^2}
			\\&\leq \norm{\chi^\w(1+P^*P/h^2)^{-1/2}}_{L^2\to L^2} \normLp{W}{\infty}{(\R^d)}
			\\&\lesssim h \normLp{W}{\infty}{(\R^d)}
		.
		\end{align*}
		\item[\quad]
		Suppose that $\alpha=2$.
		We write the $\schatten^2$ norm with respect to the integral kernel and use the one function's estimate \eqref{eq:SA-elliptic} applied to $q=\infty$
		\begin{align*}
			\normSch{W\chi^\w(1+P^*P/h^2)^{-1/2}}{2}{}^2 
			&= \int_{\R^d}\int_{\R^d} \abs{\left(W\chi^\w(1+P^*P/h^2)^{-1/2}\right) (x,y)}^2 dx dy
			\\&= \int_{\R^d}\int_{\R^d} \abs{W(x)}^2 \abs{\left(\chi^\w(1+P^*P/h^2)^{-1/2}\right) (x,y)}^2 dx dy
			\\&\leq \normLp{W}{2}{(\R^d)}^2 \sup_{x\in\R^d}\normLp{\left(\chi^\w(1+P^*P/h^2)^{-1/2}\right) (x,\cdot)}{2}{(\R^d)}^2
			\\&\leq \normLp{W}{2}{(\R^d)}^2 \norm{\left(\chi^\w(1+P^*P/h^2)^{-1/2}\right) (x,y)}_{L^\infty_x L^2_y(\R^d\times\R^d)}^2
			\\&\leq \normLp{W}{2}{(\R^d)}^2 \norm{\chi^\w(1+P^*P/h^2)^{-1/2}}_{L^2\to L^\infty}^2
			\\&\lesssim h^{2-d}\normLp{W}{2}{(\R^d)}^2.
		\end{align*}
		We may thus write
		\begin{equation*}
			\normSch{W\chi^\w(1+P^*P¨/h^2)^{-1/2}}{2}{} \lesssim h^{1-d/2} \normLp{W}{2}{(\R^d)} .
		\end{equation*}
		Then, by interpolation between the two previous bounds we get the bounds for all the exponents $2\leq \alpha\leq\infty$.
		Finally, for any $2\leq q\leq\infty$
		\begin{equation*}
			\normLp{\rho_{\chi^\w\gamma\chi^\w}}{q/2}{(\R^d)} \leq C h^{2-2d\left(\frac 12-\frac 1q\right)}\normSch{(1+P^*P/h^2)^{1/2}\gamma(1+P^*P/h^2)^{1/2}}{q/2}{}.
		\end{equation*}
	\end{proof}

	\section{More general L$^p$ estimates}\label{sec:gene}
	
	We now turn to the region $p=0$. We give a general first estimate which holds under the sole assumption that $\partial_\xi^2 p$ is not degenerate. This is particularly useful in the context of Schr\"odinger operators, because this assumption holds without any hypothesis on the potential $V$. In the one-body case ($\rk\gamma=1$), we recover \cite[Thm. 6]{koch2007semiclassical} (up to logarithmic factors which appear in few cases).

	\subsection{Statement of the result}
	
	Let $d\geq 1$.
	Let $m$ be an order function on $\R^d\times\R^d$, $p\in S(m)$ be real-valued and $P:= p^\w(x,hD)$ (the following theorem are also true for any other quantization $P$ of $p$).
	\begin{assump}\label{cond:gene}
		A point $(x_0,\xi_0)\in\R^d\times\R^d$ satisfies the \emph{general non-degeneracy condition} for the symbol $p$ if
		\begin{equation*}
			\partial_\xi^2p(x_0,\xi_0) \text{ is non-degenerate}.
		\end{equation*}
	\end{assump}

	\begin{rmk}
		For Schr\"odinger operators $p(x,\xi)=\xi^2+V(x)-E$ with $V\in\CR^\infty(\R^d,\R)$ bounded from below and satisfying Definition \ref{cond:am-potential-pol-growth}, the previous assumption is satisfied for all $(x_0,\xi_0)\in\R^d\times\R^d$.
	\end{rmk}

	Recall first the one-body result.
		
	\begin{thm}[General one-body estimates, {\cite[Thm. 6]{koch2007semiclassical}}]\label{thm:ELp-gene-1body}
		Let $(x_0,\xi_0)\in\R^d\times\R^d$ be a point satisfying Assumption \ref{cond:gene}.
		Then, there exist a neighborhood $\VR$ of $(x_0,\xi_0)$ and $h_0>0$, such that for any $\chi\in\test{\R^d\times\R^d}$ with support contained in $\VR$, there exists $C>0$ such that for any $0< h\leq h_0$, for any $2\leq q\leq\infty$ and $u\in L^2(\R^d)$,
		\[ \normLp{\chi^\w u}{q}{(\R^d)} \leq C \log(1/h)^{t(q,d)} h^{-s(q,d)} \left(\normLp{u}{2}{(\R^d)}+\frac 1h\normLp{Pu}{2}{(\R^d)}\right) \]
		where $s(q,d)$ and $t(q,d)$ and  are given by the formulas
		\begin{itemize}
			\item
			when $d=1$:
			\begin{equation}\label{eq-def:s,t-d=1-ELp-gene-1body}
				t(q,1)=0 \quad\text{ and }\quad s(q,1)=\frac 12\left(\frac 12-\frac 1q\right),
			\end{equation}
			\item 
			when $d=2$:
			\begin{equation}\label{eq-def:s,t-d=2-ELp-gene-1body}
				t(q,2) = \begin{cases}
							0 &\text{if}\ 2\leq q<\infty,\\
							\frac 1 2 &\text{if}\  q=\infty.
						\end{cases}
			\quad\text{ and }\quad
			s(q,2) = \frac 1 2-\frac 1 q
			,
			\end{equation}
			\item
			when $d\geq 3$: $t(q,d)=0$ and
			\begin{equation}\label{eq-def:s-ELp-gene-1body}
				s(q,d) = 
				\begin{cases}
						\frac d 2\left(\frac 1 2-\frac 1 q\right) &\text{if}\ 2\leq q\leq\frac{2d}{d-2},\\
					d\left(\frac 1 2-\frac 1q\right)-\frac 12 &\text{if}\ \frac{2d}{d-2}\leq q\leq\infty.
				\end{cases}
			\end{equation}
		\end{itemize}
		Equivalently, one has for all $2\leq q\leq\infty$ 
		\begin{equation*}
			\chi^\w{(1+P^*P/h^2)^{-1/2}} =\OR \left(\log(1/h)^{t(q,d)}h^{-s(q,d)}\right) : L^2(\R^d)\to L^q(\R^d).
		\end{equation*}
	\end{thm}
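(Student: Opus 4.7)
The plan is to combine a $TT^*$ argument with the semiclassical dispersive estimates of Theorem \ref{thm:SStrichartz-bounds}, in the spirit of \cite{koch2007semiclassical}. By the one-body analogue of Lemma \ref{lemma:equiv-norm-P-matdens} and the fact that $p$ is real (so $P^* = P$), the desired bound is equivalent to the operator-norm estimate
\[
\chi^\w(1+P^2/h^2)^{-1/2} = \OR\bigl(\log(1/h)^{t(q,d)} h^{-s(q,d)}\bigr): L^2(\R^d) \to L^q(\R^d).
\]
Squaring and invoking the identity $\|T\|^2 = \|TT^*\|$ reduces the problem to bounding the self-adjoint operator $TT^* = \chi^\w(1+P^2/h^2)^{-1}\chi^\w$ from $L^{q'}(\R^d)$ to $L^q(\R^d)$.

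Next, I would use the functional-calculus representation
\[
(1+P^2/h^2)^{-1} = \tfrac{1}{2}\int_\R e^{-|t|}\, e^{itP/h}\,dt,
\]
obtained from the Fourier formula $\int_\R e^{-|t|}e^{it\tau}\,dt = 2/(1+\tau^2)$ via the spectral theorem. Consequently, $\|TT^*\|_{L^{q'}\to L^q}$ is controlled by $\int_\R e^{-|t|}\|\chi^\w e^{itP/h}\chi^\w\|_{L^{q'}\to L^q}\,dt$. The tail $|t|\geq\delta$ is negligible by the exponential weight combined with a uniform $L^{q'}\to L^q$ bound coming from Calder\'on-Vaillancourt and the semiclassical Sobolev embedding of Lemma \ref{lemma:SA-basic-ELp}.

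For $|t|\leq\delta$, Assumption \ref{cond:gene} matches condition \eqref{cond:SStrichartz-bounds} with $a=p$, so Theorem \ref{thm:SStrichartz-bounds} produces the dispersive bound
\[
\|\chi^\w e^{itP/h}\chi^\w\|_{L^1\to L^\infty} \leq C\, h^{-d/2}(h+|t|)^{-d/2},
\]
up to a routine pseudodifferential step placing $\chi^\w$ on both sides of the propagator (using $(\chi^\w)^2 = (\chi^2)^\w + \OR(h)$ and absorbing the $\OR(h)$ remainder). Interpolating with the trivial $L^2\to L^2$ bound of order one yields
\[
\|\chi^\w e^{itP/h}\chi^\w\|_{L^{q'}\to L^q} \leq C\bigl(h^{-d/2}(h+|t|)^{-d/2}\bigr)^{1-2/q}.
\]
Setting $\gamma := d(1/2-1/q)$, the main contribution becomes $h^{-\gamma}\int_0^\delta e^{-t}(h+t)^{-\gamma}\,dt$, which is $\OR(1)$ when $\gamma<1$, $\OR(\log(1/h))$ when $\gamma=1$, and $\OR(h^{1-\gamma})$ when $\gamma>1$. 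Taking square roots recovers the three regimes of the theorem: $s=\gamma/2=\tfrac{d}{2}(1/2-1/q)$ when $\gamma<1$ (giving \eqref{eq-def:s,t-d=1-ELp-gene-1body} in full, the non-endpoint part of \eqref{eq-def:s,t-d=2-ELp-gene-1body}, and the range $q\le 2d/(d-2)$ of \eqref{eq-def:s-ELp-gene-1body}); a logarithmic factor at $\gamma=1$; and $s=\gamma-1/2=d(1/2-1/q)-1/2$ when $\gamma>1$, corresponding to the high-$q$ branch of \eqref{eq-def:s-ELp-gene-1body}.

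The main obstacle is to remove the $\log(1/h)^{1/2}$ factor at $\gamma=1$ when $d\geq 3$, since the theorem asserts $t(q,d)=0$ there. This requires upgrading the crude interpolation above to an endpoint Keel--Tao type argument: decompose the time integral dyadically and apply a refined bilinear bound on each dyadic scale before summing, so as to avoid the logarithmic loss. The logarithm that appears at $d=2$, $q=\infty$ (where $t(q,d)=1/2$) is, by contrast, genuinely present and matches the statement.
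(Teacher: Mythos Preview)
The paper does not supply its own proof of this theorem: it is quoted verbatim from \cite[Thm.~6]{koch2007semiclassical} and used as a black box (notably at the endpoints $q=\infty$ and $q=2d/(d-2)$ in the proof of the many-body Theorem~\ref{thm:ELp-gene-matdens}). So there is no in-paper argument to compare against; your sketch is essentially the Koch--Tataru--Zworski strategy.

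One genuine gap in your outline: you invoke Theorem~\ref{thm:SStrichartz-bounds} with $a=p$, but that theorem requires the symbol to lie in $\CR^\infty(\R_t,S(1))$, whereas $p\in S(m)$ for a nontrivial order function $m$. The propagator $e^{itP/h}$ is therefore not directly covered by the dispersive estimate \eqref{eq:SStrichartz-bounds}. The standard fix---visible in the paper's proof of Theorem~\ref{thm:ELp-gene-matdens}---is to introduce a compactly supported $\chi_0$ equal to $1$ near $(x_0,\xi_0)$, work with the bounded-symbol propagator $F(t)$ solving $(hD_t+(\chi_0 p)^\w)F(t)=0$, and then control the discrepancy between $\chi^\w$ and $\psi(t)F(t)\chi^\w$ via a Duhamel argument that produces the quasimode term $\tfrac{1}{h}\|Pu\|_{L^2}$ on the right-hand side. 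Your phrase ``routine pseudodifferential step'' does not cover this; the replacement of $P$ by a truncated operator is where the $\|Pu\|_{L^2}/h$ term actually enters, and without it the dispersive bound is not available. The endpoint $q=2d/(d-2)$ for $d\geq 3$ you correctly flag as requiring a Keel--Tao bilinear refinement, which you do not carry out; this is indeed the nontrivial part of the original proof.
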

	
	\begin{rmk}\label{rmk:comp-s-gene}
		The exponent $s_{\text{gene}}$ defined on Theorem \ref{thm:ELp-gene-1body} is larger or equal to the one in Sobolev estimates $s_{\text{Sobolev}}(q,d)=d(1/2-1/q)$ ($s_{\text{gene}}(q,d)>s_{\text{Sobolev}}(q,d)$ for $2<q\leq\infty$ and they are equal for $q=2$). It is also stricly smaller than the exponent of the elliptic estimates $s_{\text{ellip}}(q,d)=d(1/2-1/q)-1$ (see Figure \ref{fig:comp-exp-s}).
	\end{rmk}

	\begin{thm}[General many-body estimates]\label{thm:ELp-gene-matdens}  
		Let $(x_0,\xi_0)\in\R^d\times\R^d$ be a point satisfying Assumption \ref{cond:gene}. Then, there exist a neighborhood $\VR$ of $(x_0,\xi_0)$ and $h_0>0$, such that for any $\chi\in\test{\R^d\times\R^d}$ with support contained in $\VR$, there exists $C>0$ such that for any $2\leq q\leq \infty$, for any $0< h\leq h_0$, for any bounded non-negative operator $\gamma$ on $L^2(\R^d)$
		\[ \normLp{\rho_{\chi^\w\gamma\chi^\w}}{q/2}{(\R^d)} \leq C \log(1/h)^{2t(q,d)} h^{-2s(q,d)}\normSch{(1+P^2/h^2)^{1/2}\gamma(1+P^2/h^2)^{1/2}}{\alpha(q,d)}{} \]
		where $s(q,d)$ is given by the formula \eqref{eq-def:s-ELp-gene-1body} and $t(q,d)$, $\alpha(q,d)$ are given by
	\begin{itemize}
	\item
		when $d=1$: 
		\begin{equation}\label{eq-def:t,alpha-d=1-ELp-gene-matdens}
		t(q,1)=0 \quad\text{ and }\quad \alpha(q,1) = \frac q2.
		\end{equation}
	\item 
		when $d=2$:
		\begin{equation}\label{eq-def:t-d=2-ELp-gene-matdens}
			t(q,2) = 
			\begin{cases}
				0 &\text{if}\ 2\leq q< 6,\\
				\frac 1 2-\frac 1 q &\text{if}\ 6\leq q\leq\infty
				,
			\end{cases}
		\end{equation}
		and
		\begin{equation}\label{eq-def:alpha-d=2-ELp-gene-matdens}
			\alpha(q,2) =  \begin{cases}
			\frac{2q}{q+2} &\text{if}\ 2\leq q\leq 6,
			\\ \frac q 4 &\text{if}\ 6\leq q\leq\infty
			,
			\end{cases}
		\end{equation}
	\item
		when $d\geq 3$:
		\begin{equation}\label{eq-def:t-d>2-ELp-gene-matdens}
			t(q,d) =\begin{cases}
			0&\text{if}\ 2\leq q <\frac{2(d+1)}{d-1} 
			\\ \frac{d}{q}-\frac{d-2}2  &\text{if}\ \frac{2(d+1)}{d-1}\leq q\leq\frac{2d}{d-2}
			\\0&\text{if}\ \frac{2d}{d-2}\leq q\leq \infty
			,
			\end{cases}
		\end{equation}
		and 
		\begin{equation}\label{eq-def:alpha-ELp-gene-matdens}
			\alpha(q,d)= \begin{cases}
			\frac{2q}{q+2} & \text{if}\  2 \leq q \leq\frac{2(d+1)}{d-1}
			\\\frac{2q}{d(q-2)} & \text{if}\  \frac{2(d+1)}{d-1}\leq q \leq \dfrac{2d}{d-2}
			\\\frac{(d-2)}{2d} q & \text{if}\  \frac{2d}{d-2}\leq q\leq\infty
			.
			\end{cases}
		\end{equation}
	\end{itemize}
	\end{thm}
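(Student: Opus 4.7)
The plan is to adapt the Koch--Tataru--Zworski strategy underlying Theorem \ref{thm:ELp-gene-1body} to density matrices, using the many-body Strichartz estimates of Theorem \ref{thm:SStrichartz-matdens} as the principal new ingredient. First, by Theorem \ref{thm:abstract-microloc-extend} it is enough to verify Assumption \ref{ass:microloc} in a neighborhood of a fixed point $(x_0,\xi_0)$ at which $\partial_\xi^2 p(x_0,\xi_0)$ is non-singular; so we fix such a point and take $\chi\in\test{\R^d\times\R^d}$ supported in a sufficiently small neighborhood. Following the Mercer-based duality of Remark \ref{rmk:mercer-thm}, the target $L^{q/2}$-bound on $\rho_{\chi^\w\gamma\chi^\w}$ is implied by a Hilbert--Schmidt bound on $W\chi^\w\sqrt{\gamma}$ for $W\in L^{2(q/2)'}\cap\CR^0(\R^d)$. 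H\"older's inequality in Schatten classes with the conjugate exponents $2\alpha'$ and $2\alpha$ then reduces matters to proving
\[
\normSch{W\chi^\w(1+P^*P/h^2)^{-1/2}}{2\alpha'(q,d)}{(L^2(\R^d))}
\lesssim h^{-s(q,d)}\log(1/h)^{t(q,d)}\normLp{W}{2(q/2)'}{(\R^d)},
\]
which is the heart of the theorem.

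For the Strichartz range $2\le q\le \tfrac{2(d+1)}{d-1}$ (assuming $d\ge 2$; the case $d=1$ is simpler), introduce an artificial time variable. After truncating $p$ outside a large compact set to a symbol $a\in S(1)$ with $\partial_\xi^2 a$ non-singular near $(x_0,\xi_0)$, the Schr\"odinger propagator $F(t,0)=e^{-ita^\w/h}$ satisfies the hypotheses of Theorems \ref{thm:SStrichartz-bounds}--\ref{thm:SStrichartz-matdens} with $n=d$. Applying the latter to $U(t,0)=\psi(t)F(t,0)\chi^\w$ yields the \emph{space-time} Schatten bound
\[
\normSch{WU(t,0)}{4q/(q-2)}{(L^2(\R^d),L^2(\R^{d+1}))}
\lesssim h^{-d(1/2-1/q)/2}\log(1/h)^{\sigma(q,d)}\norm{W}_{L^{p(q)}_tL^{2(q/2)'}_x(\R^{d+1})}
\]
with $\sigma=0$ away from the Strichartz endpoint; direct inspection shows $4q/(q-2)=2\alpha'(q,d)$ and $d(1/2-1/q)/2=s(q,d)$ throughout this range, so the correct exponents are in sight. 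To descend to the time-independent bound, use functional calculus to write $(1+P^*P/h^2)^{-1/2}=\int g_h(t)F(t,0)\,dt$, where $g_h(t)=h^{-1}g(t/h)$ and $g=\fourier^{-1}[(1+\lambda^2)^{-1/2}]$ (essentially a Bessel kernel, belonging to every $L^r$ with $r<\infty$). Choosing $W(t,x)=\phi(t)W_0(x)$ with $\phi$ built from $\psi g_h$, together with Minkowski's inequality in the Schatten norm and the commutator bound $[\chi^\w,F(t,0)]=\OR(t):L^2\to L^2$ obtained from Corollary \ref{cor:SA-comm} via a Duhamel argument, extracts the desired time-independent estimate, provided the scaling of $g_h$ is tracked carefully so that $\normLp{\phi}{p(q)}{}$ combines with $\normLp{W_0}{2(q/2)'}{}$ to reproduce the right-hand side.

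For $q\ge\tfrac{2d}{d-2}$ (when $d\ge 3$), the Strichartz route no longer achieves the sharp $s$; instead apply the Kato--Seiler--Simon inequality (Lemmas \ref{lemma:Kato-Seiler-Simon_schatten}--\ref{lemma:Kato-Seiler-Simon_dual}) to $\chi^\w(1+P^*P/h^2)^{-1/2}$, exactly as in the proof of the elliptic Theorem \ref{thm:ELp-elliptic-matdens} but exploiting the one-derivative gain from $(1+P^*P/h^2)^{-1/2}$. For the intermediate range $\tfrac{2(d+1)}{d-1}\le q\le\tfrac{2d}{d-2}$, perform complex interpolation (Theorem \ref{thm:complex-interpol-schatten}) between the Strichartz endpoint at $q=\tfrac{2(d+1)}{d-1}$ and the Kato--Seiler--Simon endpoint at $q=\tfrac{2d}{d-2}$; this reproduces the piecewise formulas \eqref{eq-def:alpha-ELp-gene-matdens} for $\alpha(q,d)$ and \eqref{eq-def:t-d>2-ELp-gene-matdens} for $t(q,d)$, the logarithmic loss being inherited from the endpoint log of Theorem \ref{thm:SStrichartz-matdens}. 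The case $d=2$ is handled identically, with $\tfrac{2d}{d-2}=\infty$ so only the Strichartz-plus-interpolation regime appears (giving rise to the specific $t,\alpha$ of \eqref{eq-def:t-d=2-ELp-gene-matdens}--\eqref{eq-def:alpha-d=2-ELp-gene-matdens}). The main technical obstacle is the functional-calculus bridge between the space-time Strichartz bound and the time-independent Schatten bound: precisely tracking the $h$-scaling of $g_h$, the $L^{p(q)}_t$ norm of $\phi$, and the commutator error so that the correct $h^{-s}$ and $\log(1/h)^t$ factors emerge, particularly at the critical Strichartz endpoint where the logarithm is born.
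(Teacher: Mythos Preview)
Your overall architecture (Strichartz in the low-$q$ range, a separate argument for large $q$, interpolate in between) matches the paper, but two of your steps diverge from it in ways that create genuine problems.

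\textbf{The high-$q$ regime.} For $q\ge \tfrac{2d}{d-2}$ you propose to mimic the proof of Theorem~\ref{thm:ELp-elliptic-matdens}. That argument interpolates between the $\schatten^2$ bound (coming from the one-body $L^2\to L^\infty$ estimate) and the $\schatten^\infty$ bound (coming from $L^2\to L^2$), and in that interpolation the Schatten exponent and the Lebesgue exponent of $W$ are \emph{locked together}. This yields $\alpha=q/2$, which is the elliptic Schatten exponent, not the required $\alpha(q,d)=\tfrac{(d-2)q}{2d}$; in particular it misses $\alpha=1$ at the Keel--Tao point $q=\tfrac{2d}{d-2}$. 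The paper does something much simpler here: at $q=\tfrac{2d}{d-2}$ it applies the \emph{triangle inequality} together with the one-body bound of Theorem~\ref{thm:ELp-gene-1body}, which gives $\alpha=1$ for free, and at $q=\infty$ it uses the one-body $L^\infty$ bound to get $\alpha=\infty$. Interpolating these two \emph{density} bounds (not operator bounds) produces the correct piecewise-linear $\alpha(q,d)$ on $[\tfrac{2d}{d-2},\infty]$. Your route cannot reach $\alpha=1$ at the Keel--Tao point without an additional idea.

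\textbf{The bridge in the Strichartz range.} You correctly identify Theorem~\ref{thm:SStrichartz-matdens} as the key input, but your way of eliminating the artificial time variable is different and more fragile. You want to write $(1+P^2/h^2)^{-1/2}=\int g_h(t)\,e^{-itP/h}\,dt$; however the propagator that satisfies the dispersive bounds is $F(t)=e^{-it(\chi_0p)^\w/h}$ for a \emph{truncated} symbol, not $e^{-itP/h}$, so this identity does not connect directly to the Strichartz estimate you have. The paper avoids this entirely by a Duhamel argument: one writes, for each fixed $t$,
\[
\psi(t)\chi^\w \;=\; U_\chi(t,0)\;-\;i\!\int_J U_{\tilde\chi}(t,r)\,\bigl(\chi_0^\w B_h\chi^\w+R_h\chi^\w\bigr)\,dr\;+\;S,
\]
where $B_h$ is essentially $P/h$ microlocalized, and $S$ is an $\OR(h^\infty)$ remainder. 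Taking the $\schatten^2$-norm of $W\psi(t)\chi^\w\sqrt\gamma$ and applying H\"older then produces $\normSch{W U_\varphi(t,r)}{2\alpha'}{}$ on one factor and $\normSch{(P/h)\sqrt\gamma}{2\alpha}{}$ on the other, so the weight $(1+P^2/h^2)^{1/2}$ appears \emph{automatically}, with no functional calculus and no $h$-rescaled kernel $g_h$ to track. The ``main technical obstacle'' you anticipate simply does not arise in the paper's approach.

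In short: keep your Strichartz endpoint at $q=\tfrac{2(d+1)}{d-1}$, but replace the Fourier-integral representation of the resolvent by the Duhamel identity above; and for $q\ge\tfrac{2d}{d-2}$ use the triangle inequality at the Keel--Tao point rather than Kato--Seiler--Simon. The intermediate range then follows by interpolating the resulting density bounds, and the logarithmic factors in $t(q,d)$ are inherited from the $\log(1/h)$ at the Strichartz endpoint exactly as you say.
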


	We write the different end-points with what we obtain the estimates by interpolation.
	\begin{table}[h!]
		\centering
		\begin{tabular}{|c|c|c|c|c|}
			\hline
			End-point $q$  & 2 & $\infty$ & $ \frac{2(d+1)}{(d-1)}$  & $ \frac{2d}{(d-2)}$\\
			\hline
			Refering name  & (4.1) & (4.2) & (4.3) & (4.4)  \\
			\hline
		\end{tabular}
		\caption{References of the end-points' labels in the general case.}
		\label{table:endpoint-gene}
	\end{table}
	
	\begin{multicols}{2}
		\begin{center}\begin{tikzpicture}[line cap=round,line join=round,>=triangle 45,x=6 cm,y=4 cm,scale=2.1]
			d=2
			\draw (0.25,0.6) node{When $d=1$};
			\draw[->] (0.,0.) -- (0.55,0.);
			\draw (0.55,0.) node[right] {$\displaystyle{\frac{1}{q}}$};
			\draw[->] (0.,0.) -- (0.,0.6);

			%
			%
			\draw (0.5,0.) node[below]{$\frac{1}{2}$};
			%
		\draw (0.5,-0.1) node[below]{$(4.1)$};
		\draw (0.,-0.1) node[below]{$(4.2)$};
			%
			%
			\draw (0.,1/4) node[left]{$\frac 14$};
			%
			\draw (0.,1/4) node {$\bullet$}; 
			\draw[line width=0.8pt] (0.,1/4) --(0.5,0.);
			%
			%
			%
			\draw (0,0.) node {$\bullet$};
			\draw (0,0.) node[below]{$0$};
			\draw (0.5,0) node {$\bullet$};
			\draw[line width=2.pt, dashed] (0,0)--(0.5,0.);
			%
			%
			\draw[line width=0.8pt] (1/3,0.4) -- (1/3+1/15,0.4);
			\draw (1/3+1/15,0.4) node[right]{$s(q,1)$};
			\draw[line width=1.pt,dashed] (1/3,0.3) -- (1/3+1/15,0.3);
			\draw (1/3+1/15,0.3) node[right]{$t(q,1)$};
			\end{tikzpicture}\end{center}

		\begin{center}\begin{tikzpicture}[line cap=round,line join=round,>=triangle 45,x=6.0cm,y=3 cm,scale=1.5]
			d=3
			\draw[->] (0.,0.) -- (0.55,0.);
			\draw (0.55,0.) node[right] {$\displaystyle{\frac{1}{q}}$};
			\draw[->] (0.,0.) -- (0.,1.1);
			\draw (0.,1.1) node[above] {$\displaystyle{\frac 1{\alpha(q,1)}}$};
			%
			%
			\draw (0,0.) node[below]{$0$}; 
			\draw (0.5,0.) node[below]{$\frac{1}{2}$};
			%
		\draw (0.5,-0.1) node[below]{$(4.1)$};
		\draw (0.,-0.1) node[below]{$(4.2)$};
			%
			%
			\draw (0.,1) node[left]{1};
			%
			%
			\draw[color=black] (0.,0) node {$\bullet$};
			\draw[color=black] (0.5,1) node {$\bullet$};
			\draw[dotted] (0, 1)-|(0.5, 0);
			%
			\draw[line width=0.8pt] (0,0)--(0.5,1);
			\end{tikzpicture}\end{center}
	\end{multicols}

	\begin{multicols}{2}
		
		\begin{center}\begin{tikzpicture}[line cap=round,line join=round,>=triangle 45,x=6 cm,y=3 cm,scale=2.1]
			d=2
			\draw (0.25,0.6) node{When $d=2$};
			\draw[->] (0.,0.) -- (0.55,0.);
			\draw (0.55,0.) node[right] {$\displaystyle{\frac{1}{q}}$};
			\draw[->] (0.,0.) -- (0.,0.6);
			%
			%
			\draw (1/6,0.) node[below]{$\frac{d-1}{2(d+1)}=\frac 16$};
			\draw (0.5,0.) node[below]{$\frac{1}{2}$};
			%
		\draw (0.5,-0.15) node[below]{$(4.1)$};
		\draw (0.,-0.15) node[below]{$(4.2)$};
		\draw (1/6,-0.15) node[below]{$(4.3)$};
			%
			\draw (0.,0.5) node[left]{$\frac{d-1}{2}=\frac 12$};
			\draw (0.,1/3) node[left]{$\frac 13$};
			%
			\draw (0.,0.5) node {$\bullet$}; 
			\draw[line width=0.8pt] (0.,0.5) --(0.5,0.);
			%
			%
			%
			\draw (1/6,1/3) node {$\bullet$};
			\draw[dotted] (0., 1/3)-|(1/6, 0.);
			\draw[line width=2.pt, dashed] (0,0.5)--(1/6,1/3);
			\draw[)-,line width=2.pt, dashed] (1/6,0)--(0.5,0.);
			%
			%
			\draw[line width=0.8pt] (1/3,0.4) -- (1/3+1/15,0.4);
			\draw (1/3+1/15,0.4) node[right]{$s(q,2)$};
			\draw[line width=1.pt,dashed] (1/3,0.3) -- (1/3+1/15,0.3);
			\draw (1/3+1/15,0.3) node[right]{$t(q,2)$};
			\end{tikzpicture}\end{center}
		
		\begin{center}\begin{tikzpicture}[line cap=round,line join=round,>=triangle 45,x=6.0cm,y=3 cm,scale=1.5]
			d=3
			\draw[->] (0.,0.) -- (0.55,0.);
			\draw (0.55,0.) node[right] {$\displaystyle{\frac{1}{q}}$};
			\draw[->] (0.,0.) -- (0.,1.1);
			\draw (0.,1.1) node[above] {$\displaystyle{\frac 1{\alpha(q,2)}}$};
			%
			%
			\draw (0,0.) node[below]{$0$}; 
			\draw (1/6,0.) node[below]{$\frac{d-1}{2(d+1)}=\frac{1}{6}$}; 
			\draw[color=black] (0.5,1) node {$\bullet$};
			\draw (0.5,0.) node[below]{$\frac{1}{2}$};
			%
		\draw (0.5,-0.15) node[below]{$(4.1)$};
		\draw (0.,-0.15) node[below]{$(4.2)$};
		\draw (1/6,-0.15) node[below]{$(4.3)$};
			%
			%
			\draw (0.,1) node[left]{1};
			\draw (0.,2/3) node[left]{$\frac{d}{d+1}=\frac 23$};
			%
			\draw[dotted] (0, 1)-|(0.5, 0);
			\draw[color=black] (0.,0) node {$\bullet$};
			\draw[color=black] (1/6,2/3) node {$\bullet$};
			\draw[dotted] (0, 2/3)-|(1/6,0);
			%
			%
			\draw[line width=0.8pt] (0.,0) --(1/6,2/3)--(0.5,1);
			\end{tikzpicture}\end{center}
	\end{multicols}

	 \begin{multicols}{2}
	 	
	 	\begin{center}\begin{tikzpicture}[line cap=round,line join=round,>=triangle 45,x=6 cm,y=3 cm,scale=2.]
	 		d=3
	 		\draw (0.25,1) node{When $d\geq 3$};
	 		\draw[->] (0.,0.) -- (0.55,0.);
	 		\draw (0.55,0.) node[right] {$\displaystyle{\frac{1}{q}}$};
	 		\draw[->] (0.,0.) -- (0.,1.1);
	 		\draw (0,0.) node{$\bullet$};
	 		\draw (0,0.) node[below]{$0$};
	 		%
	 		%
	 		\draw (1/6,0.) node[below]{$\frac{d-2}{2d}$};
	 		\draw (1/4,0.) node[below]{$\frac{d-1}{2(d+1)}$};
	 		\draw (0.5,0.) node[below]{$\frac{1}{2}$};
	 		%
	 	\draw (0.,-0.15) node[below]{$(4.2)$};
	 	\draw (1/6,-0.15) node[below]{$(4.4)$};
	 	\draw (1/4,-0.15) node[below]{$(4.3)$};
	 	\draw (0.5,-0.15) node[below]{$(4.1)$};
	 		%
	 		\draw (0., 3/8) node[left]{$\frac{d}{2(d+1)}$};
	 		\draw (0,0.5) node[left]{$\frac{1}{2}$};
	 		\draw (0.,1.) node[left]{$\frac{d-1}{2}$};
	 		\draw (0.,1/4) node[left]{$\frac{1}{d+1}$};
	 		%
	 		\draw (0.,1.) node {$\bullet$};
	 		\draw (1/6,0.5) node {$\bullet$};
	 		\draw[dotted] (0., 0.5)-|(1/6, 0.);
	 		\draw (1/4,3/8) node {$\bullet$};
	 		\draw[dotted] (0., 3/8)-|(1/4, 0.);
	 		\draw[line width=0.8pt] (0.,1) -- (1/6,0.5)--(0.5,0.);
	 		%
	 		%
	 		\draw (1/2,0.) node {$\bullet$};
	 		\draw (1/6,0.) node {$\bullet$};
	 		\draw (1/4,1/4) node {$\bullet$};
	 		\draw[dotted] (0., 1/4)-|(1/4, 0.);
	 		\draw[line width=1.pt, dashed] (0,0)--(1/6,0.)--(1/4,1/4);
	 		\draw[)-,line width=1.2pt, dashed] (1/4,0)--(0.5,0.);
	 		%
	 		%
	 		\draw[line width=0.9pt] (1/3,0.8) -- (1/3+1/15,0.8);
	 		\draw (1/3+1/15,0.8) node[right]{$s(q,d)$};
	 		\draw[line width=1.pt,dashed] (1/3,0.7) -- (1/3+1/15,0.7);
	 		\draw (1/3+1/15,0.7) node[right]{$t(q,d)$};
	 		\end{tikzpicture}\end{center}
	 	
	 	\begin{center}\begin{tikzpicture}[line cap=round,line join=round,>=triangle 45,x=6.0cm,y=3 cm,scale=2.0]
	 		d=3
	 		\draw[->] (0.,0.) -- (0.55,0.);
	 		\draw (0.55,0.) node[right] {$\displaystyle{\frac{1}{q}}$};
	 		\draw[->] (0.,0.) -- (0.,1.1);
	 		\draw (0.,1.1) node[above] {$\displaystyle{\frac 1{\alpha(q,d)}}$};
	 		\draw (0,0.) node[below]{$0$};
	 		%
	 		%
	 		\draw (1/6,0.) node[below]{$\frac{d-2}{2d}$};
	 		\draw (1/4,0.) node[below]{$\frac{d-1}{2(d+1)}$};
	 		\draw (0.5,0.) node[below]{$\frac{1}{2}$};
	 		%
	 		%
	 	\draw (0.,-0.15) node[below]{$(4.2)$};
	 	\draw (1/6,-0.15) node[below]{$(4.4)$};
	 	\draw (1/4,-0.15) node[below]{$(4.3)$};
	 	\draw (0.5,-0.15) node[below]{$(4.1)$};
	 		%
	 		%
	 		\draw (0.,1) node[left]{1};
	 		\draw (0.,3/4) node[left]{$\frac{d}{d+1}$};
	 		\draw (0.,2/3) node[left]{$\frac{d-1}{d}$};
	 		%
	 		\draw[color=black] (0.5,1) node {$\bullet$};
	 		\draw[dotted] (0, 1)-|(0.5, 0);
	 		\draw[color=black] (0.,0) node {$\bullet$};
	 		\draw[color=black] (1/6,1) node {$\bullet$};
	 		\draw[dotted] (0, 1)-|(1/6, 0);
	 		\draw[color=black] (1/4,3/4) node {$\bullet$};
	 		\draw[dotted] (0, 3/4)-|(1/4,0);
	 		%
	 		%
	 		\draw[line width=0.8pt, color=black] (0.,0) --(1/6,1)--(1/4,3/4) ;
	 		\draw[line width=0.8pt, color=black] (1/4,3/4)--(0.5,1);
	 		\end{tikzpicture}\end{center}
	 \end{multicols}
	
	\subsection*{Some comments on improvement}
	
	\begin{rmk}\label{rmk:Linfty-gene-dim2-improvment}
		In dimension 2, in the case of Schrödinger operators for the symbols $p(x,\xi)=\abs{\xi}^2+V(x)$ satisfying Definition \ref{cond:am-potential-pol-growth}, Smith and Zworski \cite{smith2012pointwise} proved that Theorem \ref{thm:ELp-gene-1body} is true with $t(\infty,2)=0$ (which means that we can get rid of the logarithm in dimension $2$ for $q=\infty$). This implies that we can set $t(\infty,2)=0$ in Theorem \ref{thm:ELp-gene-matdens} as well, and by interpolation Theorem \ref{thm:ELp-gene-matdens} holds for $t(q,2)=\frac 2q$ for all $q\in[6,\infty]$.
	\end{rmk}

	\begin{center}\begin{tikzpicture}[line cap=round,line join=round,>=triangle 45,x=6 cm,y=3 cm,scale=2.1]
		d=2
		\draw (0.25,0.6) node{When $d=2$};
		\draw[->] (0.,0.) -- (0.6,0.);
		\draw (0.6,0.) node[right] {$\displaystyle{\frac{1}{q}}$};
		\draw[->] (0.,0.) -- (0.,0.6);
		%
		%
		\draw (1/6,0.) node[below]{$\frac{d-1}{2(d+1)}=\frac 16$};
		\draw (0.5,0.) node[below]{$\frac{1}{2}$};
		%
		\draw (0.,0.5) node[left]{$\frac{d-1}{2}=\frac 12$};
		\draw (0.,1/3) node[left]{$\frac{1}{d+1}=\frac 13=\frac{d}{2(d+1)}$};
		%
		\draw (0.,0.5) node {$\bullet$}; 
		\draw[line width=0.8pt] (0.,0.5) --(0.5,0.);
		%
		%
		\draw (0.,0.) node {$\bullet$}; 
		\draw (1/6,1/3) node {$\bullet$};
		\draw[dotted] (0., 1/3)-|(1/6, 0.);
		\draw[line width=2.pt, dashed] (0,0.)--(1/6,1/3);
		\draw[)-,line width=2.pt, dashed] (1/6,0)--(0.5,0.);
		%
		%
		\draw[line width=0.8pt] (1/3,0.4) -- (1/3+1/15,0.4);
		\draw (1/3+1/15,0.4) node[right]{$s(q,d)$};
		\draw[line width=1.pt,dashed] (1/3,0.3) -- (1/3+1/15,0.3);
		\draw (1/3+1/15,0.3) node[right]{$t(q,d)$};
		\end{tikzpicture}\end{center}

				
	\begin{rmk}\label{rmk:gene-optim-endpoint}
		When $d\geq 3$, the Schatten exponent that we obtain in Theorem \ref{thm:ELp-gene-matdens} for the Keel-Tao endpoint $q=2d/(d-2)$ is $\alpha=1$. Frank and Sabin \cite[Lem. 2]{frank2016stein} proved that this Schatten exponent is sharp in the related context of the Strichartz estimates associated to the propagator $e^{it\Delta}$. We expect that this result extends to our context, however it is not straightforward to adapt their proof. Indeed, their strategy amounts to showing that the dual operator is not compact, while here the dual operator is compact. Hence, we would rather need to quantify the ``loss of compactness''` of our dual operator as $h\to0$, which is a very interesting problem.
	\end{rmk}
	
	\subsection{Notation for the proof of Theorem \ref{thm:ELp-gene-matdens}}\label{subsec:notation-ELp-gene-matdens}
	
	\begin{itemize}
		\item Let $\delta\in]0,1[$ and $I=[-\delta/2,\delta/2]$. 
		\item Let $J=[-1,1]$.
		\item Let $\psi\in\test{\R}$ such that $\supp\psi\subset \overset{\circ}{I}$ and $\psi\neq 0$.
		\item Let $\tilde{\psi}\in\test{\R}$ such that $\tilde{\psi}=1$ on $I$ and $\supp\tilde{\psi}\subset[-\delta,\delta]$.
		\item Let $\VR=U\times V$ a bounded open neighboorhood of $(x_0,\xi_0)$.
		\item Let $\chi\in\test{\R^d\times\R^d}$ be such that $\supp\chi\subset\VR$.
		\item Let  $\tilde{\chi}\in\test{\R^d\times\R^d,[0,1]}$ be such that $\tilde{\chi}=1$ on $\supp\chi$ and such that $\supp\tilde{\chi}\subset\VR$.
		\item Let $\chi_0\in\test{\R^d\times\R^d}$ such that  $\chi_0=1$ on a neighborhood of $(x_0,\xi_0)$.
	\end{itemize}

	We will add constraints on $\delta$ and $\VR$ along the proof.
	
	
	\subsection{Proof of Theorem \ref{thm:ELp-gene-matdens}}
	
	\paragraph[q=2,infty]{
			End-points (4.1) and (4.2) of Table \ref{table:endpoint-gene}.
		}
	
	We start to give the extremal estimates for $q=2$ and $q=\infty$. 
	\begin{itemize}
		\item
		By the Mercer theorem (Remark \ref{rmk:mercer-thm}) and
		 the Calderon-Vaillancourt theorem (Theorem \ref{thm:SA-Cald-Vaill}), we have
		\[ \normLp{\rho_{\chi^\w\gamma\chi^\w}}{1}{(\R^d)} = \tr_{L^2} (\chi^\w\gamma\chi^\w) \lesssim \normSch{\gamma}{1}{} .\]
		\item
 		Similarly as in the proof of Theorem \ref{thm:abstract-microloc-extend}, by the one function $L^\infty$ estimate (c.f. Theorem \ref{thm:ELp-gene-1body})
		\begin{align*}
			\normLp{\rho_{\chi^\w\gamma\chi^\w}}{\infty}{(\R^d)}
			&\leq \norm{\left(1+P^2/h^2\right)^{1/2}\gamma\left(1+P^2/h^2\right)^{1/2}}_{L^2\to L^2} \normLp{\rho_{\chi^\w(1+P^2/h^2)^{-1}\chi^\w}}{\infty}{(\R^d)}
			\\&\lesssim  \norm{\left(1+P^2/h^2\right)^{1/2}\gamma\left(1+P^2/h^2\right)^{1/2}}_{L^2\to L^2}
			\begin{cases}
				h^{-1/2}&\text{if}\  d=1,\\
				\log(1/h)/h &\text{if}\  d=2,\\
				h^{-(d-1)} &\text{if}\  d>2.
			\end{cases}
		\end{align*}
	\end{itemize}
	We thus have the bounds for $q=2$ and $q=\infty$.
	For $d=1$, we interpolate between them and get
	\begin{equation*}
		\normLp{\rho_{\chi^\w\gamma\chi^\w}}{q/2}{(\R)}\lesssim h^{\frac 1q-\frac 12}\normSch{(1+P^2/h^2)^{1/2}\gamma(1+P^2/h^2)^{1/2}}{q/2}{(L^2(\R))},
	\end{equation*}
	which is exactly Theorem \ref{thm:ELp-gene-matdens} in the case $d=1$.
	
	\paragraph[q=ep KT]{
		End-point (4.4) of Table \ref{table:endpoint-gene}.	
}
	For $d\geq 3$, by the triangle inequality at the Keel-Tao endpoint,
	that is the one-body estimate for $q=\frac{2d}{d-2}$ (end-point (4.3)) of Theorem \ref{thm:ELp-gene-1body},
	we have
	\begin{equation*}
		\normLp{\rho_{\chi^\w\gamma\chi^\w}}{d/(d-2)}{(\R^d)} \lesssim h^{-1}\normSch{(1+P^2/h^2)^{1/2}\gamma(1+P^2/h^2)^{1/2}}{1}{(L^2(\R^d))}.
	\end{equation*}
	Interpolating this bound with the bound for $q=\infty$ proves Theorem \ref{thm:ELp-gene-matdens} in the case $2d/(d-2)\leq q\leq \infty$, $d\geq 3$. The next step is to get estimates for $2\leq q\leq 2(d+1)/(d-1)$ with $d\geq 2$. The remaining estimates in the range $2(d+1)/(d-1)\leq q\leq 2d/(d-2)$, $d\geq 3$ are then obtained by interpolating the estimates for $q=2(d+1)/(d-1)$ and $q=2d/(d-2)$.

\paragraph[q=ep Sogge]{
		End-point (4.3) of Table \ref{table:endpoint-gene} (and other points between (4.1) and (4.3)).	
	}
	We thus now fix $d\geq 2$ and $2\leq q\leq 2(d+1)/(d-1)$. The idea is to introduce a new variable $t\in\R$.
	Then, we have by Mercer theorem (Remark \ref{rmk:mercer-thm})
	\begin{align*}
		\normLp{\psi}{2}{(I)}^2\normLp{\rho_{\chi^\w\gamma\chi^\w}}{q/2}{(\R^d)}
		&=  \sup_{W\in L^{2(q/2)'}\cap\CR^0(\R^d)} \frac{ \normLp{\psi}{2}{(I)}^2 \int_{\R^d}\rho_{\chi^\w\gamma\chi^\w}(x)\abs{W(x)}^2 dx}{\normLp{W}{2(q/2)'}{(\R^d)}^2 }
		\\&\leq  \sup_{W\in L^{2(q/2)'}\cap\CR^0(\R^d)}\frac{ \normLp{\psi}{2}{(I)}^2 \normSch{W(x)\chi^\w\sqrt{\gamma}}{2}{(L^2(\R^d))}^2 }{\normLp{W}{2(q/2)'}{(\R^d)}^2 }
		\\&\leq \sup_{W\in L^{2(q/2)'}\cap\CR^0(\R^d)}\frac{ \normSch{\psi(t)W(x)\chi^\w\sqrt{\gamma}}{2}{(L^2(\R^d),L^2(\R^{d+1}))}^2 }{\normLp{W}{2(q/2)'}{(\R^d)}^2 }
		.
	\end{align*}
	In the last inequality, we use that 
	for any bounded operator $A$ on $L^2(\R^d)$ and any $\psi\in L^2(\R)$, we have (as can be seen by computing $(\psi(t)A)^*\psi(t)A$)
	\begin{equation*}
	\normSch{\psi(t)A}{2}{(L^2(\R^d),L^2(\R^{d+1}))}
	= \normLp{\psi}{2}{(\R)}\normSch{A}{2}{(L^2(\R^d))}.
	\end{equation*}
	%
	We define the operator $B_h$ by $\frac 1h P$. Let $u\in L^2(\R^d)$.
	Then $v(t,x):=u(x)$ satisfies
	\begin{equation*}
		\begin{cases}
		(hD_t+P)v &= hB_h v, \\
		v(0,x)&= u(x). 
		\end{cases}
	\end{equation*}
	Let $R_h\in\op_h^\w(\schwartz)$ be such that
	\[ \chi_0^\w(x,hD) P = (\chi_0 p)^\w(x,hD) - h R_h .\]
	Defining the unitary operators $\{F(t)\}_{t\in\R}$ on $L^2(\R^d)$ such that
	\begin{equation*}
		\begin{cases}
		(hD_t+ ({\chi_0}p)^\w)F(t) &= 0,\quad \forall t\in\R, \\
		F(0)= \id
		,
		\end{cases}
	\end{equation*}
	we have by the Duhamel formula
	\begin{align*}
		\chi^\w(x,hD)
		&= F(t)\chi^\w (x,hD) -i\int_0^t F(t)F(r)^* (\chi_0^\w(x,hD) B_h +R_h) \chi^\w(x,hD) dr 
		\\&= F(t)F(0)^*\chi^\w -i\int_0^t F(t)F(r)^* \tilde{\chi}^\w (x,hD) \: (\chi_0^\w(x,hD)  B_h+ R_h ) \chi^\w(x,hD) dr
		\\&\quad
		 -i \int_0^t F(t)F(r)^* (1-\tilde{\chi})^\w (x,hD) \: (\chi_0^\w(x,hD)  B_h+ R_h) \chi^\w(x,hD) dr
		,
	\end{align*}
	as an identity between bounded operators on $L^2(\R^d)$ for all $t\in\R$.
	%
	\begin{defi}\label{def:gene-U(t,r)}
	For all $\varphi\in\test{\R^d\times\R^d}$, let $U_\varphi(t,r) =U_{\varphi,+}(t,r)-U_{\varphi,-}(t,r)$ where
	\begin{equation*}\label{eq-def:gene-U_+(t,r)}
	\begin{split}
		U_{\varphi,+}(t,r)
		&:= \indicatrice{r\geq 0}\indicatrice{t\geq r} \: \psi(t)\tilde{\psi}(t-r)F(t)F(r)^*\varphi^\w(x,hD)
	\end{split}
	\end{equation*}
	and
	\begin{equation*}\label{eq-def:gene-U_-(t,r)}
	\begin{split}
		U_{\varphi,-}(t,r)
		&:= \indicatrice{r< 0}\indicatrice{t\leq r} \: \psi(t)\tilde{\psi}(t-r)F(t)F(r)^*\varphi^\w(x,hD)
		.
	\end{split}
	\end{equation*}
	\end{defi}
	%
	Let us also define $S$ by
	\begin{equation}\label{eq-def:gene-S}
		S
		=-i\int_0^t \psi(t) F(t)F(r)^* (1-\tilde{\chi}^\w(x,hD))(\chi_0^\w(x,hD) B_h +R_h)\chi^\w(x,hD) dr
		.
	\end{equation}
	By multipling by $\psi$ on the left of the previous Duhamel formula, we have
	 \begin{align*}
		 \psi(t)\chi^\w(x,hD) 
		 &= U_\chi(t,0)-i\int_J  U_{\tilde{\chi}}(t,r) (\chi_0^\w B_h\chi^\w +R_h\chi^\w)\: dr + S.
	 \end{align*}
	By the triangle inequality and H\"older inequality, we get for all $\alpha\geq 1$
	\begin{align*}
		&\normSch{\psi(t)W(x)\chi^\w\sqrt{\gamma}}{2}{(L^2(\R^d),L^2(\R^{d+1}))}
		\\&\quad
		\leq \normSch{ W(x)U_\chi(t,0)}{2\alpha'}{(L^2(\R^d),L^2(\R^{d+1}))}\normSch{\sqrt{\gamma}}{2\alpha}{(L^2(\R^d))}
		\\&\quad\quad
		+\abs{J}\sup_{r\in J}\normSch{W(x)U_{\tilde{\chi}}(t,r)}{2\alpha'}{(L^2(\R^d),L^2(\R^{d+1}))}
		\normSch{(\chi_0^\w B_h\chi^\w +R_h\chi^\w)\sqrt{\gamma} }{2\alpha}{(L^2(\R^d))} 
		\\&\quad\quad
		+\normSch{W(x)S}{2(q/2)'}{(L^2(\R^d),L^2(\R^{d+1}))}\normSch{\sqrt{\gamma }}{q}{(L^2(\R^d))}
		.
	\end{align*}
	There exists $\mathfrak{r}\in\schwartz(\R^d\times\R^d)$ such that $\comm{B_h}{\chi^\w}=\frac 1 h\comm{P}{\chi^\w} = \mathfrak{r}^\w$.
	Thus
	\begin{align*}
		&
		\normSch{(\chi_0^\w B_h\chi^\w +R_h\chi^\w)\sqrt{\gamma} }{2\alpha}{(L^2(\R^d))}
		\\&\quad
		\leq \normSch{\chi_0^\w \chi^\w B_h \sqrt{\gamma} }{2\alpha}{(L^2(\R^d))} +\normSch{\chi_0^\w \comm{B_h}{\chi^\w}\sqrt{\gamma}}{2\alpha}{(L^2(\R^d))} +\normSch{R_h\chi^\w\sqrt{\gamma}}{2\alpha}{(L^2(\R^d))}
		\\&\quad
		\leq \norm{\chi_0^\w \chi^\w}_{L^2(\R^d)\to L^2(\R^d)}\normSch{B_h \sqrt{\gamma} }{2\alpha}{(L^2(\R^d))}
		\\&\qquad+\left(\norm{\chi_0^\w \comm{B_h}{\chi^\w}}_{L^2(\R^d)\to L^2(\R^d)}+\norm{R_h\chi^\w}_{L^2(\R^d)\to L^2(\R^d)}\right)\normSch{\sqrt{\gamma}}{2\alpha}{(L^2(\R^d))}
		\\&\quad
		\lesssim \frac 1 h\normSch{P\gamma P}{\alpha}{(L^2(\R^d))}^{1/2} +\normSch{\gamma}{\alpha}{(L^2(\R^d))}^{1/2}
		.
	\end{align*}
	We only need to prove the Schatten estimates for the operators $U_\chi(t,r)$, $U_{\tilde{\chi}}(t,r)$ and $S$.

	\begin{prop}\label{prop:gene-matdens-term_nul}
		Recall that $S$ is the operator defined by \eqref{eq-def:gene-S}.
		Let $\beta\geq 2$. Then, we have the bound for all $W\in L^\beta(\R^{d+1})$
		\begin{equation*}
			\normSch{W(t,x)S}{\beta}{(L^2(\R^d),L^2(\R^{d+1}))}
			=\OR(h^\infty)\normLp{W}{\beta}{(I\times\R^d)}
			.
		\end{equation*}
	\end{prop}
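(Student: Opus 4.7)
The central structural observation is the identity
\begin{equation*}
\chi_0^\w B_h + R_h = h^{-1} (\chi_0 p)^\w,
\end{equation*}
which follows directly from $B_h = P/h$ and the definition $\chi_0^\w P = (\chi_0 p)^\w - h R_h$. The operator appearing in the integrand of $S$ therefore factors as $h^{-1} (1 - \tilde{\chi})^\w (\chi_0 p)^\w \chi^\w$. Because $\tilde{\chi} \equiv 1$ on $\supp \chi$, the symbols $1 - \tilde{\chi}$ and $\chi$ have disjoint supports, and every term in the asymptotic expansion of the symbol of $(1 - \tilde{\chi})^\w (\chi_0 p)^\w \chi^\w$ ends up supported in this empty intersection. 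Corollary \ref{cor:SA-comp-op-disj-spt} then delivers that the operator
\begin{equation*}
B := (1 - \tilde{\chi})^\w (\chi_0^\w B_h + R_h) \chi^\w
\end{equation*}
is $\OR(h^\infty)$ in the strong sense that $\norm{B}_{L^2(\R^d) \to H_h^s(\R^d)} = \OR(h^\infty)$ for every $s \geq 0$: indeed the symbol of $(1 - h^2 \Delta)^{s/2} B$ is Schwartz with all seminorms of order $h^\infty$, and Calderon--Vaillancourt applies.

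Writing $Su = -i \psi(t) F(t) v(t)$ with $v(t) := \int_0^t F(r)^* B u \, dr$, I will prove the claim at the endpoints $\beta = \infty$ and $\beta = 2$ and interpolate. For $\beta = \infty$, unitarity of $F(t) F(r)^*$ on $L^2$ and boundedness of $\supp \psi$ yield
\begin{equation*}
\norm{Su}_{L^2(\R^{d+1})} \leq C \norm{Bu}_{L^2(\R^d)} = \OR(h^\infty) \norm{u}_{L^2(\R^d)};
\end{equation*}
since $Su$ is supported in $\supp \psi \subset I$, this gives $\norm{WSu}_{L^2(\R^{d+1})} \leq \norm{W}_{L^\infty(I \times \R^d)} \cdot \OR(h^\infty) \norm{u}_{L^2}$. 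For $\beta = 2$, the kernel inequality
\begin{equation*}
\normSch{WS}{2}{(L^2(\R^d), L^2(\R^{d+1}))}^2 = \int_{I \times \R^d} |W(t,x)|^2 \norm{K_S(t,x,\cdot)}_{L^2(\R^d)}^2 \, dt \, dx \leq \norm{W}_{L^2(I \times \R^d)}^2 \cdot \norm{S}_{L^2 \to L^\infty(I \times \R^d)}^2
\end{equation*}
reduces the problem to controlling $\norm{S}_{L^2(\R^d) \to L^\infty(I \times \R^d)}$. For that I will chain together the semiclassical Sobolev embedding $\norm{\cdot}_{L^\infty} \lesssim h^{-d/2} \norm{\cdot}_{H_h^s}$ for $s > d/2$, the $H_h^s$-conservation of $F(t) F(r)^*$ from Lemma \ref{lemma:prop-F(t)}(i), and the $\OR(h^\infty)$ bound on $\norm{B}_{L^2 \to H_h^s}$ established above; the $h^{-d/2}$ loss is harmlessly absorbed by the $h^\infty$-gain.

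Finally, these two endpoint bounds are passed through Theorem \ref{thm:complex-interpol-schatten} (equivalently, bilinear Stein interpolation of the map $W \mapsto WS$ between $L^2(I \times \R^d) \to \schatten^2$ and $L^\infty(I \times \R^d) \to \schatten^\infty$) to produce the announced $\OR(h^\infty) \norm{W}_{L^\beta(I \times \R^d)}$ bound on $\normSch{WS}{\beta}{}$ for every $\beta \in [2, \infty]$. I anticipate the only delicate point being the $L^\infty$-estimate in the $\beta = 2$ step: one must check that the rapid decay built into $B$ is transported without loss by the propagator $F(t) F(r)^*$, which is precisely what the $H_h^s$-conservation together with the semiclassical Sobolev embedding guarantee.
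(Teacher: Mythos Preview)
Your argument is correct, and the core observation---that $\chi_0^\w B_h + R_h = h^{-1}(\chi_0 p)^\w$ so that the disjoint supports of $1-\tilde\chi$ and $\chi$ force $B=\OR(h^\infty)$---is exactly the mechanism the paper uses (stated there slightly less explicitly). Where you diverge is in the passage to Schatten norms. The paper proves the single bound $\norm{S}_{L^2(\R^d)\to H_h^M(\R^{d+1})}=\OR(h^\infty)$ for every $M$ and then inserts a factor $(1-h^2\Delta_{t,x})^{-M/2}(1-h^2\Delta_{t,x})^{M/2}$ so that one application of Kato--Seiler--Simon (Lemma \ref{lemma:Kato-Seiler-Simon_schatten}) produces the $\schatten^\beta$ bound for every $\beta\ge2$ simultaneously. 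Your endpoint-plus-interpolation route is also valid and has the mild advantage that the $\beta=2$ step only requires a pointwise-in-$t$ control of $\norm{(Su)(t,\cdot)}_{H_h^s(\R^d)}$ rather than the full space-time $H_h^M(\R^{d+1})$ estimate; on the other hand the paper's route avoids the interpolation step entirely. One caveat: Theorem \ref{thm:complex-interpol-schatten} as stated in the paper is for two-sided weights $W_1T_zW_2$ acting on $L^2(\R^{n+1})$, so it does not literally apply to the one-sided map $W\mapsto WS$; what you actually need is the standard complex interpolation of this linear map between $L^2\to\schatten^2$ and $L^\infty\to\BR$, using $[\schatten^2,\BR]_\theta=\schatten^{2/(1-\theta)}$ (cf.\ \cite[Thm.~2.9]{simon2005trace}), which is of course routine.
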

	

	Before proving this proposition, let us conclude the proof of Theorem \ref{thm:ELp-gene-matdens}.
	Given Assumption \ref{cond:gene}, we apply Theorem \ref{thm:SStrichartz-matdens} to
	$n=d$,
	$(x_0,\xi_0)\in\R^d\times\R^d$,
	$a=\chi_0 p\in\CR^\infty_t(\R,S_{(x,\xi)}(1))$ and $J=[-1,1]$. Thus, there exist $\delta>0$ and $\VR=U\times V$ neighborhood of $(x_0,\xi_0)$, so that we have for any $2\leq q\leq\tfrac{2(d+1)}{d-1}$
	\begin{align*}
		&
		\normSch{ W(x)U_\chi(t,0)}{2\alpha(q,d)'}{(L^2(\R^d),L^2(\R^{d+1}))}+\sup_{r\in J}\normSch{W(x)U_{\tilde{\chi}}(t,r)}{2\alpha(q,d)'}{(L^2(\R^d),L^2(\R^{d+1}))}
		\\&\quad
		\lesssim \log(1/h)^{t(q,d)} h^{-s(q,d)}\normLp{W}{2(q/2)'}{(\R^d)}\normLp{\psi}{\infty}{(\R)},
	\end{align*}
	for $\alpha(q,d)=\frac{2q}{q+2}$, $s(q,d)=\frac{d}{2}\big(\frac 12-\frac 1q\big)$ and $t(q,d) =0$ for $2\leq q<\frac{2(d+1)}{d-1}$ and $t(q,d) =\frac 1{d+1}$ if $q=\frac{2(d+1)}{d-1}$. 
	Hence, for any $2\leq q\leq\tfrac{2(d+1)}{d-1}$ 
	\begin{align*}
		&\normSch{\psi(t)W(x)\chi^\w\sqrt{\gamma}}{2}{(L^2(\R^d),L^2(\R^{d+1}))}
		\\&\quad
		\leq C_I \log(1/h)^{t(q,d)} h^{-s(q,d)}\normLp{W}{2(q/2)'}{(\R^d)} \times\\&\qquad\times
		\left(\normSch{\gamma}{\alpha(q,d)}{(L^2(\R^d))}^{1/2}+\frac 1 h\normSch{P\gamma P}{\alpha(q,d)}{(L^2(\R^d))}^{1/2} +\normSch{\gamma}{q/2}{(L^2(\R^d))}^{1/2}\right)
		\\&\quad
		\leq C'_I \log(1/h)^{t(q,d)} h^{-s(q,d)} \normLp{W}{2(q/2)'}{(\R^d)} \normSch{(1+P^2/h^2)^{1/2}\gamma(1+P^2/h^2)^{1/2}}{\alpha(q,d)}{(L^2(\R^d))}^{1/2}
		,
	\end{align*}
	which finishes the proof of Theorem \ref{thm:ELp-gene-matdens}
		(for all points between (4.1) and (4.3) of Table \ref{table:endpoint-gene}).

	Let us now prove Proposition \ref{prop:gene-matdens-term_nul}.
	
	\begin{proof}[\underline{Proof of Proposition \ref{prop:gene-matdens-term_nul}}]
	
	We only need to show the estimates
	\begin{equation}\label{eq-demo:gene-matdens-term_nul}
		\forall M\in\N,\quad
		\norm{S}_{L^2(\R^d)\to H_h^M(\R^{d+1})} =\OR(h^\infty).
	\end{equation}
	Note that $S=\indicatrice{t\in I} S$.
	Then, by Kato-Seiler-Simon Lemma \ref{lemma:Kato-Seiler-Simon_schatten} applied to $M\in\N$ such that $M\beta>d+1$
	\begin{align*}
		&\normSch{W(t,x)S}{\beta}{(L^2(\R^d),L^2(\R^{d+1}))}
		\\&\quad
		\leq \normSch{W(t,x)\indicatrice{t\in I}(1-h^2\Delta_{t,x})^{-M/2}}{\beta}{(L^2(\R^{d+1}))}\norm{(1-h^2\Delta_{t,x})^{M/2}S}_{L^2(\R^d)\to L^2(\R^{d+1})}
		\\&\quad
		= \OR(h^\infty)\normLp{W}{\beta}{(I\times\R^d)}
		.
	\end{align*}
	That gives us the desired estimates.
	
	Let us prove now the bounds \eqref{eq-demo:gene-matdens-term_nul}.	
	On the one hand, there exists $r\in\schwartz(\R^d\times\R^d)$ such that $(1-\tilde{\chi}^\w(x,hD))B_h\chi^\w(x,hD) =\mathfrak{r}^\w(x,hD)$.
	Since $\supp(1-\tilde{\chi})$ and $\supp\chi$ are disjoint
	\[ \mathfrak{r}^\w(x,hD) = \OR(h^\infty):\schwartzprime(\R^d)\to\schwartz(\R^d).\]
	On the other hand, by Lemma \ref{lemma:prop-F(t)}
	\begin{equation*}
		\forall M\in\N,\exists C>0\quad
		\sup_{r\in J}\norm{\psi(t)F(t)F(r)^* }_{H_h^M(\R^d)\to H_h^M(\R^{d+1})} \leq C.
	\end{equation*}
	Then, by composition we have for all $ M\in\N$
	\begin{equation*}
		\sup_{r\in J}\norm{\indicatrice{r\geq 0}\indicatrice{t\geq r} \:\psi(t)F(t)F(r)^*\mathfrak{r}^\w}_{L^2(\R^d)\to H_h^M(\R^{d+1})} =\OR(h^\infty),
	\end{equation*}
	and
	\begin{equation*}
		\sup_{r\in J}\norm{\indicatrice{r\leq 0}\indicatrice{t\leq r} \:\psi(t)F(t)F(r)^*\mathfrak{r}^\w}_{L^2(\R^d)\to H_h^M(\R^{d+1})} =\OR(h^\infty).
	\end{equation*}
	Finally, we get \eqref{eq-demo:gene-matdens-term_nul}, that ends the proof of Proposition \ref{prop:gene-matdens-term_nul}.
	\end{proof}

	\begin{rmk}\label{rmk:ELp-gene-matdens-bis}
		Notice that \eqref{eq:gene-matdens-sch-2} allows to treat the case $\beta\in\big[0,\tfrac{d-1}2\big)$ leading to better values $\alpha(q,d)$ and $t(q,d)$ but also a worse value of $s(q,d)$ for $d\geq 2$ and $\tfrac{2(d+1)}{d-1}<q\le\infty$:
		\begin{equation*}
			s(q,d)=\left(d+\frac12-\frac{2}{q-2}\right)\left(\frac12-\frac1q\right),\ t(q,d)=0,\ \alpha(q,d)=\frac{2q}{q+2}.
		\end{equation*}
		We discard these estimates because we always want to keep the same exponent $s(q,d)$ as in the one-body case (so that our many-body estimates imply the one-body estimates).
		However, we can discuss that for $\gamma=\sum_{j=1}^{N_h}\lambda_j\bra{u_j}\ket{u_j}$, how to write the best estimates (to choose the best exponent possible) for according to the range of $N_h$. For instance, for $N_h> (h^{-1})^{\min_{q}\frac{s(q)-s_{\rm gene(q)}}{1/\alpha_{\rm gene}(q)-1/\alpha(q)}}$, the concentration is better with the above estimated rather than the one of Theorem \ref{thm:ELp-gene-matdens}. But, the discussion is quite complex in the general case, without any specific given situation.
	\end{rmk}

	\begin{rmk}\label{rmk:TP:new-trans-point}
		Let us comment on why the many-body case has an additional transition point $q=2(d+1)/(d-1)$ compared to the one-body case. 
		Let $K_{h,\beta}$ be defined by
		\begin{equation*}
			\forall t\in\R\quad K_{h,\beta}(t):= h^{-d/2} \abs{t}^\beta(h+\abs{t})^{-d/2} .
		\end{equation*}
		We have in the one body case (which can be proved with the complex interpolation).
		\begin{equation*}
		\sup_{r\in J}\norm{T_0(r)}_{L^2_t L^{\left(\frac{2(\beta+1)}{\beta}\right)'}_x(\R^{d+1}) \to L^2_t L^{\frac{2(\beta+1)}{\beta}}_x(I\times\R^d)}
		\leq C \left(\int_{-1}^1K_{h,\beta}(t)dt\right)^{\frac 1{\beta+1}}
		,
		\end{equation*}
		while as the above proof shows, we have in the many body case
		\begin{equation*}
			\begin{split}
			\sup_{r\in J}&
			\normSch{W T_0(r) W}{2}{(L^2(\R^{d+1}))}
			\\&\quad
			\leq C 
			\norm{W}_{L_t^{\infty} L_x^{2 (\beta+1) }(I\times\R^d)}^2
			\left(\int_{-1}^1 K_{h,\beta}(t)^2 dt\right)^{\frac{1}{2(\beta+1)}}
			.
			\end{split}
		\end{equation*}
		Note that
		\begin{equation*}
			\int_{-1}^1K_{h,\beta}(t)dt
			= \begin{cases}
			h^{\beta-d+1} &\text{if}\  \beta<\frac{d-2}2,\\
			h^{-d/2}\log(1/h) &\text{if}\  \beta =\frac{d-2}2,\\
			h^{-d/2} &\text{if}\  \beta >\frac{d-2}2
			.
			\end{cases}
		\end{equation*}
		and
		\begin{equation*}
		\left(\int_{-1}^1K_{h,\beta}(t)^2dt\right)^{1/2}
			=\begin{cases}
				h^{\beta-d+1/2} &\text{if}\  \beta<\frac{d-1}2,\\
				h^{-d/2}\log(1/h)^{1/2} &\text{if}\  \beta =\frac{d-1}2,\\
				h^{-d/2} &\text{if}\  \beta >\frac{d-1}2,
			\end{cases}
		\end{equation*}
		so that the one-body and many-body constants coincide for $\beta>(d-1)/2$ (which corresponds to $2\leq q< 2(d+1)/(d-1)$) but differ for $\beta\leq(d-1)/2$ (which corresponds to $q>2(d+1)/(d-1)$). We expect that it is not a technical artefact of the proof, but rather that this transition point does appear in the many-body case. Indeed, a similar phenomenon exists for Strichartz estimates \cite{frank2014} where the existence of a transition is shown at this point $q=2(d+1)/(d-1)$. It is a challenging problem to adapt their result to our setting. A related problem would be to get rid of the logarithm in our many-body estimates at $q=2(d+1)/(d-1)$.
	\end{rmk}
	
	
	\section{Sogge's L$^p$ estimates}\label{sec:sogge-est}
	
	We now treat the case $p=0$ and $\nabla_\xi p\neq 0$. In the case of Schr\"odinger operators, it means that we are away from the turning point region $\{V=E\}$. This setting corresponds to the one of Sogge without potential on a compact manifold. In the one-body case ($\rk\gamma=1$), we recover \cite[Thm. 5]{koch2007semiclassical}. 
	
	\subsection{Statement of the result}
	
	Let $d\geq 2$. For $x\in\R^d$, we denote by $x'$ the $d-1$ last variables of $x$
	\[ x' := (x_2,\ldots,x_d)\in\R^{d-1} .\]
	Let $m$ an order function on $\R^d\times\R^d$, $p\in S(m)$ be real-valued and $P:= p^\w(x,hD)$ (but the following theorems are true for any other quantization).
	
	\begin{assump}\label{cond:sogge}
		A point $(x_0,\xi_0)\in\R^d\times\R^d$ satisfies the \emph{Sogge non-degeneracy conditions} for the symbol $p$ if
		\begin{equation*}
			p(x_0,\xi_0)=0,\quad \nabla_\xi p(x_0,\xi_0)\neq 0,
		\end{equation*}
		and if
		\begin{equation}\label{cond:sogge-curv}
		\begin{split}
			&\text{the second fundamental form of }
			\{ \xi\in\R^d \: :\: p(x_0,\xi)=0 \}
			\\&\text{ is non-degenerate at } \xi_0
			.
		\end{split}
		\end{equation}
	\end{assump}
	
	Fisrt recall the one-body result.
	
	\begin{thm}[Sogge one-body estimates, {\cite[Thm. 5]{koch2007semiclassical}}]\label{thm:ELp-sogge-1body}
		Let $(x_0,\xi_0)\in\R^d\times\R^d$ be a point satisfying Assumption \ref{cond:sogge}.
		Then, there exist a neighborhood $\VR$ of $(x_0,\xi_0)$ and $h_0>0$, such that for any $\chi\in\test{\R^d\times\R^d}$ with support contained in $\VR$, there  exists $C>0$ such that for any $0< h\leq h_0$ and for any $2\leq q\leq\infty$
		\begin{equation*}
			\normLp{\chi^\w u}{q}{(\R^d)} \leq C h^{-s(q,d)}\left(\normLp{u}{2}{(\R^d)}+\frac 1 h\normLp{Pu}{2}{(\R^d)}\right)
			,
		\end{equation*}
		where
		\begin{equation}\label{eq-def:s-ELp-sogge}
			s(q,d)=\begin{cases}
			\frac{d-1}{2}\left(\frac 1 2 -\frac 1 q\right) &\text{if}\ 2\leq q\leq\frac{2(d+1)}{d-1},\\
			d\left(\frac 1 2-\frac 1 q\right)-\frac 1 2 &\text{if}\ \frac{2(d+1)}{d-1}\leq q\leq\infty.
			\end{cases}
		\end{equation}
		Equivalently, one has for all $2\leq q\leq\infty$
		\begin{equation*}
			\chi^\w{(1+P^*P/h^2)^{-1/2}} =\OR (h^{-s(q,d)}) : L^2(\R^d)\to L^q(\R^d).
		\end{equation*}
	\end{thm}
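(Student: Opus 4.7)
\emph{Plan of proof.} The strategy parallels that of the general case (proof of Theorem~\ref{thm:ELp-gene-matdens} in Section~\ref{sec:gene}), except that instead of introducing an auxiliary time variable we let one of the spatial coordinates play that role. Since $\nabla_\xi p(x_0,\xi_0)\neq 0$, after a linear change of coordinates we may assume $\partial_{\xi_1}p(x_0,\xi_0)\neq 0$; by the implicit function theorem, on a neighborhood $\VR$ of $(x_0,\xi_0)$ we factor
\[ p(x,\xi)=e(x,\xi)\bigl(\xi_1-a(x,\xi')\bigr), \]
with $e$ elliptic and $a$ a smooth local symbol (cut off so that $a\in\CR^\infty(\R_{x_1},S_{\R^{d-1}\times\R^{d-1}}(1))$). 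Under the graph parametrization $\xi_1=a(x_0,\xi')$ of $\{\xi\in\R^d:p(x_0,\xi)=0\}$, the curvature hypothesis~\eqref{cond:sogge-curv} is equivalent to non-degeneracy of $\partial_{\xi'}^2 a(x_0,\xi_0')$, i.e.\ to the hypothesis~\eqref{cond:SStrichartz-bounds} of Theorem~\ref{thm:SStrichartz-bounds} applied in dimension $n=d-1$ with $x_1$ in the role of $t$.

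\emph{Reduction to a Schrödinger evolution and Strichartz.} Let $F(x_1,s)$ be the unitary propagator on $L^2(\R^{d-1})$ solving $(hD_{x_1}-a^\w(x_1,x',hD_{x'}))F(x_1,s)=0$, $F(s,s)=\id$. Using a local parametrix of $e^\w$ (Lemma~\ref{lemma:SA-inversion}), one has microlocally on $\supp\chi$ that $\|Pu\|_{L^2}$ controls $\|(hD_{x_1}-a^\w)\chi^\w u\|_{L^2}$ modulo $\OR_{\schwartz}(h^\infty)$. A Duhamel formula averaged in the initial slice against a cutoff $\psi$ with $\int\psi=1$ reads
\[ \chi^\w u(x_1,\cdot)=\int_{\R}\psi(s)\,F(x_1,s)(\chi^\w u)(s,\cdot)\,ds+\frac{1}{ih}\int F(x_1,s)\,\tilde\chi^\w(e^{-1})^\w(Pu)(s,\cdot)\,ds+Ru, \]
with $R=\OR_{\schwartz}(h^\infty)$ collecting microlocal remainders exactly as in Proposition~\ref{prop:gene-matdens-term_nul}. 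Theorem~\ref{thm:SStrichartz-bounds} in dimension $n=d-1$ yields the dispersive bounds for the microlocalized propagator $U(x_1,s)=\psi(x_1-s)F(x_1,s)\chi_0^\w$, and the rank-one case of the complex-interpolation argument in the proof of Theorem~\ref{thm:SStrichartz-matdens} (equivalently, the one-body Keel--Tao machinery) then gives
\[ \|U(\cdot,s)v\|_{L^q(\R\times\R^{d-1})}\lesssim h^{-(d-1)(1/2-1/q)/2}\|v\|_{L^2(\R^{d-1})},\quad 2\leq q\leq\tfrac{2(d+1)}{d-1}. \]
Inserted in the averaged Duhamel identity, this produces the bound of the theorem with $s(q,d)=(d-1)(1/2-1/q)/2$ on this first range.

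\emph{Endpoint $q=\infty$ and interpolation.} Egorov's theorem gives $F(x_1,s)\chi_0^\w=\widetilde\chi^\w(x_1,s)F(x_1,s)+\OR_{\schwartz}(h^\infty)$ with $\widetilde\chi$ compactly supported, so Lemma~\ref{lemma:SA-basic-ELp} applied in $d-1$ variables yields
\[ \|F(x_1,s)\chi_0^\w v\|_{L^\infty(\R^{d-1})}\lesssim h^{-(d-1)/2}\|v\|_{L^2(\R^{d-1})}. \]
Inserting this pointwise-in-$x_1$ bound in the averaged Duhamel identity delivers the $L^\infty_x$-estimate with exponent $s(\infty,d)=(d-1)/2$. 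Interpolating this with the diagonal Strichartz at $q_0=2(d+1)/(d-1)$ (where $s_0=(d-1)/(2(d+1))$) produces the remaining Sogge exponent $s(q,d)=d(1/2-1/q)-1/2$ on $2(d+1)/(d-1)\leq q\leq\infty$. The technical heart of the argument is the diagonal Strichartz at $q_0=2(d+1)/(d-1)$; note that in dimension $n=d-1$ this pair lies strictly inside the Keel--Tao range $q\leq 2n/(n-2)=2d/(d-2)$, so no logarithmic loss arises. The only genuinely delicate point is to verify carefully that the geometric condition~\eqref{cond:sogge-curv} translates as claimed to the analytic non-degeneracy of $\partial_{\xi'}^2 a$, which is a routine but essential implicit-function computation.
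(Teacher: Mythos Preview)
The paper does not prove this statement; it is recalled from \cite[Thm.~5]{koch2007semiclassical} and serves as the one-body input to the many-body Theorem~\ref{thm:ELp-sogge-matdens}. Your sketch is essentially the Koch--Tataru--Zworski argument and is correct: factor $p=e(\xi_1-a)$ via the implicit function theorem, translate the curvature condition~\eqref{cond:sogge-curv} into non-degeneracy of $\partial_{\xi'}^2 a$, treat $x_1$ as time, and apply the one-body Strichartz estimates in $n=d-1$ variables at the diagonal exponent $q_0=2(d+1)/(d-1)$, interpolating with the $q=\infty$ endpoint.

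Two points of comparison with what the paper actually does (in its proof of the many-body version, Theorem~\ref{thm:ELp-sogge-matdens}, which specializes to Theorem~\ref{thm:ELp-sogge-1body} when $\rk\gamma=1$). First, the paper obtains the $q=2$ and $q=\infty$ endpoints by quoting the general estimate Theorem~\ref{thm:ELp-gene-matdens} (which itself relies on the one-body $L^\infty$ bound of Theorem~\ref{thm:ELp-gene-1body}), rather than via Egorov plus Lemma~\ref{lemma:SA-basic-ELp} as you do; both routes give $s(\infty,d)=(d-1)/2$ without a logarithm. Second, the paper uses a Duhamel formula with a \emph{fixed} initial time $t_0\in I\setminus I_0$ (see Section~\ref{subsec:sogge-def}) and handles the evaluation $\ev_{x_1=t_0}\chi^\w$ as an $\OR_\schwartz(h^\infty)$ term because $t_0\notin\pi_{x_1}\supp\chi$, whereas you use an averaged Duhamel identity. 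Your averaged version is a legitimate variant, but note that as written your second term is missing the inner $s_0$-integration (or an equivalent reformulation via the $T_U$ operator the paper introduces); this is a cosmetic slip rather than a gap.
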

	
	\begin{rmk}\label{rmk:comp-s-sogge}
		The exponent $s_{\text{Sogge}}$, defined in \eqref{eq-def:s-ELp-sogge}, is always larger than the elliptic one $s_{\text{ellip}}$ for any $d\geq 2$ and $2\in[2,\infty]$. Moreover, it is strictly smaller than $s_{\text{gene}}$ for any $q\in(2,2d/(d-2))$ and they coincide when $q\in\{2\}\cup[2d/(d-2),\infty]$.  (c.f. Figure \ref{fig:comp-exp-s}).
	\end{rmk}

	\begin{thm}[Sogge many-body estimates]\label{thm:ELp-sogge-matdens} 
		Let $(x_0,\xi_0)\in\R^d\times\R^d$ be a point satisfying Assumption \ref{cond:sogge}.
		Then, there exist a neighborhood $\VR$ of $(x_0,\xi_0)$ and $h_0>0$, such that for any $\chi\in\test{\R^d\times\R^d}$ with support contained in $\VR$, there  exists $C>0$ such that for any $0< h\leq h_0$, for any $2\leq q\leq \infty$ and for any bounded self-adjoint non-negative operator $\gamma$ on $L^2(\R^d)$
		\begin{equation*}
			\normLp{\rho_{\chi^\w\gamma\chi^\w}}{q/2}{(\R^d)} \leq C h^{-2s(q,d)}\normSch{(1+P^*P/h^2)^{1/2}\gamma(1+P^*P/h^2)^{1/2}}{\alpha(q,d)}{}
		\end{equation*}
		where $s(q,d)$ is given by the formula \eqref{eq-def:s-ELp-sogge} and $\alpha(q,d)$ is given by
		\begin{equation}\label{eq-def:alpha-ELp-sogge-matdens}
			\alpha(q,d)=\begin{cases}
			\frac{2q}{q+2} &\text{if}\  2\leq q\leq\frac{2(d+1)}{d-1},\\
			\frac {q(d-1)}{2d} &\text{if}\  \frac{2(d+1)}{d-1}\leq q\leq\infty 
			.
			\end{cases}
		\end{equation}
	\end{thm}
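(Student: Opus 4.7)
The plan is to follow the strategy of Theorem \ref{thm:ELp-gene-matdens} while using the extra information $\nabla_\xi p(x_0,\xi_0)\neq 0$ to avoid introducing an extra time variable: instead, one of the existing spatial coordinates will play the role of time. After a symplectic change of coordinates near $(x_0,\xi_0)$ we may assume $\partial_{\xi_1}p(x_0,\xi_0)\neq 0$, and by the implicit function theorem one has a local factorization
\begin{equation*}
p(x,\xi)=e(x,\xi)\,(\xi_1-a(x,\xi')),\qquad \xi'=(\xi_2,\ldots,\xi_d),
\end{equation*}
near $(x_0,\xi_0)$ with $e$ elliptic. A direct computation shows that the Sogge curvature hypothesis \eqref{cond:sogge-curv} is equivalent to the non-degeneracy of $\partial_{\xi'}^2 a$ at the relevant point, which is exactly the dispersive condition \eqref{cond:SStrichartz-bounds} required to apply Theorem \ref{thm:SStrichartz-matdens}. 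By the microlocal inversion Lemma \ref{lemma:SA-inversion} applied to $e^\w$, one may work on $\supp\chi$ with the first-order operator $Q:=hD_{x_1}-a^\w(x,hD_{x'})$ in place of $P$, up to $\OR_{\schwartz}(h^\infty)$ errors.

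As in the proof of Theorem \ref{thm:ELp-gene-matdens}, Mercer's theorem (Remark \ref{rmk:mercer-thm}) reduces the target $L^{q/2}$-estimate to a Hilbert--Schmidt bound on $\normSch{W\chi^\w\sqrt{\gamma}}{2}{}$ by $\normLp{W}{2(q/2)'}{(\R^d)}$ times the weighted Schatten norm. Let $\{F(x_1,r)\}$ be the unitary propagator of $Q$, with $x_1$ as time and $x'$ as spatial variable, and let $\psi,\tilde\psi,\tilde\chi$ be cutoffs as in Section \ref{subsec:notation-ELp-gene-matdens}. Duhamel's formula then gives
\begin{equation*}
\psi(x_1)\chi^\w = U_\chi(x_1,0)-i\int_J U_{\tilde\chi}(x_1,r)\,\tfrac{Q}{h}\chi^\w\,dr+S,
\end{equation*}
where $U_\varphi$ is as in Definition \ref{def:gene-U(t,r)} and $S$ is a remainder whose kernel has disjoint supports, hence $\OR(h^\infty)$ in any Schatten norm by the argument of Proposition \ref{prop:gene-matdens-term_nul}. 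Microlocally $Q$ and $P$ differ by an elliptic factor, so the term $(Q/h)\chi^\w$ is treated exactly as $B_h\chi^\w$ in Section \ref{sec:gene}, producing a contribution bounded by $\normSch{\gamma}{\alpha}{}^{1/2}+h^{-1}\normSch{P\gamma P}{\alpha}{}^{1/2}$.

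One then applies Theorem \ref{thm:SStrichartz-matdens} with $n=d-1$ to control $\normSch{WU_\chi(x_1,0)}{2\alpha'}{}$ and $\sup_{r\in J}\normSch{WU_{\tilde\chi}(x_1,r)}{2\alpha'}{}$. The Strichartz range $2\leq q\leq 2d/(d-2)$ (read as $[2,\infty]$ for $d=2$) strictly contains the Sogge transition $q=2(d+1)/(d-1)$, and for $q\leq 2(d+1)/(d-1)$ the condition $2/(q-2)\geq(d-1)/2$ forces the Strichartz time exponent $p(q)=2(q/2)'$, so the mixed norm $L^{p(q)}_{x_1}L^{2(q/2)'}_{x'}$ collapses to the isotropic $L^{2(q/2)'}(\R^d)$. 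The Schatten exponent supplied by Theorem \ref{thm:SStrichartz-matdens} matches $2\alpha(q,d)'$ with $\alpha(q,d)=2q/(q+2)$, and no logarithm appears because $q$ stays strictly below the Keel--Tao endpoint $2d/(d-2)$. This settles the theorem on $[2,2(d+1)/(d-1)]$. For $q\in[2(d+1)/(d-1),\infty]$ I would interpolate this endpoint (via Theorem \ref{thm:complex-interpol-schatten}) with the one-body $L^\infty$-bound of Theorem \ref{thm:ELp-sogge-1body} (of Schatten type $\alpha=\infty$ after Lemma \ref{lemma:equiv-norm-P-matdens}), recovering $s(q,d)=d(1/2-1/q)-1/2$ and $\alpha(q,d)=q(d-1)/(2d)$ by a direct check. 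The main difficulty is to make the replacement of $P$ by $Q$ rigorous at the level of the weighted Schatten norm on the right-hand side, i.e.\ to control every commutator of $\chi^\w$ with $(e^\w)^{-1}$ and with $Q$ that arises so that all errors are absorbed into $\normSch{(1+P^*P/h^2)^{1/2}\gamma(1+P^*P/h^2)^{1/2}}{\alpha}{}$, exactly as in the analogous step of Section \ref{sec:gene}.
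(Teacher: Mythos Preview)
Your overall strategy coincides with the paper's: factor $p=e(\xi_1-a)$ near $(x_0,\xi_0)$, use $x_1$ as the evolution variable, apply the many-body Strichartz estimate (Theorem~\ref{thm:SStrichartz-matdens}) with $n=d-1$, and interpolate with $q=\infty$. Your observation that $p(q)=2(q/2)'$ exactly on $[2,2(d+1)/(d-1)]$ (so the mixed norm becomes isotropic, and no logarithm appears because one is strictly below $2d/(d-2)$) is correct and matches the paper.

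There is, however, a real gap in how you pass from the Duhamel decomposition to the Strichartz bound. You write the inhomogeneous term as $\int_J U_{\tilde\chi}(x_1,r)\tfrac{Q}{h}\chi^\w\,dr$ and propose to control it via $\sup_r\normSch{WU_{\tilde\chi}(\cdot,r)}{2\alpha'}{}$ exactly as in Section~\ref{sec:gene}. This does not work here: in Section~\ref{sec:gene} the propagator $U_\varphi(\cdot,r)$ mapped $L^2(\R^d)\to L^2(\R^{d+1})$ and composed directly with $B_h\chi^\w:L^2(\R^d)\to L^2(\R^d)$. In the Sogge setting the propagator acts on $L^2(\R^{d-1})$, so between $U_{\tilde\chi}(\cdot,r)$ and $(Q/h)\chi^\w$ an evaluation map $\ev_{x_1=r}:L^2(\R^d)\to L^2(\R^{d-1})$ is forced; this is unbounded, and one cannot control $\sup_r\normSch{\ev_{x_1=r}B_h\sqrt\gamma}{2\alpha}{}$ by the full-space weighted Schatten norm. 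Likewise, your homogeneous term $U_\chi(x_1,0)$ (if the base point lies in $\pi_{x_1}\supp\chi$) runs into the same problem.

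The paper sidesteps both issues. It picks $t_0\notin\pi_{x_1}\supp\chi$, so $\ev_{x_1=t_0}\chi^\w=\OR(h^\infty)$ and the homogeneous term is absorbed into the error $S$. For the inhomogeneous term it introduces the operator $T_U:L^2(\R^d)\to L^2(\R^d)$, $T_Uf(x_1)=\psi_1(x_1)\int_{t_0}^{x_1}U(x_1,s)f(s)\,ds$, bounds $\normSch{WT_U B_h\sqrt\gamma}{2}{}\le\normSch{WT_U}{2\alpha'}{}\normSch{B_h\sqrt\gamma}{2\alpha}{}$, and then proves the key Proposition~\ref{prop:sogge-matdens-term_cruc}: writing $T_UT_U^*=\int_{I_r}A_{r}A_{r}^*\,dr$ with $A_r:L^2(\R^{d-1})\to L^2(\R^d)$, one gets $\normSch{WT_U}{2\alpha'}^2\le|I_r|\sup_r\normSch{WA_r}{2\alpha'}^2\lesssim\sup_r\normSch{WU(\cdot,r)}{2\alpha'}^2$, and only now does Theorem~\ref{thm:SStrichartz-matdens} apply. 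This $TT^*$ step is the piece your sketch is missing; once you insert it (and choose $t_0$ outside $\pi_{x_1}\supp\chi$), the rest of your argument goes through.
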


	\begin{rmk}\label{rmk:comp-alpha-sogge}
		For $d\geq3$, the exponent $\alpha_{\text{Sogge}}>\alpha_{\text{gene}}$ for $q\in(2d/(d-2),\infty)$ and $\alpha_{\text{Sogge}}=\alpha_{\text{gene}}$ for $q=[2,2d/d-2]$. It is stricly larger than the one $\alpha_{\text{ellip}}(q,d)=q/2$ in the elliptic estimates for any $q\in(2,\infty)$ and they coincide for $q=2$ or $q=\infty$ (c.f Figure \ref{fig:comp-exp-alpha}).
	\end{rmk}

		We write the different end-points with what we obtain the estimates by interpolation.
		\begin{table}[h!]
			\centering
			\begin{tabular}{|c|c|c|c|}
				\hline
				End-point $q$  & 2 & $\infty$ & $ \frac{2d}{(d-2)}$\\
				\hline
				Refering name  & (5.1) & (5.2) & (5.3)  \\
				\hline
			\end{tabular}
			\caption{References of the end-points' labels in the Sogge's case.}
			\label{table:endpoint-sogge}
		\end{table}
	
	\begin{multicols}{2}
		\begin{center}\begin{tikzpicture}[line cap=round,line join=round,>=triangle 45,x=4.0cm,y=4.0cm,scale=2.]
			\draw[->] (0.,0.) -- (0.6,0.);
			\draw (0.6,0.) node[right] {$\frac{1}{q}$};
			\draw[->] (0.,0.) -- (0.,0.7);
			\draw (0.,0.7) node[above] {$s(q,d)$};
			\draw (0.,0.) node[below] {$0$};
			\draw(0.5,0.) node[below] {$\frac{1}{2}$};
			\draw (0.5,0)  node {$\bullet$};
			\draw (0.,0.6) node[left] {$\frac{d-1}{2}$};
			\draw (0.,0.6)  node {$\bullet$};
			\draw (0.,0.2) node[left] {$\frac{d-1}{2(d+1)}$};
			\draw (0.2,0.) node[below] {$\frac{d-1}{2(d+1)}$};
			\draw (0.2,0.2) node{$\bullet$};
	\draw (0.,0.6) node[right] {$(5.2)$};
	\draw (0.5,0) node[above] {$(5.1)$};
	\draw (0.2,0.3) node[above] {$(5.3)$};
			\draw[dotted] (0., 0.2)-|(0.2, 0.01);
			\draw[line width=0.6pt] (0.,0.6)-- (0.2,0.2) -- (0.5,0.);
			\end{tikzpicture}\end{center}
		
		\begin{center}\begin{tikzpicture}[line cap=round,line join=round,>=triangle 45,x=4.0cm,y=2.5 cm,scale=2.]
			\draw[->] (0.,0.) -- (0.6,0.);
			\draw (0.6,0.) node[right] {$\dfrac{1}{q}$};
			\draw[->] (0.,0.) -- (0.,1.1);
			\draw (0.,1.1) node[above] {$\dfrac 1{\alpha(q,d)}$};
			%
			%
			\draw (0.,0) node[below]{0};
			\draw (0.3,0.) node[below]{$\frac{d-1}{2(d+1)}$};
			\draw (0.5,0.) node[below]{$\frac{1}{2}$};
			%
		\draw (0.,-0.1) node[below] {$(5.2)$};
		\draw (0.5,1) node[right] {$(5.1)$};
		\draw (0.3,4/5) node[above] {$(5.3)$};
			%
			%
			\draw (0.,0) node[left]{0};
			\draw (0.,1) node[left]{1};
			\draw (0.,4/5) node[left]{$\frac{d}{d+1}$};
			%
			\draw[dotted] (0, 1)-|(0.5, 0);
			\draw (0., 0) node {$\bullet$};
			\draw (3/10, 4/5) node {$\bullet$};
			\draw (0.5, 1) node {$\bullet$};
			\draw[dotted] (0, 4/5)-|(0.3,0);
			%
			%
			\draw[line width=0.8pt] (0.,0) --(3/10, 4/5)--(0.5,1);
			\end{tikzpicture}\end{center}
	\end{multicols}

	\subsection{Definitions and notation for the proof of Theorem \ref{thm:ELp-sogge-matdens}}\label{subsec:sogge-def}
	
	\begin{itemize}
	\item	
	Let $I_0, I\subset\R$ be open intervals which contain $\left(x_0\right)_1$ such that
	\begin{equation*}
	I_0\subset \bar{I_0} \subset I\subset \left[(x_0)_1-1,(x_0)_1+1\right].
	\end{equation*}
	
	\item
	Let $\psi_1\in\test{\R,{[0,1]}}$ such that $\psi_1=1$ on $\overline{I_0}$ and such that $\supp\psi_1\subset I$.
	
	\item
	Let $t_0\in I\setminus I_0$.
	
	\item
	Let $J\subset\R$ a bounded open interval which contains $\left(\xi_0\right)_1$.
	
	\item
	Let $R:=\abs{I}>0$.
	Note that $R\leq 2$.
	
	\item
	We choose $\psi\in\test{\R}$ such that $\psi=1$ on $[-R,R]$ and $\psi=0$ outside $[-2R,2R]$.
	\\Note that $\supp\psi\subset[-4,4]$.
	%
	\item
	 Let us define $I_r := [(x_0)_1-5,(x_0)_1+5]$.
	
	\item
	Let $\VR_0'=U_0'\times V_0'$ and $\VR'=U'\times V'\subset\R^{d-1}\times\R^{d-1}$ be bounded open neighborhood of $(x_0',\xi_0')$ such that
	\begin{equation*}
		\VR_0' \subset \overline{\VR_0'} \subset \VR'.
	\end{equation*}
	
	\item
	Let $\varphi\in\test{\R^{2(d-1)},[0,1]}$ such that $\varphi=1$ on $\overline{\VR_0'} $ and $\supp\varphi\subset \VR'$.
	
	\item
	We define $\VR := I_0\times J\times \VR_0'$ and $\WR := I\times J\times\VR'$.
	We will add contraints on the size of $\WR$ along the proof.
	
	\item
	Let $\chi\in\test{\R^d\times\R^d}$ such that $\supp\chi\subset\VR$.	
\end{itemize}
	By construction this implies that
	\begin{equation*}
		\pi_{x_1}\supp\chi \subset I_0, \quad
		\pi_{(x',\xi')}\supp\chi \subset \VR_0',
	\end{equation*}
	then $\supp(1-\psi_1)\cap\pi_{x_1}\supp\chi=\emptyset$ and
	$\supp(1-\varphi)\cap\pi_{(x',\xi')}\supp\chi=\emptyset$.

	\begin{center}
		\begin{tikzpicture}[scale =1.2]
		\draw[->] (-3.5,0) -- (3.5,0);
		\draw (3.5,0) node[right] {$x_1$};
		\draw [->] (0,-0.1) -- (0,1.5);
		\draw (0.1,1.) node[right, above] {1};
		\draw (-1.2,0.) node{$+$};
		\draw (-1.2,0.) node[below] {$t_0$};
		\draw[ thick,  domain=-2.8:0]plot( \x,{ 0.5*(1+tanh(3.5*(\x+1.3))) });
		\draw[ thick,  domain=0:2.5]plot( \x,{ 0.5*(1-tanh(3.5*(\x-1.3))) })node[left,above]{$\psi_1$};
		\draw[<->] (-0.8,-0.15) -- (0.8,-0.15) node[midway, below] {$I_0$};
		\draw[<->] (-2.1,-0.5) -- (2.1,-0.5) node[midway, below] {$I$};
		\draw[<->,color=cristina] (-.5,-0.) -- (.3,-0.) node[midway, above] {$\pi_{x_1}\supp\chi$};
		\end{tikzpicture}
	\end{center}
	
	\begin{center}
		\begin{tikzpicture}[scale =1.2]
		\draw[->] (-3.5,0) -- (3.5,0);
		\draw (3.5,0) node[right] {$(x',\xi')$};
		\draw [->] (0,-0.1) -- (0,1.5);
		\draw (0.1,1.) node[right, above] {1};
		\draw[ thick,  domain=-2.8:0]plot( \x,{ 0.5*(1+tanh(3.5*(\x+1.3))) });
		\draw[ thick,  domain=0:2.5]plot( \x,{ 0.5*(1-tanh(3.5*(\x-1.3))) })node[left,above]{$\varphi$};
		\draw[<->] (-0.8,-0.15) -- (0.8,-0.15) node[midway, below] {$\VR_0'$};
		\draw[<->] (-2.1,-0.6) -- (2.1,-0.6) node[midway, below] {$\VR'$};
		\draw[<->,color=cristina] (-.5,-0.) -- (.3,-0.) node[midway, above] {$\pi_{(x',\xi')}\supp\chi$};
		\end{tikzpicture}
	\end{center}

	\subsection{Proof of Theorem \ref{thm:ELp-sogge-matdens}}
	
	First and foremost, the bounds at the two points $q=2$ and $q=\infty$ follow from Theorem \ref{thm:ELp-gene-matdens}.
	Since we have the bounds for $q=2$ and $q=\infty$, we now show it for $q=2(d+1)/(d-1)$, which implies the theorem by interpolation.
	
	Let us first explain why we will focus our proof on
	$\normLp{\rho_{\psi_1\chi^\w\gamma\chi^\w\psi_1}}{q/2}{(\R^d)}$.
	Recall with Mercer theorem (Remark \ref{rmk:mercer-thm}) that
	\begin{align*}
		\normLp{\rho_{\chi^\w\gamma\chi^\w}}{q/2}{(\R^d)}
		%
		= \sup_{W\in L^{2(q/2)'}\cap\CR^0(\R^d)} \frac{\normSch{W(x)\chi^\w\sqrt{\gamma}}{2}{(\R^d)}^2}{\normLp{W}{2(q/2)'}{(\R^d)}^2}
		.
	\end{align*}
	By the triangle inequality, up to a multiplicative factor, it is bounded by
	\[ \sup_{W\in L^{2(q/2)'}(\R^d)} \frac{\normSch{W(x)\psi_1(x_1)\chi^\w\sqrt{\gamma}}{2}{(\R^d)}^2}{\normLp{W}{2(q/2)'}{(\R^d)}^2}
	 \]
	 and
	 \[\sup_{W\in L^{2(q/2)'}(\R^d)} \frac{\normSch{W(x)(1-\psi_1(x_1))\chi^\w\sqrt{\gamma}}{2}{(\R^d)}^2}{\normLp{W}{2(q/2)'}{(\R^d)}^2}.\]
	By construction $\supp(1-\psi_1)$ and $\pi_{x_1}\supp\chi$ are disjoint. Then
	\[ (1-\psi_1(x_1))\chi^\w=\OR(h^\infty):\schwartzprime(\R^d)\to\schwartz(\R^d) .\]
	By the Hölder and Kato-Seiler-Simon inequalities (Lemma \ref{lemma:Kato-Seiler-Simon_schatten}), with $M\in\N$ such that $2M(q/2)'>d$
	\begin{align*}
		&\normSch{W(x)(1-\psi_1(x_1))\chi^\w\sqrt{\gamma}}{2}{(\R^d)}
		\\&\quad
		\leq 	\normSch{W(x)(1-h^2\Delta)^{-M}}{2(q/2)'}{(L^2(\R^d))}\norm{(1-h^2\Delta)^M(1-\psi_1(x_1))\chi^\w}_{L^2(\R^d)\to L^2(\R^d)}\normSch{\sqrt{\gamma}}{q}{(L^2(\R^d))}
		\\&\quad
		=\OR(h^{N-d/2})\normLp{W}{2(q/2)'}{(\R^d)}\normSch{\gamma}{q/2}{(L^2(\R^d))}^{1/2}
		\quad\forall N\in\N
		.
	\end{align*}
	Hence, the crucial part of the proof relies on the estimation of
	\[ \sup_{W\in L^{2(q/2)'}(\R^d)} \frac{\normSch{W(x)\psi_1(x_1)\chi^\w\sqrt{\gamma}}{2}{(\R^d)}^2}{\normLp{W}{2(q/2)'}{(\R^d)}^2} = \normLp{\rho_{\psi_1\chi^\w\gamma\chi^\w\psi_1}}{q/2}{(\R^d)} .\]
	
	\bigskip
	The main idea now is to reduce the problem to an evolution equation in $d-1$ variables.
	Up to a permutation of coordinates, by the implicit functions theorem, there exist a neighborhood $\UR$ of $(x_0,\xi_0)$, functions $e\in S(1)$ and $a\in\test{\R\times\R^{d-1}\times\R^{d-1}}$,
	such that
	\begin{itemize}
		\item the Hessian
		\begin{equation}\label{cond:sogge-non-degen-hess}
		\partial_{\xi'}^2 a(x_0,\xi_0') \text{ is non-degenerate},
		\end{equation}
		\item $\inf\abs{e}>0$,
		\item for all $(x,\xi)\in\UR$
		\begin{equation*}\label{cond:sogge-decomp}
		p(x,\xi)=e(x,\xi)(\xi_1-a(x_1,x',\xi')).
		\end{equation*} 
	\end{itemize}
	We thus assume that $\WR\subset\UR$.
	Then, since $\supp\chi\subset\WR$, we have 
	\begin{equation*}
		\forall(x,\xi)\in\R^d\times\R^d,\quad
		p(x,\xi)\chi(x,\xi)=e(x,\xi)(\xi_1-a(x,\xi'))\chi(x,\xi).
	\end{equation*}
	We can write $P\chi^\w$ as 
	\begin{align*}
		P\chi^\w
		&= (p\chi)^\w(x,hD)+hr_1^\w(x,hD)
		\\&= e^\w (hD_{x_1}-a^\w(x_1,x',hD_{x'}))\chi^\w(x,hD) +hr_2^\w(x,hD) +hr_1^\w(x,hD)
		\\&=e^\w (hD_{x_1}-a^\w(x_1,x',hD_{x'}))\chi^\w(x,hD) +h r^\w(x,hD).
	\end{align*}
	By symbolic calculus $r\in\schwartz(\R^d\times\R^d)$.
	Let $B_h$ be the following pseudodifferential operator 
	\begin{equation*}
		B_h := \frac 1 h e^\w(x,hD_x)^{-1}\left(P\chi^\w(x,hD_x)-h r^\w(x,hD)\right)
		.
	\end{equation*}
	By definition $B_h$ satisfies $(hD_{x_1}-a^\w)\chi^\w = h B_h$.
	In other terms, we have for all $u\in L^2(\R^d)$
	\[ [hD_{x_1}-a^\w(x,hD_{x'})]\chi^\w u = h
	 B_h u
	.\]
	Furthermore, we have
	\begin{lemma}\label{lemma:sogge-prop-Bh}
		The operator $B_h$ satisfies 
		\begin{itemize}
			\item[(i)] the localization property
			\begin{equation}
			\label{eq:lemma-sogge-prop-Bh-eq-1}
			(1-\varphi^\w(x',hD_{x'}))B_h = \OR(h^\infty):\schwartzprime(\R^d)\to\schwartz(\R^d),
			\end{equation}
			\item[(ii)]
			\begin{equation}\label{eq:lemma-sogge-prop-Bh-eq-2}
			B_h(1+P^2/h^2)^{-1/2} = \OR(1): L^2(\R^d)\to L^2(\R^d).
			\end{equation}
		\end{itemize}
	\end{lemma}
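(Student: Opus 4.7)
The plan is to prove (i) and (ii) separately. For (i) I will exploit the identity $hB_h=(hD_{x_1}-a^\w(x_1,x',hD_{x'}))\chi^\w(x,hD)$, which follows from combining the definition of $B_h$ with the factorization $P\chi^\w=e^\w(hD_{x_1}-a^\w)\chi^\w+hr^\w$ established just above. For (ii) I will work directly from $B_h=\tfrac{1}{h}(e^\w)^{-1}(P\chi^\w-hr^\w)$ and invoke spectral functional calculus for $(1+P^2/h^2)^{-1/2}$, taking advantage of the self-adjointness of $P$ (since $p$ is real).

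For (i), the key geometric input is that $\varphi=1$ on $\overline{\VR_0'}$ while $\pi_{(x',\xi')}\supp\chi\subset\VR_0'$, so the symbols $1-\varphi(x',\xi')$ (viewed as independent of $(x_1,\xi_1)$) and $\chi(x,\xi)$ have disjoint supports in $\R^d\times\R^d$. I decompose
\begin{equation*}
(1-\varphi^\w(x',hD_{x'}))\cdot hB_h = hD_{x_1}\cdot(1-\varphi^\w)\chi^\w - (1-\varphi^\w)a^\w\chi^\w,
\end{equation*}
using that $\varphi^\w(x',hD_{x'})$ commutes exactly with $hD_{x_1}$. The first summand is $\OR_{\op_h^\w(\schwartz)}(h^\infty)$ by Corollary \ref{cor:SA-comp-op-disj-spt} followed by applying $hD_{x_1}$, which preserves the Schwartz class. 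For the second summand I first compose $a^\w\chi^\w=(a_\#\chi)^\w$; by Proposition \ref{prop:SA-comp-op} the asymptotic expansion of $a_\#\chi$ has all coefficients supported in $\supp a\cap\supp\chi\subset\supp\chi$, and then composing each coefficient with $1-\varphi$ yields $\OR_\schwartz(h^\infty)$ by Corollary \ref{cor:SA-comp-op-disj-spt} applied termwise, while the remainder is controlled by the usual $\OR(h^N)$ bound for arbitrary $N$. Dividing by $h$ preserves the $\OR(h^\infty)$ estimate, and the resulting Schwartz-kernel operator maps $\schwartzprime\to\schwartz$ by Proposition \ref{prop:quantiz-schwartz}.

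For (ii), I split $B_h=\tfrac{1}{h}(e^\w)^{-1}P\chi^\w-(e^\w)^{-1}r^\w$. Since $e\in S(1)$ with $\inf|e|>0$ is globally elliptic, $(e^\w)^{-1}$ belongs to $\op_h^\w(S(1))$ for small $h$ by a standard Neumann series argument, hence is uniformly $L^2$-bounded by Theorem \ref{thm:SA-Cald-Vaill}; together with $r\in\schwartz\subset S(1)$ this makes $(e^\w)^{-1}r^\w$ uniformly $L^2$-bounded, and right-multiplication by the contraction $(1+P^2/h^2)^{-1/2}$ preserves this. For the main term I commute $P$ past $\chi^\w$: by Corollary \ref{cor:SA-comm}, $[P,\chi^\w]=hq^\w$ with $q\in\schwartz$, so
\begin{equation*}
\tfrac{1}{h}(e^\w)^{-1}P\chi^\w(1+P^2/h^2)^{-1/2} = (e^\w)^{-1}\chi^\w\cdot\tfrac{1}{h}P(1+P^2/h^2)^{-1/2} + (e^\w)^{-1}q^\w(1+P^2/h^2)^{-1/2}.
\end{equation*}
Here $(e^\w)^{-1}\chi^\w$ is $L^2$-bounded, $\tfrac{1}{h}P(1+P^2/h^2)^{-1/2}$ has $L^2\to L^2$ norm at most $1$ by the scalar inequality $|\lambda|(1+\lambda^2/h^2)^{-1/2}\leq h$ and functional calculus for the self-adjoint operator $P$, and the remaining summand is bounded by Theorem \ref{thm:SA-Cald-Vaill}. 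The main technical point lies in (i): one has to be careful that even though $1-\varphi$ is not compactly supported (only in $S(1)$), the asymptotic expansion of Proposition \ref{prop:SA-comp-op} provides coefficients supported in the compact set $\supp\chi$, against which $1-\varphi$ does have disjoint support, so the disjoint-support principle of Corollary \ref{cor:SA-comp-op-disj-spt} applies term by term.
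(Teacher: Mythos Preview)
Your proof is correct and follows essentially the same strategy as the paper's: for (i) you use the identity $hB_h=(hD_{x_1}-a^\w)\chi^\w$ together with the disjoint-support principle, and for (ii) you split $B_h$ into the $P$-term and the $r$-term and bound each using Calder\'on--Vaillancourt and the spectral inequality $\lVert(P/h)(1+P^2/h^2)^{-1/2}\rVert\le 1$. In fact your treatment of (ii) is slightly more careful than the paper's: the paper's displayed computation writes $B_h=\tfrac{1}{h}(e^\w)^{-1}P-(e^\w)^{-1}r^\w$, silently dropping the factor $\chi^\w$, whereas you explicitly commute $P$ past $\chi^\w$ via $[P,\chi^\w]=hq^\w$ with $q\in\schwartz$, which is precisely what is needed to place $P$ adjacent to $(1+P^2/h^2)^{-1/2}$.
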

	%
	\begin{proof}[\underline{Proof of Lemma \ref{lemma:sogge-prop-Bh}}]
		\item[(i)]
		Since $B_h$ satisfies
		\[ B_h = \frac 1h (hD_{x_1}-a^\w(x_1,x',hD_{x'}))\chi^\w(x,hD)  \]
		and  $\supp(1-\varphi)$ and $\pi_{(x',\xi')}\chi$ are disjoint, then
		\[
		(1-\varphi^\w(x',hD_{x'}))B_h =\OR(h^\infty):{\schwartzprime(\R^d)\to\schwartz(\R^d)}.\]
		
		\item[(ii)]
		Besides recalling the definition of $B_h$, $e\in S(1)$ and $r\in\schwartz(\R^d\times\R^d)$
		\begin{align*}
		B_h(1+P^2/h^2)^{-1/2}
		&= \left(\frac 1 h(e^\w)^{-1} P-(e^\w)^{-1}r^\w\right)(1+P^2/h^2)^{-1/2}
		\\&=  (e^\w)^{-1} \left(\frac 1 hP\right) (1+P^2/h^2)^{-1/2} \\&\quad
		-(e^\w)^{-1}r^\w(1+P^2/h^2)^{-1/2}.
		\end{align*}
		We obtain \eqref{eq:lemma-sogge-prop-Bh-eq-2} using that $(e^\w)^{-1}$, $r^\w$, $(P/h)(1+P^2/h^2)^{-1/2}$ and $(1+P^2/h^2)^{-1/2}$ are $\OR(1):L^2(\R^d)\to L^2(\R^d)$.
	\end{proof}
	
	Let $r\in\R$. The following evolution equation
	\begin{equation*}\label{eq:sogge-propag}
		\begin{cases}
		[hD_t-a^\w(t,x',hD_{x'})] F(t,r)=0\ \quad t\in\R,
		\\ F(r,r) = \id,
		\end{cases}
	\end{equation*}
	is solved by a unique family of unitary operators $\{F(t,r)\}_{t\in\R}$ on $L^2(\R^{d-1})$ (we refer the reader to \cite[Thm. 10.1]{zworski2012semiclassical}).
	Recall the Duhamel's formula satisfied by all $u\in L^2(\R^d)$ and $t\in\R$
	\begin{equation*}
		\chi^\w u(t) = F(t,t_0)(\chi^\w u)(t_0)+i\int_{t_0}^t F(t,s)(B_h u)(s) ds \quad\text{ in }L^2_{x'}(\R^{d-1}) .
	\end{equation*}
	Defining by $\ev_{x_1=t_0}$ the operator of evaluation in $t_0\in\R$ of the first variable, which maps functions on $\R^d$ to fonctions on $\R^{d-1}$ 
	\begin{equation*}
		\ev_{x_1=t_0} u(x) = u(t_0,x')
	\end{equation*}
	and $U(t,r)$ the microlocalized operator on $L^2_{x'}(\R^{d-1})$
	\begin{equation*}\label{eq-def:sogge-U(t,r)}
		U(t,r) := \psi(t-r)F(t,r)\varphi^\w(x',hD_{x'}),
	\end{equation*}
	given the support property of $\psi_1$ and $\psi$, we get the decomposition
	\begin{equation*}
	\begin{split}
		\psi_1(t)
		\chi^\w(t,x',hD_{t,x'}) 
		&= 
		\psi_1(t) \left(
		F(t,t_0)\ev_{x_1=t_0}\chi^\w
		+i\left(\int_{t_0}^t F(t,s)(1-\varphi^\w)\ev_{x_1=s} 
		B_h
		ds\right)
		\right)
		\\&\quad+ i\psi_1(t)\left( \int_{t_0}^t U(t,s)\ev_{x_1=s} 
		B_h
		ds \right)
		.
	\end{split}
	\end{equation*}
	We notice that each term of this operator maps functions on $\R^d$ into functions on $\R^d$.
	Define $S$ by
	\begin{equation*}\label{eq:sogge-def-S}
		S := \psi_1(t) F(t,t_0)\ev_{x_1=t_0}\chi^\w
		+i\psi_1(t)\left(\int_{t_0}^t F(t,s)(1-\varphi^\w)\ev_{x_1=s} 
		B_h
		ds\right)
	\end{equation*}
	We introduce the operator $T_U$ which acts on functions on $\R^d$
	\begin{equation*}\label{eq:sogge-def-T_U}
		T_U f(t) := \psi_1(t)\int_{t_0}^t U(t,s)f(s) ds\quad \text{ in }L^2(\R^d)
		.
	\end{equation*}
	We then have the following results.
	
	\begin{prop}\label{prop:sogge-matdens-term_nul}
		Let $\beta\geq 2$. We have the bound
		\begin{equation*}
		\normSch{W S}{\beta}{(L^2(\R^d))} = \OR(h^\infty)\normLp{W}{\beta}{(\R^d)}.
		\end{equation*}
	\end{prop}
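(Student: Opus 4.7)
The plan is to mirror the strategy used in the proof of Proposition \ref{prop:gene-matdens-term_nul}: reduce the Schatten estimate on $WS$ to a regularity estimate on $S$ itself via Kato-Seiler-Simon, then establish the latter term by term.

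First I would reduce to an $H_h^M$ estimate. Observe that both summands of $S$ contain the factor $\psi_1(t)$, so $S = \psi_1(t)S$. By H\"older in Schatten spaces, for any $M\in\N$,
\begin{equation*}
\normSch{WS}{\beta}{(L^2(\R^d))} \leq \normSch{W\psi_1(t)(1-h^2\Delta_{t,x'})^{-M/2}}{\beta}{(L^2(\R^d))} \cdot \norm{(1-h^2\Delta_{t,x'})^{M/2}S}_{L^2(\R^d)\to L^2(\R^d)}.
\end{equation*}
Choosing $M$ so that $M\beta > d$, Lemma \ref{lemma:Kato-Seiler-Simon_schatten} bounds the first factor by $Ch^{-d/\beta}\normLp{W}{\beta}{(\R^d)}$ (computing $\normLp{(1+h^2|\eta|^2)^{-M/2}}{\beta}{(\R^d)}$ and using $\supp\psi_1\subset I$). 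It therefore suffices to establish
\begin{equation*}
\norm{S}_{L^2(\R^d)\to H_h^M(\R^d)} = \OR(h^\infty) \quad \text{for every } M\in\N,
\end{equation*}
which together with the $h^{-d/\beta}$ factor still yields $\OR(h^\infty)$.

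Next I would treat the two summands of $S$ separately. For the first term $S_1 := \psi_1(t)F(t,t_0)\ev_{x_1=t_0}\chi^\w$, the crucial observation is that $t_0\in I\setminus I_0$ while $\pi_{x_1}(\supp\chi)\subset I_0$, so $t_0$ is bounded away from the $x_1$-support of $\chi$. The Schwartz kernel of $\chi^\w$ evaluated at $x_1=t_0$ is an oscillatory integral in $\xi_1$ with phase $e^{i(t_0-y_1)\xi_1/h}$; since $|t_0-y_1|\geq 2\dist(t_0,I_0)>0$ on the support of the amplitude $\chi((t_0+y_1)/2,(x'+y')/2,\xi)$, repeated integration by parts in $\xi_1$ yields a Schwartz kernel on $\R^{d-1}\times\R^d$ whose all Schwartz seminorms are $\OR(h^\infty)$. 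Thus $\ev_{x_1=t_0}\chi^\w=\OR(h^\infty):L^2(\R^d)\to H_h^M(\R^{d-1})$, and composing with the bound $\psi_1(t)F(t,t_0):H_h^M(\R^{d-1})\to H_h^M(\R^d)$ from Lemma \ref{lemma:prop-F(t)}(ii) gives $\norm{S_1}_{L^2(\R^d)\to H_h^M(\R^d)}=\OR(h^\infty)$.

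For the second term $S_2 := i\psi_1(t)\int_{t_0}^t F(t,s)(1-\varphi^\w)\ev_{x_1=s}B_h\, ds$, the key point is that $\ev_{x_1=s}$ commutes with $\varphi^\w(x',hD_{x'})$, since the latter acts only on the $x'$-variable. Writing $v_u(s,x') := ((1-\varphi^\w)B_hu)(s,x')$ (a relabeling of the $x_1$-variable as $s$), one has $S_2u = T_F(v_u)$ with $T_F$ the integral operator of Lemma \ref{lemma:prop-F(t)}(iii). Lemma \ref{lemma:sogge-prop-Bh}(i) states $(1-\varphi^\w)B_h=\OR(h^\infty):\schwartzprime(\R^d)\to\schwartz(\R^d)$, which in particular gives $\norm{v_u}_{H_h^M(\R^d)}=\OR(h^\infty)\normLp{u}{2}{(\R^d)}$; combined with the continuity of $T_F$ on $H_h^M(\R^d)$ from Lemma \ref{lemma:prop-F(t)}(iii), this yields $\norm{S_2}_{L^2(\R^d)\to H_h^M(\R^d)}=\OR(h^\infty)$.

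The main technical input is the oscillatory-integral argument for $\ev_{x_1=t_0}\chi^\w$, but it is standard once the geometric separation $t_0\notin I_0$ is noted. The rest consists of combining the propagator bounds of Lemma \ref{lemma:prop-F(t)} with the localization property of $B_h$ from Lemma \ref{lemma:sogge-prop-Bh}(i), and the proof then concludes by summing the two contributions.
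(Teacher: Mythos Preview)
Your proof is correct and follows essentially the same approach as the paper: reduce via Kato--Seiler--Simon to the bound $S=\OR(h^\infty):L^2(\R^d)\to H_h^M(\R^d)$, then handle the two summands of $S$ using respectively the separation $t_0\notin\pi_{x_1}\supp\chi$ combined with Lemma~\ref{lemma:prop-F(t)}(ii), and the commutation of $\varphi^\w$ with $\ev_{x_1=s}$ together with Lemma~\ref{lemma:sogge-prop-Bh}(i) and Lemma~\ref{lemma:prop-F(t)}(iii). The only cosmetic difference is that the paper phrases the regularity gain as $S=\OR(h^\infty):\schwartzprime(\R^d)\to H_h^k(\R^d)$, but this is equivalent for the purpose of the Schatten estimate.
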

	
	\begin{prop}\label{prop:sogge-matdens-term_cruc}
		If $\WR$ is a small enough  neighborhood of $(x_0,\xi_0)$, then the operator $T_U$ satisfies the dual estimates
		\begin{equation*}
			\normSch{W T_U}{2\alpha(q,d)'}{(L^2(\R^d))} \leq C h^{- s(q,d)}\normLp{W}{2(q/2)'}{(\R^d)}
		\end{equation*}
		for all $W\in L^{2(q/2)'}(\R^d)$, where $s(q,d)$ and $\alpha(q,d)$ are defined in the statement of Theorem \ref{thm:ELp-sogge-matdens}.
	\end{prop}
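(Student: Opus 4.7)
The proof plan follows the blueprint of Theorem \ref{thm:ELp-gene-matdens} applied to the reduced evolution in $d-1$ spatial variables, and then transfers the resulting many-body Strichartz bounds from the homogeneous propagator $U(t,s)$ to the retarded Duhamel operator $T_U$. \emph{First}, the Sogge curvature condition \eqref{cond:sogge-curv} translates, via the implicit-function factorization $p(x,\xi) = e(x,\xi)(\xi_1 - a(x_1, x', \xi'))$ set up in Section \ref{subsec:sogge-def}, into the non-degeneracy of the Hessian $\partial_{\xi'}^2 a(x_0, \xi_0')$, namely condition \eqref{cond:sogge-non-degen-hess}. Consequently Theorem \ref{thm:SStrichartz-matdens} applies to the propagator $F(t,s)$ of $hD_t - a^\w$ in $n = d-1$ spatial dimensions. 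After shrinking $\WR$ so that the intervals and neighbourhoods fit inside those provided by Theorem \ref{thm:SStrichartz-bounds}, this yields the homogeneous Schatten-Strichartz bound
\begin{equation*}
\sup_{s \in I_r}\normSch{W\,U(t,s)}{2\alpha(q,d)'}{(L^2(\R^{d-1}), L^2(\R^d))} \leq C h^{-s(q,d)} \norm{W}_{L^{p(q)}_t L^{2(q/2)'}_{x'}(\R^d)}
\end{equation*}
for $2 \leq q \leq \tfrac{2(d+1)}{d-1}$. At the Sogge endpoint $q = \tfrac{2(d+1)}{d-1}$, which is precisely the symmetric Schrödinger Strichartz endpoint $\tfrac{2(n+2)}{n}$ in dimension $n = d-1$ and lies strictly below the Keel-Tao point $\tfrac{2d}{d-2}$ of Theorem \ref{thm:SStrichartz-matdens}, the time and space exponents coincide ($p(q) = 2(q/2)' = d+1$) and no logarithmic factor appears.

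\emph{Second}, to transfer the homogeneous estimate to $T_U f(t) = \psi_1(t) \int_{t_0}^t U(t,s) f(s, \cdot)\, ds$, I would rerun the complex-interpolation scheme of the proof of Theorem \ref{thm:SStrichartz-matdens} directly on the retarded analytic family
\begin{equation*}
\mathcal{T}_z g(t) := \psi_1(t) \int_{\R}(t-s+i0)^z \indicatrice{s \in [\min(t_0,t),\, \max(t_0,t)]}\, U(t,s) g(s, \cdot)\, ds,
\end{equation*}
which coincides with $T_U$ at $z = 0$. The operator-norm bound on $W_1 \mathcal{T}_{-1+i\sigma} W_2$ at $\Re z = -1$ follows by Plancherel in $t$ using the $L^\infty$ bound on $\hat m_{-1+i\sigma}$ together with the uniform $L^2$-boundedness of $U(t,s)^*$; the Hilbert-Schmidt bound on $W_1 \mathcal{T}_{(d-1)/2+i\sigma} W_2$ at $\Re z = (d-1)/2$ follows from the $L^1 \to L^\infty$ dispersive bound of Theorem \ref{thm:SStrichartz-bounds} applied to $U(t,s) U(s',s)^*$ combined with Young's inequality in time, mirroring Steps 1 and 2 of the proof of Theorem \ref{thm:SStrichartz-matdens}. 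Interpolation via Theorem \ref{thm:complex-interpol-schatten} at $z = 0$ then yields the desired estimate at $q = \tfrac{2(d+1)}{d-1}$. The remaining values $q \in [2, \infty]$ follow by complex interpolation between this endpoint, the trivial Hilbert-Schmidt bound at $q = 2$ (obtained by writing out the kernel of $W T_U$ and using the $L^2$-boundedness of $U(t,s)$), and the $q = \infty$ bound already supplied by Theorem \ref{thm:ELp-gene-matdens}.

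\emph{The main obstacle} is the retardation step: the indicator $\indicatrice{s \in [\min(t_0,t), \max(t_0,t)]}$ breaks the analyticity of $\mathcal{T}_z$ naively, and one must first split $\R$ into the two half-lines $\{s \geq t_0\}$ and $\{s \leq t_0\}$ (on each of which the indicator reduces to a Heaviside in a fixed direction and can be absorbed into the distribution $(t-s+i0)^z$ up to a harmless time cutoff) before running the interpolation argument; on each piece one then has to verify that the operator-norm estimate at $\Re z = -1$ survives. This is essentially the point where a Schatten-valued Christ-Kiselev-type argument enters. Invoking such a lemma directly is an alternative route, but since at the Sogge endpoint the time and output exponents of the homogeneous bound coincide ($p(q) = 2(q/2)' = d+1$), the strict-inequality hypothesis of the standard Christ-Kiselev lemma is borderline and a careful endpoint variant would be needed, so the explicit complex-interpolation route seems cleaner.
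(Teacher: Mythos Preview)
Your first step (Theorem \ref{thm:SStrichartz-matdens} with $n=d-1$ at the Sogge endpoint $q=\tfrac{2(d+1)}{d-1}$, no logarithmic loss) is exactly right; the transfer to $T_U$ is where you diverge from the paper, and your route has a gap. The paper avoids both complex interpolation on a retarded family and any Christ--Kiselev technology: after the same $T_\pm$ split you propose, it simply computes $T_+T_+^*$ and observes that the inner integration variable becomes the reference time $r$ of the homogeneous bound,
\[
T_+T_+^* \;=\; \int_{I_r} A_{r,+}A_{r,+}^*\,dr, \qquad A_{r,+}g(t) := \indicatrice{t\geq t_0}\,\indicatrice{t\geq r}\,\psi_1(t)\,U(t,r)g.
\]
This is an integral of positive operators over a bounded interval, so $\normSch{WT_+}{2\alpha'}{}^2 = \normSch{WT_+T_+^*\bar W}{\alpha'}{} \leq |I_r|\sup_{r}\normSch{WA_{r,+}}{2\alpha'}{}^2 \leq C\sup_{r}\normSch{WU(\cdot,r)}{2\alpha'}{}^2$, and Theorem \ref{thm:SStrichartz-matdens} closes the argument in one line. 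No analyticity, no endpoint subtleties.

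Your $\Re z=-1$ step, by contrast, does not go through as stated. After the half-line split, absorbing the Heaviside $\indicatrice{t\geq s}$ into $(t-s+i0)^{z}$ produces $(t-s)_+^{z}$; at $z=-1+i\sigma$ its Fourier transform carries the factor $\Gamma(i\sigma)$, which blows up like $|\sigma|^{-1}$ as $\sigma\to 0$ and violates the uniform-in-$\sigma$ hypothesis of Theorem \ref{thm:complex-interpol-schatten}. Independently, the Plancherel argument you invoke requires decoupling the $t$ and $s$ variables, which forces you to write $U(t,s)=\psi(t-s)F(t,r)F(s,r)^*\varphi^\w$ through a fixed reference time $r$ anyway; once you do this you have essentially rediscovered the paper's $A_{r,\pm}$ factorization, and the analytic family becomes superfluous.
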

	
	\begin{rmk}
		Recall that the operator $T_U$ depends on $\WR$ through the functions $\psi_1$ and $\varphi$.
	\end{rmk}
	
	Before proving the previous propositions, we use them to complete the proof of Theorem \ref{thm:ELp-sogge-matdens}.
	By the decomposition of the operator $\psi_1(t)\chi^\w$ and by the triangle inequality
	\begin{equation*}
		\normSch{W \psi_1(t)\chi^\w\sqrt{\gamma}}{2}{}
		\leq \normSch{W  S \sqrt{\gamma}}{2}{}+ \normSch{W T_U B_h\sqrt{\gamma}}{2}{}.
	\end{equation*}
	Using the H{\"o}lder inequality and Proposition \ref{prop:sogge-matdens-term_nul}, we have
	\begin{align*}
		\normSch{W S \sqrt{\gamma}}{2}{}
		&\leq \normSch{W  S}{2(q/2)'}{}\normSch{\sqrt{\gamma}}{q}{}
		\\&
		=\OR(h^\infty) \normLp{W}{2(q/2)'}{} \normSch{\gamma}{q/2}{}^{1/2}.
	\end{align*}
	By the H{\"o}lder inequality, Lemma \ref{lemma:sogge-prop-Bh} 
	 and Proposition \ref{prop:sogge-matdens-term_cruc}
	\begin{align*}
		&\normSch{W  T_U B_h\sqrt{\gamma}}{2}{}
		\\&\quad
		\leq \normSch{W  T_U }{2\alpha'}{} \norm{B_h(1+P^2/h^2)^{-1/2}}_{L^2\to L^2}\normSch{(1+P^2/h^2)^{1/2}\sqrt{\gamma}}{2\alpha}{}
		\\&\quad
		\lesssim h^{-s(q,d)} \normSch{(1+P^2/h^2)^{1/2}\gamma(1+P^2/h^2)^{1/2}}{\alpha}{}^{1/2}
		.
	\end{align*}
	Then, we have the same bound for the norm $\normSch{W(x)\psi_1(t)\chi^\w\sqrt{\gamma}}{2}{}$.
	
	\bigskip
	Finally, we obtain for $q=2(d+1)/d-1$
	\begin{equation*}
		\normLp{\rho_{\chi^\w\gamma\chi^\w}}{q/2}{(\R^d)} \lesssim h^{-2s(q,d)}\normSch{(1+P^2/h^2)^{1/2}\gamma(1+P^2/h^2)^{1/2}}{\alpha(q,d)}{} ,
	\end{equation*}
	which is the desired bound for the end-point (5.2).
	
	\subsubsection{Proof of Proposition \ref{prop:sogge-matdens-term_nul}}

		Let $\beta\geq 2$ and $W\in L^\beta(\R^d)$.
		In order to prove the desired inequality, we prove
		\begin{equation}\label{eq-demo:sogge-matdens-terme_nul}
		\forall k\in\N,\quad
			S=\OR(h^\infty): \schwartzprime(\R^d) \to H_h^k(\R^d).
		\end{equation}
		Assuming this result, we only need to choose $k\in\N$ so that $x\mapsto\crochetjap{x}^{-k}$ be $L^\beta(\R^d)$. By the Hölder, the previous inequality \eqref{eq-demo:sogge-matdens-terme_nul} and Kato-Seiler-Simon inequality (Lemma \ref{lemma:Kato-Seiler-Simon_schatten})
		\begin{align*}
			&\normSch{W(t,x')S}{\beta}{(L^2(\R^d))}
			\\&\quad
			\leq \normSch{W(t,x')(1-h^2\Delta_{t,x'})^{-k/2}}{\beta}{(L^2(\R^d))} \norm{(1-h^2\Delta_{t,x'})^{k/2} S}_{L^2(\R^d)\to L^2(\R^d)}
			\\&\quad
			\leq C_{d,k,N} h^{N-d/2}\normLp{W}{\beta}{(L^2(\R^d))} \quad\quad\forall N\in\N
			.
		\end{align*}
		Hence,
		\begin{equation*}
			\normSch{W S}{\beta}{(L^2(\R^d))} =\OR(h^\infty) \normLp{W}{\beta}{(L^2(\R^d))}.
		\end{equation*}
		
		\bigskip
		Let us now prove \eqref{eq-demo:sogge-matdens-terme_nul}.

		Since $t_0\not\in\pi_{x_1}\supp\chi$, we have
		\begin{equation*}\label{eq:sogge-eval}
			\ev_{x_1=t_0}\chi^\w(x,hD) = \OR(h^\infty):{\schwartzprime_x(\R^d)\to\schwartz_{x'}(\R^{d-1})},
		\end{equation*}
		which together with Lemma \ref{lemma:prop-F(t)}, implies that
		\begin{equation*}
			\forall k\in\N,\quad
			\psi_1(t)F(t,t_0)\ev_{x_1=t_0}\chi^\w =\OR(h^\infty):\schwartzprime(\R^d)\to H_h^k(\R^d).
		\end{equation*}
		
		Let us prove the same equality for the second term $\psi_1(t)\int_{t_0}^t F(t,s)(1-\varphi^\w(x,hD_{x'}))\ev_{x_1=s}B_h ds$.
		Recall $T_F$ is the operator which acts on functions in $\R^d$ defined by
		\begin{equation}\label{eq-def:sogge-T_F}
			T_F :
			u=u(t,x')\mapsto \psi_1(t)\int_{t_0}^t( F(t,s) u(s))(x') \: ds .
		\end{equation}
		By Lemma \ref{lemma:prop-F(t)} (to $n=d-1$, $t=x_1$ and $x=x'$), the operator $T_F$ which maps to $H_h^k(\R^d)$ into $H_h^k(\R^d)$ for all $k\in\N$.
		
		Then, since $1-\varphi^\w$ commutes with $\ev_{x_1=s}$ (because $\varphi^\w$ only acts on the variables $(x',\xi')$) and given Lemma \ref{lemma:sogge-prop-Bh} 
		\begin{align*}
			\psi_1(t) \: \int_{t_0}^t F(t,s) (1-\varphi^\w) \ev_{x_1=s}B_h \quad ds
			&= \psi_1(t) \: \int_{t_0}^t F(t,s)\ev_{x_1=s}(1-\varphi^\w)B_h \quad ds
			\\&= T_F\circ ((1-\varphi^\w)B_h)
			\\&= \OR(h^\infty): \schwartzprime(\R^d)\to H_h^k(\R^d)
			\quad\forall k\in\N.
		\end{align*}
		Finally, for all $k\in\N$
		\begin{equation*}
			S=\OR(h^\infty): \schwartzprime(\R^d)\to H_h^k(\R^d) ,
		\end{equation*}
		what is exactly \eqref{eq-demo:sogge-matdens-terme_nul}.

	\subsubsection{Proof of Proposition \ref{prop:sogge-matdens-term_cruc}}
	
		The operator $T_U$ can be split as $T_U=T_+-T_-$ with
		\begin{equation*}
			T_+ :=  \indicatrice{t\geq t_0} T_U
		\end{equation*}
		and
		\begin{equation*}
			T_- :=-\indicatrice{t\leq t_0}T_U.
		\end{equation*}
		 Their dual operators' expressions are the following 
		 \begin{equation*}
			 T_+^* : 
			 \left\lbrace\begin{array}{lll}
			 L^2(\R^d)&\to& L^2(\R^d)\\
			 f&\mapsto& \indicatrice{r\geq t_0}	\int_\R \indicatrice{s\geq t_0}\indicatrice{s\geq r} U(s,r)^* \psi_1(s)f(s)  ds
			 \end{array}\right.
		 \end{equation*}
		 and
		 \begin{equation*}
			 T_-^* : 
			 \left\lbrace\begin{array}{lll}
			 L^2(\R^d)&\to& L^2(\R^d)\\
			 f&\mapsto& \indicatrice{r\leq t_0}	\int_\R \indicatrice{s\leq t_0}\indicatrice{t\leq r} U(s,r)^* \psi_1(s)f(s)  ds
			 .
			 \end{array}\right.
		 \end{equation*}
		 Then the operators $T_\pm T_\pm^*$ can be written as
		 \begin{equation*}
		 \begin{split}
			 T_+T_+^* f(t)
			 &= \int_{r\geq t_0}dr 
			 \int_\R ds \indicatrice{t\geq t_0}\indicatrice{t\geq r}	\indicatrice{s\geq t_0}\indicatrice{(s\geq r)}
			 \times\\&\quad\times
			 \psi_1(t)U(t,r) U(s,r)^* \psi_1(s)f(s)  
		 \end{split}
		 \end{equation*}
		 and
		 \begin{equation*}
		 \begin{split}
			 T_-T_-^* f(t)
			 &=  \int_{r\leq t_0}dr 
			 \int_\R ds \indicatrice{t\leq t_0}\indicatrice{t\leq r}	\indicatrice{s\leq t_0}\indicatrice{s\leq r}
			 \times\\&\quad\times
			 \psi_1(t)U(t,r) U(s,r)^* \psi_1(s)f(s)  .
		 \end{split}
		 \end{equation*}
		 Let $r\in\R$. Let us introduce the operators $A_{r,\pm}$
		 \begin{equation*}
		 	A_{r,+}:
		 	\left\lbrace\begin{array}{lll}
			 	L^2_{x'}(\R^{d-1})&\to& L^2_{t,x'}(\R^d)\\
			 	g&\mapsto& \indicatrice{t\geq t_0}\psi_1(t) \indicatrice{t\geq r} U(t,r)g
		 	\end{array}\right.
		 \end{equation*}
		 and
		 \begin{equation*}
			 A_{r,-}:
			 \left\lbrace\begin{array}{lll}
			 L^2_{x'}(\R^{d-1})&\to& L^2_{t,x'}(\R^d)\\
			 g&\mapsto& \indicatrice{t\leq t_0}	\psi_1(t)\indicatrice{t\leq r} U(t,r)g
			 .
			 \end{array}\right.
		 \end{equation*}
		 Their dual operators can be written as
		 \begin{equation*}
			 A_{r,+}^*:
			 \left\lbrace\begin{array}{lll}
			 L^2_{t,x'}(\R^d)&\to& L^2_{x'}(\R^{d-1})\\
			 f&\mapsto& 	\int_\R \indicatrice{s\geq t_0}\indicatrice{s\geq r}U(s,r)^* \psi_1(s)f(s)  ds
			 \end{array}\right.
		 \end{equation*}
		 and
		 \begin{equation*}
			 A_{r,-}^*:
			 \left\lbrace\begin{array}{lll}
			 L^2_{t,x'}(\R^d)&\to& L^2_{x'}(\R^{d-1})\\
			 f&\mapsto& \int_\R\indicatrice{s\leq t_0}\indicatrice{s\leq r}U(s,r)^*\psi_1(s)f(s)ds
			 .
			 \end{array}\right.
		 \end{equation*}
		 The operators $A_{r,\pm}A_{r,\pm}^*$ acts on $L^2(\R^d)$.
		 This gives
		 \begin{align*}
			  T_+T_+^* = \int_{r\geq t_0} A_{r,+}A_{r,+}^* dr
			  = \int_{I_r}A_{r,+}A_{r,+}^* dr
			  ,\\ T_-T_-^* = \int_{r\leq t_0} A_{r,-}A_{r,-}^* dr
			  = \int_{I_r}A_{r,-}A_{r,-}^* dr
			  .
		 \end{align*}
		 Moreover
		 \begin{equation*}
			  T_UT_U^* = T_+T_+^*+T_-T_-^* .
		 \end{equation*}
		 Thus, for any $\alpha\geq 1$
		 \begin{align*}
			 &\normSch{W T_U}{2\alpha'}{(L^2(\R^d))}^2
			 \\&\quad
			 = \normSch{W T_U T_U^* \bar{W}}{\alpha'}{(L^2(\R^d))}
			 \\&\quad
			 \leq \abs{I_r} \left(\sup_{r\in I_r} \normSch{W A_{r,+} A_{r,+}^* \bar{W}}{\alpha'}{(L^2(\R^{d}))}+\sup_{r\in I_r} \normSch{W A_{r,-} A_{r,-}^* \bar{W}}{\alpha'}{(L^2(\R^{d}))} \right)
			 \\&\quad
			 \leq \abs{I_r} \left(\sup_{r\in I_r}\normSch{W A_{r,+} }{2\alpha'}{(L^2(\R^{d-1}),L^2(\R^d))}^2+\sup_{r\in I_r}\normSch{W A_{r,-} }{2\alpha'}{(L^2(\R^{d-1}),L^2(\R^d))}^2\right).
		 \end{align*}
		 Now, notice that
		 \begin{equation*}
		 	\normSch{W A_{r,\pm} }{2\alpha'}{}\leq C\normSch{W U(t,r)}{2\alpha'}{}.
		 \end{equation*}
		 Given \eqref{cond:sogge-non-degen-hess}, we can apply Theorem \ref{thm:SStrichartz-matdens} to 
		 $n=d-1$,
		 $(x_0',\xi_0')\in\R^{d-1}\times\R^{d-1}$, $a\in\CR^\infty_{x_1}(\R,S_{(x',\xi')}(1))$ and $J=I_r$. This defines $\delta>0$ and $U_1'\times V_1'$ a neighborhood of $(x_0',\xi_0')$(which corresponds to the neighborhood $U\times V$ in Theorem \ref{thm:SStrichartz-bounds}) .
		 Thus, imposing the following constraints on $\WR$:
		 \begin{itemize}
		 	\item $\abs{I}<\frac\delta 2$,
		 	\item $\VR'\subset U_1'\times V_1'$.
		 \end{itemize}
		 we obtain 
		\begin{equation*} 			
			\sup_{r\in I_r}\normSch{WU(t,r)}{d+1}{(L^2(\R^{d-1}),L^2(\R^d))} \lesssim h^{-\frac{d-1}{2(d+1)}}\normLp{W}{d+1}{(\R^d)}.
		\end{equation*}
	That ends the proof of Proposition \ref{prop:sogge-matdens-term_cruc}.
		
	

			
	\section{L$^p$ estimates around turning points}\label{sec:TP-est}
	
	We now treat the turning point region $\{V=E\}$, under the assumption $\nabla_xV\neq 0$ on this set. In the one-body case ($\rk\gamma=1$), we recover \cite[Thm. 7]{zworski2012semiclassical}.
	
	\subsection{Statement of the result}
	
	Let $d\geq 2$.
	
	\begin{assump}\label{cond:TP}
		A point $(x_0,\xi_0)\in\R^d\times\R^d$ satisfies the following \emph{turning point conditions} for a symbol $p$ if
		\begin{equation*}
			p(x_0 , \xi_0 ) = 0 , \: \nabla_\xi p(x_0 , \xi_0 ) = 0 , \: \nabla_x p(x_0 ,\xi_0 )  \neq 0 ,\: \partial_\xi^2 p(x_0 , \xi_0 ) \text{ is positive definite}.
		\end{equation*}
	\end{assump}
	
	\begin{rmk}
		For Schr\"odinger operators $p(x,\xi)=\xi^2+V(x)-E$ with $V\in\CR^\infty(\R^d,\R)$ bounded from below satisfying Definition \ref{cond:am-potential-pol-growth}, the previous assumption is equivalent to:
		\begin{equation*}\label{cond:TP-Sch}
			\xi_0=0,\quad V(x_0)=E,\quad \nabla_xV(x_0)\neq 0 .
		\end{equation*}
	\end{rmk}
	
	First recall the individual function result.
		
	\begin{thm}[Improved one-body estimates, {\cite[Thm. 7]{koch2007semiclassical}}]\label{thm:ELp-TP-1body}
		Let $V\in\CR^\infty(\R^d,\R)$ bounded and below and satisfying Definition \ref{cond:am-potential-pol-growth}, define $ p(x,\xi) :=\abs{\xi}^2+V(x)$ and $P:=p^\w(x,hD)$ (or any other quantization).
		Let $(x_0,\xi_0)\in\R^d\times\R^d$ be a point satisfying the Assumption \ref{cond:TP} for the symbol $p$. Then, there exist a neighborhood $\VR$ of $(x_0,\xi_0)$ and $h_0>0$, such that for any $\chi\in\test{\R^d\times\R^d}$ with support contained in $\VR$, there  exists $C>0$ such that for any $0< h\leq h_0$, for any $2\leq q\leq\infty$ and for any bounded self-adjoint non-negative operator $\gamma$ on $L^2(\R^d)$
		\begin{equation*}
			\normLp{\chi^\w u}{q}{(\R^d)} \leq C \log(1/h)^{t(q,d)} h^{-s(q,d)}\left(\normLp{u}{2}{(\R^d)}+\frac 1 h\normLp{Pu}{2}{(\R^d)}\right)
		\end{equation*}
		where $t(q,d)$ and $s(q,d)$ are given by the following formulas
		\begin{equation}\label{eq-def:t-ELp-TP}
			t(q,d) =\begin{cases}
			\frac{d+1}{2(d+3)}  &\text{if}\ q=\frac{2(d+3)}{d+1},\\
			0 &\text{otherwise}
			,
		\end{cases}
		\end{equation}
		and
		\begin{itemize}
			\item when $d=2$:
			\begin{equation}\label{eq-def:s-d=2-ELp-TP}
				s(q,2)=\begin{cases}
				\frac 1 4 -\frac 1 {2q} &\text{if}\ 2\leq q \leq \frac{10} 3,\\
				\frac 12 -\frac 43 \frac 1q &\text{if}\ \frac{10} 3\leq q\leq \infty,
				\end{cases}
			\end{equation}
			\item when $d\geq 3$:
			\begin{equation}\label{eq-def:s-d>2-ELp-TP}
				s(q,d)=\begin{cases}
				\frac{d-1}2\left(\frac 1 2 -\frac 1 q\right) &\text{if}\ 2\leq q \leq\frac{2(d+3)}{d+1},\\
				\frac{ 2d} 3\left(\frac 1 2 -\frac 1 q\right) -\frac 1 6 &\text{if}\ \frac{2(d+3)}{d+1}\leq q\leq \frac{2d}{d-2},
				\\ d\left(\frac 1 2-\frac 1q\right)-\frac 12 &\text{if}\ \frac{2d}{d-2}\leq q\leq\infty
				.
				\end{cases}
			\end{equation}
		\end{itemize}
		Equivalently, one has for all $2\leq q\leq\infty$
		\begin{equation*}
			\chi^\w{(1+P^*P/h^2)^{-1/2}} =\OR (\log(1/h)^{t(q,d)}h^{-s(q,d)}) : L^2(\R^d)\to L^q(\R^d).
		\end{equation*}
	\end{thm}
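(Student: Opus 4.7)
The strategy, following Koch--Tataru--Zworski, is a dyadic decomposition at the natural scale $h^{2/3}$ of the Airy transition region around the turning set $\{V=E\}$. Because $\nabla_x V(x_0)\neq 0$, we may locally use $\lambda := V(x) - E$ as a coordinate. Let $\Phi\in C^\infty_c(\R)$ be supported in $[-4,4]$ and $\varphi\in C^\infty_c(\R)$ supported in $[1/2,4]$ with $\Phi(t) + \sum_{j\ge 1}\varphi(2^{-j}t) = 1$ for $|t|\lesssim 1$. Writing $\mu := (\abs{\xi}^2 + |V(x)-E|)/h^{2/3}$, I decompose $\chi = \chi\Phi(\mu) + \sum_{1\le j\le C\log(1/h)}\chi\varphi(2^{-j}\mu) =: \chi_0 + \sum_j \chi_j$, where the sum truncates at scale $O(1)$. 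The three exponent regimes in \eqref{eq-def:s-d=2-ELp-TP}--\eqref{eq-def:s-d>2-ELp-TP} will correspond to whether the dyadic sum is dominated by the innermost scale, by all scales with a logarithmic factor at criticality, or by the outermost scale.

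\textbf{Inner region (scale $h^{2/3}$).} On the support of $\chi_0^\w$, the symbol $p_E=\abs{\xi}^2+V-E$ has size $O(h^{2/3})$ and its support has volume $O(h^{2d/3})$. The estimate here is of $H^1_h$-Sobolev flavor: by the basic $L^q$ bound (Lemma \ref{lemma:SA-basic-ELp}), Hölder and the volume of $\pi_x\supp\chi_0$, together with the Koch--Tataru--Zworski $H^1_h$--$L^\infty$ inequality adapted to the transition region, one obtains
\begin{equation*}
\normLp{\chi_0^\w u}{q}{(\R^d)} \le C\, h^{-\sigma_0(q,d)}\Bigl(\normLp{u}{2}{} + \tfrac1h\normLp{Pu}{2}{}\Bigr),
\end{equation*}
with $\sigma_0(q,d)=\tfrac{2d}{3}\bigl(\tfrac12-\tfrac1q\bigr)-\tfrac16$ in the regime $q\in[2(d+3)/(d+1),2d/(d-2)]$ (and the corresponding elliptic-type power on the $L^\infty$ side for $q\ge 2d/(d-2)$, which handles the rightmost branch of \eqref{eq-def:s-d>2-ELp-TP}).

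\textbf{Outer shells (scale $\lambda_j := 2^j h^{2/3}$).} On $\supp \chi_j$ we perform the anisotropic rescaling $x = x_0 + \lambda_j y$, $\xi = \lambda_j^{1/2}\eta$, under which $p_E(x,\xi) = \lambda_j\bigl(\eta^2 + \widetilde V_j(y)\bigr) + O(\lambda_j^2)$, where $\widetilde V_j(y) = \lambda_j^{-1}(V(x_0+\lambda_j y)-E) = \nabla V(x_0)\cdot y + O(\lambda_j)$ is smooth, uniformly bounded with non-vanishing gradient, and the rescaled level set $\{\eta^2+\widetilde V_j = 0\}$ has non-degenerate second fundamental form (it is asymptotically a paraboloid). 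The effective semiclassical parameter becomes
\begin{equation*}
\widetilde h_j = h\,\lambda_j^{-3/2} = 2^{-3j/2}\in(0,1].
\end{equation*}
Since $\widetilde V_j\neq 0$ on the (rescaled) support of $\chi_j$, we are in the Sogge regime, and Theorem \ref{thm:ELp-sogge-1body} applies at parameter $\widetilde h_j$. Undoing the rescaling gives
\begin{equation*}
\normLp{\chi_j^\w u}{q}{} \le C\,\lambda_j^{\,d(1/q-1/2)+1/2}\,\widetilde h_j^{-s_{\rm Sogge}(q,d)}\Bigl(\normLp{u}{2}{}+\tfrac1h\normLp{Pu}{2}{}\Bigr),
\end{equation*}
which, after substituting $\lambda_j = 2^j h^{2/3}$ and $\widetilde h_j = 2^{-3j/2}$, becomes $C\, 2^{j\beta(q,d)}\,h^{-s_{\rm Sogge}(q,d)-\frac{2}{3}(d(1/2-1/q)-1/2)}\,(\cdots)$ for a computable exponent $\beta(q,d)$.

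\textbf{Summation and critical exponent.} The estimate $\normLp{\chi^\w u}{q}{}\le \normLp{\chi_0^\w u}{q}{} + \sum_{j\ge 1}\normLp{\chi_j^\w u}{q}{}$ produces a geometric series in $j$. A direct computation shows that $\beta(q,d)<0$ for $q<2(d+3)/(d+1)$ (the sum is controlled by $j=0$, recovering the Sogge exponent $s = \tfrac{d-1}{2}(\tfrac12-\tfrac1q)$), $\beta(q,d)=0$ at $q=2(d+3)/(d+1)$ (producing the logarithmic factor $\log(1/h)^{(d+1)/(2(d+3))}$ of \eqref{eq-def:t-ELp-TP} from the $\log(1/h)$ terms), and $\beta(q,d)>0$ for $q>2(d+3)/(d+1)$ (the sum is controlled by the outermost scale, matching $s(q,d)=\tfrac{2d}{3}(\tfrac12-\tfrac1q)-\tfrac16$ and, for $q\ge 2d/(d-2)$ in $d\ge 3$, the inner-region elliptic bound takes over). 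The two-dimensional case is analogous, with the transition at $q=10/3=2(d+3)/(d+1)\big|_{d=2}$. The main technical difficulty is the uniform-in-$j$ control of the symbolic calculus after the anisotropic rescaling: one must verify that $\chi_j$ in the rescaled coordinates has derivatives bounded independently of $j$ and that the error terms in the Weyl quantization are genuine $O(h^\infty)$ remainders in the rescaled semiclassical parameter $\widetilde h_j$, which is where the choice of $h^{2/3}$ as the base scale is essential.
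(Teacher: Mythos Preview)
Your overall architecture---dyadic decomposition at the Airy scale $h^{2/3}$, an inner $H^1$-type estimate, rescaled Sogge estimates on the outer shells, and a geometric summation that becomes critical at $q=2(d+3)/(d+1)$---is the correct one and matches the Koch--Tataru--Zworski strategy (which the paper follows in its many-body proof of Theorem~\ref{thm:ELp-TP-matdens}; note that Theorem~\ref{thm:ELp-TP-1body} itself is only cited). There are, however, two concrete gaps.

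\textbf{The missing $\varepsilon^{1/4}$ factor.} Your outer-shell estimate reads $\normLp{\chi_j^\w u}{q}{}\le C\,\lambda_j^{\,d(1/q-1/2)+1/2}\,\widetilde h_j^{-s_{\rm Sogge}}(\cdots)$, but this exponent is wrong: pure rescaling of the $L^q$ versus $L^2$ norms only produces $\lambda_j^{d(1/q-1/2)}$, and with your $+1/2$ the critical value of $q$ comes out to $2(d+3)/(d-1)$, not $2(d+3)/(d+1)$. What you are missing is the $L^2$-concentration estimate (Lemma~\ref{lemma:TP-3} in the paper, \cite[Lem.~7.3]{koch2007semiclassical}): for a quasimode, $\normLp{\chi^\w u}{2}{(\Omega_\varepsilon)}\lesssim \varepsilon^{1/4}(\normLp{u}{2}{}+h^{-1}\normLp{Pu}{2}{})$. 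Feeding this into the rescaled Sogge estimate gives the correct factor $\lambda_j^{d(1/q-1/2)+1/4}$, and then $\beta(q,d)=1/4-\mu(q,d)$ with $\mu=d(1/2-1/q)-\tfrac32 s_{\rm Sogge}$ vanishes exactly at $q=2(d+3)/(d+1)$. You invoke the KTZ $H^1_h$ estimate only for the inner piece $\chi_0$, but it is equally essential on every shell.

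\textbf{Physical-space versus phase-space decomposition.} Your dyadic cutoff is in the phase-space variable $\mu=(|\xi|^2+|V-E|)/h^{2/3}$, whereas KTZ (and the paper) work in physical space: after straightening so that $V(x)-E=-c(x)x_1$, they cut in $x_1$ alone into strips $A_\varepsilon=\{|x_1-\varepsilon|<\varepsilon/2\}$, and then further into boxes $A_\varepsilon^k$ of side $\varepsilon$ in the transverse directions $x'$. The reason for the box decomposition is exactly what you flag at the end: after rescaling $x=\lambda_j y$, the original support of $\chi$ becomes a ball of radius $\sim \lambda_j^{-1}$ in $y$, so the rescaled cutoff is \emph{not} uniformly compactly supported, and one cannot apply the Sogge theorem with a constant independent of $j$. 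The box decomposition reduces to unit cubes in $y$, and an $\ell^{q/2}$--$\ell^\alpha$ interpolation (see the proof of Lemma~\ref{lemma:TP-matdens-crucial-strip}) reassembles them. Your phase-space variant could in principle be made to work, but you would need an analogous covering argument, and the symbolic calculus is harder to control because the $\chi_j$ are $h$-dependent symbols in a class that is not simply $S(1)$; the physical-space route avoids this because localization by $\chi_{A_\varepsilon^k}(x)$ commutes with the differential operator $P$ up to explicitly controllable errors.
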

	
	\begin{rmk}\label{rmk:comp-s-TP}
		The exponent $s_{\text{TP}}$, defined in Theorem \ref{thm:ELp-TP-1body} satisfies $s_{\text{Sogge}}\leq s_{\text{TP}}\leq s_{\text{gene}}$ for any $d\geq 1$ and $q\in[2,\infty]$. They are all equal for $q=\{2\}\cup[2d/(d-2),\infty]$. Furthermore $s_{\text{TP}}=s_{\text{Sogge}}$ when $q\in[2,2(d+3)/(d+1)]$. Otherwise, the inequalities are strict. (c.f. Figure \ref{fig:comp-exp-s}).
	\end{rmk}
	
	\begin{rmk}\label{rmk:ELp-TP-improv}
		Note that the previous result has been proved in \cite[Thm. 7]{koch2007semiclassical} for slightly more general symbols
		\begin{equation*}
			p(x,\xi)= \sum_{i,j=1}^d a_{ij}(x)\xi_i\xi_j + V(x),
		\end{equation*}
		where $\{a_{ij}\}_{1\leq i,j\leq d}\subset\CR^\infty(\R^d,\R)$ is a positive definite Riemannian metric on $\R^d$ and $V\in\CR^\infty(\R^d,\R)$ sfollows Definition \ref{cond:bound-from-below} and Definition \ref{cond:am-potential-pol-growth}. Our results can also be generalized to this case.
	\end{rmk}
	
	\begin{thm}[Improved many-body estimates]\label{thm:ELp-TP-matdens}
		Let $V\in\CR^\infty(\R^d,\R)$ bounded from below and satisfying Definition \ref{cond:am-potential-pol-growth}, define $ p(x,\xi) :=\abs{\xi}^2+V(x)$ and $P:=p^\w(x,hD)$.
		Let $(x_0,\xi_0)\in\R^d\times\R^d$ be a point satisfying the Assumption \ref{cond:TP} for the symbol $p$.
		Then, there exist a neighborhood $\VR$ of $(x_0,\xi_0)$ and $h_0>0$, such that for any $\chi\in\test{\R^d\times\R^d}$ with support contained in $\VR$, there  exists $C>0$ such that for any $0< h\leq h_0$, for any $2\leq q\leq \infty$ and for any bounded self-adjoint non-negative operator $\gamma$ on $L^2(\R^d)$
		\begin{equation*}
			\normLp{\rho_{\chi^\w\gamma\chi^\w}}{q/2}{(\R^d)} \leq C \log(1/h)^{2t(q,d)} h^{-2s(q,d)}\normSch{(1+P^2/h^2)^{1/2}\gamma(1+P^2/h^2)^{1/2}}{\alpha(q,d)}{}
		\end{equation*}
		where $t(q,d)$, $s(q,d)$ are given by the formulas  \eqref{eq-def:t-ELp-TP}, \eqref{eq-def:s-d=2-ELp-TP}, \eqref{eq-def:s-d>2-ELp-TP},
		and $\alpha(q,d)$ is given by the formula of \eqref{eq-def:alpha-ELp-sogge-matdens}.
	\end{thm}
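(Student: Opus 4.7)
}

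The plan is to implement the many-body version of the Koch--Tataru--Zworski multi-scale decomposition around the turning point, feeding the many-body Sogge estimates of Theorem \ref{thm:ELp-sogge-matdens} at each dyadic scale and using an elementary Kato--Seiler--Simon estimate at the innermost scale. As in Sections \ref{sec:gene} and \ref{sec:sogge-est}, by Mercer's theorem (Remark \ref{rmk:mercer-thm}) it suffices to establish the dual bound
\begin{equation*}
\normSch{W\chi^{\w}\sqrt{\gamma}}{2}{}\le C\log(1/h)^{t_{\rm TP}(q,d)}h^{-s_{\rm TP}(q,d)}\normLp{W}{2(q/2)'}{(\R^d)}\normSch{(1+P^2/h^2)^{1/2}\gamma(1+P^2/h^2)^{1/2}}{\alpha_{\rm TP}(q,d)}{}^{1/2}
\end{equation*}
for every $W\in L^{2(q/2)'}\cap\CR^0(\R^d)$. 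The end points $q=2$ and $q=\infty$ come for free from Theorem \ref{thm:ELp-gene-matdens} and the one-body $L^\infty$ bound of Theorem \ref{thm:ELp-TP-1body} combined with the Calderon--Vaillancourt argument already used in Section \ref{sec:gene}; by interpolation it is therefore enough to treat the two ``corners'' $q=\frac{2(d+3)}{d+1}$ (responsible for the logarithm) and, when $d\ge 3$, $q=\frac{2d}{d-2}$.

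The first step is to perform a dyadic decomposition of $\chi$ according to the distance to the characteristic set. Since $\nabla_x p(x_0,\xi_0)\ne 0$ and $\partial_\xi^2 p(x_0,\xi_0)$ is positive definite, after a smooth symplectic change of variables we may assume $p(x,\xi)=|\xi|^2+x_1$ modulo flat error terms on a neighborhood of $(x_0,0)$. Choose a standard dyadic partition $1=\varphi_0+\sum_{j\ge 1}\varphi_j$ adapted to the scale $\mu_j:=2^j h^{2/3}$, with $\varphi_j$ supported on the region $\{|p(x,\xi)|\sim\mu_j\}\cap\supp\chi$ for $j\ge 1$ and $\varphi_0$ supported in $\{|p|\lesssim h^{2/3}\}$; write $\chi_j:=\chi\varphi_j$ and $\chi^{\w}=\sum_{j\ge 0}\chi_j^{\w}+\OR_{\schwartz}(h^\infty)$ using Proposition \ref{prop:SA-comp-op}. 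By the triangle inequality in $\schatten^2$, it suffices to bound each $\normSch{W\chi_j^{\w}\sqrt{\gamma}}{2}{}$ separately, the $\OR(h^\infty)$ remainder being absorbed via Lemma \ref{lemma:Kato-Seiler-Simon_schatten}.

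For the innermost block $j=0$, the support of $\chi_0$ has volume $\lesssim h^{2/3}\cdot h^{1/3(d-1)+1/3}\sim h^{(d+2)/3}$ in phase space, so the Kato--Seiler--Simon inequality in the form of Lemma \ref{lemma:Kato-Seiler-Simon_dual}, together with the elementary $H^1$-control
\begin{equation*}
\normSch{\chi_0^{\w}(1+P^2/h^2)^{-1/2}}{2\alpha'}{}\le Ch^{1/3}\normSch{\chi_0^{\w}(1-h^2\Delta)^{-1/2}}{2\alpha'}{}
\end{equation*}
of \cite[Lem. 2.4]{koch2007semiclassical} (which is available because $p+x_1$ is elliptic relative to $|\xi|^2$ where $x_1\lesssim h^{2/3}$), produces the bound at this scale with exponents matching the value of $(s_{\rm TP},\alpha_{\rm TP})$ announced in the statement. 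For the outer blocks $j\ge 1$ we rescale $x=x_0+\mu_j\tilde x$, $\xi=\sqrt{\mu_j}\,\tilde\xi$. The conjugated Weyl operator $\mu_j^{-1}P$ becomes the Weyl quantization of $\tilde p_j(\tilde x,\tilde\xi)=|\tilde\xi|^2+\tilde x_1+\OR(\mu_j)$ with effective semiclassical parameter $\tilde h_j:=h/\mu_j^{3/2}\le 1$, and the rescaled symbol $\tilde\chi_j$ is supported in a compact set where the Sogge non-degeneracy Assumption \ref{cond:sogge} holds (because $|\tilde\xi|\sim 1$ away from the Airy layer). The density and the weighted Schatten norm transform by explicit powers of $\mu_j$ (the density picks up $\mu_j^{-d/2}$, the $\schatten^\alpha$ norm is isometric, and $\tilde P=\mu_j^{-1}P$ so $(1+\tilde P^2/\tilde h_j^2)=(1+P^2/h^2)$), so applying Theorem \ref{thm:ELp-sogge-matdens} in the rescaled variables and converting back yields an estimate at scale $\mu_j$ with constant $C\,\tilde h_j^{-s_{\rm Sogge}(q,d)}\mu_j^{\tau(q,d)}$ for an explicit exponent $\tau(q,d)$ reflecting the $L^{q/2}$ and $\schatten^{\alpha_{\rm Sogge}}$ scalings.

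Finally, one sums the geometric series $\sum_{j:\,\mu_j\lesssim 1}\tilde h_j^{-s_{\rm Sogge}(q,d)}\mu_j^{\tau(q,d)}$. By the explicit form of $s_{\rm Sogge}$ in Theorem \ref{thm:ELp-sogge-1body} and of $\alpha_{\rm Sogge}$ in Theorem \ref{thm:ELp-sogge-matdens}, this sum is dominated by its first term for $q$ strictly between the end points and for the end point $q=\frac{2d}{d-2}$ (in dimension $d\ge 3$), but at the balanced exponent $q=\frac{2(d+3)}{d+1}$ every dyadic block contributes the same order, producing exactly $\log(1/h)$ scales and hence the factor $t_{\rm TP}(q,d)=\frac{d+1}{2(d+3)}$ (after taking square roots, as we are working on the dual level). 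Interpolating between the resulting bounds at $q\in\{2,\frac{2(d+3)}{d+1},\frac{2d}{d-2},\infty\}$ gives the full range announced in \eqref{eq-def:s-d=2-ELp-TP}--\eqref{eq-def:s-d>2-ELp-TP} and matches $\alpha_{\rm TP}=\alpha_{\rm Sogge}$ from \eqref{eq-def:alpha-ELp-sogge-matdens}.

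The delicate step is the rescaling one: one must check that conjugation by the unitary $U_{\mu_j}f(\tilde x)=\mu_j^{d/4}f(\mu_j\tilde x)$ intertwines $P$ with $\mu_j\tilde P_j$ and transforms the weighted density matrix norm without losing powers of $\mu_j$, and that the Sogge neighborhood and constant $C$ in Theorem \ref{thm:ELp-sogge-matdens} can be chosen uniformly in $j$ (because $\tilde p_j$ depends on $j$ through a bounded perturbation of the model $|\tilde\xi|^2+\tilde x_1$). The uniformity follows from the parametrix construction of Theorem \ref{thm:SStrichartz-bounds}, whose dispersive constants depend only on a finite number of seminorms of the symbol, all controlled here by $\mu_j\le 1$. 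Once this is ensured, the summation and interpolation are routine and produce exactly the advertised triple $(s_{\rm TP},t_{\rm TP},\alpha_{\rm TP})$.
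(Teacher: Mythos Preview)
Your overall strategy—dyadic decomposition, rescaled Sogge at each scale, summation—is the same as the paper's, but two concrete errors make the argument fail as written.

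\textbf{The decomposition must be spatial, not in $|p|$.} You localise where $|p(x,\xi)|\sim\mu_j$ and then assert that after rescaling ``$|\tilde\xi|\sim 1$ away from the Airy layer''. That is false: for the model symbol $p=|\xi|^2+x_1$, the point $(\xi,x_1)=(0,\mu_j)$ lies in $\{|p|=\mu_j\}$ and rescales to $\tilde\xi=0$. On $\{|\tilde p|\sim 1\}$ the rescaled symbol is \emph{elliptic}, so Theorem~\ref{thm:ELp-sogge-matdens} does not apply there (you would get $\alpha_{\rm ellip}=q/2$, not $\alpha_{\rm Sogge}$). The paper instead decomposes purely in position space into strips $\{x_1\sim\varepsilon\}$ (and further into boxes $A_\varepsilon^k$ of side $\varepsilon$, summed via an $l^{q/2}\schatten^\alpha$ interpolation you do not mention); after rescaling a box one lands in a fixed compact where both the elliptic region and the genuine Sogge region $\{\tilde p=0,\ |\tilde\xi|\sim 1\}$ are present, and Lemma~\ref{lemma:TP-matdens-crucial-zone-rescal-boxes} handles them together.

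\textbf{The weighted norm does not transform trivially, and the missing gain is exactly what produces $s_{\rm TP}$.} With $\tilde P=\mu_j^{-1}P$ and $\tilde h_j=h\mu_j^{-3/2}$ one has $\tilde P/\tilde h_j=\mu_j^{1/2}P/h$, hence $(1+\tilde P^2/\tilde h_j^2)=(1+\mu_j P^2/h^2)$, not $(1+P^2/h^2)$. Bounding this by $(1+P^2/h^2)$ gives on each scale the factor $h^{-2s_{\rm Sogge}}\mu_j^{-2\mu}$ with $\mu(q,d)=d(\tfrac12-\tfrac1q)-\tfrac32 s_{\rm Sogge}$; summing over $\mu_j\in[h^{2/3},1]$ then yields $h^{-4d(1/2-1/q)/3}$, which is $h^{-1/3}$ worse than $h^{-2s_{\rm TP}}$ in the range $\tfrac{2(d+3)}{d+1}\le q\le\tfrac{2d}{d-2}$ and places the logarithm nowhere. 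The paper recovers the missing $\mu_j^{1/2}$ at \emph{every} scale (not only at $j=0$) via Corollary~\ref{lemma:TP-matdens-3}: the $H^2$-control $(1-h^2\Delta)\chi_\varepsilon\chi^{\w}(1+P^2/h^2)^{-1/2}=\OR(\varepsilon^{1/4})$ on the half-space $\Omega_\varepsilon$ turns the right-hand side of the box estimate into $\varepsilon^{1/2-2\mu}$, and then the dyadic sum produces exactly $s_{\rm TP}$ with the log at $q=\tfrac{2(d+3)}{d+1}$. You invoke the $H^1$-control only for the innermost block; its role at all outer scales is the heart of the proof.
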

	

	\begin{rmk}\label{rmk:ELp-TP-matdens_improv-s}
		The proof of \cite{koch2007semiclassical} gives the exponent $q=\tfrac{2d}{d-2}$ as a threshold for the exponent $s(q,d)$. Actually, their proof shows that this threshold can be improved to $q=\tfrac{2d}{d-4}$ (because they only use a control in $H^1$ while their proof also provides a control in $H^2$). We choose to keep this weaker statement because it only applies to functions which are microlocalized around a turning point $(x_0,\xi_0=0)$ ($p(x_0,\xi_0)=0$). In our application to spectral clusters we will also need to deal with points such that $p(x_0,\xi_0)=0$ and $\xi_0\neq 0$, where only the Sogge estimates are available.
	\end{rmk}

	\begin{multicols}{2}
	\begin{center}\begin{tikzpicture}[line cap=round,line join=round,>=triangle 45,x=4. cm,y=3.5 cm,scale=2.6]
		d=2
		\draw[->] (0.,0.) -- (0.55,0.);
		\draw (0.55,0.) node[right] {$\dfrac{1}{q}$};
		\draw[->] (0.,-0) -- (0.,1/2+0.05);
		\draw (0,1/2+0.05) node[above] {$\displaystyle{s(q,d)}$};
		%
		%
		\draw (0+0.03,0.) node[below]{$\frac{d-2}{2d} =0$};
		\draw (3/10,0.) node[below]{$\frac{d+1}{2(d+3)}$};
		\draw (0.5,0.) node[below]{$\frac{1}{2}$};
		%
	\draw (0+0.03,0.5) node[right]{$(6.2)$}; 
	\draw (3/10,1/10) node[above, right]{$(6.3)$}; 
	\draw (0.5,0) node[above]{$(6.1)$}; 
		%
		\draw (0.,0.) node[left]{$0$};
		\draw (0,0.5) node[left]{$\frac{d-1}{2}=\frac{1}{2}$};
		\draw (0.,1/10) node[left]{$\frac{d-1}{2(d+3)}$};
		%
		\draw (1/2,0) node{$\bullet$};
		\draw (0,1/2) node{$\bullet$}; 
		\draw (3/10,1/10) node{$\bullet$}; 
		\draw[dotted] (0.,1/10)-|(3/10, 0.); 
		%
		\draw[line width=.8pt ] (0.,1/2)--(3/10,1/10)(3/10,1/10)--(0.5,0.);
		%
		\draw (1/8,1/2+.1)  node[right]{For $d=2$};
		\end{tikzpicture}\end{center}
	
	\begin{center}\begin{tikzpicture}[line cap=round,line join=round,>=triangle 45,x=6. cm,y=2.5 cm,scale=2]
		\draw[->] (0.,0.) -- (0.6,0.);
		\draw (0.6,0.) node[right] {$\displaystyle{\frac{1}{q}}$};
		\draw[->] (0.,0.) -- (0.,1.1);
		\draw (0.,1.1) node[above] {$\displaystyle{\frac 1{\alpha(q,d)}}$};
		%
		%
		\draw (0.,0) node[below]{0};
		\draw (0.5,0.) node[below]{$\frac{1}{2}$};
		\draw (1/6,0.) node[below]{$\frac{d-1}{2(d+1)}$}; 
		%
	\draw (0+0.03,0.1) node[left]{$(6.2)$};
	\draw (1/6,2/3) node[above]{$(6.4)$}; 
	\draw (0.5,1) node[above]{$(6.1)$};
		%
		%
		\draw (0.,0) node[left]{0};
		\draw (0.,1) node[left]{1};
		\draw (0.,2/3) node[left]{$\frac{d}{d+1}$}; 
		%
		\draw[dotted] (0, 1)-|(0.5, 0); 
		\draw (0., 0) node {$\bullet$};
		\draw (1/6, 2/3) node {$\bullet$}; 
		\draw (0.5, 1) node {$\bullet$};
		\draw[dotted] (0, 2/3)-|(1/6,0); 
		%
		%
		\draw[line width=0.8pt] (0.,0)-- 
		(0, 0)--(1/6, 2/3)--(0.5,1);
		%
		%
		\draw (1/8,1.2)  node[right]{For $d=2$};
		\end{tikzpicture}\end{center}
	
	\end{multicols}

	\begin{multicols}{2}
	\begin{center}\begin{tikzpicture}[line cap=round,line join=round,>=triangle 45,x=6.5 cm,y=2. cm,scale=2.]
		\draw[->] (0.,0.) -- (0.56,0.);
		\draw (0.56,0.) node[right] {$\displaystyle{\frac{1}{q}}$};
		\draw[->] (0.,0.) -- (0.,3/2+0.1);
		\draw (0.,3/2+0.1) node[above] {$\displaystyle{s(q,d)}$};
		%
		%
		\draw (1/4,0.) node[below]{$\frac{d-2}{2d}$};
		\draw (5/14,0.) node[below]{$\frac{d+1}{2(d+3)}$};
		\draw (0.5,0.) node[below]{$\frac{1}{2}$};
		%
		\draw (0.,0.) node[left]{$0$};
		\draw (0,0.5) node[left]{$\frac{1}{2}$};
		\draw (0.,3/14) node[left]{$\frac{d-1}{2(d+3)}$};
		\draw (0.,3/2) node[left]{$\frac{d-1}{2}$};
		%
		%
		\draw (0.,3/2) node{$\bullet$}; 
		\draw (1/4, 0.5) node{$\bullet$};
		\draw[dotted] (0., 0.5)-|(1/4, 0.); 
		\draw (5/14, 3/14) node{$\bullet$};
		\draw[dotted] (0., 3/14)-|(5/14, 0.); 
		\draw (1/2, 0) node{$\bullet$};
		%
	\draw (0.,3/2) node[right]{$(6.2)$}; 
	\draw (1/4, 0.5) node[above, right]{$(6.5)$};
	\draw (5/14, 3/14) node[above,right]{$(6.3)$};
	\draw (1/2, 0) node[above]{$(6.1)$};	
		%
		\draw[line width=0.9pt] (0.,3/2) -- 
		(1/4,0.5)--(5/14,3/14)--(0.5,0.);
		%
		\draw
		(1/4,3/2)node[right]{For $d\geq 3$};
		\end{tikzpicture}\end{center}
	
	\begin{center}\begin{tikzpicture}[line cap=round,line join=round,>=triangle 45,x=6.cm,y=2.5 cm,scale=2.2]
		\draw[->] (0.,0.) -- (0.55,0.);
		\draw (0.55,0.) node[right] {$\displaystyle{\frac{1}{q}}$};
		\draw[->] (0.,0.) -- (0.,1.1);
		\draw (0.,1.1) node[above] {$\displaystyle{\frac 1{\alpha(q,d)}}$};
		%
		%
		\draw (0.,0) node[below]{0};
		\draw (0.3,0.) node[below]{$\frac{d-1}{2(d+1)}$};
		\draw (0.5,0.) node[below]{$\frac{1}{2}$};
		\draw (1/4,0.) node[below]{$\frac{d-2}{2d}$};
		%
		%
		\draw (0.,1) node[left]{1};
		\draw (0.,4/5) node[left]{$\frac{d}{d+1}$}; 
		\draw (0.,2/3) node[left]{$\frac{d-2}{d-1}$}; 
		%
		\draw[dotted] (0, 1)-|(0.5, 0); 
		\draw (0., 0) node {$\bullet$};
		\draw (1/4, 2/3) node {$\bullet$}; 
		\draw (3/10, 4/5) node {$\bullet$}; 
		\draw (0.5, 1) node {$\bullet$};
		\draw[dotted] (0, 2/3)-|(1/4,0); 
		\draw[dotted] (0, 4/5)-|(0.3,0); 
		%
	\draw (0., -0.) node[above,right]{$(6.2)$};
	\draw (1/4, 2/3) node[above,left] {$(6.5)$}; 
	\draw (3/10, 4/5) node[above] {$(6.4)$}; 
	\draw (0.5, 1) node[above] {$(6.1)$};
		%
		%
		\draw[line width=0.8pt] (0.,0)-- 
		(1/4, 2/3)--(3/10, 4/5)--(0.5,1);
		%
		%
		\draw (1/8,1.2)  node[right]{For $d\geq 3$};
		\end{tikzpicture}\end{center}
	\end{multicols}

	We write the different end-points with what we obtain the estimates by interpolation.
	\begin{table}[h!]
		\centering
		\begin{tabular}{|c|c|c|c|c|c|}
			\hline
			End-point $q$  & 2 & $\infty$ & $\frac{2(d+3)}{d+1}$ &$\frac{2(d+1)}{(d-1)}$ &$ \frac{2d}{(d-2)}$ \\
			\hline
			Refering name  & (6.1) & (6.2) & (6.3) & (6.4) & (6.5) \\
			\hline
		\end{tabular}
		\caption{References of the end-points' labels in the turning points case.}
		\label{table:endpoint-TP}
\end{table}

	\subsection{Proof of Theorem \ref{thm:ELp-TP-matdens}}
	
	As argued in \cite{koch2007semiclassical}, we may reduce the problem to the case
	\begin{equation*}
		p(x,\xi)= \xi_1^2 +\sum_{i,j=2}^d a_{ij}(x)\xi_i\xi_j +V(x),\quad V(x)=-c(x)x_1,
	\end{equation*}
	where $(a_{ij}(x))_{i,j}\subset\CR^\infty(\R^d)$ is positive definite uniformly, $c\in\CR^\infty(\R^d)$ with $c(0)>0$, and
	\begin{equation*}
		\forall\alpha\in\N^d,\:\exists C_\alpha>0,\: \forall i,j=\{2,\ldots,d\},\:\forall x\in\R^d,\quad \abs{\partial^\alpha a_{ij}(x)} + \abs{\partial^\alpha V(x)}\leq C_\alpha.
	\end{equation*}

	\paragraph{Notation}\label{subsec:notation-ELp-TP-matdens}
	
	\begin{itemize}
		\item Let $\delta>0$ such that $\inf_{x\in B_\delta}c(x)>0$, where $B_\delta:=\{ x\in\R^d \: :\: \abs{x}<\delta\}$.
		\item
		Let $\VR\subset\R^d\times\R^d$ a bounded open neighborhood of $(x_0,\xi_0)=(0,0)$ such that $\pi_x\VR$ is contained in $B_{\delta/4}$ and such that $\VR\subset\VR_0$ where $\VR_0$ is given by Corollary \ref{lemma:TP-matdens-3}.
		\item
		Let $\chi\in\test{\R^d\times\R^d}$ such that $\supp\chi\subset\VR$.
		\item
		Let $M\geq 1$ be larger than $M_0\geq 1$ given by Corollary \ref{lemma:TP-matdens-3}. An other constraint will be given in the proof.
		\item For all $\varepsilon>0$, let us define
		\begin{equation*}
		\Omega_\varepsilon := \{x\in\R^d \: : \: x_1<\varepsilon \}.
		\end{equation*}
		\item
		Let us define $\chi_\varepsilon:=\chi_0(\cdot/\varepsilon)$ where $\chi_0\in\CR^\infty(\R,[0,1])$ in a nonnegative function equal to 1 on $]-\infty,1]$ and equal to 0 on $[2,\infty[$.
		%
		\item Let $s_{\text{Sogge}}$ and $\alpha_{\text{Sogge}}$ be given by the formulas in the statement of Theorem \ref{thm:ELp-sogge-matdens}. 
	\end{itemize}
	As in the proof of Theorem \ref{thm:ELp-gene-matdens}, we have by the one-body estimates (Theorem \ref{thm:ELp-TP-1body})
	\begin{equation*}
	\normLp{\rho_{\chi^\w\gamma\chi^\w}}{\infty}{(\R^d)}\lesssim h^{1-d}\norm{(1+P^2/h^2)^{1/2}\gamma(1+P^2/h^2)^{1/2}}_{L^2(\R^d)\to L^2(\R^d)}.
	\end{equation*}
	We prove the estimates of Theorem \ref{thm:ELp-TP-matdens} for low regime $2\leq q\leq 2d/(d-2)$. The remaining estimates for $2d/(d-2)<q<\infty$ are then obtained by interpolating between $q=2d/(d-2)$ and $q=\infty$. We now fix $2\leq q\leq 2d/(d-2)$.	The strategy to estimate $\normLp{\rho_{\chi^\w\gamma\chi^\w}}{q/2}{(\R^d)}$ is to estimate $\normLp{\rho_{\chi^\w\gamma\chi^\w}}{q/2}{(\Omega)}$ on various regions $\Omega$ that cover $\R^d$ (c.f. Figure \ref{fig:TP-regions-work}), and then sum the obtained estimates.

	Before going into the proof, let us recall some key estimates of Koch-Tataru-Zworski \cite{koch2007semiclassical}.

	\begin{lemma}[{\cite[Lem. 7.3 and Sec. 7]{koch2007semiclassical}}]
		\label{lemma:TP-3}
		Let $d\geq 1$.
		Then, there exist $M\geq 1$, $h_0>0$ and a bounded neighborhood $\VR\subset\R^d\times\R^d$ of $0$ such that, for any $\chi\in\test{\R^d\times\R^d}$ supported in $\VR$, any $h\in(0,h_0]$, and any $\varepsilon\geq Mh^{2/3}$ we have for all $\alpha\in\N^d $ such that $\abs{\alpha}\leq 2$
		\begin{equation}\label{eq:lemma-ELp-PT-3}
			\normLp{(hD)^\alpha \chi^\w u}{2}{(\Omega_\varepsilon)} = \OR\left(\varepsilon^{\frac{1}{4}+\frac{\abs{\alpha}}{2}}\right)
			\left(\normLp{u}{2}{(\R^d)}+\frac 1 h\normLp{Pu}{2}{(\R^d)}\right)
		.
		\end{equation}
	\end{lemma}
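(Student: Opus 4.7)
The plan is to establish this Airy-type estimate via a positive-commutator argument involving a multiplier adapted to the turning-point geometry, together with an elliptic bootstrap for the higher-order derivatives.

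\textbf{Reduction.} First I would set $w := \chi^\w u$ and $g := Pw$. By Corollary \ref{cor:SA-comm} applied to $p \in S(m)$ and $\chi\in S(1)$, the commutator $[P,\chi^\w]$ is $\OR_{L^2\to L^2}(h)$; hence
\[
\normLp{g}{2}{(\R^d)} \leq \normLp{\chi^\w Pu}{2}{(\R^d)} + Ch\normLp{u}{2}{(\R^d)} \leq C h\,N(u),
\]
where $N(u):=\normLp{u}{2}{(\R^d)}+h^{-1}\normLp{Pu}{2}{(\R^d)}$. It therefore suffices to establish, for $|\alpha|\leq 2$,
\[
\normLp{(hD)^\alpha w}{2}{(\Omega_\varepsilon)} \leq C\,\varepsilon^{1/4+|\alpha|/2}\,\bigl(\normLp{w}{2}{(\R^d)} + h^{-1}\normLp{g}{2}{(\R^d)}\bigr),
\]
the right-hand side being itself $\lesssim N(u)$.

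\textbf{Base case $|\alpha|=0$: multiplier identity.} Fix a smooth cutoff $\psi_\varepsilon\in\CR^\infty(\R,[0,1])$ with $\psi_\varepsilon\equiv 1$ on $(-\infty,\varepsilon]$, $\supp\psi_\varepsilon\subset(-\infty,2\varepsilon]$, and $|\psi_\varepsilon^{(k)}|\lesssim\varepsilon^{-k}$. Write $P=-h^2\partial_{x_1}^2+Q+V$ with $V=-c(x)x_1$ and $Q:=\op_h^\w\bigl(\sum_{i,j\geq 2}a_{ij}(x)\xi_i\xi_j\bigr)\geq 0$ modulo $\OR(h^2)$ by sharp G{\aa}rding. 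I would pair the equation $Pw=g$ with two real-valued multipliers: $M_1:=\psi_\varepsilon^2 w$ and $M_2:=(2\varepsilon-x_1)\psi_\varepsilon^2 w$. Testing against $M_1$ gives, after integration by parts, the energy identity
\[
\int \psi_\varepsilon^2\,|h\partial_{x_1}w|^2 dx + \int\psi_\varepsilon^2\langle Qw,w\rangle dx = \int c(x)x_1\psi_\varepsilon^2|w|^2 dx + \mathrm{Re}\langle g,\psi_\varepsilon^2 w\rangle + \tfrac{h^2}{2}\int(\psi_\varepsilon^2)''|w|^2 dx,
\]
while testing against $M_2$ produces an analogous weighted identity whose $\partial_{x_1}$ hitting the factor $(2\varepsilon-x_1)$ yields an extra positive contribution $\int\psi_\varepsilon^2|h\partial_{x_1}w|^2 dx$, which after a further integration by parts is converted into a direct $L^2$ control of $w$ on $\Omega_\varepsilon$. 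Using $|c(x)x_1\psi_\varepsilon^2|\leq 2\varepsilon\sup_{\pi_x\VR}c$, Cauchy-Schwarz on the source term, and $|(\psi_\varepsilon^2)''|\lesssim\varepsilon^{-2}\indicatrice{\varepsilon\leq x_1\leq 2\varepsilon}$, one obtains
\[
\normLp{w}{2}{(\Omega_\varepsilon)}^2 \leq C\varepsilon^{1/2}\bigl(\normLp{w}{2}{(\R^d)}^2 + h^{-2}\normLp{g}{2}{(\R^d)}^2\bigr) + C\,h^2\varepsilon^{-2}\normLp{w}{2}{(\R^d)}^2.
\]
The hypothesis $\varepsilon\geq Mh^{2/3}$ forces $h^2\varepsilon^{-5/2}\leq M^{-5/2}h^{1/3}$, so for $M$ large enough and $h\leq h_0$ the $h^2$-error is absorbed and the case $|\alpha|=0$ follows.

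\textbf{Higher derivatives and main obstacle.} The tangential derivatives $hD_{x_j}$ ($j\geq 2$) are obtained by commuting: since $[P,hD_{x_j}]=\OR_{L^2\to L^2}(h)$, applying the base case to $hD_{x_j}w$, which satisfies $P(hD_{x_j}w)=hD_{x_j}g+[P,hD_{x_j}]w$ with source $\OR_{L^2}(h)N(u)$, gives the required $\varepsilon^{3/4}$ bound, and iterating once more yields $\varepsilon^{5/4}$. For the normal derivative, the $M_2$-identity of the base case already provides $\normLp{h\partial_{x_1}w}{2}{(\Omega_\varepsilon)}\lesssim\varepsilon^{3/4}N(u)$, and $h^2\partial_{x_1}^2 w$ is reconstructed algebraically from $-h^2\partial_{x_1}^2 w=g-Qw+c(x)x_1 w$ using the previously obtained tangential and $L^2$ bounds together with $|c(x)x_1|\lesssim\varepsilon$ on $\Omega_\varepsilon$. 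The main obstacle lies in the base case: the $\varepsilon^{1/4}$ scaling is sharp, attained by genuine Airy-type quasimodes, and its proof requires a delicate balancing of the sign-indefinite potential term $-c(x)x_1$ against the non-negative kinetic contributions. The threshold $\varepsilon\geq Mh^{2/3}$ is exactly the Airy scale and cannot be relaxed: below it, eigenfunctions genuinely concentrate to amplitude $h^{-1/6}$ on a band of width $h^{2/3}$, and the stated bound fails.
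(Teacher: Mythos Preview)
The paper does not prove this lemma; it is cited from Koch--Tataru--Zworski \cite{koch2007semiclassical} and only extended by a remark. So there is no in-paper proof to compare against, and I will assess your argument directly.

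Your overall architecture (reduce to $w=\chi^\w u$, multiplier estimate for $|\alpha|=0$, then bootstrap) is the right one and matches the KTZ strategy. But there are two genuine gaps.

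\textbf{Base case.} Your claim that testing against $M_2=(2\varepsilon-x_1)\psi_\varepsilon^2 w$ produces ``an extra positive contribution $\int\psi_\varepsilon^2|h\partial_{x_1}w|^2$'' is not correct. When $h\partial_{x_1}$ hits the factor $(2\varepsilon-x_1)$ in the integration by parts you get the cross term $-h\int\psi_\varepsilon^2(h\partial_{x_1}w)\bar w$, whose real part is $\tfrac{h^2}{2}\int(\psi_\varepsilon^2)'|w|^2=\OR(h^2\varepsilon^{-1})\|w\|^2$, not a coercive $L^2$ term. Two \emph{scalar} multipliers $M_1,M_2$ cannot produce the $\varepsilon^{1/4}$ gain: the positivity that drives the estimate is $-\partial_{x_1}V\approx c(0)>0$, and to see it you need a \emph{momentum} multiplier. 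Testing $Pw=g$ against $\psi_\varepsilon^2\,h\partial_{x_1}w$ gives
\[
\mathrm{Re}\langle Vw,\psi_\varepsilon^2 h\partial_{x_1}w\rangle=-\tfrac{h}{2}\int(\partial_{x_1}V)\psi_\varepsilon^2|w|^2+\OR(h)\|w\|_{L^2(\varepsilon<x_1<2\varepsilon)}^2\geq \tfrac{hc(0)}{4}\|\psi_\varepsilon w\|^2-\ldots,
\]
and it is this term, combined with the $M_1$-energy bound $\|\psi_\varepsilon h\nabla w\|^2\lesssim\varepsilon\|\psi_\varepsilon w\|^2+\ldots$ to control the remaining pieces, that closes to $\|\psi_\varepsilon w\|^2\lesssim\varepsilon^{1/2}N(u)^2$.

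\textbf{First derivatives.} Commuting $hD_{x_j}$ through and reapplying the base case to $hD_{x_j}w$ only gives $\|hD_{x_j}w\|_{L^2(\Omega_\varepsilon)}\lesssim\varepsilon^{1/4}N(u)$, not $\varepsilon^{3/4}$: the base case does not know that its input already carries a derivative. The extra $\varepsilon^{1/2}$ comes instead from the $M_1$-energy identity $\|\psi_\varepsilon h\nabla w\|^2\lesssim\varepsilon\|\psi_\varepsilon w\|^2+h\|w\|N(u)+h^2\varepsilon^{-2}\|w\|^2$, into which one feeds the already-proved bound $\|\psi_\varepsilon w\|^2\lesssim\varepsilon^{1/2}N(u)^2$ and uses $h\lesssim\varepsilon^{3/2}$. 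Your reconstruction of $(h\partial_{x_1})^2w$ from the equation is fine once the $|\alpha|\leq 1$ bounds are in hand.
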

	
	\begin{rmk}\label{rmk:TP-3_ktz}
		In \cite{koch2007semiclassical}, the estimate \eqref{eq:lemma-ELp-PT-3} is proved only for $\abs{\alpha}\leq 1$ and for $\abs{\alpha}= 2$ when $d=2$ and $\varepsilon=Mh^{2/3}$. Their method allows to treat the case $\abs{\alpha}=2$ without the restrictions $d=2$ and $\varepsilon=Mh^{2/3}$.
	\end{rmk}

	\begin{rmk}\label{rmk:TP-3_opt}
		In the case $\varepsilon=Mh^{2/3}$, the estimate \eqref{eq:lemma-ELp-PT-3} reduces to 
		\begin{equation*}
			\sum_{\abs{\alpha}\leq 2}\normLp{(h^{2/3}D)^\alpha \chi^\w u}{2}{(\Omega_{Mh^{2/3}})} \leq Ch^{1/6}
			\left(\normLp{u}{2}{(\R^d)}+\frac 1 h\normLp{Pu}{2}{(\R^d)}\right)
			,
		\end{equation*}
		which, by Sobolev embeddings, imply that for all $2\leq q\leq \frac{2d}{(d-4)_+}$ (excluding $q=\infty$ for $d=4$)
		\begin{equation*}
			\normLp{\chi^\w u}{q}{(\Omega_{Mh^{2/3}})}\leq  Ch^{\frac 16-\frac {2d}3\left(\frac 12-\frac 1q\right)}
			\left(\normLp{u}{2}{(\R^d)}+\frac 1 h\normLp{Pu}{2}{(\R^d)}\right).
		\end{equation*}
		In dimension $d=1$, the above estimates match the $L^q$ norms of the normalized Hermite functions.  One can even get rid of the microlocalization $\chi^\w$ in the argument of \cite{koch2007semiclassical}. Indeed, the two normalized functions $u_h$ such that $-h^2u_h''+x^2u_h =u_h$ satisfy
		\begin{equation*}
			\normLp{u_h}{q}{(\R)}\lesssim \begin{cases}
				1 &\text{if}\ 2\leq q \leq 4,\\
				h^{-\frac 16+\frac{2}{3q}} &\text{if}\ 4\leq q \leq \infty.
			\end{cases}
		\end{equation*}
		These bounds are equivalent to \cite[Cor. 3.2]{koch2007semiclassical} for $(\phi,\lambda^2)$ the eigenfunction-eigenvalue pair of
		the Hermite operator $-\frac{d^2}{dx^2}+x^2$, by the scaling $u_h=h^{-1/4}\phi(h^{-1/2}\cdot)$ and $h=\lambda^{-2}$.
		For $\alpha=0$, this estimate is sharp for this potential $V(x)=x^2-1$ and $x_0=1$ (and $x_0=-1$), because Hermite functions behave like $h^{-1/6} {\rm Ai}(h^{-2/3}(x-1))$ close to $x_0= 1$ (c.f. also Figure \ref{fig:oh-hr-eigenfunction} for $E=1$).
		In particular, the contribution of the $L^q$ norm around in a a turning point's neighborhood of size $h^{2/3}$ is $h^{-\frac 16+\frac{2}{3q}}$, that saturates the bound for $q\geq 4$.
		More precisely, according to \cite[Chap. 6 and 11]{olver1997asymptotics}, there exist a normalization constant $C>0$ such that the normalized eigenfunction $u_h$ of $-h^2\frac{d^2}{dx^2}+x^2-1$ satisfy, for any $x\in\R$
		\begin{equation*}
		\begin{split}
		u_h(x)=
			C h^{-1/6}\frac{\zeta(x)^{1/4}}{(x^2-1)^{1/4}} {\rm Ai}&\left(h^{-2/3}\zeta(x)\right) 
			\left(1+ r_h\left(h^{-2/3}\zeta(x)\right)\right)
			,
		\end{split}
		\end{equation*}
		with 
		\begin{equation*}\forall x\in\R^{\pm},\quad
		\zeta(x):=\pm\sgn\big(x\mp 1) \abs{ \tfrac 32 \int_{\pm 1}^x\sqrt{t^2-1}dt }^{2/3}
		,
		\end{equation*}
		and $r_h$ a smooth function such that for any $h>0$
		\begin{equation*}\forall x\in\R,\quad
		\abs{r_h(x)}\leq h(1+\abs{x}^{1/4})^{-1}\exp\left(-\tfrac 23(x_+)^{3/2}\right)
		.
		\end{equation*}
		We obtain the satured $L^q$ bounds using the asymptotics of the Airy function
		\begin{equation*}
		\begin{split}
			{\rm Ai}(x) & \underset{x>0}{=}
			\displaystyle{ \frac{\exp\left(-\frac{2}{3}x^{3/2}\right)}{2\pi^{1/2}x^{1/4}} \left[1+\OR\left(\frac{1}{x^{3/2}}\right)\right] }, \quad \\
			{\rm Ai}(x)&  \underset{x<0}{=}
			\displaystyle{  \frac{\cos\left(\frac{2}{3}(-x)^{3/2}-\frac{\pi}{4}\right)}{\pi^{1/2}(-x)^{1/4}}
				\left[1+\OR\left(\frac{1}{(-x)^{3/2}}\right)\right]  }.
		\end{split}
		\end{equation*}
		 and suitable changes of variables in the three types of regions. Note that \cite{larsson-cohn2002} provides also equivalents of $L^q$ norms of Hermite functions with respect to a gaussian weight.
		The sharpness in higher dimension seems open to us.
	\end{rmk}

	\begin{cor}\label{lemma:TP-matdens-3}
		Let $d\geq 1$.
		Then, there exist $M_0\geq 1$, $h_0>0$ and a bounded neighborhood $\VR_0\subset\R^d\times\R^d$ of 0, such that for any $\chi\in\test{\R^d\times\R^d}$ supported in $\VR_0$, for any $h\in(0,h_0]$ and any $\varepsilon\in [M_0h^{2/3},1]$
		\begin{equation*}
			(1-h^2\Delta)\chi_\varepsilon\chi^\w\left(1+P^2/h^2\right)^{-1/2}
			=\OR_{L^2(\R^d)\to L^2(\R^d)}\left(\varepsilon^{1/4}\right).
		\end{equation*}
	\end{cor}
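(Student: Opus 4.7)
The strategy is to reduce the operator bound to the pointwise derivative estimates of Lemma \ref{lemma:TP-3} by exploiting that $\chi_\varepsilon$ and all its derivatives are supported in the slab $\Omega_{2\varepsilon}=\{x_1<2\varepsilon\}$, while derivatives of $\chi_\varepsilon$ cost negative powers of $\varepsilon$ that are controlled by the constraint $\varepsilon\geq M_0 h^{2/3}$.

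First, fix an arbitrary $v\in L^2(\R^d)$ and set $u:=(1+P^2/h^2)^{-1/2}v$, so that
\[
\normLp{u}{2}{(\R^d)}+\tfrac{1}{h}\normLp{Pu}{2}{(\R^d)}\lesssim \normLp{v}{2}{(\R^d)},
\]
by the equivalence of weighted norms discussed before Lemma \ref{lemma:equiv-norm-P-matdens}. Write $w:=\chi^\w u$. Applying the Leibniz rule yields
\[
(1-h^2\Delta)(\chi_\varepsilon w)=\chi_\varepsilon(1-h^2\Delta)w - h^2(\Delta\chi_\varepsilon)w - 2h^2\nabla\chi_\varepsilon\cdot\nabla w.
\]
By the scaling $\chi_\varepsilon(x)=\chi_0(x_1/\varepsilon)$, the functions $\chi_\varepsilon$, $h\nabla\chi_\varepsilon$ and $h^2\Delta\chi_\varepsilon$ all have support in $\Omega_{2\varepsilon}$, with $L^\infty$ norms bounded by $1$, $h/\varepsilon$ and $(h/\varepsilon)^2$ respectively. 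Under the hypothesis $\varepsilon\geq M_0 h^{2/3}$, the quantities $h/\varepsilon$ and $(h/\varepsilon)^2$ are uniformly bounded (in fact $\OR(h^{1/3})$ and $\OR(h^{2/3})$).

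Second, apply Lemma \ref{lemma:TP-3} on $\Omega_{2\varepsilon}$ to the three terms above with multi-indices $|\alpha|=0,1,2$. This gives, with $R(u):=\normLp{u}{2}{(\R^d)}+\tfrac{1}{h}\normLp{Pu}{2}{(\R^d)}$,
\[
\normLp{w}{2}{(\Omega_{2\varepsilon})}\lesssim\varepsilon^{1/4}R(u),\ \normLp{h\nabla w}{2}{(\Omega_{2\varepsilon})}\lesssim\varepsilon^{3/4}R(u),\ \normLp{h^2\Delta w}{2}{(\Omega_{2\varepsilon})}\lesssim\varepsilon^{5/4}R(u),
\]
so the three terms in the Leibniz expansion are bounded respectively by $(1+\varepsilon)\varepsilon^{1/4}R(u)\lesssim\varepsilon^{1/4}R(u)$ (using $\varepsilon\leq 1$), by $(h/\varepsilon)^2\varepsilon^{1/4}R(u)\lesssim\varepsilon^{1/4}R(u)$, and by $(h/\varepsilon)\varepsilon^{3/4}R(u)=h\varepsilon^{-1/4}R(u)\lesssim\varepsilon^{1/4}R(u)$ (since $h\leq\varepsilon^{1/2}$ under the hypothesis). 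Combining, $\normLp{(1-h^2\Delta)\chi_\varepsilon w}{2}{(\R^d)}\lesssim\varepsilon^{1/4}R(u)\lesssim\varepsilon^{1/4}\normLp{v}{2}{(\R^d)}$, which is the claimed operator bound.

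The only nontrivial point is the verification that the $h/\varepsilon$ and $(h/\varepsilon)^2$ factors arising from derivatives of $\chi_\varepsilon$ do not spoil the $\varepsilon^{1/4}$ gain; this is exactly where the threshold $\varepsilon\geq M_0 h^{2/3}$ enters (choosing $M_0$ and $h_0$ according to Lemma \ref{lemma:TP-3} and absorbing implicit constants). No further obstacle is anticipated, since the heart of the estimate -- the simultaneous control of $w$ and up to two semiclassical derivatives of $w$ on $\Omega_{2\varepsilon}$ -- is already contained in Lemma \ref{lemma:TP-3} (including Remark \ref{rmk:TP-3_ktz} which extends the original statement of \cite{koch2007semiclassical} to $|\alpha|=2$ in any dimension).
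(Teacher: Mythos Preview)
Your proof is correct and is precisely the argument the paper has in mind: the corollary is stated without proof in the paper, as an immediate consequence of Lemma~\ref{lemma:TP-3}, and your Leibniz expansion together with the $|\alpha|\leq 2$ bounds of that lemma is exactly the intended derivation. The only cosmetic remark is that your inequality $h\leq\varepsilon^{1/2}$ in the third term can be sharpened to $h\leq\varepsilon^{3/2}$ (giving $h\varepsilon^{-1/4}\leq\varepsilon^{5/4}\leq\varepsilon^{1/4}$), but your weaker bound already suffices.
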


	
	\begin{center}\begin{tikzpicture}[line cap=round,line join=round,>=triangle 45,x=7.7 cm,y=2.1 cm,scale=2.8]
		d=2
		\draw[->] (0.,0.) -- (0.52,0.);
		\draw (0.52,0.) node[right] {$\dfrac{1}{q}$};
		\draw[->] (0.,-1/6-0.1) -- (0.,1/2+0.1);
		\draw (0,1/2+0.1) node[above] {$\displaystyle{s(q,2)}$};
		%
		%
		\draw (3/10,0.) node[below]{$\frac 3{10}$};
		\draw (0.5,0.) node[below]{$\frac{1}{2}$};
		%
	\draw (3/10,-1/6) node[below]{$(6.3)$};
	\draw (0,-1/6) node[below,right]{$(6.2)=(6.5)$};
	\draw (0.5,-1/6) node[below]{$(6.1)$};
		%
		\draw (0.,0.) node[left]{$0$};
		\draw (0,0.5) node[left]{$\frac{1}{2}$};
		\draw (0.,1/10) node[left]{$\frac1{10}$};
		\draw (0,-1/6) node[left]{$-\frac 1 6$};
		%
		\draw[color=stefou] (0,1/2) node{$\bullet$}; 
		\draw[dotted] (0.,1/10)-|(3/10, 0.); 
		\draw (3/10,1/10) node{$\bullet$};
		\draw[dotted] (0., -1/6)-|(1/2, 0.); 
		\draw[color=stefou] (1/2,-1/6) node{$\bullet$};
		\draw (1/2,0) node{$\bullet$}; 
		%
		\draw[dashed, line width=1.2pt ] (0.,1/2)--(3/10,1/10)(3/10,1/10)--(0.5,0.);
		\draw[line width=0.9pt, color=stefou] (3/10,1/10)--(0.5,-1/6);
		%
		\draw (1/8,1/2+.1)  node[right]{For $d=2$};
		\draw[line width=0.9pt, white] (1/8,1/2)  node[right]{\textcolor{black}{sum up of the estimates}};
		\draw[line width=0.9pt, dashed] (1/4,1/2-0.2)-|(1/4+.02,1/2-0.2)  node[right]{on $\R^d\setminus\Omega_{Mh^{2/3}}$};
		\draw[line width=0.9pt, color=stefou] (1/4,1/2-0.1)-|(1/4+.02,1/2-0.1)  node[right]{\textcolor{black}{on $\Omega_{Mh^{2/3}}$}};
		\end{tikzpicture}\end{center}
	
	\begin{center}\begin{tikzpicture}[line cap=round,line join=round,>=triangle 45,x=7.7 cm,y=2.1 cm,scale=2.8]
		d=3
		\draw[->] (0.,0.) -- (0.52,0.);
		\draw (0.52,0.) node[right] {$\frac{1}{q}$};
		\draw[->] (0.,-1/6-0.1) -- (0.,1/2+0.1);
		\draw (0,1/2+0.1) node[above] {$\displaystyle{s(q,d)}$};
		%
		%
		\draw (1/4,0.) node[below]{$\frac{d-2}{2d}$};
		\draw (5/14,0.) node[below]{$\frac{d+1}{2(d+3)}$};
		\draw (0.5,0.) node[below]{$\frac{1}{2}$};
		%
	\draw (1/4,-1/6) node[below]{$(6.5)$};
	\draw (5/14,-1/6) node[below]{$(6.3)$};
	\draw (0.5,-1/6) node[below]{$(6.1)$};
		%
		\draw (0.,0.) node[left]{$0$};
		\draw (0,0.5) node[left]{$\frac{1}{2}$};
		\draw (0.,3/14) node[left]{$\frac{d-1}{2(d+3)}$};
		\draw (0,-1/6) node[left]{$-\frac 1 6$};
		%
		%
		\draw[dotted] (0., 0.5)-|(1/4, 0.); 
		\draw (1/4,1/2) node{$\bullet$};
		\draw[dotted] (0., 3/14)-|(5/14, 0.); 
		\draw (5/14,3/14) node{$\bullet$};
		\draw[dotted] (0., -1/6)-|(1/2, 0.); 
		\draw[color=stefou] (1/2,-1/6) node{$\bullet$};
		\draw (1/2,0) node{$\bullet$}; 
		%
		%
		\draw[dashed, line width=1.2pt ] (1/4,0.5)--(5/14,3/14)--(0.5,0.);
		\draw[line width=0.9pt, color=stefou](1/4,0.5)--(5/14,3/14)--(0.5,-1/6);
		%
		%
		\draw (1/8,2/3+.1)  node[right]{For $d\geq 3$};
		\draw[line width=0.9pt, white] (1/8,2/3)  node[right]{\textcolor{black}{sum up of the estimates}};
		\draw[line width=1.pt, dashed] (1/4+0.02,2/3-0.2)-|(1/4+.04,2/3-0.2)  node[right]{on $\R^d\setminus\Omega_{Mh^{2/3}}$};
		\draw[line width=1.pt, color=stefou] (1/4+0.02,2/3-0.1)-|(1/4+.04,2/3-0.1)  node[right]{\textcolor{black}{on $\Omega_{Mh^{2/3}}$}};
		\end{tikzpicture}\end{center}

	\begin{multicols}{2}
		
	\begin{center}\begin{tikzpicture}[line cap=round,line join=round,>=triangle 45,x=6. cm,y=2.5 cm,scale=2]
		\draw[->] (0.,0.) -- (0.52,0.);
		\draw (0.52,0.) node[right] {$\displaystyle{\frac{1}{q}}$};
		\draw[->] (0.,0.) -- (0.,1.1);
		\draw (0.,1.1) node[above] {$\displaystyle{\frac 1{\alpha(q,d)}}$};
		%
		%
		\draw (0.5,0.) node[below]{$\frac{1}{2}$};
		\draw (1/6,0.) node[below]{$\frac{d-1}{2(d+1)}=\frac 16$}; 
		\draw (0,0.) node[below]{$0=\frac{d-2}{2d}$}; 
		%
		%
		\draw (0.,0) node[left]{0};
		\draw (0.,1) node[left]{1};
		\draw (0.,2/3) node[left]{$\frac{d}{d+1}=\frac 23$}; 
		%
		\draw[dotted] (0, 1)-|(0.5, 0); 
		\draw(0., 0) node {$\bullet$};
		\draw (1/6, 2/3) node {$\bullet$}; 
		\draw (0.5, 1) node {$\bullet$};
		\draw[dotted] (0, 2/3)-|(1/6,0); 
		%
	\draw(0.,-0.1) node[below] {$(6.2)=(6.5)$};
	\draw (1/6, -0.2) node {$(6.4)$}; 
	\draw(1/2,-0.1) node[below] {$(6.1)$};
		%
		\draw[line width=1pt, dashed] (0.,0)-- 
		(0, 0)--(1/6, 2/3)--(0.5,1);
		%
		\draw[line width=1pt,color=stefou]
		(0, 0)--(0.5,1);
		%
		%
		\draw (1/8,1.2)  node[right]{For $d=2$};
		 node[right]{\textcolor{black}{sum up of the estimates}};
		\draw[line width=1.pt, dashed] (1/4+0.02,-0.1-0.2)-|(1/4+.04,-0.1-0.2)  node[right]{on $\R^d\setminus\Omega_{Mh^{2/3}}$};
		\draw[line width=1.pt, color=stefou] (1/4+0.02,-0.1-0.1)-|(1/4+0.04,-0.1-0.1)  node[right]{\textcolor{black}{on $\Omega_{Mh^{2/3}}$}};
		\end{tikzpicture}\end{center}

	\begin{center}\begin{tikzpicture}[line cap=round,line join=round,>=triangle 45,x=6.cm,y=3.5 cm,scale=2.2]
		\draw[->] (0.,0.) -- (0.55,0.);
		\draw (0.55,0.) node[right] {$\displaystyle{\frac{1}{q}}$};
		\draw[->] (0.,0.) -- (0.,1.1);
		\draw (0.,1.1) node[above] {$\displaystyle{\frac 1{\alpha(q,d)}}$};
		%
		%
		\draw (0.,0) node[below]{0};
		\draw (0.3,0.) node[below]{$\frac{d-1}{2(d+1)}$};
		\draw (0.5,0.) node[below]{$\frac{1}{2}$};
		\draw (1/4,0.) node[below]{$\frac{d-2}{2d}$};
		%
	\draw (0.3,-0.1) node[below,right]{$(6.4)$};
	\draw (0.5,-0.1) node[below]{$(6.1)$};
	\draw (1/4,-0.1) node[below,left]{$(6.5)$};
		%
		%
		\draw (0.,0) node[left]{0};
		\draw (0.,1) node[left]{1};
		\draw (0.,4/5) node[left]{$\frac{d}{d+1}$}; 
		\draw (0.,2/3) node[left]{$\frac{d-2}{d-1}$}; 
		\draw (0.,3/5) node[left]{$\frac{d-2}{d}$};
		%
		%
		\draw (0., 0) node {$\bullet$};
		\draw (0.5, 1) node {$\bullet$};
		\draw[dotted] (0, 1)-|(0.5, 0); 
		\draw (1/4, 2/3) node {$\bullet$}; 
		\draw[dotted] (0, 2/3)-|(1/4,0); 
		\draw[dotted] (0, 3/5)-|(1/4, 0); 
		\draw (3/10, 4/5) node {$\bullet$}; 
		\draw[dotted] (0, 4/5)-|(0.3,0);
		%
		%
		%
		\draw[line width=0.8pt,dashed] 
		(1/4, 2/3)--(3/10, 4/5)--(0.5,1);
		%
		\draw[line width=1pt,color=stefou] (1/4,3/5)--(0.5,1);
		\draw[color=stefou] (1/4,3/5) node{$\bullet$};
		%
		\draw (1/8,1.2)  node[right]{For $d\geq 3$};
		\end{tikzpicture}\end{center}
	
	\end{multicols}

	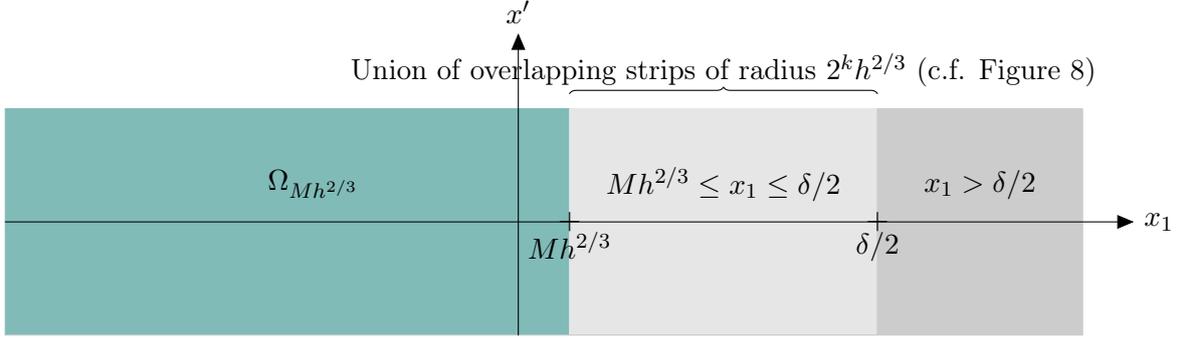
\begin{figure}[h!]
		\begin{center}\begin{tikzpicture}[line cap=round,line join=round,>=triangle 45,x=1.35cm,y=1.0cm,scale=1]
			\usetikzlibrary{patterns}
			\draw[color = stefou!50,fill=stefou!50] (-5,-1.5) -- (-5,1.5) -- (0.5,1.5) -- (0.5,-1.5) -- (-5,-1.5);
			\draw (-2,0.5) node{$\Omega_{Mh^{2/3}}$};
			\draw[color = gray!20,fill=gray!20](0.5,-1.5) -- (0.5,1.5) -- (3.5,1.5) -- (3.5,-1.5) -- (-5,-1.5);
			\draw (2,0.5) node{$Mh^{2/3}\leq x_1\leq \delta/2$};
			\draw[color = gray!40,fill=gray!40](3.5,-1.5) -- (3.5,1.5) -- (5.5,1.5) -- (5.5,-1.5) -- (-5,-1.5);
			\draw (4.5,0.5) node{$x_1> \delta/2$};
			\draw[decorate,decoration={brace,raise=0.2cm}]
			(0.5,1.5) -- (3.5,1.5) node[above=4.5,pos=0.5] {Union of overlapping strips of radius $2^{k}h^{2/3}$ (c.f. Figure \ref{fig:TP-strips})};
			\draw (0.5,0) node{$+$};
			\draw (0.5,0) node[below]{$Mh^{2/3}$};
			\draw (3.5,0) node{$+$};
			\draw (3.5,0) node[below]{$\delta/2$};
			\draw[->] (-5.,0.) -- (6,0.);
			\draw (6,0.) node[right]{$x_1$};
			\draw[->] (0,-1.5)--(0,2.5);
			\draw (0.,2.5) node[above]{$x'$};
			\end{tikzpicture}\end{center}
		%
		\caption{Different regions of $\R^d$.}
		\label{fig:TP-regions-work}
	\end{figure}


	\subsubsection{Estimates on $\Omega_{Mh^{2/3}}$}

	\begin{lemma}\label{lemma:TP-matdens-delta2<x1<Mh23}
		Let $d\geq 1$. For $2\leq q\leq  \frac{2d}{(d-2)_+}$ and any bounded self-adjoint operator $\gamma$ on $L^2(\R^d)$
		\begin{equation*}
			\normLp{\rho_{\chi^\w\gamma\chi^\w}}{q/2}{\left(\Omega_{Mh^{2/3}}\right)}\leq C h^{\frac 1 3-\frac{4d}{3}\left(\frac 1 2 -\frac 1 q\right)}\normSch{\left(1+\frac 1{h^2}P^2\right)^{1/2}\gamma\left(1+\frac 1{h^2}P^2\right)^{1/2}}{q/2}{\left(L^2(\R^d)\right)} 
			.
		\end{equation*}
	\end{lemma}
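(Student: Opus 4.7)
The natural semiclassical scale on $\Omega_{Mh^{2/3}}$ is $\tilde h:=h^{2/3}$ rather than $h$, and I carry out the argument at this rescaled scale. By Mercer's theorem (Remark \ref{rmk:mercer-thm}) applied with spatial cutoff $\chi_{Mh^{2/3}}$, together with H\"older in Schatten spaces using the conjugate pair $(2(q/2)',q)$ (which satisfies $\tfrac{1}{2(q/2)'}+\tfrac1q=\tfrac12$) and the identity $\normSch{(1+P^2/h^2)^{1/2}\sqrt\gamma}{q}{}^2=\normSch{(1+P^2/h^2)^{1/2}\gamma(1+P^2/h^2)^{1/2}}{q/2}{}$, the claim reduces to the single-operator bound
\begin{equation*}
\normSch{W\chi_{Mh^{2/3}}\chi^\w(1+P^2/h^2)^{-1/2}}{2(q/2)'}{(L^2(\R^d))}\leq C h^{\tfrac16-\tfrac{2d}{3}(\tfrac12-\tfrac1q)}\normLp{W}{2(q/2)'}{(\R^d)},
\end{equation*}
uniform in $W\in L^{2(q/2)'}\cap\CR^0(\R^d)$, with the endpoint $q=2$ handled by taking $W\equiv1$ and reading the Hilbert--Schmidt norm as a trace.

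To establish the displayed bound I factor
\begin{equation*}
\chi_{Mh^{2/3}}\chi^\w(1+P^2/h^2)^{-1/2}=(1-\tilde h^2\Delta)^{-1}\cdot A_h, \quad A_h:=(1-\tilde h^2\Delta)\chi_{Mh^{2/3}}\chi^\w(1+P^2/h^2)^{-1/2},
\end{equation*}
and control the two factors separately. Lemma \ref{lemma:Kato-Seiler-Simon_dual} applied with $m=2$ and semiclassical parameter $\tilde h$ (the Sobolev condition $\tfrac1q>\tfrac12-\tfrac2d$ holds throughout $2\leq q\leq\tfrac{2d}{(d-2)_+}$) gives
\begin{equation*}
\normSch{W(1-\tilde h^2\Delta)^{-1}}{2(q/2)'}{(L^2(\R^d))}\leq C\tilde h^{-d(\tfrac12-\tfrac1q)}\normLp{W}{2(q/2)'}{}=Ch^{-\tfrac{2d}{3}(\tfrac12-\tfrac1q)}\normLp{W}{2(q/2)'}{}.
\end{equation*}
For the operator $A_h$, Lemma \ref{lemma:TP-3} with $\varepsilon=Mh^{2/3}$ yields $\normLp{(\tilde hD)^\alpha\chi^\w u}{2}{(\Omega_{Mh^{2/3}})}\lesssim h^{1/6}\bigl(\normLp{u}{2}{}+h^{-1}\normLp{Pu}{2}{}\bigr)$ for every $|\alpha|\leq 2$; the commutator $[(1-\tilde h^2\Delta),\chi_{Mh^{2/3}}]=-\tilde h^2(\Delta\chi_{Mh^{2/3}})-2\tilde h^2\nabla\chi_{Mh^{2/3}}\cdot\nabla$ is supported in $\Omega_{2Mh^{2/3}}\setminus\Omega_{Mh^{2/3}}$, where the coefficient sizes $\tilde h^2\varepsilon^{-2}=O(1)$ and $\tilde h^2\varepsilon^{-1}=O(h^{2/3})$ exactly balance the first and zeroth order bounds of Lemma \ref{lemma:TP-3}. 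This shows $A_h=\OR_{L^2\to L^2}(h^{1/6})$; composition with the previous Schatten bound and squaring concludes the proof.

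The main obstacle is the operator bound $A_h=\OR_{L^2\to L^2}(h^{1/6})$. Corollary \ref{lemma:TP-matdens-3} as stated only provides the \emph{unrescaled} analogue $(1-h^2\Delta)\chi_{Mh^{2/3}}\chi^\w(1+P^2/h^2)^{-1/2}=\OR(h^{1/6})$, which combined with Kato-Seiler-Simon in the unrescaled scale $h$ would yield an exponent $h^{\tfrac13-2d(\tfrac12-\tfrac1q)}$ after squaring, strictly weaker than the target. Replacing $h^2\Delta$ by $\tilde h^2\Delta$ is therefore essential and relies on the full second-order content of Lemma \ref{lemma:TP-3} (available for arbitrary $d\geq 1$ by the improvement described in Remark \ref{rmk:TP-3_ktz}), together with a careful treatment of the commutators arising from the $h$-dependent spatial cutoff $\chi_{Mh^{2/3}}$.
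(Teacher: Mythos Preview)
Your proof is correct and follows the same route as the paper: reduce via Mercer and H\"older to a Schatten bound on $W\chi_{Mh^{2/3}}\chi^\w(1+P^2/h^2)^{-1/2}$, factor through $(1-\tilde h^2\Delta)^{-1}$ with $\tilde h=h^{2/3}$, apply Kato--Seiler--Simon at scale $\tilde h$ (Lemma \ref{lemma:Kato-Seiler-Simon_dual} with $m=2$), and use the $L^2\to L^2$ bound $A_h=\OR(h^{1/6})$. Your observation about the rescaling is exactly right: the paper's proof invokes Corollary \ref{lemma:TP-matdens-3} but writes the conclusion with $(1-h^{4/3}\Delta)$ rather than the $(1-h^2\Delta)$ appearing in the corollary's statement, so your explicit derivation of the $\tilde h$-version from Lemma \ref{lemma:TP-3} (including the commutator bookkeeping for $\chi_{Mh^{2/3}}$) fills in precisely the step the paper leaves implicit.
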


	\begin{rmk}
		The previous estimates are also true for all $2\leq q< \frac{2d}{(d-4)_+}$, but we will not need it (see Remark \ref{rmk:ELp-TP-matdens_improv-s}).
	\end{rmk}
	
	\begin{proof}[\underline{Proof of Lemma \ref{lemma:TP-matdens-delta2<x1<Mh23}}]   	
    By Corollary \ref{lemma:TP-matdens-3} with $\varepsilon=Mh^{2/3}$
    \begin{equation*}\label{eq:TP-matdens-bounded-alpha=2}
    		(1-h^{4/3}\Delta) \chi_{Mh^{2/3}}\chi^\w (1+P^2/h^2)^{-1/2} = \OR_{L^2(\R^d)\to L^2(\R^d)}(h^{1/6}),
    \end{equation*}
	On the one hand, by the Mercer theorem 
	\begin{align*}
		&\normLp{\rho_{\chi^\w\gamma\chi^\w}}{1}{\left(\Omega_{Mh^{2/3}}\right)}
		= \tr_{L^2}\left(\chi_{Mh^{2/3}}\chi^\w\gamma\chi^\w\chi_{Mh^{2/3}}\right)
		\\&\quad
		\leq \norm{(1-h^{4/3}\Delta)^{-1}}^2_{L^2\to L^2} \norm{(1-h^{4/3}\Delta) \chi_{Mh^{2/3}}\chi^\w (1+P^2/h^2)^{-1/2} }^2_{L^2\to L^2} 
		\times\\&\qquad\times
		\normSch{(1+P^2/h^2)^{1/2}\gamma(1+P^2/h^2)^{1/2}}{1}{}^{1/2}
		\\&\quad
		\lesssim h^{1/6}\normSch{(1+P^2/h^2)^{1/2}\gamma(1+P^2/h^2)^{1/2}}{1}{}^{1/2}
		.
	\end{align*}
	Let us consider now $2< q \leq \frac{2d}{(d-2)_+}$.
     By the Kato-Seiler-Simon bound applied to $m=2$, which is true if and only if $2<q<\frac{2d}{d-4}$ (see Lemma \ref{lemma:Kato-Seiler-Simon_dual})
	\begin{equation*}\label{eq:SA-matdens-bounded-ESob}
		\normSch{W (1-h^{4/3}\Delta)^{-1}}{2(q/2)'}{ (L^2(\R^d)) } \leq C h^{-\frac{2d}3\left(\frac 1 2 -\frac 1 q\right)}\normLp{W}{2(q/2)'}{ (L^2(\R^d)) } 
		,
	\end{equation*}
	we have for $2< q \leq \frac{2d}{(d-2)_+}$ and $W\in L^{2(q/2)'}(\R^d)$ 
	\begin{align*}
		&\normSch{W\chi_{Mh^{2/3}}\chi^\w\sqrt{\gamma}}{2}{}
		\\&\quad
		\leq \normSch{W(1-h^{4/3}\Delta)^{-1}}{2(q/2)'}{} \norm{(1-h^{4/3}\Delta)\chi_{Mh^{2/3}}\chi^\w(1+P^2/h^2)^{-1/2}}_{L^2\to L^2}
		\times\\&\qquad\times
		\normSch{(1+P^2/h^2)^{1/2}\sqrt{\gamma}}{q}{}
		\\&\quad
		\lesssim h^{\frac 16 -\frac{2d}3\left(\frac 12 -\frac 1q\right)} \normLp{W}{2(q/2)'}{(\R^d)} \normSch{(1+P^2/h^2)^{1/2}\gamma(1+P^2/h^2)^{1/2}}{q/2}{}^{1/2},
	\end{align*}
	which by duality (Remark \ref{rmk:mercer-thm}) ends the proof. 
	\end{proof}

	\subsubsection{Estimates on $\{ x\in\R^d \: : \: x_1> \delta/2 \}$}
	
	Since $\pi_x\supp\chi$ and  $\{x\in\R^d \: :\: x_1>\delta/2\}$ are disjoint, we deduce as in the beginning of the proof of Theorem \ref{thm:ELp-sogge-matdens} that for any $2\leq q\leq\infty$ and any bounded self-adjoint operator $\gamma$ on $L^2(\R^d)$
	\begin{equation*}
	\normLp{\rho_{\chi^\w\gamma\chi^\w}}{q/2}{\left(\{x\in\R^d \: :\: x_1> \delta/2\}\right)}
	=\OR( h^\infty) \normSch{(1+P^2/h^2)^{1/2}\gamma(1+P^2/h^2)^{1/2}}{q/2}{}
	.
	\end{equation*}

	\subsubsection{Estimates on $\{x\in\R^d \: :\: Mh^{2/3} \leq x_1\leq \delta/2\}$}
	
	In the following, we assume $h_0<(\delta/(2M))^{3/2}$ to ensure $Mh^{2/3}<\delta/2$ for $M>0$ be defined later.
	
	\begin{lemma}\label{lemma:TP-matdens-crucial-zone}
		Let $d\geq 2$ and $2\leq q\leq\frac{2d}{d-2}$. Then, there exists $M>0$ such that for all bounded self-adjoint non-negative operator $\gamma$ on $L^2(\R^d)$ we have
		\begin{equation*}
			\normLp{\rho_{\chi^\w\gamma\chi^\w}}{q/2}{\left(\Omega_{\delta/2}\setminus\Omega_{Mh^{2/3}}\right)} \lesssim C_h^2 \normSch{\left(1+P^2/h^2\right)^{1/2}\gamma\left(1+P^2/h\right)^{1/2}}{\alpha_{\text{Sogge}}(q,d)}{(L^2(\R^d))}
			,
		\end{equation*}
		where 
		\begin{equation*}
			C_h
			:=\begin{cases}
				h^{-\frac{d-1}{2}\left(\frac 1 2-\frac 1 q\right)}&\text{if}\ 2\leq q<\frac{2(d+3)}{d+1},
				\\ \log^{\frac{d+1}{2(d+3)}}(1/h)h^{-\frac{d-1}{2(d+3)}}  &\text{if}\ q=\frac{2(d+3)}{d+1},
				\\h^{\frac 1 6 -\frac{2d}{3}\left(\frac 1 2 -\frac 1 q\right)} &\text{if}\ \frac{2(d+3)}{d+1}<q\leq\frac{2d}{d-2}
				.
			\end{cases}
		\end{equation*}
	\end{lemma}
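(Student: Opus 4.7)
The plan is to handle the annular region by a dyadic decomposition in the distance to the turning point, reducing each piece to a Sogge-type estimate via an anisotropic Airy rescaling. Setting $\varepsilon_k := 2^k Mh^{2/3}$ for $k=0,1,\ldots,K$ with $K\sim \log(1/h)$ chosen so that $\varepsilon_K\sim\delta/2$, introduce a smooth partition of unity on $\Omega_{\delta/2}\setminus\Omega_{Mh^{2/3}}$ whose pieces $A_k$ are supported essentially where $x_1\sim\varepsilon_k$. On each $A_k$ we are at distance $\sim \varepsilon_k$ from $\{V=E\}$, so that $\nabla_\xi p\neq 0$ there and we are in a Sogge-type regime, provided we rescale.

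On strip $A_k$ perform the anisotropic change of variables $x_1=\varepsilon_k\tilde x_1$, $x'=\sqrt{\varepsilon_k}\,\tilde x'$ (with the dual rescaling for $\xi$). Under this change, the operator $-h^2\Delta+V-E$ becomes $\varepsilon_k$ times an operator of the same form in the rescaled variables, but with effective semiclassical parameter $\tilde h_k := h/\varepsilon_k^{3/2}\leq M^{-3/2}$, and with a rescaled symbol that satisfies the Sogge non-degeneracy Assumption \ref{cond:sogge} (convexity of the Hamiltonian level set is inherited from $\partial_\xi^2 p>0$). Choosing $M$ sufficiently large ensures $\tilde h_k\leq h_0$ so that Theorem \ref{thm:ELp-sogge-matdens} applies in the rescaled variables at parameter $\tilde h_k$. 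Pushing the estimate back to the original variables, tracking the Jacobians on both sides and using Corollary \ref{lemma:TP-matdens-3} (whose $\varepsilon^{1/4}$ bound on $(1-h^2\Delta)\chi_\varepsilon\chi^\w(1+P^2/h^2)^{-1/2}$ is exactly what compensates the Schatten weight) produces an estimate of the form
\begin{equation*}
\normLp{\rho_{\chi^\w\gamma\chi^\w}}{q/2}{(A_k)}\lesssim\varepsilon_k^{a(q,d)}\,h^{-2s_{\rm Sogge}(q,d)}\,\normSch{(1+P^2/h^2)^{1/2}\gamma(1+P^2/h^2)^{1/2}}{\alpha_{\rm Sogge}(q,d)}{}
\end{equation*}
for an explicit exponent $a(q,d)$ that combines the space Jacobian $\varepsilon_k^{(d+1)/2}$ (in $L^{q/2}$), the rescaling of the weighted Schatten norm (involving $\alpha_{\rm Sogge}(q,d)$), and the conversion $\tilde h_k^{-2s_{\rm Sogge}}=\varepsilon_k^{3s_{\rm Sogge}}h^{-2s_{\rm Sogge}}$.

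The conclusion is obtained by summing over $k\in\{0,\ldots,K\}$. A computation of $a(q,d)$ shows that it changes sign precisely at $q=2(d+3)/(d+1)$: for $2\leq q<2(d+3)/(d+1)$ the geometric sum is dominated by its largest term $\varepsilon_K\sim 1$, which reproduces the pure Sogge exponent $h^{-(d-1)/2\cdot(1/2-1/q)}$; for $2(d+3)/(d+1)<q\leq 2d/(d-2)$ it is dominated by the smallest scale $\varepsilon_0=Mh^{2/3}$, which yields the turning-point exponent $h^{1/6-(2d/3)(1/2-1/q)}$; and at the critical value $q=2(d+3)/(d+1)$ every term contributes equally, producing the extra factor $K^{(d+1)/(2(d+3))}\sim \log(1/h)^{(d+1)/(2(d+3))}$ (after raising the $L^{q/2}$ triangle inequality to the appropriate power). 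The main obstacle I expect is the careful bookkeeping of exponents through the rescaling: one must verify that the many-body Sogge exponent $\alpha_{\rm Sogge}(q,d)$ (not $q/2$) propagates correctly on the right-hand side, and that the weighted bound of Lemma \ref{lemma:TP-3} transplants through the anisotropic rescaling into precisely the $(1+P^2/h^2)$-weighted Schatten norm needed, so that losses in $\varepsilon_k$ from using Sogge at parameter $\tilde h_k$ are exactly compensated.
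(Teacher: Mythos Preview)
Your high-level strategy (dyadic decomposition in $x_1\sim\varepsilon_k$, rescale to apply Sogge at an effective parameter $\tilde h_k=h/\varepsilon_k^{3/2}$, then sum) is indeed the paper's strategy, and your analysis of the dyadic sum---with the sign of the exponent flipping at $q=2(d+3)/(d+1)$---is correct. But two concrete points are off.

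First, the rescaling you propose is wrong. Under the \emph{anisotropic} change $x_1=\varepsilon\tilde x_1$, $x'=\sqrt{\varepsilon}\,\tilde x'$, the operator $-h^2\Delta+V-E$ becomes
\[
\varepsilon\Bigl[-(h/\varepsilon^{3/2})^2\partial_{\tilde x_1}^2-(h/\varepsilon)^2\Delta_{\tilde x'}-c\,\tilde x_1\Bigr],
\]
which carries \emph{two different} effective semiclassical parameters and is therefore not ``an operator of the same form'' to which Theorem~\ref{thm:ELp-sogge-matdens} applies. The paper instead uses the \emph{isotropic} scaling $x=\varepsilon\tilde x$ (plus a translation in $x'$), which gives $(P-E)=\varepsilon\tilde P$ with $\tilde P$ a genuine Schr\"odinger operator at parameter $\tilde h=h/\varepsilon^{3/2}$; this is precisely Lemma~\ref{lemma:TP-matdens-crucial-zone-boxes}. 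Your Jacobian $\varepsilon^{(d+1)/2}$ is accordingly wrong; the correct one is $\varepsilon^d$, which yields the exponent $\mu(q,d)=d(1/2-1/q)-\tfrac32 s_{\rm Sogge}(q,d)$ of \eqref{def:TP-mu}.

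Second, you omit a non-trivial step. With the isotropic scaling, the rescaled strip $\{\tilde x_1\sim1\}$ has extent $\sim\delta/\varepsilon\gg1$ in $\tilde x'$, so one must further decompose each strip into boxes $A_\varepsilon^k$ of size $\varepsilon$ indexed by $k\in\Z^{d-1}$ and then sum. The summation over $k$ in the Schatten exponent $\alpha_{\rm Sogge}(q,d)$ (not $q/2$) is not automatic: the paper handles it (Lemma~\ref{lemma:TP-matdens-crucial-strip}) by interpolating $\|\chi_{\tilde A_\varepsilon^k}\Gamma\chi_{\tilde A_\varepsilon^k}\|_{\ell_k^\alpha\schatten^\alpha}\lesssim\|\Gamma\|_{\schatten^\alpha}$ between $\alpha=1$ (bounded overlap of the $\chi_{\tilde A_\varepsilon^k}^2$) and $\alpha=\infty$, and only then invokes Corollary~\ref{lemma:TP-matdens-3} to pick up the crucial $\varepsilon^{1/2}$ that makes the final dyadic sum work. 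Without this box step your bookkeeping cannot close.
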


	\begin{rmk}
		The previous estimates are also true for all $2\leq q< \frac{2d}{(d-4)_+}$.
	\end{rmk}

	\subsubsection*{Proof of Lemma \ref{lemma:TP-matdens-crucial-zone}.}
	
	We prove the result for $P$ which is the right quantization of the symbol $p$ (so that $P=h^2D_{x_1}^2 + \sum_{2\leq i,j\leq d}a_{ij}(x)(hD_{x_i})(hD_{x_j}) -c(x)x_1$ is a differential operator).
	While the final result does not depend on the choice of quantization, the proof will rely on several space localizations so that it will be useful that the operator $P$ is local.
		
	Let us now fix some notation.
	Let $\varepsilon>0$ and let us define the strips by
	\begin{equation*}\label{eq-def:TP-A_eps}
		A_\varepsilon := \{x\in\R^d : \abs{x_1-\varepsilon}<\varepsilon/2 \} \quad\text{and}\quad
		\tilde{A}_\varepsilon:= \{x\in\R^d \: \: \abs{x_1-\varepsilon}<3\varepsilon/4 \}
		.
	\end{equation*}
	Each strip $A_\varepsilon$ or $\tilde{A}_\varepsilon$ can be decomposed into an union of boxes of size $\varepsilon$ (see for instance Figure \ref{fig:TP-boxes})
	\begin{equation*}
		A_\varepsilon =\bigcup_{k\in\Z^{d-1}}A_\varepsilon^k
		,\quad
		\tilde{A}_\varepsilon =\bigcup_{k\in\Z^{d-1}}\tilde{A}_\varepsilon^kn
	\end{equation*}
	defined by
	 \begin{equation*}\label{eq-def:TP-A_eps^k}
	 \begin{split}
		 A_\varepsilon^k &:=\{x\in\R^d : \abs{x_1-\varepsilon}<\varepsilon/2,\quad \abs{x'-\varepsilon k}_{l^\infty}<\varepsilon/2 \} \\
		 \tilde{A}_\varepsilon^k &:=\{x\in\R^d : \abs{x_1-\varepsilon}<3\varepsilon/4,\quad \abs{x'-\varepsilon k}_{l^\infty}<3\varepsilon/4 \},\\
		 \tilde{\tilde{A}}_\varepsilon^k &:=\{x\in\R^d : \abs{x_1-\varepsilon}<4\varepsilon/5,\quad \abs{x'-\varepsilon k}_{l^\infty}<4\varepsilon/5 \}.
	 \end{split}
	 \end{equation*}
	
	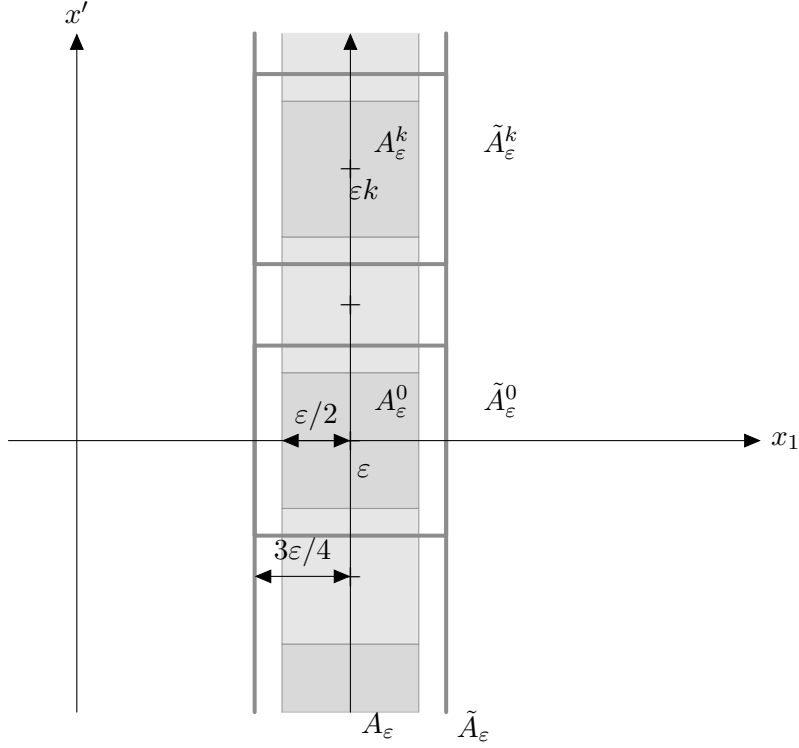
\begin{figure}[!h]
	\begin{center}\begin{tikzpicture}[line cap=round,line join=round,>=triangle 45,x=2.0cm,y=2.0cm,scale=.9]
		%
		\draw[color = gray!80,fill=gray!30](-0.5,-0.5) -- (-0.5,0.5) -- (0.5,0.5) -- (0.5,-0.5) -- (-0.5,-0.5);
		\draw[color = gray!80,fill=gray!20](-.5,0.5) -- (-0.5,1.5) -- (.5,1.5) -- (.5,0.5) -- (-.5,0.5);
		\draw[color = gray!80,fill=gray!30](-0.5,1.5) -- (-0.5,2.5) -- (0.5,2.5) -- (0.5,1.5) -- (-0.5,1.5);
		%
		\draw[color = gray!80, fill=gray!20](-0.5,3) -- (-0.5,2.5) -- (0.5,2.5) -- (0.5,3);
		\draw[color = gray!20](-0.5,3) -- (0.5,3.);
		\draw[color = gray!80,fill=gray!20](-.5,-1.5) -- (-.5,-0.5) -- (.5,-0.5) -- (.5,-1.5) -- (-.5,-1.5);
		\draw[color = gray!80,fill=gray!30](-.5,-2) -- (-.5,-1.5) -- (.5,-1.5) -- (.5,-2.) -- (-.5,-2.);
		\draw[color = gray!30](-.5,-2) -- (0.5,-2);
		%
		%
		\draw[color = gray!90,line width=1.5pt](-0.7,-0.7) -- (-0.7,0.7) -- (0.7,0.7) -- (0.7,-0.7) -- (-0.7,-0.7);
		\draw[color = gray!90,line width=1.5pt](-0.7,1.3) -- (-0.7,2.7) -- (0.7,2.7) -- (0.7,1.3) -- (-0.7,1.3);
		\draw[color = gray!90,line width=1.5pt](-0.7,-2)-- (-0.7,3);
		\draw[color = gray!90,line width=1.5pt](0.7,-2)-- (0.7,3);
		%
		%
		\draw[->] (-2.5,0.) -- (3,0.);
		\draw  (3.,0.) node[right]{$x_1$};
		\draw[->] (-2.,-2.) -- (-2.,3);
		\draw  (-2.,3.) node[above]{$x'$};
		\draw[->] (0.,-2.) -- (0.,3);
		\draw  (0,0.) node{$+$};
		\draw  (0.1,-0.1) node[below] {$\varepsilon$};
		\draw  (0,1.) node{$+$};
		\draw  (0,2.) node{$+$};
		\draw  (0,-1.) node{$+$};
		%
		%
		\draw  (0.3,0.1) node[left, above]{$A_\varepsilon^0$};
		\draw  (1.1,0.1) node[left, above]{$\tilde{A}_\varepsilon^0$};
		\draw  (0.4,-2.1) node[left]{$A_\varepsilon$};
		\draw  (1.1,-2.1) node[left]{$\tilde{A}_\varepsilon$};
		\draw  (0.1,2.) node[right, below] {$\varepsilon k$};
		\draw  (1.1,2) node[left, above]{$\tilde{A}_\varepsilon^k$};
		\draw  (0.3,2) node[left, above]{$A_\varepsilon^k$};
		%
		\draw[<->](-0.5,0)--(0.,0) node[above, midway]{$\varepsilon/2$};
		\draw[<->](-0.7,-1)--(0.,-1) node[above, midway]{$3\varepsilon/4$};
		\end{tikzpicture}\end{center}
	\caption{Boxes $A_{\varepsilon}^k$ and $\tilde{A}_{\varepsilon}^k$.}
	\label{fig:TP-boxes}
	\end{figure}
	
	Finally, the set $\Omega_{\delta/2}\setminus\Omega_{Mh^{2/3}}=\{x\in\R^d \: :\: Mh^{2/3}\leq x_1\leq \delta/2 \}$ can be covered by an union of $\sim\log(1/h)$ overlapping strips
	$ \bigcup_{k=\lfloor\log_2(M)\rfloor}^{K(h)} A_{2^kh^{2/3}} $ where $K(h)=\lceil\log_2(\delta h^{-2/3}/2)\rceil$ (see for instance Figure \ref{fig:TP-strips}).
	
	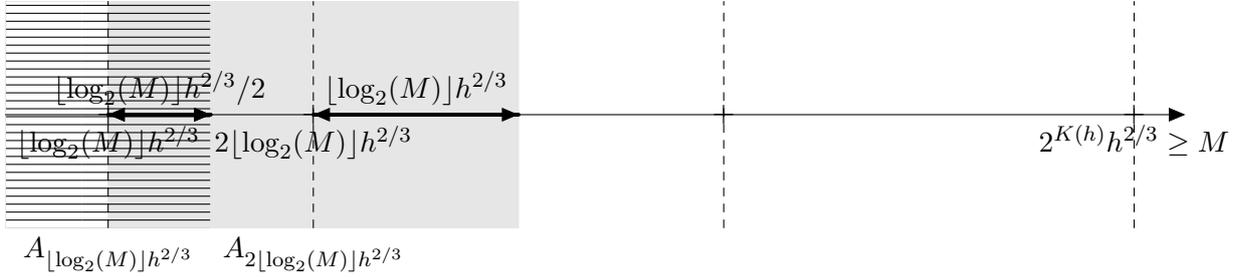
\begin{figure}[!h]
	\begin{center}\begin{tikzpicture}[line cap=round,line join=round,>=triangle 45,x=1.35cm,y=1.0cm,scale=1]
		\usetikzlibrary{patterns}
		\draw[color = gray!20,fill=gray!20](-8,-1.5) -- (-8,1.5) -- (-4,1.5) -- (-4,-1.5) -- (-8,-1.5);
		\draw[color = gray!20,pattern=horizontal lines](-9.,-1.5) -- (-9.,1.5) -- (-7.,1.5) -- (-7.,-1.5) -- (-9,-1.5);
		\draw[->] (-9.,0.) -- (2.5,0); 
		\draw (-8,0.)node{$+$};
		\draw (-8., -0.01) node[below] {$ \lfloor\log_2(M)\rfloor h^{2/3}$};
		\draw[dashed](-8., -1.5)--(-8., 1.5);
		\draw[<->](-8.,0)--(-7,0);
		\draw[line width=1.5pt](-8.,0)--(-7,0);
		\draw[<->](-8.,0)--(-7,0) node[midway, above]{$\lfloor\log_2(M)\rfloor h^{2/3}/2$};
		\draw(-8,-1.5) node[below]{$A_{ \lfloor\log_2(M)\rfloor h^{2/3}}$};
		\draw (-6,0.)node{$+$};
		\draw (-6., -0.01) node[below] {$2 \lfloor\log_2(M)\rfloor h^{2/3}$};
		\draw[dashed](-6., -1.5)--(-6., 1.5);
		\draw[<->](-6,0.)--(-4.,0.);
		\draw[line width=1.5pt](-6,0.)--(-4.,0.);
		\draw (-5.,0.) node[above]{$\lfloor\log_2(M)\rfloor h^{2/3}$};
		\draw(-6,-1.5) node[below]{$A_{2 \lfloor\log_2(M)\rfloor h^{2/3}}$};
		\draw (-2,0.)node{$+$};
		\draw[dashed](-2., -1.5)--(-2., 1.5);
		\draw (2.,0.)node{$+$};
		\draw (2.,0.)node[below] {$2^{K(h)}h^{2/3}\geq M$};
		\draw[dashed](2., -1.5)--(2., 1.5);
		\end{tikzpicture}\end{center}
	\caption{Strips $A_\varepsilon$.}
	\label{fig:TP-strips}
	\end{figure}
	
	The main steps for the proof of $L^q$ estimates in $\Omega_{\delta/2}\setminus\Omega_{Mh^{2/3}}$ are the following:
	\begin{itemize}
		\item[\quad 0.] obtain the estimates on a box $A$ of size $1$,
		\item[\quad 1.] obtain the estimates on the boxes $A_\varepsilon^k$ of size $\varepsilon$ by scaling the previous one,
		\item[\quad  2.] obtain the estimates on the strips $A_\varepsilon$ by summing the estimates on the $\varepsilon$-boxes,
		\item[\quad  3.] conclude by summing the estimates on the strips.
	\end{itemize}
	

	\paragraph[Step 0]{Step 0. Estimates on a box of size 1.}
	
	We prove estimates on the boxes 
	\begin{equation*}\label{eq-def:TP-A^k}
	\begin{split}
		A &:= \{ \tilde{x}\in\R^d \: : \: \abs{\tilde{x}_1-1}<1/2,\quad\abs{\tilde{x}'}_{l^\infty}<1/2 \} 
		,\\
		\tilde{A} &:= \{ \tilde{x}\in\R^d \: : \: \abs{\tilde{x}_1-1}<3/4,\quad\abs{\tilde{x}'}_{l^\infty}<3/4  \}
		,\\
		\tilde{\tilde{A}} &:=\{ x\in\R^d \: :\: \abs{\tilde{x}_1-1}<4/5, \quad \abs{\tilde{x}'}_{l^\infty}<4/5 \}
		.
	\end{split}
	\end{equation*}
	
	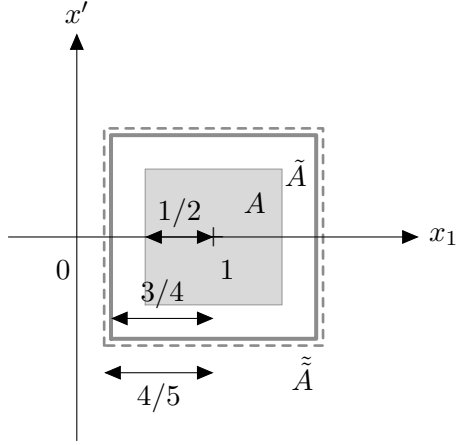
\begin{figure}[h!]
	\begin{center}\begin{tikzpicture}[line cap=round,line join=round,>=triangle 45,x=2.0cm,y=2.0cm,scale=.9]
		\draw[color = gray!80,fill=gray!30](-0.5,-0.5) -- (-0.5,0.5) -- (0.5,0.5) -- (0.5,-0.5) -- (-0.5,-0.5);
		%
		\draw[color = gray!90,line width=1.5pt](-0.75,-0.75) -- (-0.75,0.75) -- (0.75,0.75) -- (0.75,-0.75) -- (-0.75,-0.75);
		\draw[color = gray!90,line width=1.pt,dashed](-0.8,-0.8) -- (-0.8,0.8) -- (0.8,0.8) -- (0.8,-0.8) -- (-0.8,-0.8);
		%
		\draw  (0.3,0.1) node[left, above]{$A$};
		\draw  (.6,0.3) node[left, above]{$\tilde{A}$};
		\draw  (.65,-1.2) node[left, above]{$\tilde{\tilde{A}}$};
		%
		\draw[<->](-0.5,0)--(0.,0) node[above, midway]{$1/2$};
		\draw[<->](-0.75,-0.6)--(0.,-0.6) node[above, midway]{$3/4$};
		\draw[<->](-0.8,-1)--(0.,-1) node[below, midway]{$4/5$};
		%
		%
		\draw[->] (-1.5,0.) -- (1.5,0);
		\draw  (1.5,0.) node[right]{$x_1$};
		\draw[->] (-1.,-1.5) -- (-1,1.5);
		\draw  (-1.,1.5) node[above]{$x'$};
		\draw  (0,0.) node{$+$};
		\draw  (0.1,-0.1) node[below] {$1$};
		\draw  (-1.1,-0.1) node[below] {$0$};
		\end{tikzpicture}\end{center}
	\caption{Boxes of size 1}
	\label{fif:TP-box-size1}
	\end{figure}

	\begin{lemma}\label{lemma:TP-matdens-crucial-zone-rescal-boxes}
		Let $d\geq 2$ and $2\leq q\leq\frac{2d}{d-2}$. Let $\varepsilon_0>0$.
		Let $\tilde{p}_\varepsilon(\tilde{x},\tilde{\xi}):=\prodscal{\tilde{\xi}}{B_\varepsilon(\tilde{x})\tilde{\xi}}+V_\varepsilon(\tilde{x})$ with  $\{(B_\varepsilon,V_\varepsilon)\}_{\varepsilon\in[0,\varepsilon_0]}\subset\CR^\infty(\R^d,\R^{d\times d}_{\rm sym}\times\R)$ and $(\varepsilon,\tilde{x})\mapsto (B_\varepsilon(\tilde{x}),V_\varepsilon(\tilde{x}))\in\CR([0,\varepsilon_0]\times\R^d)$ 
		such that
	\begin{itemize}
		\item
		for any $\alpha\in\N^d$, there exists $C_\alpha>0$ such that for any $\varepsilon\in[0,\varepsilon_0]$
		\begin{equation}\label{cond:TP-Veps-growth}
			\forall x\in\R^d \quad
			  \abs{\partial^\alpha B_\varepsilon(\tilde{x})}+\abs{\partial^\alpha V_\varepsilon(\tilde{x})}\leq C_\alpha
			 ,
		\end{equation}
		\item
		there exists $c>0$ such that for any $\varepsilon\in[0,\varepsilon_0]$
		\begin{equation}\label{cond:TP-Veps-ellip}
		B_\varepsilon\geq c,\,\abs{V_\varepsilon}\geq c \quad\text{ on }\quad \{ \tilde{x}\in\R^d \: : \: \abs{\tilde{x}_1-1}<9/10,\quad \abs{\tilde{x}'}_{l^\infty} <9/10  \}.
		\end{equation} 
	\end{itemize}
		Let $\tilde{P}:=\tilde{p}_\varepsilon^\w(\tilde{x},\tilde{h}D_{\tilde{x}})$ (or any other quantization) and let $\chi_A\in\test{\R^d\times\R^d,[0,1]}$ such that $\chi_A=1$ on $A$ and $\supp\chi_A\subset\tilde{A}$.
		Then, 
		there exist $C>0$ and $\tilde{h}_0>0$  such that 
		for all $0<\tilde{h}\leq \tilde{h}_0$, for all $0\leq\varepsilon\leq\varepsilon_0$ and all bounded  self-adjoint non-negative operator $\tilde\gamma$ on $L^2(\R^d)$
		\begin{equation*}
			\normLp{\rho_{\tilde\gamma}(\tilde{x})}{q/2}{(A)} \leq C \tilde{h}^{-2s_{\text{Sogge}}(q,d)} 
			\normSch{ \big(1+\tilde{P}^*\tilde{P}/\tilde{h}^2\big)^{1/2} \chi_A\tilde\gamma\chi_A \big(1+\tilde{P}^*\tilde{P}/\tilde{h}^2\big)^{1/2} }{\alpha_{\text{Sogge}}(q,d)}{}
			.
		\end{equation*}
	\end{lemma}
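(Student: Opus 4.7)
The plan is to combine the microlocal Sogge many-body estimate (Theorem~\ref{thm:ELp-sogge-matdens}) with the elliptic many-body estimate (Theorem~\ref{thm:ELp-elliptic-matdens}) through the abstract covering/extension result of Theorem~\ref{thm:abstract-loc-space}, after converting the cutoff $\chi_A$ into a phase-space cutoff. The geometric observation underlying the whole reduction is that on $\tilde{\tilde{A}}$ the hypothesis \eqref{cond:TP-Veps-ellip} forbids turning points: at any characteristic point $(\tilde{x}_0,\tilde{\xi}_0)$ with $\tilde{x}_0\in\tilde{\tilde{A}}$ and $\tilde{p}_\varepsilon(\tilde{x}_0,\tilde{\xi}_0)=0$, the identity $\langle\tilde{\xi}_0,B_\varepsilon(\tilde{x}_0)\tilde{\xi}_0\rangle=-V_\varepsilon(\tilde{x}_0)\geq c>0$ forces $\tilde{\xi}_0\neq 0$, whence $\nabla_{\tilde{\xi}}\tilde{p}_\varepsilon(\tilde{x}_0,\tilde{\xi}_0)=2B_\varepsilon(\tilde{x}_0)\tilde{\xi}_0\neq 0$, and the level surface $\{\tilde{\xi}:\langle\tilde{\xi},B_\varepsilon(\tilde{x}_0)\tilde{\xi}\rangle=-V_\varepsilon(\tilde{x}_0)\}$ is a uniformly non-degenerate ellipsoid whose second fundamental form is elliptic. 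Thus Assumption~\ref{cond:sogge} holds at every characteristic point over $\tilde{\tilde{A}}$, uniformly in $\varepsilon\in[0,\varepsilon_0]$; everywhere else $\tilde{p}_\varepsilon$ is elliptic.

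\textbf{Reduction and application.} Since $\chi_A=1$ on $A$, one has $\normLp{\rho_{\tilde{\gamma}}}{q/2}{(A)}\leq\normLp{\rho_{\chi_A\tilde{\gamma}\chi_A}}{q/2}{(\R^d)}$. Pick $R>0$ large enough that $\tilde{p}_\varepsilon$ is elliptic on $\tilde{\tilde{A}}\times\{|\tilde{\xi}|\geq R\}$ for every $\varepsilon\in[0,\varepsilon_0]$ (using \eqref{cond:TP-Veps-growth} and \eqref{cond:TP-Veps-ellip}), and a cutoff $\kappa\in\test{\R^d,[0,1]}$ with $\kappa=1$ on $\{|\tilde{\xi}|\leq R\}$ and compact support. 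Pseudodifferential calculus (Proposition~\ref{prop:SA-comp-op}) gives $\chi_A(\tilde{x})\kappa(\tilde{h}D_{\tilde{x}})=\tilde{\chi}^\w+\tilde{h}r^\w$ with $\tilde{\chi}\in\test{\R^d\times\R^d}$ supported in $\tilde{\tilde{A}}\times\{|\tilde{\xi}|\leq 3R\}$ and $r\in\schwartz(\R^{2d})$. Writing $\chi_A=\chi_A\kappa(\tilde{h}D_{\tilde{x}})+\chi_A(1-\kappa(\tilde{h}D_{\tilde{x}}))$ decomposes $\chi_A\tilde{\gamma}\chi_A$ into a principal piece essentially of the form $\tilde{\chi}^\w\tilde{\gamma}\tilde{\chi}^\w$, to which Theorem~\ref{thm:abstract-loc-space} applies after verifying Assumption~\ref{ass:microloc} pointwise on $\supp\tilde{\chi}$ via Theorem~\ref{thm:ELp-sogge-matdens} at characteristic points and via the (strictly better) elliptic Theorem~\ref{thm:ELp-elliptic-matdens} at elliptic points, cf.\ Remarks~\ref{rmk:comp-s-sogge} and \ref{rmk:comp-alpha-sogge}; this already delivers the desired exponents $(s_{\rm Sogge},0,\alpha_{\rm Sogge})$. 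For the complementary high-frequency piece involving $1-\kappa(\tilde{h}D_{\tilde{x}})$, the symbol of $\tilde{p}_\varepsilon$ is elliptic on its microsupport, so $(1-\kappa)(\tilde{h}D_{\tilde{x}})(1+\tilde{P}^*\tilde{P}/\tilde{h}^2)^{-1/2}$ gains arbitrary powers of $\tilde{h}$, and combining with Lemma~\ref{lemma:Kato-Seiler-Simon_dual} produces an estimate stronger than required. Finally, one transfers the weight $(1+\tilde{P}^*\tilde{P}/\tilde{h}^2)^{1/2}$ between $\tilde{\gamma}$ and $\chi_A\tilde{\gamma}\chi_A$ by commuting it through the cutoff, the commutator $[\chi_A,\tilde{P}]$ being bounded relatively to $(1+\tilde{P}^*\tilde{P}/\tilde{h}^2)^{1/2}$.

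\textbf{Main obstacle.} The delicate point will be the \emph{uniformity of all constants in} $\varepsilon\in[0,\varepsilon_0]$: the neighborhoods $\VR$ and constants $C$ supplied by Theorems~\ref{thm:ELp-elliptic-matdens} and \ref{thm:ELp-sogge-matdens} must be chosen independently of $\varepsilon$. The family $\{\tilde{p}_\varepsilon\}_\varepsilon$ is bounded in $S(\crochetjap{\tilde{\xi}}^2)$ by \eqref{cond:TP-Veps-growth} and carries uniform lower ellipticity/curvature bounds by \eqref{cond:TP-Veps-ellip} plus the continuity hypothesis $(\varepsilon,\tilde{x})\mapsto(B_\varepsilon(\tilde{x}),V_\varepsilon(\tilde{x}))\in\CR([0,\varepsilon_0]\times\R^d)$. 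A standard compactness argument in the symbol topology then provides a finite open cover and uniform constants, provided one inspects the proofs of Theorems~\ref{thm:ELp-elliptic-matdens} and \ref{thm:ELp-sogge-matdens} to verify that their constants depend continuously on the symbol (through the dispersive bounds of Theorem~\ref{thm:SStrichartz-bounds}, whose parametrix construction is stable under perturbation of the symbol). This uniformity is essential because the subsequent rescaling step of the lemma turns the scale $\varepsilon\geq Mh^{2/3}$ of the dyadic strip $A_\varepsilon$ into precisely the parameter $\varepsilon$ appearing in the present statement.
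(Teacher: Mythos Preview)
Your strategy is the right one and mirrors the paper's: decompose in phase space into a compactly microsupported piece (handled by the Sogge and elliptic estimates via the abstract extension theorems) and a high-frequency elliptic remainder. However, there is a genuine gap in your final ``transfer'' step.

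You apply Theorem~\ref{thm:abstract-loc-space} to the pair $(\tilde\chi,\tilde\gamma)$, which puts the weighted norm $\normSch{(1+\tilde P^*\tilde P/\tilde h^2)^{1/2}\tilde\gamma(1+\tilde P^*\tilde P/\tilde h^2)^{1/2}}{\alpha_{\rm Sogge}}{}$ on the right. The lemma, however, demands the weight on $\chi_A\tilde\gamma\chi_A$, not on $\tilde\gamma$. Your proposed remedy---commute the weight through $\chi_A$---runs in the wrong direction: one cannot bound $\normSch{(1+\tilde P^2/\tilde h^2)^{1/2}\tilde\gamma(1+\tilde P^2/\tilde h^2)^{1/2}}{\alpha}{}$ by $\normSch{(1+\tilde P^2/\tilde h^2)^{1/2}\chi_A\tilde\gamma\chi_A(1+\tilde P^2/\tilde h^2)^{1/2}}{\alpha}{}$ for a general bounded non-negative $\tilde\gamma$ (take $\tilde\gamma$ a rank-one projection onto a function supported away from $\supp\chi_A$, or any $\tilde\gamma$ for which $\tilde P\sqrt{\tilde\gamma}$ is unbounded). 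The commutator bound you cite yields the reverse inequality, which is useless here.

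The paper avoids this by applying the abstract theorem (via Remark~\ref{rmk:abstract-loc-space}) directly with $\gamma=\chi_A\tilde\gamma\chi_A$ and a phase-space cutoff $\tilde\psi$, so the weighted norm lands on $\chi_A\tilde\gamma\chi_A$ from the outset and no transfer is needed. The price is that the complementary elliptic piece becomes $(1-\tilde\psi^\w)\chi_A\tilde\gamma\chi_A(1-\tilde\psi^\w)$, with the spatial cutoff $\chi_A$ trapped \emph{inside}; one must then show
\[
\normSch{(1-\tilde h^2\Delta)(1-\tilde\psi^\w)\chi_A\sqrt{\tilde\gamma}}{\alpha}{}\lesssim\normSch{\tilde P\chi_A\sqrt{\tilde\gamma}}{\alpha}{}+\tilde h\,\normSch{\chi_A\sqrt{\tilde\gamma}}{\alpha}{},
\]
which is the content of Fact~\ref{fact:TP-matdens-g2qw} and requires an honest parametrix construction (Lemma~\ref{lemma:SA-inversion}) plus absorption of the $\OR(\tilde h^\infty)$ remainder for $\tilde h$ small---not merely the observation that $\tilde p_\varepsilon$ is elliptic on $\supp(1-\tilde\psi)\chi_{\tilde A}$. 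Incidentally, your claim that the high-frequency resolvent ``gains arbitrary powers of $\tilde h$'' is incorrect: ellipticity yields exactly one power (Lemma~\ref{lemma:SA-elliptic-1body}), which is nonetheless sufficient since $s_{\rm ellip}\leq s_{\rm Sogge}$ and $\alpha_{\rm ellip}\leq\alpha_{\rm Sogge}$.

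Your discussion of uniformity in $\varepsilon$ is to the point; the paper handles it by defining the compact set $\tilde K$ as a union over $\varepsilon\in[0,\varepsilon_0]$ (using the continuity hypothesis to ensure closedness) and choosing a single $\tilde\psi$ accordingly.
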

	
	\begin{proof}[\underline{Proof of Lemma \ref{lemma:TP-matdens-crucial-zone-rescal-boxes}}]
	
		Let $\chi_{\tilde{A}}\in\test{\R^d,[0,1]}$ be a cut-off function equal to 1 on $\tilde{A}$ and supported into $\tilde{\tilde{A}}$.
		It follows from the equality 
		$\chi_A^2(\tilde{x})\rho_{\tilde{\gamma}}(\tilde{x},\tilde{x})=\rho_{\chi_A\tilde{\gamma}\chi_A}(\tilde{x},\tilde{x})$ that
		\begin{equation*}
			\normLp{\rho_{\tilde{\gamma}}}{q/2}{(A)} \leq \normLp{\chi_A^2\rho_{\tilde{\gamma}}}{q/2}{(\R^d)} =\normLp{\rho_{\chi_A\tilde{\gamma}\chi_A}}{q/2}{(\R^d)} .
		\end{equation*}
		Let
		\begin{equation*}\label{def-demo:TP-tildeK}
			\tilde{K} := \bigcup_{0\leq \varepsilon\leq\varepsilon_0}
			\left\lbrace (\tilde{x},\tilde{\xi})\in\bar{\tilde{\tilde{A}}}\times\R^d \: :\: \prodscal{\tilde{\xi}}{B_\varepsilon(\tilde{x})\tilde{\xi})}+V_\varepsilon(\tilde{x})\leq \frac 12\big(1+\prodscal{\tilde{\xi}}{B_\varepsilon(\tilde{x})\tilde{\xi})}\big) \right\rbrace
			.
		\end{equation*}
		Note that each $\tilde{K}$ is a closed set of $\R^d$ (here we use the continuity of $(B_\varepsilon(\tilde{x}),V_\varepsilon(\tilde{x}))$ in $(\varepsilon,\tilde{x})$). It is also bounded, since it is contained into the bounded set 
		\begin{equation*}
			\overline{\tilde{\tilde{A}}}\times \left\lbrace \tilde{\xi}\in\R^d \: :\: \abs{\tilde{\xi}}\leq c^{-1/2} \sqrt{1+2 \sup_{0\leq\varepsilon\leq\varepsilon_0}\normLp{V_\varepsilon}{\infty}{(\R^d)}} \right\rbrace .
		\end{equation*}
		Let $\tilde{\psi} \in\test{\R^d\times\R^d}$ be a function
		$0\leq \tilde{\psi}\leq 1$, such that $\tilde{\psi}=1$ in the compact $\tilde{K}$.
		The operator is composed of three parts
		\begin{equation*}
			\chi_A\tilde{\gamma}\chi_A=\tilde{\gamma}_1 +\tilde{\gamma}_2 + \tilde{\gamma}_3,
		\end{equation*}
		defined by
		\begin{align*}
			\tilde{\gamma}_1 &:= \tilde{\psi}^\w\chi_A\tilde{\gamma}\chi_A\tilde{\psi}^\w, \\
			\tilde{\gamma}_2 &:= 	(1-\tilde{\psi}^\w)\chi_A\tilde{\gamma}\chi_A(1-\tilde{\psi}^\w),  \\
			\tilde{\gamma}_3 &:= 
			\underset{ =: \tilde{\gamma}_{3,1}}{\underbrace{
					(1-\tilde{\psi}^\w)\chi_A\tilde{\gamma}\chi_A 	\tilde{\psi}^\w
			}}
			+ \underset{ =: \tilde{\gamma}_{3,2}}{\underbrace{
						\tilde{\psi}^\w\chi_A\tilde{\gamma}\chi_A(1-\tilde{\psi}^\w)
			}}
			.
		\end{align*}
		Let us prove that for any $i\in\{1,2,3\}$
		\begin{equation*}
			 \normLp{\rho_{\tilde{\gamma}_i}}{q/2}{(\R^d)} \lesssim \tilde{h}^{-2s_{\text{Sogge}}(q,d)} \normSch{(1+\tilde{P}^*\tilde{P}/\tilde{h}^2)^{1/2}\chi_A\tilde{\gamma}\chi_A(1+\tilde{P}^*\tilde{P}/\tilde{h}^2)^{1/2}}{\alpha_{\text{Sogge}}(q,d)}{} .
		\end{equation*}
		These above bounds together with the triangle inequality prove the lemma.
		
		\subparagraph{Estimate of the term $\tilde{\gamma}_1$.}
		
		Note that
		\begin{equation*}
			\supp\tilde{\psi} \cap \tilde{\tilde{A}}\times\R^d \subset S=\{ \tilde{x}\in\R^d \: : \: \abs{\tilde{x}_1-1}<9/10,\quad \abs{\tilde{x}'}_{l^\infty} <9/10  \}\times\R^d .
		\end{equation*}
		Every point $(\tilde{x},\tilde{\xi})$ of $S$ satisfies  $\tilde{p}_\varepsilon(\tilde{x},\tilde{\xi})\neq 0$ or Sogge curvature conditions (Assumption \ref{cond:sogge}).
		Indeed, if $(\tilde{x},\tilde{\xi})$ satisfies $\tilde{p}_\varepsilon(\tilde{x},\tilde{\xi})= 0$, then uniformly in $\varepsilon$, by \eqref{cond:TP-Veps-ellip}
		\begin{equation*}
			\abs{\nabla_{\tilde{\xi}}\tilde{p}_\varepsilon(\tilde{x},\tilde{\xi})}=2\abs{B_\varepsilon(\tilde{x})\tilde{\xi}} = 2\sqrt{c}\sqrt{\abs{V_\varepsilon(\tilde{x})}}\geq 2 c .
		\end{equation*}
		Furthermore, for all $\varepsilon$ and $\tilde{x}$
		\begin{equation*}
			\{ \tilde{\xi}\in\R^d \: :\: \tilde{p}_\varepsilon(\tilde{x},\tilde{\xi}) =0 \} = \{  \tilde{\xi}\in\R^d \: :\: \prodscal{\tilde{\xi}}{B_\varepsilon(\tilde{x})\tilde{\xi}}= \abs{V_\varepsilon(\tilde{x})} \} 
		\end{equation*}
		has a positive curvature which is bounded and bounded away from 0 uniformly in $\varepsilon$.
		Hence, by Theorem \ref{thm:ELp-elliptic-matdens} and Theorem \ref{thm:ELp-sogge-matdens}, Assumption \ref{ass:microloc} holds for $q$, $s=s_{\text{Sogge}}(q,d)$, $t= 0$ and $\alpha=\alpha_{\text{Sogge}}(q,d)$ on the set $S$. 
		By Remark \ref{rmk:abstract-loc-space} applied to $S$, to $\Omega=\tilde{A}$, to $\chi=\tilde{\psi}$, to $(q,s_{\text{Sogge}},0,\alpha_{\text{Sogge}})$ and to the operator  $\gamma=\chi_A\tilde{\gamma}\chi_A
		$
		\begin{equation*}
		\begin{split}
			\normLp{\rho_{\tilde{\gamma}_1}}{q/2}{(\R^d)}
			&= \normLp{\rho_{\tilde\psi^\w\chi_A\tilde{\gamma}\chi_A\tilde\psi^\w}}{q/2}{(\R^d)}
			%
			= \normLp{\rho_{\tilde\psi^\w\chi_{\tilde{A}} \: \chi_A\tilde{\gamma}\chi_A \: \chi_{\tilde{A}}\tilde\psi^\w}}{q/2}{(\R^d)}
			%
			\\&\lesssim \tilde{h}^{-2s_{\text{Sogge}}(q,d)}\normSch{(1+\tilde{P}^*\tilde{P}/\tilde{h}^2)^{1/2}\chi_A\tilde{\gamma}\chi_A(1+\tilde{P}^*\tilde{P}/\tilde{h}^2)^{1/2}}{\alpha_{\text{Sogge}}(q,d)}{}
			.
		\end{split}
		\end{equation*}

		\subparagraph{Estimate of the term $\tilde{\gamma}_2$.}
		
		We write with the Mercer theorem (Remark \ref{rmk:mercer-thm}) that
		\begin{equation*}
			\normLp{\rho_{\tilde{\gamma}_2}}{q/2}{(\R^d)}
			= 
			\begin{cases}
				\normSch{(1-\tilde{\psi}^\w)\chi_A\sqrt{\tilde{\gamma}}}{2}{}^2 &\text{if}\
				q=2,\\\displaystyle
				\sup_{W\in L^{2(q/2)'}\cap\CR^0(\R^d)}\frac{\normSch{W(1-\tilde{\psi}^\w)\chi_A\sqrt{\tilde{\gamma}}}{2}{}^2}{\normLp{W}{2(q/2)'}{(\R^d)}^2} &\text{if}\
				2\leq q\leq \frac{2d}{d-2}.
			\end{cases}
		\end{equation*}
		By the Hölder and Kato-Seiler-Simon inequalities (Lemma \ref{lemma:Kato-Seiler-Simon_dual}) then applied successively, we have for any $2\leq q\leq\frac{2d}{d-2}$
		(since $\frac{2d}{d-2}< \frac{2d}{(d-4)_+}$)
		\begin{align*}
			\normSch{W(1-\tilde{\psi}^\w)\chi_A\sqrt{\tilde{\gamma}}}{2}{}
			&\leq \normSch{W (1-\tilde{h}^2\Delta)^{-1}}{2(q/2)'}{}
			\normSch{ (1-\tilde{h}^2\Delta)(1-\tilde{\psi}^\w)\chi_A\sqrt{\tilde{\gamma}}}{q/2}{}^{1/2}
			\\&\lesssim h^{-d\left(\frac 12-\frac 1q\right)}\normLp{W}{2(q/2)'}{(\R^d)}\normSch{ (1-\tilde{h}^2\Delta)(1-\tilde{\psi}^\w)\chi_A\sqrt{\tilde{\gamma}}}{q/2}{}^{1/2}
			.
		\end{align*}
		Implicitly, when $q=2$, we write $W=1$ and the operator norm instead of the $\schatten^{2(q/2)'}$ norm.
		Now let us give a estimate of the right side of the previous bound.
		\begin{fait}\label{fact:TP-matdens-g2qw}
			There exist $C>0$ and $\tilde{h}_0>0$ such that for any $0<\tilde{h}\leq \tilde{h}_0$ and any $0\leq\varepsilon\leq\varepsilon_0$, one has for all $\alpha\geq 1$
			\begin{equation*}
			\normSch{ (1-\tilde{h}^2\Delta)(1-\tilde{\psi}^\w)\chi_A\sqrt{\tilde{\gamma}}}{ \alpha}{} \leq
			C \left( \normSch{\tilde{P}\chi_A\sqrt{\tilde{\gamma}}}{\alpha}{}+ \tilde{h} \normSch{\chi_A\sqrt{\tilde{\gamma}}}{\alpha}{} \right)
			.
			\end{equation*}
		\end{fait}
		%
		\begin{proof}[\underline{Proof of Fact \ref{fact:TP-matdens-g2qw}}]
			Let $\alpha\geq 1$.
			\item[\quad 1)] Let us first show that it is enough to do everything with right quantization by replacing $(1-\tilde{\psi}^\w)$ by $(1-\tilde{\psi}^\qr)$ into Fact \ref{fact:TP-matdens-g2qw}.
			By Proposition \ref{cor:SA-quantif-change} applied to the symbol $\tilde{\psi}$, there exists $\tilde{r}\in\schwartz(\R^d\times\R^d)$ such that
			\begin{equation*}
				(1-\tilde{\psi}^\w)-(1-\tilde{\psi}^\qr)=\tilde{\psi}^\qr-\tilde{\psi}^\w =\tilde{h}\tilde{r}^\w .
			\end{equation*}
			We can now write
			\begin{align*}
			(1-\tilde{h}^2\Delta)\: (1-\tilde{\psi}^\w)\chi_A\sqrt{\tilde{\gamma}}
			%
			&= (1-\tilde{h}^2\Delta)\:\left(
			(1-\tilde{\psi}^\qr)
			\chi_A\sqrt{\tilde{\gamma}} + \op_{\tilde{h}}^{1/2}\left(\OR_\schwartz(\tilde{h})\right) \chi_A\sqrt{\tilde{\gamma}}
			\right)
			\\&=
			(1-\tilde{h}^2\Delta)(1-\tilde{\psi}^\qr)\chi_A\sqrt{\tilde{\gamma}}
			+ \OR_{L^2\to L^2}(\tilde{h}) \chi_A\sqrt{\tilde{\gamma}}
			.
			\end{align*}
			
			\item[\quad 2)] It remains to show fact \ref{fact:TP-matdens-g2qw}
			by replacing $\tilde{\psi}^\w$ by $\tilde{\psi}^\qr$.
			\item  Let $\tilde{m}_0(\tilde{x},\tilde{\xi})=\langle\tilde{\xi}\rangle^2$. Let us first notice that the operator $\tilde{p}_\varepsilon$ is elliptic on the support of $(1-\tilde{\psi})\chi_{\tilde{A}}$.
			Indeed, the function $(1-\tilde{\psi})\chi_{\tilde{A}}$ is supported into $\tilde{K}^c\cap(\bar{\tilde{\tilde{A}}}\times\R^d)$ and  by definition of $\tilde{K}$, for all $\varepsilon\in[0,\varepsilon_0]$ and for all $(\tilde{x},\tilde{\xi})\in \tilde{K}^c\cap(\bar{\tilde{\tilde{A}}}\times\R^d)$ we have
			\begin{equation*}
				\tilde{p}_\varepsilon(\tilde{x},\tilde{\xi})> \frac {\min(1,c)}2\tilde{m}_0(\tilde{x},\tilde{\xi}) .
			\end{equation*}
			Thus, we get the ellipticity (uniform in $\varepsilon$) of $\tilde{p}_\varepsilon$ on $\tilde{K}^c\cap(\bar{\tilde{\tilde{A}}}\times\R^d)$ and thus on $\supp((1-\tilde{\psi})\chi_{\tilde{A}})$.
			By Lemma \ref{lemma:SA-inversion} applied to $(1-\tilde{\psi})\chi_{\tilde{A}} \in S(1)$ and to $p\in S(\tilde{m}_0)$, there exist $\tilde{b}\in S(\tilde{m_0}^{-1})$ and $\tilde{r}=\OR_{S(1)}(\tilde{h}^\infty)$ such that
			\begin{equation*}
			(1-\tilde{\psi})^\qr \chi_{\tilde{A}}= \tilde{b}^\qr \tilde{P} \: (1-\tilde{\psi}^\qr)\chi_{\tilde{A}} +  \tilde{r}^\qr .
			\end{equation*}
			Note that $\chi_A=\chi_{\tilde{A}}\chi_A$.
			We compose by $(1-\tilde{h}^2\Delta)$ on the left and $\chi_{A}\sqrt{\tilde{\gamma}}$ on the right to infer
			\begin{align*}
				&(1-\tilde{h}^2\Delta)(1-\tilde{\psi})^\qr\chi_A\sqrt{\tilde{\gamma}}
				\\&\qquad = \underset{=\OR_{L^2\to L^2}(1)}{\underbrace{
						(1-\tilde{h}^2\Delta)\tilde{b}^\qr
				}}
				\tilde{P}(1-\tilde{\psi}^\qr)\chi_A\sqrt{\tilde{\gamma}}
				+ 
				\underset{=\OR_{L^2\to L^2}(\tilde{h}^\infty)}{\underbrace{
						(1-\tilde{h}^2\Delta) \tilde{r}^\qr (1-\tilde{h}^2\Delta)^{-1}
				}}
				\: (1-\tilde{h}^2\Delta)\chi_A\sqrt{\tilde{\gamma}}
				.
			\end{align*}
			Let us write $\tilde{P}(1-\tilde{\psi}^\qr)\chi_A\sqrt{\tilde{\gamma}}$ in two parts
			\begin{equation*}
				\tilde{P}(1-\tilde{\psi}^\qr)\chi_A\sqrt{\tilde{\gamma}}
				= 
				\underset{=\OR_{L^2\to L^2}(1)}{\underbrace{
						(1-\tilde{\psi}^\qr)
				}}
				\tilde{P} \chi_A\sqrt{\tilde{\gamma}}
				-
				\underset{=\OR_{L^2\to L^2}(\tilde{h})}{\underbrace{
						\comm{\tilde{P}}{\tilde{\psi}^\qr}
				}}
				\chi_A\sqrt{\tilde{\gamma}}.
			\end{equation*}
			In addition, the term $(1-\tilde{h}^2\Delta)\chi_A\sqrt{\tilde{\gamma}}$ is also divided into two parts
			\begin{align*}
				(1-\tilde{h}^2\Delta)\chi_A\sqrt{\tilde{\gamma}}
				&= \underset{=\OR_{L^2\to L^2}(1)}{\underbrace{
						(1-\tilde{h}^2\Delta)\tilde{\psi}^\qr 
				}}  
				\chi_A\sqrt{\tilde{\gamma}}
				+ (1-\tilde{h}^2\Delta)(1-\tilde{\psi}^\qr ) \chi_A\sqrt{\tilde{\gamma}} .
			\end{align*}
			Putting everything together and moving to the Schatten norm
			\begin{align*}
				\normSch{(1-\tilde{h}^2\Delta)(1-\tilde{\psi}^\qr)\chi_A\sqrt{\tilde{\gamma}}}{\alpha}{}
				&\lesssim \normSch{\tilde{P}\chi_A\sqrt{\tilde{\gamma}}}{\alpha}{} +
				\tilde{h} \normSch{\chi_A\sqrt{\tilde{\gamma}}}{\alpha}{} \\&\quad +\OR(\tilde{h}^\infty)
				\normSch{(1-\tilde{h}^2\Delta)(1-\tilde{\psi}^\qr)\chi_A\sqrt{\tilde{\gamma}}}{\alpha}{}.
			\end{align*}
			Then, we conclude by taking $\tilde{h}$ small enough
			\begin{equation*} 				\normSch{(1-\tilde{h}^2\Delta)(1-\tilde{\psi}^\qr)\chi_A\sqrt{\tilde{\gamma}}}{\alpha}{} \lesssim \normSch{\tilde{P}\chi_A \sqrt{\tilde{\gamma}}}{\alpha}{} + \tilde{h} \normSch{\chi_A\sqrt{\tilde{\gamma}}}{\alpha}{} .
			\end{equation*}
			That ends the proof of Fact \ref{fact:TP-matdens-g2qw}.
		\end{proof}
		%
		We apply it to $\alpha=q/2$ and we apply Lemma \ref{lemma:equiv-norm-P-matdens}.
		Hence, we get for any $2\leq q\leq\frac{2d}{d-2}$
		\begin{align*}
			\normLp{\rho_{\tilde{\gamma}_2}}{q/2}{(\R^d)}
			&\lesssim \tilde{h}^{2-2d\left(\frac 1 2 -\frac 1 q\right)} \left(\normSch{\chi_A\tilde{\gamma}\chi_A}{q/2}{}+\frac 1{\tilde{h}^2}\normSch{\tilde{P}^*\chi_A\tilde{\gamma}\chi_A\tilde{P}}{q/2}{}\right) 
			\\& \lesssim \tilde{h}^{2-2d\left(\frac 1 2 -\frac 1 q\right)} \normSch{(1+\tilde{P}^*\tilde{P}/\tilde{h}^2)^{1/2}\chi_A\tilde{\gamma}\chi_A(1+\tilde{P}^*\tilde{P}/\tilde{h}^2)^{1/2}}{q/2}{}
			.
		\end{align*}
		
		\subparagraph{Estimate of the crossed terms $\tilde{\gamma}_{3,1}$ and $\tilde{\gamma}_{3,2}.$}
		We deduce the estimates on the crossed terms $\tilde{\gamma}_{3,1}$ and $\tilde{\gamma}_{3,2}$ from those of $\tilde{\gamma}_1$ and $\tilde{\gamma}_2$.
		For example for $\tilde{\gamma}_{3,1}$, noting $C := \sqrt{\tilde{\gamma}}\chi_A(1-\tilde{\psi}^\w)$ and $B := \sqrt{\tilde{\gamma}}\chi_A\tilde{\psi}^\w$
		\begin{align*}
			&\abs{\tr_{L^2}(W\tilde{\gamma}_{3,1}W)}
			\\&\quad 
			= \abs{ \tr_{L^2}\left(W(1-\tilde{\psi}^\w)\chi_A\tilde{\gamma}\chi_A\tilde{\psi}^\w W\right) }
			= \abs{\tr_{L^2}(WC^*BW)}
			\\&\quad\leq \abs{\tr_{L^2}\left(WC^*CW\right)}^{1/2}\abs{\tr_{L^2}\left(WB^*BW\right)}^{1/2}
			\\&\quad\leq \left(\normLp{W}{2(q/2)'}{(\R^d)}^2\normLp{\rho_{C^*C}}{q/2}{(\R^d)}\right)^{1/2}\left(\normLp{W}{2(q/2)'}{(\R^d)}^2\normLp{\rho_{B^*B}}{q/2}{(\R^d)}\right)^{1/2}
			\\&\quad
			\leq \normLp{W}{2(q/2)'}{(\R^d)}^2
			\normLp{\rho_{(1-\tilde{\psi}^\w)\chi_A\tilde{\gamma}\chi_A(1-\tilde{\psi}^\w)}}{q/2}{(\R^d)}^{1/2} \normLp{\rho_{\tilde{\psi}^\w\chi_A\tilde{\gamma}\chi_A\tilde{\psi}^\w}}{q/2}{(\R^d)}^{1/2}
			\\&\quad
			\lesssim \tilde{h}^{1-d\left(\frac 1 2-\frac 1 q\right)-s_{\text{Sogge}}(q,d)}\normLp{W}{2(q/2)'}{(\R^d)}^2  \normSch{(1+\tilde{P}^*\tilde{P}/\tilde{h}^2)^{1/2}\chi_A\tilde{\gamma}\chi_A(1+\tilde{P}^*\tilde{P}/\tilde{h}^2)^{1/2}}{\alpha_{\text{Sogge}}(q,d)}{}
			.
		\end{align*}
		For any $2\leq q\leq\frac{2d}{d-2}$
		\begin{equation*}
			 \normLp{\rho_{\tilde{\gamma}_3}}{q/2}{(\R^d)}
			\lesssim \tilde{h}^{1-d\left(\frac 1 2-\frac 1 q\right)-s_{\text{Sogge}}(q,d)} \normSch{(1+\tilde{P}^*\tilde{P}/\tilde{h}^2)^{1/2}\chi_A\tilde{\gamma}\chi_A(1+\tilde{P}^*\tilde{P}/\tilde{h}^2)^{1/2}}{\alpha_{\text{Sogge}}(q,d)}{} .
		\end{equation*}
	\end{proof}

	\paragraph[Step 1]{Step 1. The scaling.}

	Let us deduce from Lemma \ref{lemma:TP-matdens-crucial-zone-rescal-boxes} the same kind of result but on the boxes $A^k_\varepsilon$ by a scaling argument.
	The following lemma controls the $L^{q/2}$ norm of the density on the boxes $A^k_\varepsilon$.
	
	\begin{lemma}\label{lemma:TP-matdens-crucial-zone-boxes}
		Let $d\geq 2$ and $2\leq q\leq\frac{2d}{d-2}$.
		Then, there exists $C>0$
		such that for any $0<h\leq h_0$, for any $\varepsilon\in [M h^{2/3},\delta/2]$ and for any $k\in\Z^{d-1}$ such that $\abs{k}_{l^\infty}<\delta/\varepsilon-1$,
		there exists $\chi_{\tilde{A}_\varepsilon^k}\in\test{\R^d\times\R^d,[0,1]}$  supported into $\tilde{\tilde{A}}^k_\varepsilon$,
		such that any bounded self-adjoint non-negative operator $\gamma$ on $L^2(\R^d)$
		\begin{equation*}\label{eq:TP-matdens-crucial-zone-boxes}
			\normLp{\rho_\gamma}{q/2}{(A_\varepsilon^k)} \leq C
			h^{-2s_{\text{Sogge}}(q,d)}\varepsilon^{-2\mu(q,d)}
			\left(\normSch{\chi_{\tilde{A}_\varepsilon^k}\gamma\chi_{\tilde{A}_\varepsilon^k}}{\alpha_{\text{Sogge}}(q,d)}{}+\frac{\varepsilon}{h^2}\normSch{\chi_{\tilde{A}_\varepsilon^k}P\gamma P^*\chi_{\tilde{A}_\varepsilon^k}}{\alpha_{\text{Sogge}}(q,d)}{}\right)
			,
		\end{equation*}
		where $\mu(q,d)$ is given by the formula
		\begin{equation}\label{def:TP-mu}
			\mu(q,d):=d\left(\frac 1 2 -\frac 1 q\right)-\frac{3s_{\text{Sogge}}(q,d)}{2} .
		\end{equation}
	\end{lemma}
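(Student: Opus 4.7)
The plan is to deduce Lemma~\ref{lemma:TP-matdens-crucial-zone-boxes} from Lemma~\ref{lemma:TP-matdens-crucial-zone-rescal-boxes} by an affine change of variables that maps the box $A^k_\varepsilon$ to the unit box $A$, combined with a rescaling of the semiclassical parameter that turns the turning point into one at unit scale. Concretely, set $\tilde{x}_1=x_1/\varepsilon$, $\tilde{x}'=(x'-\varepsilon k)/\varepsilon$, and $\tilde{h}=h/\varepsilon^{3/2}$. Define the unitary operator $U_\varepsilon:L^2(\R^d)\to L^2(\R^d)$ by $(U_\varepsilon u)(\tilde x)=\varepsilon^{d/2}u(\varepsilon\tilde x_1,\varepsilon(\tilde x'+k))$, let $\tilde\gamma=U_\varepsilon\gamma U_\varepsilon^*$ and verify the conjugation identity $\tilde P_\varepsilon:=\varepsilon^{-1}U_\varepsilon P U_\varepsilon^*$ is the $\tilde h$-quantization of the symbol
\[
\tilde p_\varepsilon(\tilde x,\tilde\xi)=\tilde\xi_1^2+\sum_{i,j=2}^d a_{ij}(\varepsilon\tilde x_1,\varepsilon(\tilde x'+k))\,\tilde\xi_i\tilde\xi_j - c(\varepsilon\tilde x_1,\varepsilon(\tilde x'+k))\,\tilde x_1,
\]
obtained from the relations $h^2 D_{x_1}^2=\varepsilon^2(\tilde h\,D_{\tilde x_1})^2$ and $c(x)x_1=\varepsilon\cdot c(\dots)\tilde x_1$, after factoring out the common $\varepsilon^{-1}$.

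Next I will check that $\tilde p_\varepsilon$ meets the hypotheses of Lemma~\ref{lemma:TP-matdens-crucial-zone-rescal-boxes} uniformly in $\varepsilon\in[Mh^{2/3},\delta/2]$ and in $k$ with $|k|_{l^\infty}<\delta/\varepsilon-1$: boundedness of all derivatives of $(B_\varepsilon,V_\varepsilon)$ (Condition~\eqref{cond:TP-Veps-growth}) is immediate since every $\tilde x$-derivative gains a factor $\varepsilon\leq 1$ and $a_{ij},c\in \mathcal C_b^\infty$; for the ellipticity Condition~\eqref{cond:TP-Veps-ellip}, the points $(\varepsilon\tilde x_1,\varepsilon(\tilde x'+k))$ with $|\tilde x_1-1|<9/10$, $|\tilde x'|_{l^\infty}<9/10$ lie in $B_\delta$ (by the assumption $|k|_{l^\infty}<\delta/\varepsilon-1$ and $\varepsilon\leq \delta/2$), and there both $c$ is bounded below (by choice of $\delta$) and $\tilde x_1\geq 1/10$, ensuring $|V_\varepsilon|\geq c/10$. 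Note also that $\tilde h=h/\varepsilon^{3/2}\leq M^{-3/2}$ so, after fixing $M$ large enough, $\tilde h$ lies in an interval $(0,\tilde h_0]$ with $\tilde h_0$ arbitrarily small, which is what the lemma requires.

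I then track how each quantity scales. A direct change of variable in the integral kernel gives $\rho_{\tilde\gamma}(\tilde x)=\varepsilon^d\rho_\gamma(\varepsilon\tilde x_1,\varepsilon(\tilde x'+k))$ and hence
\[
\|\rho_{\tilde\gamma}\|_{L^{q/2}(A)}=\varepsilon^{2d\left(\frac12-\frac1q\right)}\|\rho_\gamma\|_{L^{q/2}(A^k_\varepsilon)}.
\]
Setting $\chi_{\tilde A^k_\varepsilon}(x):=\chi_A((x-(0,\varepsilon k))/\varepsilon)$ (supported in $\tilde{\tilde A}^k_\varepsilon$), unitary invariance of Schatten norms gives $\|\chi_A\tilde\gamma\chi_A\|_\alpha=\|\chi_{\tilde A^k_\varepsilon}\gamma\chi_{\tilde A^k_\varepsilon}\|_\alpha$ and $\|\tilde P_\varepsilon^*\chi_A\tilde\gamma\chi_A\tilde P_\varepsilon\|_\alpha=\varepsilon^{-2}\|P^*\chi_{\tilde A^k_\varepsilon}\gamma\chi_{\tilde A^k_\varepsilon}P\|_\alpha$. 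Combining these with the factor $1/\tilde h^2=\varepsilon^3/h^2$ produces exactly the weight $\varepsilon/h^2$ appearing in the statement.

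Finally I apply Lemma~\ref{lemma:TP-matdens-crucial-zone-rescal-boxes} (with the equivalence of weighted Schatten norms, Lemma~\ref{lemma:equiv-norm-P-matdens}, to pass from $(1+\tilde P_\varepsilon^*\tilde P_\varepsilon/\tilde h^2)^{1/2}\cdots(1+\tilde P_\varepsilon^*\tilde P_\varepsilon/\tilde h^2)^{1/2}$ to the sum $\|\chi_A\tilde\gamma\chi_A\|_\alpha+\tilde h^{-2}\|\tilde P_\varepsilon^*\chi_A\tilde\gamma\chi_A\tilde P_\varepsilon\|_\alpha$) and unwind the scalings:
\[
\tilde h^{-2s_{\rm Sogge}(q,d)}=h^{-2s_{\rm Sogge}(q,d)}\varepsilon^{3 s_{\rm Sogge}(q,d)},\qquad \varepsilon^{-2d(\frac12-\frac1q)}\cdot\varepsilon^{3s_{\rm Sogge}(q,d)}=\varepsilon^{-2\mu(q,d)}
\]
with $\mu(q,d)=d(\tfrac12-\tfrac1q)-\tfrac32 s_{\rm Sogge}(q,d)$, which matches~\eqref{def:TP-mu}. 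The main technical point is the uniform verification of the ellipticity and boundedness conditions for $(a_{ij,\varepsilon},\tilde V_{\varepsilon,k})$ on the enlarged box used in Lemma~\ref{lemma:TP-matdens-crucial-zone-rescal-boxes}; everything else reduces to bookkeeping of the scaling exponents.
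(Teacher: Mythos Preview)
Your overall strategy---rescale $x\mapsto \tilde x=(x_1/\varepsilon,(x'-\varepsilon k)/\varepsilon)$, set $\tilde h=h/\varepsilon^{3/2}$, verify the hypotheses of Lemma~\ref{lemma:TP-matdens-crucial-zone-rescal-boxes} uniformly in $(\varepsilon,k)$, and track the exponents---is exactly the paper's. The verification of conditions~\eqref{cond:TP-Veps-growth}--\eqref{cond:TP-Veps-ellip} and the exponent bookkeeping are fine.

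There is, however, a genuine gap at the step where you invoke Lemma~\ref{lemma:equiv-norm-P-matdens}. Applied to $\chi_A\tilde\gamma\chi_A$, that lemma yields
\[
\normSch{(1+\tilde P^*\tilde P/\tilde h^2)^{1/2}\chi_A\tilde\gamma\chi_A(1+\tilde P^*\tilde P/\tilde h^2)^{1/2}}{\alpha}{}
\;\le\;\normSch{\chi_A\tilde\gamma\chi_A}{\alpha}{}+\tilde h^{-2}\normSch{\tilde P^*\chi_A\tilde\gamma\chi_A\tilde P}{\alpha}{},
\]
with $\tilde P$ \emph{outside} the cutoff. After undoing the scaling you obtain $\normSch{P^*\chi_{\tilde A^k_\varepsilon}\gamma\chi_{\tilde A^k_\varepsilon}P}{\alpha}{}$, not the quantity $\normSch{\chi_{\tilde A^k_\varepsilon}P\gamma P^*\chi_{\tilde A^k_\varepsilon}}{\alpha}{}$ in the statement. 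These are not the same: the commutator $[\tilde P,\chi_A]$ is of size $O(\tilde h)$ times a first-order operator, and after division by $\tilde h^2$ the discrepancy does not vanish. Moreover, the ordering with $\chi$ on the outside is precisely what is needed downstream (Lemma~\ref{lemma:TP-matdens-crucial-strip}) to sum over $k$ using $\sum_k\chi_{\tilde A^k_\varepsilon}^2\le C$.

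The paper closes this gap with a separate commutation estimate (Fact~\ref{fact:TP-matdens-crucial-zone-inv-P&chi_A}): using that $\tilde P$ is a local (differential) operator together with interior elliptic regularity on $A\subset\tilde A$, one shows
\[
\normSch{(1+\tilde P^*\tilde P/\tilde h^2)^{1/2}\chi_A\tilde\gamma\chi_A(1+\tilde P^*\tilde P/\tilde h^2)^{1/2}}{\alpha}{}
\;\lesssim\;\normSch{\chi_{\tilde A}\tilde\gamma\chi_{\tilde A}}{\alpha}{}+\tilde h^{-2}\normSch{\chi_{\tilde A}\tilde P\tilde\gamma\tilde P^*\chi_{\tilde A}}{\alpha}{},
\]
at the price of enlarging $\chi_A$ to $\chi_{\tilde A}$. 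Your cutoff $\chi_{\tilde A^k_\varepsilon}$ should accordingly be the pullback of $\chi_{\tilde A}$ (supported in $\tilde{\tilde A}$), not of $\chi_A$. Once this step is supplied, the rest of your argument is correct.
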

	
	\begin{proof}[\underline{Proof of Lemma \ref{lemma:TP-matdens-crucial-zone-boxes}}]
	Recall that $h_0$ and $M$ were already fixed above. We will make additional constraints on them along this proof. Let $h\in(0,h_0]$,  $\varepsilon\in [M h^{2/3},\delta/2]$ and $k\in\Z^{d-1}$ such that $\abs{k}_{l^\infty}<\delta/\varepsilon-1$. Let $\chi_{\tilde{A}}\in\test{\R^d,[0,1]}$ be a cut-off function equal to 1 on $\tilde{A}$ and supported into $\tilde{\tilde{A}}$. Let $\chi_{\tilde{\tilde{A}}}\in\test{\R^d,[0,1]}$ be a cut-off function equal to 1 on $\supp\chi_{\tilde{A}}$ and supported into $\tilde{\tilde{A}}$. 
	Define for any $\tilde{x}\in\R^d$
	\begin{equation*}
		B_\varepsilon(\tilde{x}):= \begin{pmatrix}
		1 & \vline &0 \cdots 0 \\ \hline 
		0 & \vline &\\
		\vdots & \vline&   (a_{ij}(\varepsilon\tilde{x}_1,\varepsilon\tilde{x}'+\varepsilon k))_{2\leq i,j\leq d}& \\
		0  & \vline& 
		\end{pmatrix}
		,
		\quad
		V_\varepsilon(\tilde{x}) := -\tilde{x}_1c(\varepsilon\tilde{x_1},\varepsilon\tilde{x}'+\varepsilon k)\chi_{\tilde{\tilde{A}}}(\tilde{x})
		.
	\end{equation*}
	By our assumptions on $(a_{ij})_{2\leq i,j\leq d}$ and $c$, $\{(B_\varepsilon,V_\varepsilon)\}_\varepsilon$ satisfies the assumptions of Lemma \ref{lemma:TP-matdens-crucial-zone-rescal-boxes} with $\varepsilon_0=\delta/2$ (notice that for our choice of $\varepsilon$ and $k$, $\abs{(\varepsilon\tilde{x_1},\varepsilon\tilde{x}'+\varepsilon k)}< \delta$ for all $\tilde{x}\in\tilde{\tilde{A}}$).
	Hence, let $C>0$ and $\tilde{h}_0>0$ such that 
	for all $0<\tilde{h}\leq \tilde{h}_0$
	and all bounded  self-adjoint non-negative operator $\tilde\gamma$ on $L^2(\R^d)$
	\begin{equation*}
		\normLp{\rho_{\tilde\gamma}(\tilde{x})}{q/2}{(A)} \leq C \tilde{h}^{-2s_{\text{Sogge}}(q,d)} 
		\normSch{ \big(1+\tilde{P}^*\tilde{P}/\tilde{h}^2\big)^{1/2} \chi_A\tilde\gamma\chi_A \big(1+\tilde{P}^*\tilde{P}/\tilde{h}^2\big)^{1/2} }{\alpha_{\text{Sogge}}(q,d)}{}
		,
	\end{equation*}
	where $\tilde{P}=p_\varepsilon^\qr(\tilde{x},\tilde{h}D_{\tilde{x}})$ and $p_\varepsilon(\tilde{x},\tilde{\xi})=\prodscal{\tilde{\xi}}{B_\varepsilon(\tilde{x})\tilde{\xi}} +V_\varepsilon(\tilde{x})$. 
	Moreover, we have the following fact (which proof is given below).
	\begin{fait}\label{fact:TP-matdens-crucial-zone-inv-P&chi_A}
		Let $\alpha\geq 1$. Then, there exists $C>0$ such for all $\tilde{h}\in(0,\tilde{h}_0]$, for all $\varepsilon\in[0,\varepsilon_0]$ and for all bounded self-adjoint non-negative operator $\tilde{\gamma}$
		\begin{equation*}
		\normSch{(1+\tilde{P}^*\tilde{P}/\tilde{h}^2)^{1/2}\chi_A\tilde{\gamma}\chi_A(1+\tilde{P}^*\tilde{P}/\tilde{h}^2)^{1/2}}{\alpha}{} \leq C \left( \normSch{\chi_{\tilde{A}}\tilde{\gamma}\chi_{\tilde{A}}}{\alpha}{}+\frac 1{\tilde{h}^2} \normSch{\chi_{\tilde{A}}\tilde{P}\tilde{\gamma}\tilde{P}^*\chi_{\tilde{A}}}{\alpha}{}\right).
		\end{equation*}
	\end{fait}
	%
	We deduce that for all bounded  self-adjoint non-negative operator $\tilde\gamma$ on $L^2(\R^d)$, we have
	\begin{equation*}
		\normLp{\rho_{\tilde\gamma}(\tilde{x})}{q/2}{(A)} \leq C \tilde{h}^{-2s_{\text{Sogge}}(q,d)} \left( \normSch{\chi_{\tilde{A}}\tilde{\gamma}\chi_{\tilde{A}}}{\alpha_{\text{Sogge}}(q,d)}{}+\frac 1{\tilde{h}^2} \normSch{\chi_{\tilde{A}}\tilde{P}\tilde{\gamma}\tilde{P}^*\chi_{\tilde{A}}}{\alpha_{\text{Sogge}}(q,d)}{}\right)
		.
	\end{equation*}
	Since $\chi_{\tilde{A}}\chi_{\tilde{\tilde{A}}}=\chi_{\tilde{A}}$, the same bound holds when there is no factor $\chi_{\tilde{\tilde{A}}}$ in $V_\varepsilon$. We still denote by $\tilde{P}$ the resulting operator.
	Now, let $\gamma$ be a  bounded  self-adjoint non-negative operator on $L^2(\R^d)$. 
	We apply the above bound to $\tilde{\gamma}=\big(U_\varepsilon^k\big)^* \gamma U_\varepsilon^k$, where
	$U_\varepsilon^k$ is the unitary transformation defined by
	\begin{equation*}\label{def:TP-matdens-unit-transf}
		\begin{array}{lll}
		U_\varepsilon^k : \:&	L^2(\R^d)&\to L^2(\R^d)\\
		&f &\mapsto \left( x\mapsto \varepsilon^{-d/2}f\left(\frac{x_1}{\varepsilon},\frac{x'-\varepsilon k}{\varepsilon}\right) \right)
		.
		\end{array} 
	\end{equation*}
	Since we have
	\begin{equation*}
		\normLp{\rho_{\tilde\gamma}}{q/2}{(A)}=\varepsilon^{2d(1/2-1/q)} \normLp{\rho_\gamma}{q/2}{(A^k_\varepsilon)} ,
	\end{equation*}
	and $U_\varepsilon^k\tilde{P}\big(U_\varepsilon^k\big)^*=P/\varepsilon$, we deduce that
	\begin{align*}
		&\normLp{\rho_\gamma}{q/2}{(A^k_\varepsilon)} \\&\quad \leq C \tilde{h}^{-2s_{\text{Sogge}}(q,d)}  \varepsilon^{-2d(1/2-1/q)} \left(
		\normSch{\chi_{\tilde{A}^k_\varepsilon}\gamma \chi_{\tilde{A}^k_\varepsilon}}{\alpha_{\text{Sogge}}(q,d)}{}+\frac 1{(\varepsilon\tilde{h})^2}\normSch{\chi_{\tilde{A}^k_\varepsilon}P\gamma P^*\chi_{\tilde{A}^k_\varepsilon}}{\alpha_{\text{Sogge}}(q,d)}{} \right)
		,
	\end{align*}
	with $\chi_{\tilde{A}^k_\varepsilon}=U_\varepsilon^k\chi_{\tilde{A}}\big(U_\varepsilon^k\big)^*$, i.e.\ $\chi_{\tilde{A}^k_\varepsilon}(x)=\chi_{\tilde{A}}\left(\frac{x_1}{\varepsilon},\frac{x'-\varepsilon k}{\varepsilon}\right)$.
	Now assume that $M\geq \tilde{h}_0^{-2/3}$ and let $h\in(0,h_0]$ (where we recall that $h_0>0$ was chosen such that $Mh_0^{2/3}<\delta/2$). We apply the above bound to $\tilde{h}=h/\varepsilon^{3/2}$, which indeed satisfies $\tilde{h}\leq \tilde{h}_0$ since $\varepsilon\geq Mh^{2/3}$ implies $\tilde{h}\leq M^{-3/2}\leq \tilde{h}_0$. Finally, we obtain
	\begin{align*}
		&\normLp{\rho_\gamma}{q/2}{(A_\varepsilon^k)}
		\\&\quad\leq C (h/\varepsilon^{3/2})^{-2s_{\text{Sogge}}(q,d)}\varepsilon^{-2d(1/2-1/q)}  \left(
		\normSch{\chi_{\tilde{A}^k_\varepsilon}\gamma \chi_{\tilde{A}^k_\varepsilon}}{\alpha_{\text{Sogge}}(q,d)}{}+\frac \varepsilon{h^2}\normSch{\chi_{\tilde{A}^k_\varepsilon} P\gamma P^*\chi_{\tilde{A}^k_\varepsilon}}{\alpha_{\text{Sogge}}(q,d)}{} \right)
		\\&\quad= C h^{-2s_{\text{Sogge}}(q,d)}\varepsilon^{-2\mu(q,d)}  \left(
		\normSch{\chi_{\tilde{A}^k_\varepsilon}\gamma \chi_{\tilde{A}^k_\varepsilon}}{\alpha_{\text{Sogge}}(q,d)}{}+\frac \varepsilon{h^2}\normSch{\chi_{\tilde{A}^k_\varepsilon} P\gamma P^*\chi_{\tilde{A}^k_\varepsilon}}{\alpha_{\text{Sogge}}(q,d)}{} \right)
		.
	\end{align*}
	\end{proof}
	
	We now give the missing proof of Fact \ref{fact:TP-matdens-crucial-zone-inv-P&chi_A}.
	
	\begin{proof}[\underline{Proof of Fact \ref{fact:TP-matdens-crucial-zone-inv-P&chi_A}}]
		\item[\qquad$\bullet$] It is essentially enough to understand why this inequality is true for the one-body case. The bound to prove is 
		\begin{equation*}
			\normLp{\tilde{P}\chi_A \tilde{u}}{2}{(\R^d)} \lesssim
			\normLp{\chi_{\tilde{A}}\tilde{P}\tilde{u}}{2}{(\R^d)} +\tilde{h}\normLp{\chi_{\tilde{A}}\tilde{u}}{2}{(\R^d)}.
		\end{equation*}
		We already notice that
		\begin{align*}
			\tilde{P}\chi_A
			&= \chi_A\tilde{P} -\tilde{h}^2(\partial_{\tilde{x_1}}^2\chi_A)
			-2\tilde{h}(\partial_{\tilde{x}_1}\chi_A)(\tilde{h}\partial_{\tilde{x}_1})
			\\&\quad
			 -\tilde{h^2} \sum_{2\leq i,j\leq d} (\partial_{\tilde{x}_i}\partial_{\tilde{x}_j}\chi_{A}) a_{ij}(\varepsilon\tilde{x}_1,\varepsilon x'+\varepsilon k)
			 -\tilde{h} \prodscal{\nabla\chi_A}{(a_{ij}(\varepsilon\tilde{x}_1,\varepsilon\tilde{x}'+\varepsilon k))_{i,j}\tilde{h}\nabla}
			 .
		\end{align*}	
		On the one hand, since $a_{ij}\in L^\infty(\R^d)$
		\begin{align*}
			&\normLp{\tilde{h}^2(\partial_{\tilde{x_1}}^2\chi_A)\:\tilde{u}}{2}{(\R^d)}
			+\normLp{\tilde{h^2} \sum_{2\leq i,j\leq d} (\partial_{\tilde{x}_i}\partial_{\tilde{x}_j}\chi_{A}) a_{ij}(\varepsilon\tilde{x}_1,\varepsilon x'+\varepsilon k)\tilde{u}}{2}{(\R^d)}
			\\&\qquad\lesssim  \tilde{h^2}\normLp{\chi_{\tilde{A}}\tilde{u}}{2}{(\R^d)}
			.
		\end{align*}
		On the other hand, since $\tilde{P}$ is elliptic in the sense of \cite[Lem. 2.6]{koch2007semiclassical}, we have 
		\begin{align*}
			\sum_{\abs{\alpha}=1}\normLp{(\tilde{h}D)^\alpha \tilde{u}}{2}{(A)}
			&\lesssim  \normLp{\tilde{u}}{2}{\left(\tilde{A}\right)}+\normLp{\tilde{P}\tilde{u}}{2}{\left(\tilde{A}\right)},
		\end{align*}
		(by using again that $a_{ij}\in L^\infty(\R^d)$) so that 
		\begin{align*}
			&\normLp{\tilde{h}(\partial_{\tilde{x}_1}\chi_A)(\tilde{h}\partial_{\tilde{x}_1})}{2}{(\R^d)}^2
			+\normLp{\tilde{h} \prodscal{\nabla\chi_A}{(a_{ij}(\varepsilon\tilde{x}_1,\varepsilon\tilde{x}'+\varepsilon k))_{i,j}(\tilde{h}\nabla\tilde{u})}}{2}{(\R^d)}^2
			\\&\quad
			\lesssim
			\tilde{h}^2\normLp{\tilde{h}D_{\tilde{x}_1}\tilde{u}}{2}{(A)}^2
			+\tilde{h}^2 \normLp{(a_{ij}(\varepsilon\tilde{x}_1,\varepsilon\tilde{x}'+\varepsilon k))_{i,j}(\tilde{h}\nabla\tilde{u})}{2}{(\tilde{A})}
			\\&\quad
			\lesssim \tilde{h}^2 \left(\normLp{\tilde{u}}{2}{\left(\tilde{A}\right)}^2+\normLp{\tilde{P}\tilde{u}}{2}{\left(\tilde{A}\right)}^2\right)
			\\&\quad
			\lesssim \tilde{h}^2 \left(\normLp{\chi_{\tilde{A}}\tilde{u}}{2}{(\R^d)}^2+\normLp{\chi_{\tilde{A}}\tilde{P}\tilde{u}}{2}{(\R^d)}^2\right).
		\end{align*}
		Hence,
		\begin{align*}
			\normLp{\tilde{P}\chi_A \tilde{u}}{2}{(\R^d)} 
			&\lesssim \normLp{\chi_{\tilde{A}}\tilde{P}\tilde{u}}{2}{(\R^d)} + \tilde{h}\left(\normLp{\chi_{\tilde{A}}\tilde{P}\tilde{u}}{2}{(\R^d)} + \normLp{\chi_{\tilde{A}}\tilde{u}}{2}{(\R^d)}\right)
			\\&\lesssim \normLp{\chi_{\tilde{A}}\tilde{P}\tilde{u}}{2}{(\R^d)} +\tilde{h}\normLp{\chi_{\tilde{A}}\tilde{u}}{2}{(\R^d)}
			.
		\end{align*}
		%
		\item[$\qquad\bullet$] Let us now extend the result to density matrices. 
		We have shown the inequality of operators
		\begin{equation*}
			\left(\tilde{P}\chi_A\right)^*\left(\tilde{P}\chi_A\right) \lesssim \tilde{h}^2
			\chi_{\tilde{A}}^2 + \left(\chi_{\tilde{A}}\tilde{P}\right)^*\chi_{\tilde{A}}\tilde{P}
			.
		\end{equation*}
		In other words
		\begin{equation*}
			 \chi_A\tilde{P}^*\tilde{P}\chi_A \lesssim\tilde{h}^2 \chi_{\tilde{A}}^2+\tilde{P}^*\chi_{\tilde{A}}^2\tilde{P} .
		\end{equation*}
		Then for all $\alpha\geq 1$ and all bounded self-adjoint non-negative operator $\tilde{\gamma}$ on $L^2(\R^d)$
		\begin{equation*}
			\normSch{\sqrt{\tilde{\gamma}}\chi_A(1+\tilde{P}^*\tilde{P}/\tilde{h}^2)\chi_A\sqrt{\tilde{\gamma}}}{\alpha}{}\lesssim \normSch{\sqrt{\tilde{\gamma}}\chi_{\tilde{A}}^2\sqrt{\tilde{\gamma}}}{\alpha}{} +\frac 1{\tilde{h}^2}\normSch{\sqrt{\tilde{\gamma}}\tilde{P}^*\chi_{\tilde{A}}^2\tilde{P}\sqrt{\tilde{\gamma}}}{\alpha}{}.
		\end{equation*}
		This concludes the proof of Fact \ref{fact:TP-matdens-crucial-zone-inv-P&chi_A}.
	\end{proof}

	\paragraph[Step 2]{Step 2. The summation of the boxes.}
	
	With the results on boxes of size $\varepsilon$, we will now have the following result.
	
	\begin{lemma}\label{lemma:TP-matdens-crucial-strip}
		Let $d\geq 2$ and $2\leq q\leq \frac{2d}{d-2}$. Then, there exists $C>0$ such that for any $\varepsilon\in [M h^{2/3},\delta/2]$, for any $h\in(0,h_0]$, and for any bounded self-adjoint non-negative operator $\gamma$ on $L^2(\R^d)$ we have
		\begin{equation*}\label{eq:TP-matdens-crucial-strip}
			\normLp{\rho_{\chi^\w\gamma\chi^\w}}{q/2}{(A_\varepsilon)} \leq C
			h^{-2s_{\text{Sogge}}(q,d)}\varepsilon^{1/2-2\mu(q,d)} \normSch{\left(1+\frac{1}{h^2}P^*P\right)^{1/2}\gamma\left(1+\frac{1}{h^2}P^*P\right)^{1/2}}{\alpha_{\text{Sogge}}(q,d)}{}
			,
		\end{equation*}
		where $\mu(q,d)$ is given by \eqref{def:TP-mu}.
	\end{lemma}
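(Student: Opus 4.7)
\textbf{Proof plan for Lemma \ref{lemma:TP-matdens-crucial-strip}.} The strategy is to decompose the strip $A_\varepsilon$ into the disjoint union of the boxes $\{A_\varepsilon^k\}_k$ of Figure \ref{fig:TP-boxes}, apply Lemma \ref{lemma:TP-matdens-crucial-zone-boxes} with $\gamma$ replaced by $\chi^{\w}\gamma\chi^{\w}$ on each box, and recombine. Writing $N:=\normSch{(1+P^*P/h^2)^{1/2}\gamma(1+P^*P/h^2)^{1/2}}{\alpha_{\rm Sogge}}{}$, $a_k:=\normSch{\chi_{\tilde A_\varepsilon^k}\chi^\w\gamma\chi^\w\chi_{\tilde A_\varepsilon^k}}{\alpha_{\rm Sogge}}{}$ and $b_k:=\normSch{\chi_{\tilde A_\varepsilon^k}P\chi^\w\gamma\chi^\w P^*\chi_{\tilde A_\varepsilon^k}}{\alpha_{\rm Sogge}}{}$, disjointness of the $A_\varepsilon^k$ and Lemma \ref{lemma:TP-matdens-crucial-zone-boxes} yield
\begin{equation*}
\normLp{\rho_{\chi^\w\gamma\chi^\w}}{q/2}{(A_\varepsilon)}^{q/2}=\sum_k\normLp{\rho_{\chi^\w\gamma\chi^\w}}{q/2}{(A_\varepsilon^k)}^{q/2}\lesssim h^{-s_{\rm Sogge}q}\varepsilon^{-\mu q}\sum_k\bigl(a_k+(\varepsilon/h^2)b_k\bigr)^{q/2}.
\end{equation*}
Splitting the two terms via $(x+y)^{q/2}\lesssim x^{q/2}+y^{q/2}$ and using the embedding $\ell^{\alpha_{\rm Sogge}}\hookrightarrow\ell^{q/2}$ (which is legitimate since $\alpha_{\rm Sogge}\le q/2$ for the range of $q$ considered), the problem reduces to controlling the $\ell^{\alpha_{\rm Sogge}}$-sums of $\{a_k\}$ and $\{b_k\}$.

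The next, and crucial, step is an almost-orthogonality bound for Schatten norms: the enlarged cutoffs $\chi_{\tilde A_\varepsilon^k}$ have overlap bounded independently of $\varepsilon$ and are all dominated by a smooth cutoff $\chi_{A_\varepsilon^+}$ of a slightly fattened strip entirely contained in $\{x_1<2\varepsilon\}$. I would establish
\begin{equation*}
\sum_k\normSch{\chi_{\tilde A_\varepsilon^k}B\chi_{\tilde A_\varepsilon^k}}{\alpha}{}^{\alpha}\lesssim\normSch{\chi_{A_\varepsilon^+}B\chi_{A_\varepsilon^+}}{\alpha}{}^{\alpha}
\end{equation*}
for every non-negative compact operator $B$, by interpolating between the trace case $\alpha=1$ (immediate from $B\ge 0$, cyclicity of the trace, and $\sum_k\chi_{\tilde A_\varepsilon^k}^2\lesssim\chi_{A_\varepsilon^+}^2$) and the operator-norm case $\alpha=\infty$ (which follows from $\chi_{\tilde A_\varepsilon^k}=\chi_{\tilde A_\varepsilon^k}\chi_{A_\varepsilon^+}$). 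Applying this to $B=\chi^\w\gamma\chi^\w$ and $B=P\chi^\w\gamma\chi^\w P^*$ respectively reduces the two sums to the Schatten norms on the thickened strip.

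To finish, I would control these two Schatten norms via a Hölder factorization with the weight $(1+P^*P/h^2)^{\pm 1/2}$. For the first, the inclusion $A_\varepsilon^+\subset\{x_1<2\varepsilon\}$ together with Corollary \ref{lemma:TP-matdens-3} (after discarding the $(1-h^2\Delta)$ factor, which is harmless on $L^2$) gives $\norm{\chi_{A_\varepsilon^+}\chi^\w(1+P^*P/h^2)^{-1/2}}_{L^2\to L^2}\lesssim\varepsilon^{1/4}$ and therefore $\normSch{\chi_{A_\varepsilon^+}\chi^\w\gamma\chi^\w\chi_{A_\varepsilon^+}}{\alpha_{\rm Sogge}}{}\lesssim\varepsilon^{1/2}N$. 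For the second, the identity $P\chi^\w=\chi^\w P+hr^\w$ with $r\in\mathscr{S}$, combined with functional calculus on $(P/h)(1+P^*P/h^2)^{-1/2}$, yields $\norm{P\chi^\w(1+P^*P/h^2)^{-1/2}}_{L^2\to L^2}=O(h)$, hence $\normSch{\chi_{A_\varepsilon^+}P\chi^\w\gamma\chi^\w P^*\chi_{A_\varepsilon^+}}{\alpha_{\rm Sogge}}{}\lesssim h^2 N$. Combining,
\begin{equation*}
\Bigl(\sum_k a_k^{\alpha_{\rm Sogge}}\Bigr)^{q/(2\alpha_{\rm Sogge})}\lesssim\varepsilon^{q/4}N^{q/2},\qquad(\varepsilon/h^2)^{q/2}\Bigl(\sum_k b_k^{\alpha_{\rm Sogge}}\Bigr)^{q/(2\alpha_{\rm Sogge})}\lesssim\varepsilon^{q/2}N^{q/2};
\end{equation*}
since $\varepsilon\le\delta/2<1$ the first bound dominates, and taking the $(q/2)$-th root produces the claimed $h^{-2s_{\rm Sogge}}\varepsilon^{1/2-2\mu}N$.

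\textbf{Main obstacle.} The key technical point is the Schatten almost-orthogonality estimate in the middle step: for general $\alpha\in[1,\infty]$ one must interpolate carefully (e.g.\ via the Stein-type Theorem \ref{thm:complex-interpol-schatten} applied to the linear map $B\mapsto(\chi_{\tilde A_\varepsilon^k}B\chi_{\tilde A_\varepsilon^k})_k$ valued in an $\ell^\alpha$-vector-valued Schatten space) and keep the multiplicative constant independent of $\varepsilon$ and $k$; the dependence is ultimately only on the uniform finite overlap of the cover $\{\tilde A_\varepsilon^k\}$, which is a geometric feature of the construction.
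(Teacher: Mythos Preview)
Your approach is correct and coincides with the paper's: the Schatten almost-orthogonality you flag as the main obstacle is handled exactly by the $\ell^1\schatten^1$/$\ell^\infty\schatten^\infty$ interpolation you describe (the paper writes it in one line, first factoring $\chi_{\tilde A_\varepsilon^k}=\chi_{\tilde A_\varepsilon^k}\chi_{2\varepsilon}$ to reduce to $\chi_{2\varepsilon}\Gamma\chi_{2\varepsilon}$, which plays the role of your $\chi_{A_\varepsilon^+}B\chi_{A_\varepsilon^+}$), and your two final bounds via Corollary~\ref{lemma:TP-matdens-3} and the commutator $P\chi^\w=\chi^\w P+hr^\w$ match the paper's. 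The only small omission is that Lemma~\ref{lemma:TP-matdens-crucial-zone-boxes} applies only for $|k|_{l^\infty}<\delta/\varepsilon-1$, so one first restricts to $A_\varepsilon\cap B_\delta$ using $\pi_x\supp\chi\subset B_{\delta/2}$; the contribution of $A_\varepsilon\setminus B_\delta$ is $O(h^\infty)\normSch{\gamma}{q/2}{}$.
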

	
	\begin{proof}[\underline{Proof of Lemma \ref{lemma:TP-matdens-crucial-strip}}]
		Since $\pi_x\supp\chi\subset B_{\delta/2}$, we have for all $2\leq q\leq\infty$
		\begin{equation*}
			\normLp{\rho_{\chi^\w\gamma\chi^\w}}{q/2}{(B_\delta^c)}=\OR(h^\infty) \normSch{\gamma}{q/2}{} .
		\end{equation*}
		By Lemma \ref{lemma:TP-matdens-crucial-zone-boxes}, we have for non-negative operators $\gamma$
		\begin{align*}&
			\normLp{\rho_{\chi^\w\gamma\chi^\w}}{q/2}{(A_\varepsilon\cap B_{\delta})}
			%
			= \left(\sum_{|k|_{l^\infty}\leq\delta/\varepsilon-1}\normLp{\rho_{\chi^\w\gamma\chi^\w}}{q/2}{(A_\varepsilon^k)}^{q/2}\right)^{2/q}
			\\&\quad´
			\lesssim h^{-2s_{\text{Sogge}}(q,d)} \varepsilon^{-2\mu(q,d)}
			\times\\&\quad\quad\times
			\underset{ 
				= \norm{ \chi_{\tilde{A}_\varepsilon^k}\chi^\w\gamma\chi^\w \chi_{\tilde{A}_\varepsilon^k} }_{l_k^{q/2}\schatten^{\alpha_\text{Sogge}(q,d)}}
			 }{\underbrace{
					\left( \sum_{k\in\Z^{d-1}} \normSch{ \chi_{\tilde{A}_\varepsilon^k}\chi^\w\gamma\chi^\w \chi_{\tilde{A}_\varepsilon^k}}{\alpha_\text{Sogge}(q,d)}{}^{q/2}
					\right)^{2/q}
			}}
			+ \frac{\varepsilon}{h^2}
			\underset{
				= \norm{ \chi_{\tilde{A}_\varepsilon^k}P\chi^\w\gamma\chi^\w P^*\chi_{\tilde{A}_\varepsilon^k} }_{l_k^{q/2}\schatten^{\alpha_\text{Sogge}(q,d)}}
			}{\underbrace{
				\left( \sum_{k\in\Z^{d-1}}\normSch{ \chi_{\tilde{A}_\varepsilon^k}P\chi^\w\gamma \chi^\w P^* \chi_{\tilde{A}_\varepsilon^k}}{\alpha_\text{Sogge}(q,d)}{}^{q/2}\right)^{2/q}
			}}
			.
		\end{align*}
		\item[$\triangleright$]
		The first step consists in showing 	that for $q\geq 2$, for any bounded self-adjoint operator $\gamma$
		\begin{equation*}
		\begin{split}
			&\norm{ \chi_{\tilde{A}_\varepsilon^k}\chi^\w\gamma\chi^\w\chi_{\tilde{A}_\varepsilon^k} }_{l_k^{q/2}\schatten^{\alpha_{\text{Sogge}}(q,d)}} +\frac\varepsilon{h^2}\norm{ \chi_{\tilde{A}_\varepsilon^k}P\chi^\w\gamma\chi^\w P^*\chi_{\tilde{A}_\varepsilon^k} }_{l_k^{q/2}\schatten^{\alpha_{\text{Sogge}}(q,d)}} 
			\\&\qquad\quad\lesssim \normSch{ \chi_{2\varepsilon}\chi^\w\gamma\chi^\w\chi_{2\varepsilon} }{\alpha_{\text{Sogge}}(q,d)}{} +\frac\varepsilon{h^2}\normSch{ \chi_{2\varepsilon}P\chi^\w\gamma\chi^\w P^*\chi_{2\varepsilon} }{\alpha_{\text{Sogge}}(q,d)}{}
			.
		\end{split}
		\end{equation*}
		In fact, we prove more precisely that for all non-negative compact operator $\Gamma$
		\begin{equation*}
			\sum_{k\in\Z^{d-1}} \normSch{\chi_{\tilde{A}_\varepsilon^k}\Gamma\chi_{\tilde{A}_\varepsilon^k}}{\alpha_{\text{Sogge}}(q,d)}{}^{q/2} \leq C\normSch{\chi_{2\varepsilon}\Gamma\chi_{2\varepsilon}}{\alpha_{\text{Sogge}}(q,d)}{}^{q/2} .
		\end{equation*}
		Recall that $\supp\chi_{\tilde{A}^k_\varepsilon}\subset\tilde{\tilde{A}}^k_\varepsilon\times\R^d$, 
		$\chi_{2\varepsilon}(x_1)=1$ when $x_1\leq 2\varepsilon$.
		Hence, we have
		\begin{equation*}
			\chi_{\tilde{A}^k_\varepsilon}=\chi_{\tilde{A}^k_\varepsilon}\chi_{2\varepsilon}
		\end{equation*}
		and thus we only have to show that 
		\begin{equation*}
			 \sum_{k\in\Z^{d-1}} \normSch{\chi_{\tilde{A}_\varepsilon^k}\Gamma\chi_{\tilde{A}_\varepsilon^k}}{\alpha_{\text{Sogge}}(q,d)}{}^{q/2} \leq C\normSch{\Gamma}{\alpha_{\text{Sogge}}(q,d)}{}^{q/2} .
		\end{equation*}
		%
		\item[\quad 1)]
		Let us check first that
		\begin{equation*}
			\norm{ \chi_{\tilde{A}_\varepsilon^k}\Gamma \chi_{\tilde{A}_\varepsilon^k} }_{l_k^1\schatten^1}\lesssim
			\normSch{\Gamma}{1}{}
			.
		\end{equation*}
		Since $\chi_{\tilde{A}^k_\varepsilon}\in\CR^\infty(\R^d,[0,1])$ and $\supp\chi_{\tilde{A}^k_\varepsilon}\subset\tilde{\tilde{A}}^k_\varepsilon$, there exists $C>0$ such that for all $\varepsilon>0$,
		\begin{equation*}
			\sum_{k\in\Z^{d-1}}\chi_{\tilde{A}^k_\varepsilon}^2 \leq C
			.
		\end{equation*}
		We recall that:
		\begin{itemize}
			\item[$\bullet$] if we have two non-negative operators $A$ and $B$ such that $A\leq B$ then $\sqrt{A}\leq\sqrt{B}$,
			\item[$\bullet$] for all trace-class operators $A$ and $B$, $A\leq B \: \Longrightarrow \: \tr(A)\leq\tr(B)$
			.
		\end{itemize}
		So if $\Gamma$ is a non-negative operator
		\begin{align*}
			\norm{\chi_{A_\varepsilon^k}\Gamma\chi_{A_\varepsilon^k}}_{l_k^1\schatten^1}
			&= \sum_{k\in\Z^{d-1}}\tr_{L^2}\left(\chi_{A_\varepsilon^k}\Gamma\chi_{A_\varepsilon^k}\right)
			\\&= \sum_{k\in\Z^{d-1}} \tr_{L^2}\left(\sqrt{\Gamma}\chi_{\tilde{A}_\varepsilon^k}^2\sqrt{\Gamma}\right)
			\\&\leq \tr_{L^2}\left(\sqrt{\Gamma} \sum_{k\in\Z^{d-1}}\chi_{\tilde{A}_\varepsilon^k}^2\sqrt{\Gamma}\right)
			\\&\leq C \tr_{L^2}\Gamma
			=C\normSch{\Gamma}{1}{}
			.
		\end{align*}
		We can pass to a general trace-class $\Gamma$ by decomposing $\Gamma= \Gamma_+-\Gamma_-$ with $\Gamma_+,\Gamma_-\geq 0$ and we obtain
		\begin{align*}
			\norm{\chi_{A_\varepsilon^k}\Gamma\chi_{A_\varepsilon^k}}_{l_k^1\schatten^1}
			&\leq \norm{\chi_{A_\varepsilon^k}\Gamma_+\chi_{A_\varepsilon^k}}_{l_k^1\schatten^1}+\norm{\chi_{A_\varepsilon^k}\Gamma_-\chi_{A_\varepsilon^k}}_{l_k^1\schatten^1}
			\\&\lesssim \normSch{\Gamma_+}{1}{} + \normSch{\Gamma_-}{1}{}
			\\&\lesssim \normSch{\Gamma}{1}{}
			.
		\end{align*}
		%
		%
		\item[\quad 2)]
		Notice that we always have
		\begin{equation*}
			\norm{  \chi_{\tilde{A}_\varepsilon^k}\Gamma \chi_{\tilde{A}_\varepsilon^k} }_{l_k^\infty\schatten^\infty}\lesssim
			\norm{\Gamma}_{L^2\to L^2}
			.
		\end{equation*}
		%
		%
		\item[\quad 3)]
		The interpolation of
		\begin{equation*}
			\begin{cases}
			\norm{  \chi_{\tilde{A}_\varepsilon^k}\Gamma \chi_{\tilde{A}_\varepsilon^k} }_{l_k^1\schatten^1} &\lesssim \normSch{\Gamma}{1}{}
			\\
			\norm{  \chi_{\tilde{A}_\varepsilon^k}\Gamma \chi_{\tilde{A}_\varepsilon^k} }_{l_k^\infty\schatten^\infty} &\lesssim \norm{\Gamma}_{L^2\to L^2}
			\end{cases} 
		\end{equation*}
		gives
		\begin{equation*}
			\forall 1\leq\alpha\leq\infty \quad
			\norm{  \chi_{\tilde{A}_\varepsilon^k}\Gamma \chi_{\tilde{A}_\varepsilon^k} }_{l_k^{\alpha}\schatten^{\alpha}} \lesssim \normSch{\Gamma}{\alpha}{} .
		\end{equation*}
		Hence, for any $2\leq q\leq\infty$ and any $1\leq\alpha\leq q/2$ 
		\begin{align*}
			\norm{  \chi_{\tilde{A}_\varepsilon^k}\Gamma \chi_{\tilde{A}_\varepsilon^k} }_{l_k^{q/2}\schatten^\alpha} 
			& \leq 	\norm{  \chi_{\tilde{A}_\varepsilon^k}\Gamma \chi_{\tilde{A}_\varepsilon^k} }_{l_k^\alpha\schatten^\alpha}
			\\& \lesssim \normSch{\Gamma}{\alpha}{}.
		\end{align*}
		Now for any $2\leq q\leq\infty$, we have  $1\leq \alpha_{\text{Sogge}}(q,d)\leq q/2$, hence  
		for any $2\leq q\leq \frac{2d}{d-2}$ we have
		\begin{align*}
			&\normLp{\rho_{\chi^\w\gamma\chi^\w}}{q/2}{(A_\varepsilon\cap B_\delta)} \\&\quad
			\lesssim
			h^{-2s_{\text{Sogge}}(q,d)}\varepsilon^{-2\mu(q,d)}
			\left(\normSch{ \chi_{2\varepsilon}\chi^\w\gamma\chi^\w\chi_{2\varepsilon} }{\alpha_{\text{Sogge}}(q,d)}{} +\frac\varepsilon{h^2}\normSch{ \chi_{2\varepsilon}P\chi^\w\gamma\chi^\w P^*\chi_{2\varepsilon} }{\alpha_{\text{Sogge}}(q,d)}{}\right)
			.
		\end{align*}
		\item[$\triangleright$]
		By the Hölder inequality and Corollary \ref{lemma:TP-matdens-3}, we have for any $\alpha\geq1$,
		\begin{align*}
			\normSch{\chi_{2\varepsilon}\chi^\w\gamma\chi^\w\chi_{2\varepsilon}}{\alpha}{}
			&\lesssim \varepsilon^{1/2} \normSch{\left(1+P^*P/h^2\right)^{1/2}\gamma\left(1+P^*P/h^2\right)^{1/2}}{\alpha}{}.
		\end{align*}
		Besides for any $\alpha\geq 1$,
		\begin{align*}
			\normSch{\chi_{2\varepsilon}P\chi^\w\gamma \chi^\w P^*\chi_{2\varepsilon}}{\alpha_{\text{Sogge}}(q,d)}{}
			&\lesssim \normSch{P\gamma P^*}{\alpha_{\text{Sogge}}(q,d)}{}
			\\&\lesssim  \normSch{\left(1+P^*P/h^2\right)^{1/2}\gamma\left(1+P^*P/h^2\right)^{1/2}}{\alpha}{}.
		\end{align*}
		Thus, with the triangle inequality for any $2\leq q\leq \frac{2d}{d-2}$ 
		\begin{align*}&
			\normLp{\rho_{\chi^\w\gamma\chi^\w}}{q/2}{(A_\varepsilon\cap B_\delta)}
			\\&\quad \lesssim
			h^{-2s_{\text{Sogge}}(q,d)}\varepsilon^{-2\mu(q,d)} \left( \normSch{\chi_{2\varepsilon}\chi^\w\gamma\chi^\w\chi_{2\varepsilon}}{\alpha_{\text{Sogge}}(q,d)}{}
			+
			\frac{\varepsilon}{h^2}\normSch{\chi_{2\varepsilon}P\chi^\w\gamma \chi^\w P^*\chi_{2\varepsilon}}{\alpha_{\text{Sogge}}(q,d)}{}\right)
			\\&\quad\lesssim h^{-2s_{\text{Sogge}}(q,d)}\varepsilon^{1/2-2\mu(q,d)} \normSch{\left(1+P^*P /h^2\right)^{1/2}\gamma\left(1+P^*P/h^2\right)^{1/2}}{\alpha_{\text{Sogge}}(q,d)}{}
			.
		\end{align*}
		That finishes the proof of Lemma \ref{lemma:TP-matdens-crucial-strip}.
	\end{proof}
	
	\paragraph[Step 3]{Step 3. The final summation.}
	
	Finally, by Lemma \ref{lemma:TP-matdens-crucial-strip} we are in position to obtain the estimates
	\begin{align*}
		\normLp{\rho_{\chi^\w\gamma\chi^\w}}{q/2}{(\Omega_{\delta/2}\setminus\Omega_{Mh^{2/3}})}
		&\leq \normLp{\rho_{\chi^\w\gamma\chi^\w}}{q/2}{\left( \cup_k A_{2^kh^{2/3}} \right)}
		\\&\leq \left( \sum_{k=\lfloor\log_2(M)\rfloor}^{K(h)} 	\normLp{\rho_{\chi^\w\gamma\chi^\w}}{q/2}{\left(A_{2^kh^{2/3}}\right)}^{q/2} \right)^{2/q}
		\\&\lesssim \left( \sum_{k=\lfloor\log_2(M)\rfloor}^{K(h)} C_{h,2^kh^{2/3}}^{q} \right)^{2/q}
		\normSch{\left(1+P^*P/h^2\right)^{1/2}\gamma\left(1+P^*P/h^2\right)^{1/2}}{\alpha_{\text{Sogge}}(q,d)}{}
		,
	\end{align*}
	where 
	 $C_{h,\varepsilon}=
	h^{-s_{\text{Sogge}}(q,d)}\varepsilon^{1/4-\mu(q,d)}$. Hence
	\begin{equation*}
		 \normLp{\rho_{\gamma}}{q/2}{(\Omega_{\delta/2}\setminus\Omega_{Mh^{2/3}})} \lesssim C_h^2 \normSch{\left(1+P^*P/h^2\right)^{1/2}\gamma\left(1+P^*P/h^2\right)^{1/2}}{\alpha_{\text{Sogge}}(q,d)}{}
	\end{equation*}
	where
	\begin{align*}
		C_h
		&=  \left( \sum_{k=\lfloor\log_2(M)\rfloor}^{K(h)} C_{h,2^kh^{2/3}}^q \right)^{1/q}
		\\&= \left( \sum_{k=\lfloor\log_2(M)\rfloor}^{K(h)} h^{-q(s_{\text{Sogge}}(q,d)+2/3(\mu(q,d)-1/4)} 2^{qk(1/4-\mu(q,d))} \right)^{1/q}
		\\&\lesssim \left\lbrace\begin{array}{lll}
		h^{-\frac{(d-1)}{2}\left(\frac 1 2-\frac 1 q\right)}&\text{ if } 2\leq q<\frac{2(d+3)}{d+1},
		\\ \log^{\frac{d+1}{2(d+3)}}(1/h)h^{-\frac{d-1}{2(d+3)}}  &\text{ if } q=\frac{2(d+3)}{d+1},
		\\h^{\frac 1 6 -\frac{2d}{3}\left(\frac 1 2 -\frac 1 q\right)} &\text{ if } \frac{2(d+3)}{d+1}<q \leq\frac{2d}{d-2}.
		\end{array}\right.
	\end{align*}
	That ends the proof of Lemma \ref{lemma:TP-matdens-crucial-zone}.
	
	\section{Applications to spectral clusters}\label{sec:app-spectral-clusters}
	
	In this section, we apply the results of the preceding sections on microlocalized quasimodes to spectral clusters. As we will see, this allows to get rid the microlocalization and leads to global estimates.
	
	\subsection{Notation}
	
	\begin{itemize}
		\item
		Let $\varepsilon\in(0,1)$ and $h_0\in(0,\varepsilon/2)$.
		\item
		Let $E\in\R$ and $h\in(0,h_0]$.
		\item
		Let  $I_{h,E}:=[E-h,E+h]$.
		\item
		Let $t_{\text{gene}}$, $s_{\text{gene}}\geq 0$ and $\alpha_{\text{gene}}\geq 1$ be given by the formulas in the statement of Theorem \ref{thm:ELp-gene-matdens}.
		\item
		Let $s_{\text{Sogge}}\geq 0$ and $\alpha_{\text{Sogge}}\geq 1$ be given by the formulas in the statement of Theorem \ref{thm:ELp-sogge-matdens}.
		\item
		Let $t_{\text{TP}}$, $s_{\text{TP}}\geq 0$ and $\alpha_{\text{TP}}\geq 1$ be given by the formulas in the statement of Theorem \ref{thm:ELp-TP-matdens}.
	\end{itemize}

	\subsection{Statement of the results}
	
	Let us add an addition assumption on the potential $V$, that will implies that the operator $-h^2\Delta+V$ has a compact resolvent. Moreover, it also ensures that $V$ is bounded from below.
	
	\begin{defi}[Polynomial growth]\label{cond:potential-pol-growth}
		A potential $V\in\CR^\infty(\R^d,\R)$ has a \emph{polynomial growth} if it satisfies Definition \eqref{cond:am-potential-pol-growth} and if there exist $k\in\N^*$ and $R>0$ such that
		\begin{equation}
			\label{eq:cond-potential-conf}
			\forall x\in\R^d,\:\forall\abs{x}\geq R,\quad V(x) \geq c\crochetjap{x}^k
		\end{equation}
	\end{defi}

	\begin{thm}[Spectral cluster upper bounds]\label{thm:spectral-clusters}
		\begin{itemize}
			\item[(i)]
		Let $d\geq 1$.
		Let $p(x,\xi)=\abs{\xi}^2+V(x)$ with $V\in\CR^\infty(\R^d,\R)$ with a polynomial growth (Definition \ref{cond:potential-pol-growth}).
		For $h>0$ and $E\in\R$, let us define $P:=p^\w(x,hD)$ and the spectral projector $\Pi_h$ by
		\begin{equation*}
		\Pi_h := \indicatrice{P\in I_{h,E}}.
		\end{equation*}
		Let $E\in\R$.
		Then, there exist $C>0$ and $h_0>0$ such that, for any $0< h\leq h_0$, any $2\leq q\leq\infty$ and any bounded self-adjoint non-negative operator $\gamma$ on $L^2(\R^d)$ 
		\begin{equation}\label{eq:spectral-clusters}
			\normLp{\rho_{\Pi_h\gamma\Pi_h}}{q/2}{(\R^d)} \leq C \log(1/h)^{2t_{\text{gene}}(q,d)}h^{-2s_{\text{gene}}(q,d)}\normSch{\gamma}{\alpha_{\text{gene}}(q,d)}{(L^2(\R^d))}
		,
		\end{equation}
			\item[(ii)]
		Let $d\geq 2$. There exist $C>0$ and $h_0>0$ such that, for any $0< h\leq h_0$, any $2\leq q\leq\infty$ and any bounded self-adjoint non-negative operator $\gamma$ on $L^2(\R^d)$
		\begin{equation}\label{eq:spectral-clusters-loc-far-tp}
			\normLp{\rho_{\Pi_h\gamma\Pi_h}}{q/2}{\left(\{x\in\R^d \: :\: \abs{V(x)-E}>\varepsilon\}\right)} \leq Ch^{-2s_{\text{Sogge}}(q,d)}\normSch{\gamma}{\alpha_{\text{Sogge}}(q,d)}{} 
			,
		\end{equation}
			\item[(iii)]
		Let $d\geq 2$.
		Under the additional assumption that
		\begin{equation}\label{cond:TP-potent}
			\forall x\in\R^d,\quad V(x)=E \quad\Longrightarrow\quad \nabla_x V(x)\neq 0,
		\end{equation}
		there exist $C>0$ and $h_0>0$ such that, for any $0< h\leq h_0$, any $2\leq q\leq\infty$ and any bounded self-adjoint non-negative operator $\gamma$ on $L^2(\R^d)$
		\begin{equation}\label{eq:spectral-clusters-loc-tp}
			\normLp{\rho_{\Pi_h\gamma\Pi_h}}{q/2}{\left(\{x\in\R^d \: :\: \abs{V(x)-E}\leq\varepsilon\}\right)}
			\leq C \log(1/
			h)^{2t_{\text{TP}}(q,d)} h^{-2s_{\text{TP}}(q,d)}  \normSch{\gamma}{\alpha_{\text{TP}}(q,d)}{} 
			.
		\end{equation}
		\end{itemize}
	\end{thm}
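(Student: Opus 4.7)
My plan is to reduce each spectral cluster bound to a microlocalized quasimode bound established in Sections 3--6, using functional calculus to pass from spectral localization to phase-space localization, and then applying the abstract microlocalization-to-space-localization results of Theorems \ref{thm:abstract-microloc-extend} and \ref{thm:abstract-loc-space} with the appropriate geometric conditions from Table \ref{table:geom-conditions}.

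\textbf{Reduction to a microlocalized operator.} Since $V$ has polynomial growth, $p+i$ is elliptic in $S(m)$ with $m(x,\xi)=\crochetjap{\xi}^2\crochetjap{x}^k$, and $p(x,\xi)\to\infty$ at infinity. Pick $f\in\test{\R}$ equal to $1$ on $[E-h_0,E+h_0]$ and supported in $(E-\varepsilon/2,E+\varepsilon/2)$, so that $f(P)\Pi_h=\Pi_h$ for all $h\in(0,h_0]$. Theorem \ref{thm:funct-calculus} gives $f(P)=a^\w(x,hD)$ with $a\in\bigcap_k S(m^{-k})$ and principal symbol $f\circ p$ supported in a compact set $K\Subset\R^d\times\R^d$. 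Choosing $\chi\in\test{\R^d\times\R^d}$ with $\chi=1$ on a neighborhood of $K$ and applying Corollary \ref{cor:SA-comp-op-disj-spt}, we get $a^\w=a^\w\chi^\w+\OR_{\schatten^\alpha}(h^\infty)$, so that
\begin{equation*}
\Pi_h\gamma\Pi_h=\chi^\w(a^\w\Pi_h)\gamma(\Pi_h a^\w)\chi^\w + R_h,
\end{equation*}
where $R_h$ contributes $\OR(h^\infty)\normSch{\gamma}{\alpha}{}$ to the $L^{q/2}$ norm of the density, by the Kato--Seiler--Simon inequality (Lemma \ref{lemma:Kato-Seiler-Simon_dual}). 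Setting $\tilde\gamma:=(a^\w\Pi_h)\gamma(\Pi_h a^\w)$, the problem reduces to bounding $\normLp{\rho_{\chi^\w\tilde\gamma\chi^\w}}{q/2}{(\Omega)}$.

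\textbf{Schatten quasimode property.} By the spectral theorem, $(P-E)^2\Pi_h\leq h^2\Pi_h$ as an operator inequality, hence
\begin{equation*}
\normSch{(P-E)\tilde\gamma(P-E)}{\alpha}{}\leq h^2\normSch{\tilde\gamma}{\alpha}{}\leq h^2\normSch{\gamma}{\alpha}{},
\end{equation*}
where the second inequality uses that $a^\w\Pi_h$ is uniformly bounded on $L^2$. Together with Lemma \ref{lemma:equiv-norm-P-matdens}, this turns the right-hand side of Theorems \ref{thm:abstract-microloc-extend} and \ref{thm:abstract-loc-space} into $C\normSch{\gamma}{\alpha}{}$.

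\textbf{Geometric case analysis.} For \emph{(i)}, every point of $\supp\chi$ satisfies Assumption \ref{cond:gene} because $\partial_\xi^2 p=2I_d$, so Theorem \ref{thm:ELp-gene-matdens} together with Theorem \ref{thm:abstract-microloc-extend} applied to the compact set $\supp\chi$ yields \eqref{eq:spectral-clusters}. For \emph{(ii)}, fix $\Omega=\{|V-E|>\varepsilon\}$: on $\Omega\cap\{V>E+\varepsilon\}$ we have $p\geq\varepsilon$ so condition (ellip) holds, while on $\Omega\cap\{V<E-\varepsilon\}$ the zero set of $p$ is the sphere $|\xi|^2=E-V(x)>0$, whose second fundamental form is non-degenerate, so either (ellip) or (Sogge) holds at every point of $\pi_x^{-1}(\Omega)\cap\supp\chi$. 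Since $s_{\rm ellip}\leq s_{\rm Sogge}$, Theorem \ref{thm:ELp-sogge-matdens} furnishes Assumption \ref{ass:microloc} with the (Sogge) exponents on this set, and Theorem \ref{thm:abstract-loc-space} yields \eqref{eq:spectral-clusters-loc-far-tp}. For \emph{(iii)}, fix $\Omega=\{|V-E|\leq\varepsilon\}$: points $(x,\xi)\in\pi_x^{-1}(\Omega)\cap\supp\chi$ with $p\neq 0$ are elliptic; if $p=0$ and $\xi\neq 0$, condition (Sogge) holds (sphere curvature); if $p=0$ and $\xi=0$, then $V(x)=E$, so assumption \eqref{cond:TP-potent} gives $\nabla V(x)\neq 0$, and since $\partial_\xi^2 p=2I_d>0$, condition (TP) holds. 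By Remark \ref{rmk:comp-s-TP} and the formula $\alpha_{\rm TP}=\alpha_{\rm Sogge}$, the TP exponents dominate the Sogge and elliptic exponents, so Theorems \ref{thm:ELp-elliptic-matdens}, \ref{thm:ELp-sogge-matdens}, and \ref{thm:ELp-TP-matdens} all give Assumption \ref{ass:microloc} with $(s_{\rm TP},t_{\rm TP},\alpha_{\rm TP})$ on this set, and Theorem \ref{thm:abstract-loc-space} yields \eqref{eq:spectral-clusters-loc-tp}.

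\textbf{Expected main obstacle.} The delicate point is the uniform verification in \emph{(iii)} that every point of the phase-space set $\pi_x^{-1}(\{|V-E|\leq\varepsilon\})\cap\supp\chi\cap\{p=0\}$ falls into one of the regimes (ellip), (Sogge), or (TP), and that Assumption \ref{ass:microloc} can be invoked with a single consistent triple $(s,t,\alpha)$ regardless of which of the three regimes applies at each point. This requires the comparison of exponents (Remarks \ref{rmk:comp-s-ellip}, \ref{rmk:comp-s-sogge}, \ref{rmk:comp-s-TP}, \ref{rmk:comp-alpha-sogge}) as well as the fact that the different microlocalized estimates are compatible with each other under the partition-of-unity argument embedded in the proof of Theorem \ref{thm:abstract-microloc-extend}. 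A secondary technical matter is controlling the $\OR(h^\infty)$ remainders from the passage $a^\w\mapsto a^\w\chi^\w$ and from the Kato--Seiler--Simon argument in the relevant Schatten norm $\schatten^{\alpha(q,d)}$ uniformly in $q\in[2,\infty]$.
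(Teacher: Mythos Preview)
Your proposal is correct and follows essentially the same route as the paper: functional calculus to replace $\Pi_h$ by a pseudodifferential cutoff, then the abstract microlocal-to-local machinery (Theorems \ref{thm:abstract-microloc-extend} and \ref{thm:abstract-loc-space}) together with the geometric trichotomy (ellip)/(Sogge)/(TP), and finally the spectral bound $(P-E)^2\Pi_h\le h^2\Pi_h$ to absorb the weighted Schatten norm into $\normSch{\gamma}{\alpha}{}$.

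One technical imprecision worth flagging: you invoke Corollary \ref{cor:SA-comp-op-disj-spt} to obtain $a^\w=a^\w\chi^\w+\OR(h^\infty)$, but that corollary requires $\supp a\cap\supp(1-\chi)=\emptyset$, and the full symbol $a\in\schwartz(\R^d\times\R^d)$ produced by Theorem \ref{thm:funct-calculus} is \emph{not} compactly supported---only the individual terms $a_j$ in its asymptotic expansion are (they lie in $\supp(f\circ p)$). The paper sidesteps this by working directly with the finite truncation $\tilde\chi_N:=\sum_{j<N}h^j a_j$, which \emph{is} compactly supported, writing $\Pi_h=\tilde\chi_N^\w\Pi_h+h^N r_N^\w\Pi_h$ and bounding the remainder $h^N r_N^\w$ via Kato--Seiler--Simon. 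Your conclusion is still true (indeed it follows from the paper's decomposition, since $a^\w(1-\chi)^\w=\tilde\chi_N^\w(1-\chi)^\w+h^N r_N^\w(1-\chi)^\w$ and the first term has genuinely disjoint supports), but the justification you cite does not quite apply as stated.
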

	
	\begin{rmk}\label{rmk:spectral-clusters_unif-C}
		The constant $C$ appearing in the above theorem can be chosen to be uniform in the energy level $E$ when it varies in a compact set.
	\end{rmk}
	
	\begin{rmk}\label{rmk:spectral-clusters_unif-forbidden-zone}
		One can split the $L^{q/2}$-norm of $\rho_{\Pi_h\gamma\Pi_h}$ on $\{x\in\R^d \: :\: \abs{V(x)-E}>\varepsilon\}$ into two parts: on the the classically allowed region
		$\{x\in\R^d \: :\: V(x)< E-\varepsilon\}$ and on the classically forbidden region $\{x\in\R^d \: :\: V(x)> E+\varepsilon\}$.
		When $d\geq 2$, there exist $C>0$ and $h_0>0$ such that for any $0<h\leq h_0$, any $2\leq q\leq\infty$ and bounded self-adjoint non-negative operator $\gamma$ on $L^2(\R^d)$,
		\begin{equation}\label{eq:spectral-clusters-loc-allow}
			\normLp{\rho_{\Pi_h\gamma\Pi_h}}{q/2}{\left(\{x\in\R^d \: : \: V(x)-E< -\varepsilon\}\right)} \leq C h^{-2s_{\text{Sogge}}(q,d)}\normSch{\gamma}{\alpha_{\text{Sogge}}(q,d)}{(L^2(\R^d))}
			.
		\end{equation}
		We will see below that, as one can expect, for any $d\geq 1$, there exists $C,c>0$ such that for any $h\in(0,h_0]$, any $2\leq q\leq\infty$ and any bounded operator $\gamma$ 
		\begin{equation}\label{eq:spectral-clusters-loc-forbidd_improv}
			\normLp{\rho_{\Pi_h\gamma\Pi_h}}{q/2}{\left(\{x\in\R^d \: : \: V(x)-E> \varepsilon\}\right)} =\OR(h^\infty )(e^{-c/h})^\frac{1}{q}\norm{\gamma}_{L^2\to L^2}.
		\end{equation}
		We do not know if one can improve the $\OR(h^\infty(e^{-c/h})^{1/q})$ to $\OR( e^{-c/h})$.
	\end{rmk}
	
	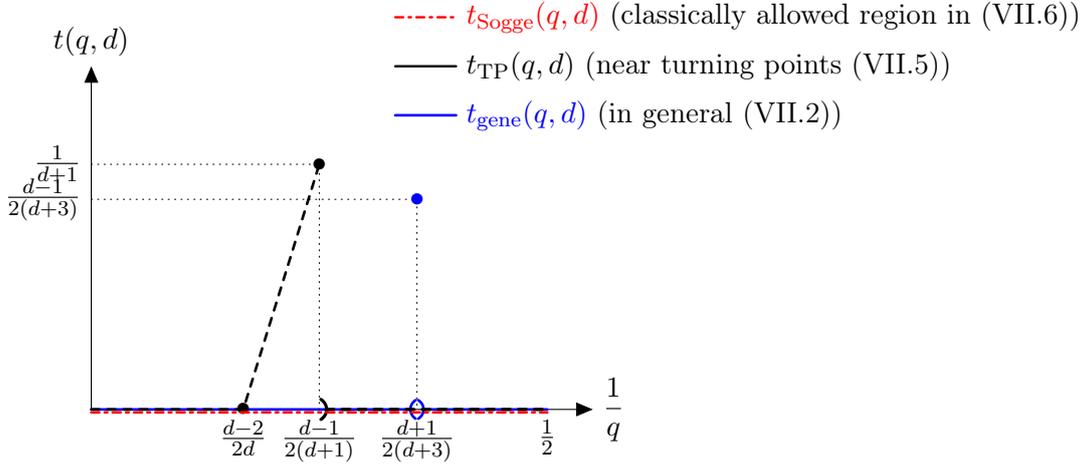
\begin{figure}[!h]
		\begin{center}\begin{tikzpicture}[line cap=round,line join=round,>=triangle 45,x=6 cm,y=6.5 cm,scale=2.]
			d=3
			\draw[->] (0.,0.) -- (0.55,0.);
			\draw (0.55,0.) node[right] {$\displaystyle{\frac{1}{q}}$};
			\draw[->] (0.,0.) -- (0.,0.35);
			\draw (0.,0.35) node[above] {$\displaystyle{t(q,d)}$};
			%
			\draw (5/14,0.) node[below]{$\frac{d+1}{2(d+3)}$};
			\draw (0.,3/14) node[left]{$\frac{d-1}{2(d+3)}$};
			\draw[dotted] (0., 3/14)-|(5/14, 0.);
			\draw[color=blue] (5/14,3/14) node {$\bullet$};
			\draw[-(,color=blue, line width=1.pt] (0,0)--(5/14,0);
			\draw[)-,color=blue, line width=1.pt] (5/14,0)--(1/2,0);
			%
			\draw (1/6,0.) node[below]{$\frac{d-2}{2d}$};
			\draw (0.5,0.) node[below]{$\frac{1}{2}$};
			\draw (1/6,0.) node {$\bullet$};
			\draw[line width=1.pt, dashed] (0,0)--(1/6,0.)--(1/4,1/4);
			\draw (1/4,0.) node[below]{$\frac{d-1}{2(d+1)}$};
			\draw (0.,1/4) node[left]{$\frac{1}{d+1}$};
			\draw (1/4,1/4) node {$\bullet$};
			\draw[dotted] (0., 1/4)-|(1/4, 0.);
			\draw[)-,line width=1.2pt, dashed] (1/4,0)--(0.5,0.);
			\draw[line width=1.pt, dash dot,color=red] (0,-0.003)--(0.5,-0.003);
			%
			\draw[line width=0.9pt,red, dash dot] (1/3,0.4) -- (1/3+1/15,0.4);
			\draw (1/3+1/15,0.4) node[right]{\textcolor{red}{$t_{\text{Sogge}}(q,d)$} (classically allowed region in \eqref{eq:spectral-clusters-loc-allow})};
			\draw[line width=0.9pt] (1/3,0.35) -- (1/3+1/15,0.35);
			\draw (1/3+1/15,0.35) node[right]{$t_{\text{TP}}(q,d)$ (near turning points \eqref{eq:spectral-clusters-loc-tp})};
			\draw[color=blue,line width=1.pt] (1/3,0.3) -- (1/3+1/15,0.3);
			\draw (1/3+1/15,0.3) node[right]{\textcolor{blue}{$t_{\text{gene}}(q,d)$} (in general \eqref{eq:spectral-clusters})};
			\end{tikzpicture}\end{center}
		\caption{Concentration logarithm exponent $t(q,d)$ when $d\geq 3$.}
		\label{fig:comp-exp-t}
	\end{figure}

	\subsection{Proof of Theorem \ref{thm:spectral-clusters}}
	
	\paragraph{Microlocalization of $\gamma$.}
	
	We first explain why we only need to consider the microlocalized density matrix $\chi^\w\Pi_h\gamma\Pi_h\chi^\w$ for some $\chi\in\test{\R^d\times\R^d}$ instead of the full one $\Pi_h\gamma\Pi_h$.
	
	Let $f\in\test{\R,[0,1]}$ such that $f=1$ on $I_{h_0,E}$ and $\supp f\subset I_{\varepsilon/2,E}$.
	If we assume that $h\in(0, h_0]$, we thus have 
	\begin{equation*}
	 	\Pi_h = f(P)\Pi_h.
	\end{equation*}
	By fonctional calculus Theorem \ref{thm:funct-calculus}, $f(P)$ can be written as the Weyl quantization of a symbol $\tilde{\chi}\in\schwartz(\R^d\times\R^d)$. Furthermore, for all $N\in\N$, there exists $\tilde{\chi}_N\in\test{\R^d\times\R^d}$ and $r_N\in \schwartz(\R^d\times\R^d)$, such that $\supp\tilde{\chi}_N\subset\supp f\circ p$ (note that $\supp f\circ p$ is compact since $p(x,\xi)\to\infty$ when $\abs{(x,\xi)}\to\infty$) and 
	\begin{equation*}
		\tilde{\chi}(x,\xi) =\tilde{\chi}_N(x,\xi)+h^N r_N(x,\xi).
	\end{equation*}
	Let us write the decomposition
	\begin{equation}\label{eq:spectral-loc}
		\Pi_h = \tilde{\chi}_N^\w \Pi_h +h^N r_N^\w\Pi_h.
	\end{equation}
	One the one hand, by the fact that $r_N^\w=\OR_{\schwartzprime\to\schwartz}(1)$ and Kato-Seiler-Simon  applied to $k\in\N$ when $q>2$ (resp. just an estimation of the operator norm of $(1-h^2\Delta)^{-k/2}$ and $W=1$ when $q=2$) such that $k(q/2)'>d$, for any $W\in L^{2(q/2)'}(\R^d)$
	\begin{align*}
		&
		\normSch{h^NWr_N^\w\Pi_h\sqrt{\gamma}}{2}{(L^2(\R^d))}
		\\&\quad
		\leq h^N \normSch{W(1-h^2\Delta)^{-k/2}}{2(q/2)'}{(L^2(\R^d))}\norm{(1-h^2\Delta)^{k/2}r_N^\w\Pi_h}_{L^2(\R^d)\to L^2(\R^d)}\normSch{\sqrt{\gamma}}{q}{(L^2(\R^d))}
		\\&\quad
		\lesssim h^{N-d/2}\normLp{W}{2(q/2)'}{(\R^d)}\normSch{\gamma}{q/2}{(L^2(\R^d))}^{1/2}
		.
	\end{align*}
	By duality and Mercer theorem, we deduce that for all $N\in\N$
	\begin{equation*}
		\normLp{\rho_{(1-\tilde{\chi}_N^\w)\Pi_h\gamma\Pi_h(1-\tilde{\chi}_N^\w)}}{q/2}{(\R^d)} \leq C h^{2N-d} \normSch{\gamma}{q/2}{(L^2(\R^d))}	,
	\end{equation*}
	hence, it remains to estimate $\rho_{\tilde{\chi}_N^\w\Pi_h\gamma\Pi_h\tilde{\chi}_N^\w}$ on various regions with perhaps additional assumptions on $V$.
	
	\paragraph{(o) Microlocalized estimates in the classically forbidden region.}
	Notice that since 
	\begin{equation*}
		\supp (f\circ p) \cap (\{x\in\R^d \: : \: V(x)-E> \varepsilon\}\times\R^d) =\emptyset,
	\end{equation*}
	we have
	\begin{equation*}
		\normLp{\rho_{\tilde{\chi}_N^\w\Pi_h\gamma\Pi_h\tilde{\chi}_N^\w}}{q/2}{\left(x\in\R^d \: : \: V(x)-E> \varepsilon\right)} =\OR(h^\infty)\normSch{\gamma}{q/2}{}.
	\end{equation*}
	By Weyl's law (Proposition \ref{prop:int-weyl-law}), we have
	\begin{align*}
		\normSch{\Pi_h\gamma\Pi_h}{q/2}{} 
		&\leq \norm{\gamma}_{L^2\to L^2}\normSch{\Pi_h}{q/2}{}
		\leq C h^{-2d/q} \norm{\gamma}_{L^2\to L^2}
		,
	\end{align*}
	and thus, we deduce the bound
	\begin{equation}\label{eq:spectral-clusters-loc-forbidd}
		\normLp{\rho_{\Pi_h\gamma\Pi_h}}{q/2}{\left(\{x\in\R^d \: : \: V(x)-E> \varepsilon\}\right)} =\OR(h^\infty)\norm{\gamma}_{L^2\to L^2}.
	\end{equation}
	Let a write how we can improve this bound in forbidden region. Let us write $\Pi_h=\sum_{j\in J_h}\bra{u_j}\ket{u_j}$ with $\{u_j\}_{1\leq j\leq N_h}$ the orthonormal basis of eigenfunctions of $P$ associated to eigenfunctions in $[E-h,E+h]$. Here, $N_h=\tr(\Pi_h)=\rk(\Pi_h(L^2(\R^d)))$. By definition, there exists $\{\nu_j\}_{1\leq j\leq N_h}\subset\C$ such that $\Pi_h\gamma\Pi_h =\sum_{1\leq j\leq N_h}\nu_j\bra{u_j}\ket{u_j} $.
	Let us introduce the Agmon distance
	\begin{equation*}
		d_{\delta,E}:x\mapsto \delta\dist(x,\{x\in\R^d \: :\: V(x)<E+\delta\}).
	\end{equation*}
	By the Agmon estimates ((c.f. for instance \cite[Chap. 6]{dimassi1999spectral} or \cite[Prop. 2.3]{deleporte2021universality}), there exists $C=C(\delta,E)>0$  such that for any normalized eigenfunction $v$ of $P$ less or equal than $E_0$
	\begin{equation*}
	\normLp{e^{d_{\varepsilon,E}(x)/h}v}{2}{(\R^d)}\leq C
	.
	\end{equation*}
	Recall that $h_0\in(0,\varepsilon/2)$. One has, by the triangle inequality, the Agmon estimates and the Weyl's law (Theorem \ref{prop:int-weyl-law}), there exists $C'=C'(h_0,E)>0$
	\begin{align*}
	&\normLp{\rho_{\Pi_h\gamma\Pi_h}}{1}{\left(x\in\R^d \: : \: V(x)>E+\varepsilon\right)}
	\\&\qquad
	\leq \sum_{j=1}^{N_h}\abs{\nu_j} \normLp{u_j}{2}{\left(x\in\R^d \: : \: V(x)>E+\varepsilon\right)}
	\\&\qquad
	\leq  \sup_{1\leq j\leq N_h}\abs{\nu_j}\sum_{j=1}^{N_h} \normLp{u_j}{2}{\left(x\in\R^d \: : \: V(x)>E+\varepsilon\right)}
	\\&\qquad
	\leq \norm{\Pi_h\gamma\Pi_h}_{L^2\to L^2} \normLp{e^{-d_{h_0,E+h_0}/h}}{\infty}{\left(x\in\R^d \: : \: V(x)>E+\varepsilon\right)}\sum_{j=1}^{N_h} \normLp{e^{d_{h_0,E+h_0}/h}u_j}{2}{(\R^d)}
	\\&\qquad
	\leq C'e^{-c/h}h^{-d}\norm{\gamma}_{L^2\to L^2}
	.
	\end{align*}
	Here $c=\min_{x\in\{V\leq E+\varepsilon\}} \dist({x,\{V\leq E+h_0\}})=\dist({\{V\leq E+\varepsilon\},\{V\leq E+h_0\}})>0$, given the choice of $h_0>0$.
	We interpolate then this estimate with the $L^\infty$ estimate \eqref{eq:spectral-clusters-loc-forbidd}
	\begin{equation*}
	\normLp{\rho_{\Pi_h\gamma\Pi_h}}{\infty}{\left(x\in\R^d \: : \: V(x)>E+\varepsilon\right)}=\OR(h^\infty)\norm{\gamma}_{L^2\to L^2}
	.
	\end{equation*}
	One has for any $q\in[2,\infty]$, for any $N>0$, there exists $C>0$ such that
	\begin{align*}
	\normLp{\rho_{\Pi_h\gamma\Pi_h}}{q/2}{\left(x\in\R^d \: : \: V(x)>E+\varepsilon\right)}
	&\leq C (e^{- c/h})^{\frac 1q} h^N\norm{\gamma}_{L^2\to L^2}.
	\end{align*}
	That proves \eqref{eq:spectral-clusters-loc-forbidd_improv}.
	
	\paragraph{(i) General microlocalized estimates.}
	
	Note that all $(x_0,\xi_0)\in\R^d\times\R^d$ satisfy the non-degeneracy Assumption \ref{cond:gene} for the symbol $p_E:= p-E$. By Theorem \ref{thm:ELp-gene-matdens}, Assumption \ref{ass:microloc} is thus satisfied for $S=\R^d\times\R^d$, $q\in[2,\infty]$, $s=s_{\text{gene}}(q,d)$, $t=0$ and $\alpha=\alpha_{\text{gene}(q,d)}$.
	We apply Theorem \ref{thm:abstract-microloc-extend} to these parameters $(q,s_{\text{gene}}(q,d),0,\alpha_{\text{gene}}(q,d))$ and to $\chi=\tilde{\chi}_N$, that gives us
	\begin{align*}
		\normLp{\rho_{\tilde{\chi}_N^\w\Pi_h\gamma\Pi_h\tilde{\chi}_N^\w}}{q/2}{(\R^d)}
		&\leq C\log(1/h)^{2t_{\text{gene}}(q,d)}h^{-2s_{\text{gene}}(q,d)}
		\times\\&\quad\times
		\left(\normSch{\Pi_h\gamma\Pi_h}{\alpha_{\text{gene}}(q,d)}{}+\frac{1}{h^2} \normSch{(P-E)\Pi_h\gamma\Pi_h (P-E)}{\alpha_{\text{gene}}(q,d)}{} \right)
		\\&\leq  C'\log(1/h)^{2t_{\text{gene}}(q,d)}h^{-2s_{\text{gene}}(q,d)}\normSch{\gamma}{\alpha_{\text{gene}}(q,d)}{}
		,
	\end{align*}
	which is exactly \eqref{eq:spectral-clusters}.

	\paragraph{(ii) Microlocalized estimates in the classically allowed region.}
		 
	Let $d\geq 2$.
	Let $S=\{(x,\xi)\in\R^d\times\R^d \: : \: \abs{V(x)-E}>\varepsilon\}$.
	Any $(x_0,\xi_0)\in S$ satisfies either the ellipticity condition $p_E(x_0,\xi_0)\neq 0$ or Assumption \ref{cond:sogge} for the symbol $p_E$.
	By Theorem \ref{thm:ELp-elliptic-matdens} and Theorem \ref{thm:ELp-sogge-matdens}, the set $S$ thus satisfies Assumption \ref{ass:microloc} for all $q\in[2,\infty]$, $s=s_{\text{Sogge}}(q,d)$, $t=0$ and $\alpha=\alpha_{\text{Sogge}}(q,d)$.
	We apply Theorem \ref{thm:abstract-loc-space} to these parameters,  $\Omega = \{x\in\R^d \: :\: \abs{V(x)-E}>\varepsilon\}$) and $\chi=\tilde{\chi}_N$. Then, there exist $C>0$ and $h_0>0$ such that for any $0< h\leq h_0$ and $2\leq q\leq\infty$
	\begin{equation*}
		\normLp{\rho_{\tilde{\chi}_N^\w\Pi_h\gamma\Pi_h\tilde{\chi}_N^\w}}{q/2}{(\{\abs{V-E}>\varepsilon \})}\leq C h^{-2s_{\text{Sogge}}(q,d)} \normSch{\gamma}{\alpha_{\text{Sogge}}(q,d)}{}  ,
	\end{equation*}
	where we got rid of the operator $P-E$ in the Schatten norm by the same method as in the previous step. We thus get  \eqref{eq:spectral-clusters-loc-far-tp}.
	
	\paragraph{(iii) Microlocalized estimates near the turning points.}
	Let $d\geq 2$.
	Let $S=\{(x,\xi)\in\R^d\times\R^d \: : \: \abs{V(x)-E}\leq\varepsilon\}$.
	Any $(x_0,\xi_0)\in S$ satisfies either the ellipticity condition $p_E(x_0,\xi_0)\neq 0$, Assumption \ref{cond:sogge}, or Assumption \ref{cond:TP-Sch} for the symbol $p_E$.
	By Theorem \ref{thm:ELp-elliptic-matdens}, Theorem \ref{thm:ELp-sogge-matdens}, and Theorem \ref{thm:ELp-TP-matdens}, the set $S$ thus satisfies Assumption \ref{ass:microloc} for all $q\in[2,\infty]$, $s=s_{\text{TP}}(q,d)$, $t=t_{\text{TP}}(q,d)$ and $\alpha=\alpha_{\text{TP}}(q,d)$.
	We apply Theorem \ref{thm:abstract-loc-space} to these parameters,  $\Omega = \{x\in\R^d \: :\: \abs{V(x)-E}\leq\varepsilon\}$) and $\chi=\tilde{\chi}_N$. As above, there exist $C>0$ and $h_0>0$ such that for any $0< h\leq h_0$ and $2\leq q\leq\infty$
	\begin{equation*}
		\normLp{\rho_{\tilde{\chi}_N^\w\Pi_h\gamma\Pi_h\tilde{\chi}_N^\w}}{q/2}{(\{\abs{V-E}\leq\varepsilon\})} \leq C \log(1/h)^{2t_{\text{TP}}(q,d)} h^{-2s_{\text{TP}}(q,d)} \normSch{\gamma}{\alpha_{\text{TP}}(q,d)}{} ,
	\end{equation*}
	showing \eqref{eq:spectral-clusters-loc-far-tp}.

	\section{Optimality}\label{sec:optim}
	
	In this section, we discuss about the optimality of the concentration exponents $s(q,d)$ and $\alpha(q,d)$, appearing in the estimates of Theorem \ref{thm:spectral-clusters}. We first explain why the exponents $s(q,d)$ are sharp for all values of $q$ and $d$. To do so, we will see that it is enough to consider the one-body case $\rk\gamma=1$ for which only the exponent $s(q,d)$ appears. In many cases, this optimality was known in the literature, but we provide some details here. On the contrary, the optimality of the exponent $\alpha(q,d)$ is only proved in a restricted range of cases.

	\subsection{One-body optimality}

	Similarly as \cite{tacy2009notes}, that treats the optimality of the Laplacian in the manifolds and submanifolds, we write a survey of the one-body optimality in the Euclidean case. Actually, we will see that the understanding of the proof in Section \ref{subsec:zonal-quasimode} allows us to prove the sharpness of the spectral bound in the bulk. There are reasons to hope that other examples can help to prove the optimality of the exponent $\alpha$ in the other cases.

	Let $V$ satisfying Definition \ref{cond:potential-pol-growth}, $h>0$, $P=-h^2\Delta+V$ and $E\in\R$. In this section, we explain several concentration scenarii of functions $u_h$, which saturate the various one-body $L^q$ bounds. All the saturating scenarii happen in the bulk $\{V-E<-\varepsilon\}$, meaning that they satisfy lower bounds of the type
	\begin{equation}\label{eq:optim-1body_glob}
		\normLp{u_h}{q}{(\{V-E<-\varepsilon\})}\geq Ch^{-s(q,d)}\left(\normLp{u_h}{2}{(\R^d)}+\frac 1h\normLp{(P-E)u_h}{2}{(\R^d)}\right).
	\end{equation} 
	The different saturation scenarii according to the values of $s(q,d)$ are summarized in Figure \ref{fig:conc-opt-1body} and Figures \ref{fig:conc_gaussian-gs}, \ref{fig:conc_zonal-type-quasim} and  \ref{fig:conc_oh-gaussian-beams}. 
	The optimality of the estimates in the turning point region $\{\abs{V-E}\leq \varepsilon\}$ is much more delicate. In \cite{koch2005p}, the optimality in the case $V(x)=\abs{x}^2$ is proved for the estimates in the dyadic regions of size $2^jh^{2/3}$ (see the proof of Theorem \ref{thm:ELp-TP-matdens}) for fixed $j$.
	The optimality in the full region $\{\abs{V-E}\leq \varepsilon\}$ (that is, when we sum over $j$), as well as the optimality in the small neighborhood $\Omega_{Mh^{2/3}}$ of a turning point seem open to us in dimension $d\geq 2$ (see Remark \ref{rmk:TP-3_opt} for the discussion in dimension 1).
	Indeed, for small regime $2\leq q\leq 2(d+3)/(d-1)$, the $L^q$ bounds are saturated by Gaussiam beams (see Proposition \ref{prop:optim-lowr:ELp-sogge-1body}); for high regime $2d/(d-2)\leq q\leq\infty$, they are saturated by zonal-type quasimodes (see Proposition \ref{prop:optim-highr:ELp-gene-1body}). This is less obvious for the intermediate regime $2(d+3)/(d-1)\leq q\leq 2d/(d-2)$. By some raw calculations, the estimates cannot be saturated by simple direct products of $d$ scalar gaussian grounds states $\varphi_0(t)=(2\pi\sqrt{h})^{-1/2}e^{-\frac{t^2}{2h}}$ and Hermite functions $\varphi_n$ associated to the eigenvalue in a neighborhood of $E$ (as in Figure \ref{fig:oh-hr-eigenfunction}). At best, one could expect that  well-chosen linear combinaison of direct product scalar Hermite functions can saturate the bounds. 
	
	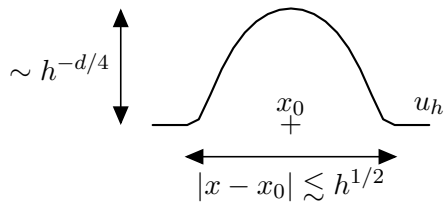
\begin{figure}[!h]	\centering
		\begin{tikzpicture}[line cap=round,line join=round,>=triangle 45,x=2.0cm,y=6.0cm,scale=0.7]
		\draw (0.,0.) node{$+$};
		\draw (0.,0.) node[above]{$x_0$};
		\draw[color=black, thick,  domain=-1.3:1.3]plot( \x,{exp(1/((\x)^2-1)))*(abs(\x)<1)})node[left,above]{$u_h$};
		\draw[<->,line width=0.6pt,color=black] (-1.,-0.1) -- (1.,-0.1) node[midway, below]{$\abs{x-x_0}\lesssim h^{1/2}$};
		\draw[<->,line width=0.6pt,color=black] (-1.6,0.) -- (-1.6,{exp(-1)}) node[midway, left]{$\sim h^{-d/4}$};
		\end{tikzpicture}
		\caption{Concentration of a gaussian groundstate.}
		\label{fig:conc_gaussian-gs}
	\end{figure}
	
	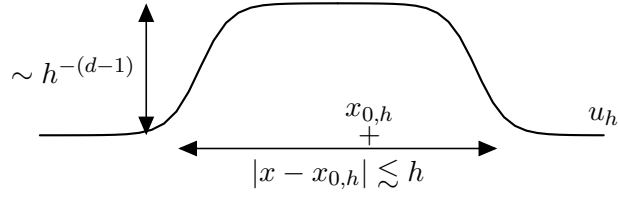
\begin{figure}[!h]	\centering
		\begin{tikzpicture}[line cap=round,line join=round,>=triangle 45,x=2.0cm,y=2.5cm,scale=0.7]
		\draw (0.3,0.) node{$+$};
		\draw (0.3,0.) node[above]{$x_{0,h}$};
		\draw[ thick,  domain=-2.8:0]plot( \x,{ 0.5*(1+tanh(3.5*(\x+1.3))) });
		\draw[ thick,  domain=0:2.5]plot( \x,{ 0.5*(1-tanh(3.5*(\x-1.3))) })node[left,above]{$u_h$};
		\draw[<->,line width=0.6pt,color=black] (-1.5,-0.1) -- (1.5,-0.1) node[midway, below]{$\abs{x-x_{0,h}}\lesssim h$};
		\draw[<->,line width=0.6pt,color=black] (-1.8,0.) -- (-1.8,1) node[midway, left]{$\sim h^{-(d-1)}$};
		\end{tikzpicture}
		\caption{Concentration of a zonal-type quasimode.}
		\label{fig:conc_zonal-type-quasim}
	\end{figure}

	\begin{figure}[h!]
		\centering
		\includegraphics[scale=0.2]{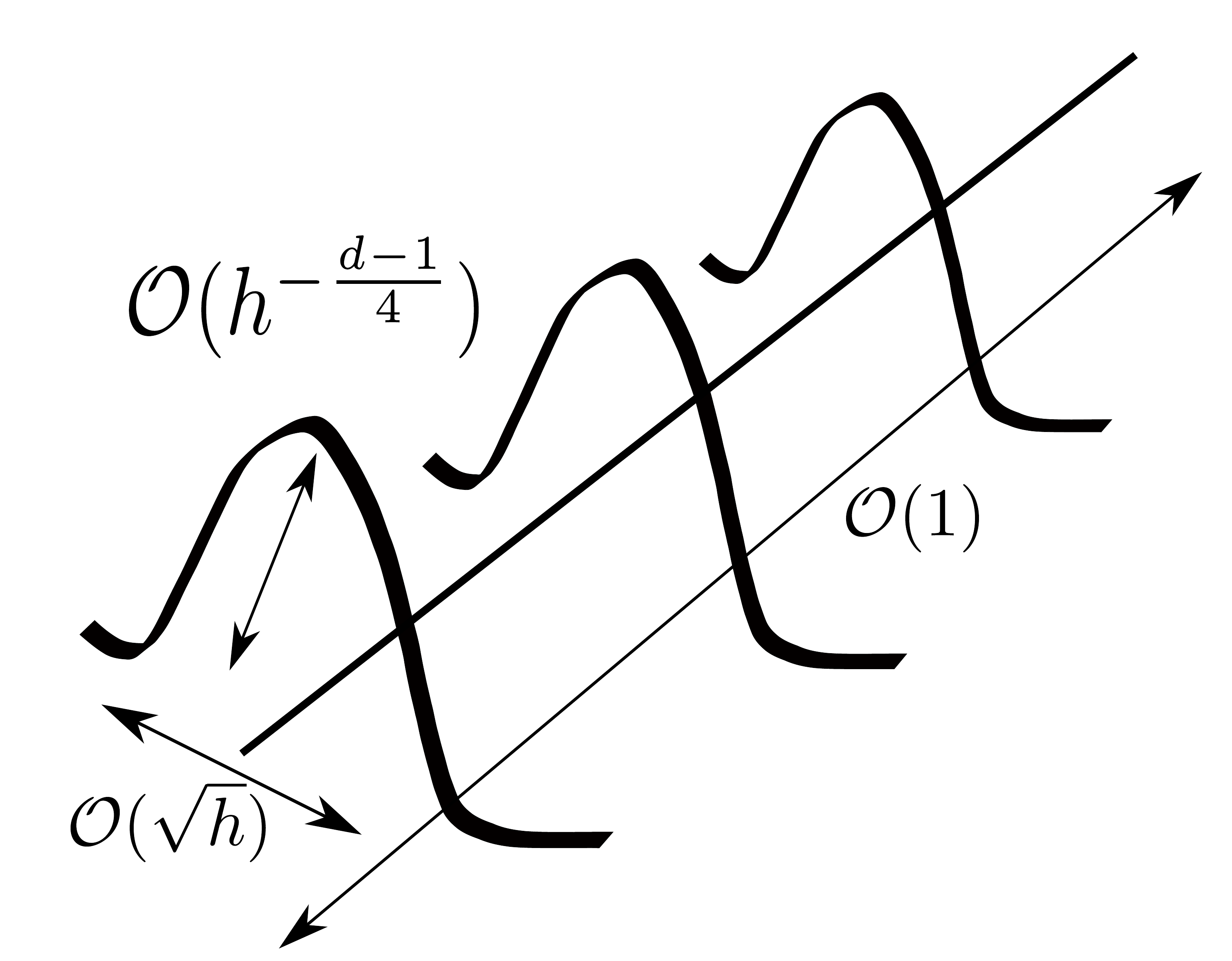}
		\caption{Gaussian beams: concentration around a curve.}
		\label{fig:conc_oh-gaussian-beams}
	\end{figure}
	
	\newpage
	
	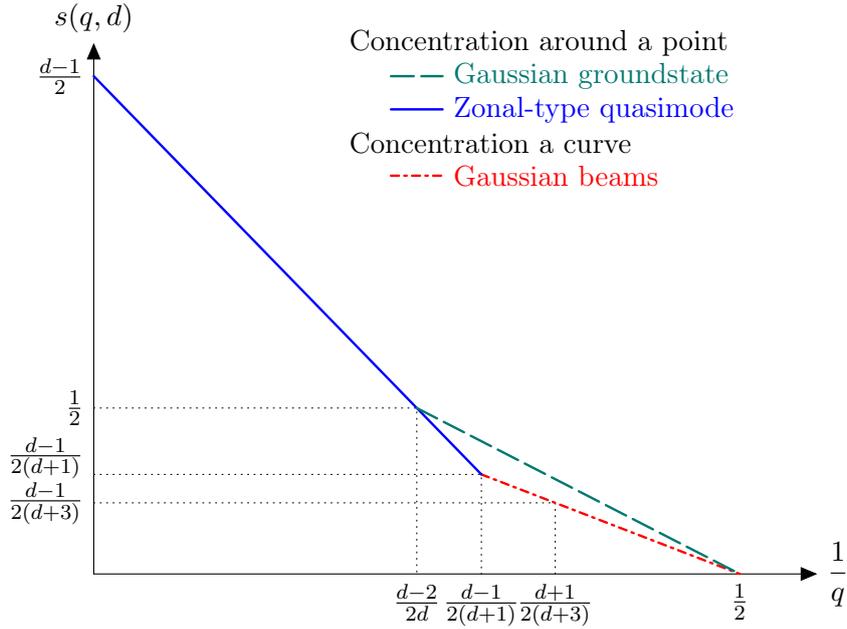
\begin{figure}[!h]
	\begin{center}\begin{tikzpicture}[line cap=round,line join=round,>=triangle 45,x=8.5 cm,y=2.2 cm,scale=2]
		d=3
		\draw[->] (0.,0.) -- (0.56,0.);
		\draw (0.56,0.) node[right] {$\displaystyle{\frac{1}{q}}$};
		\draw[->] (0.,0.) -- (0.,3/2+0.1);
		\draw (0.,3/2+0.1) node[above] {$\displaystyle{s(q,d)}$};
		%
		%
		\draw (1/4,0.) node[below]{$\frac{d-2}{2d}$};
		\draw (3/10,0.) node[below]{$\frac{d-1}{2(d+1)}$};
		\draw (5/14,0.) node[below]{$\frac{d+1}{2(d+3)}$};
		\draw (0.5,0.) node[below]{$\frac{1}{2}$};
		%
		\draw (0,0.5) node[left]{$\frac{1}{2}$};
		\draw (0,3/10+0.05) node[left]{$\frac{d-1}{2(d+1)}$};
		\draw (0.,3/14) node[left]{$\frac{d-1}{2(d+3)}$};
		\draw (0.,3/2) node[left]{$\frac{d-1}{2}$};
		%
		%
		\draw[dotted] (0., 0.5)-|(1/4, 0.); 
		\draw[dotted] (0., 3/10)-|(3/10, 0.); 
		\draw[dotted] (0., 3/14)-|(5/14, 0.); 
		%
		\draw[ line width=0.9pt, dash dot, color=red] (3/10,3/10)--(0.5,0.);
		\draw[ line width=0.9pt, color=blue] (0.,3/2) -- (1/4,0.5)--(3/10,3/10);
		\draw[ line width=0.9pt, color=stefou,dash pattern=on 3mm off 1mm](1/4,0.5)--(0.5,0.);
		%
		\draw (1/4-0.06,3/2+0.1) node[right]{Concentration around a point};
		\draw[line width=0.9pt, color= stefou,dash pattern=on 3mm off 1mm] (1/4-0.02,3/2)-|(1/4+.02,3/2)   node[right]{Gaussian groundstate};
		\draw[line width=0.9pt, color=blue]
		(1/4-0.02,3/2-0.1)-|(1/4+.02,3/2-0.1)  node[right]{Zonal-type quasimode};
		\draw (1/4-0.06,3/2-0.2) node[right]{Concentration a curve};
		\draw[line width=0.9pt, dash dot, color=red]
		(1/4-0.02,3/2-0.3)-|(1/4+.02,3/2-0.3)   node[right]{Gaussian beams};
		%
		\end{tikzpicture}\end{center}
	\caption{Saturation of $s(q,d)$ for $d\geq 3$.}
	\label{fig:conc-opt-1body}
	\end{figure}

	\subsubsection{Gaussian groundstate}
	
	We begin with explaining why the exponent $s_{\text{gene}}(q,d)$ is sharp (that is, it cannot be lowered). We do so, by exhibiting a family of functions that saturates the inequalities in which $s_{\text{gene}}(q,d)$ appears. We will see that in this case, the saturation scenario happens for functions concentrating around a non-degenerate local minimum of the potential, exactly like the ground state of a harmonic oscillator. Such a construction is well-known (see for instance \cite[Chap. 2]{helffer2006semi}, \cite[Thm 4.23]{dimassi1999spectral} and \cite[Exemple 2]{koch2007semiclassical}), but we recall here for completeness.
	
	\begin{prop}\label{prop:optim-lowr:ELp-gene-1body}
		Let $d\geq 1$ and $2\leq q\leq\infty$.
		Let $p(x,\xi):=\abs{\xi}^2+V(x)$ where $V\in\mathcal{C}^\infty(\R^d,\R)$ is as in Definition \ref{cond:potential-pol-growth}.
		For any $h>0$, define $P=p^\w(x,hD)$.
		Let $E\in\R$ such that there exists $x_0\in\R^d$ such that
		\begin{equation*}
			V(x_0)=E,\quad \nabla_x V(x_0)=0,\quad \partial_x^2V(x_0) \text{ definite positive }
			.
		\end{equation*}
		Then, there exist $C>0$ and $h_0>0$, such that the normalized groundstate of the operator $-h^2\Delta+\prodscal{x-x_0}{\sqrt{\partial_x^2 V(x_0)}(x-x_0)}$	
		\begin{equation*}
			u_h(x) := (2\pi\sqrt{h})^{-d/2}\det(\partial_x^2 V(x_0))^{-1/4}e^{-\frac{\prodscal{x-x_0}{\sqrt{\partial_x^2 V(x_0)}(x-x_0)}}{2h}} 
		\end{equation*}
		 associated  to the eigenfunction $\lambda_h := h\tr_{\R^d}\sqrt{\partial_x^2 V(x_0)}$ satisfies the bound for any $h\in(0,h_0]$
		\begin{equation}\label{eq:prop:optim-lowr:ELp-gene-1body}
			\normLp{u_h}{2}{(\R^d)}=1,\quad		
			\normLp{(P-E)u_h}{2}{(\R^d)}\leq C h,\quad
			\normLp{u_h}{q}{(\R^d)}\geq  (1/C) h^{-\frac d 2\left(\frac 1 2 -\frac 1 q\right) }.
		\end{equation}
	\end{prop}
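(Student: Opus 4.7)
The plan is to verify the three bounds in \eqref{eq:prop:optim-lowr:ELp-gene-1body} by direct computation, exploiting that $u_h$ concentrates on scale $\sqrt{h}$ around $x_0$. Writing $y=x-x_0$ and $A:=\partial_x^2V(x_0)$, I would first observe that the rescaling $y=\sqrt{h}z$ turns every integral against $|u_h|^q$ into a standard Gaussian integral:
\begin{equation*}
\normLp{u_h}{q}{(\R^d)}^q=(2\pi\sqrt{h})^{-dq/2}(\det A)^{-q/4}\,h^{d/2}\!\int_{\R^d}e^{-\tfrac{q}{2}\prodscal{z}{\sqrt{A}z}}\,dz.
\end{equation*}
At $q=2$ this yields $\normLp{u_h}{2}{(\R^d)}=1$ after a direct prefactor check, while for general $q\in[2,\infty]$ extracting the $h$-dependence gives $\normLp{u_h}{q}{(\R^d)}\gtrsim h^{-\tfrac{d}{2}(\tfrac12-\tfrac1q)}$, which is exactly the lower bound matching $s_{\rm gene}(q,d)$ in the low-$q$ regime.

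The substantive task is to show $\normLp{(P-E)u_h}{2}{(\R^d)}\le Ch$. I would compute the action of $-h^2\Delta$ on $u_h$ via $\nabla u_h=-h^{-1}\sqrt{A}y\,u_h$, which produces
\begin{equation*}
-h^2\Delta u_h=\bigl(h\tr\sqrt{A}-\prodscal{y}{Ay}\bigr)u_h.
\end{equation*}
Combining with the Taylor expansion $V(x)-E=\tfrac12\prodscal{y}{Ay}+R(y)$, valid on a small fixed ball $B_\delta$ around $x_0$ with $|R(y)|\le C|y|^3$, one obtains pointwise on $B_\delta$
\begin{equation*}
(P-E)u_h=\bigl[h\tr\sqrt{A}-\tfrac12\prodscal{y}{Ay}+R(y)\bigr]u_h.
\end{equation*}
The three resulting $L^2$-contributions over $B_\delta$ are all $O(h)$: the first trivially, since it is the constant $h\tr\sqrt A$ times the normalized $u_h$; the second because $\||y|^2u_h\|_{L^2}$ scales like $h$ by the Gaussian integral above; the third because $\||y|^3u_h\|_{L^2}$ scales like $h^{3/2}$, which is even better.

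What I expect to be the main technical nuisance is the tail $\R^d\setminus B_\delta$, where the Taylor bound on $R$ fails and only the polynomial growth $|V(x)|\lesssim\crochetjap{x}^k$ of Definition~\ref{cond:potential-pol-growth} is available. However, $|u_h(x)|\lesssim h^{-d/4}\exp(-c|y|^2/h)$, so on $\{|y|\ge\delta\}$ the integrand $|(V(x)-E)u_h(x)|^2$ is dominated by a polynomial in $x$ times $\exp(-c\delta^2/h)$; this produces an $L^2$-contribution of order $e^{-c'/h}$, negligible compared to $h$. The term $-h^2\Delta u_h$ on the tail is handled identically, being a polynomial in $y$ times $u_h$. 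Summing all contributions yields $\normLp{(P-E)u_h}{2}{(\R^d)}\le Ch$, completing the proof.
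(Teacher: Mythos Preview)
Your proof is correct and follows essentially the same approach as the paper: Gaussian rescaling for the $L^q$ lower bound, and a near/far decomposition for the quasimode estimate, with the Taylor remainder controlled by $\||y|^3u_h\|_{L^2}=O(h^{3/2})$ on a fixed ball and the polynomial-growth assumption absorbed by the Gaussian tail $e^{-c/h}$ outside. The only cosmetic difference is that the paper phrases the near-region step via the eigenvalue equation $(P_0-\lambda_h)u_h=0$ for the quadratic model $P_0$, whereas you compute $-h^2\Delta u_h$ directly and bound the leftover $-\tfrac12\langle y,Ay\rangle u_h$ term as $O_{L^2}(h)$; both yield the same $O(h)$ bound.
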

	
	The function $u_h$ concentrates around $x_0$ at a scale $\sqrt{h}$ with a height $\sim h^{-d/4}$ (c.f. Figure \ref{fig:conc_gaussian-gs}).

	\begin{rmk}\label{rmk:optim-lowr:ELp-gene-1body_1}
		The previous proposition gives the optimality of the exponent $s_{\text{gene}}(q,d)$
		\begin{itemize}
			\item when $d=1$: for $2\leq q\leq\infty$,
			\item when $d=2$: for $2\leq q<\infty$,
			\item when $d\geq 3$: for $2\leq q\leq 2d/(d-2)$.
		\end{itemize}
		Indeed, recall that in these cases we have $s_{\text{gene}}(q,d)=\tfrac d 2\left(\frac 1 2 -\frac 1 q\right)$, which is the exponent appearing in \eqref{eq:prop:optim-lowr:ELp-gene-1body}. More precisely, this proves that one cannot take a smaller $s(q,d)$ in Theorem \ref{thm:ELp-gene-1body}. Notice that this theorem applies to microlocalized functions, which is not the case for our quasimode $u_h$ above. However, since $u_h$ is an eigenfunction of a Schr\"odinger operator with a quadratic potential, there exists $\chi\in\test{\R^d\times\R^d}$ such that $u_h = \chi^\w u_h +\OR(h^\infty)$ as in the proof of Theorem \ref{thm:spectral-clusters}. Any point $(x_0,\xi_0)$ in the support of $\chi$ satisfies either $p(x_0,\xi_0)\neq 0$ or Assumption \ref{cond:gene}. Hence, it shows that $\{u_h\}_{h\in(0,h_0]}$ actually saturates the bound of Theorem \ref{thm:ELp-gene-1body} but in its version of Theorem \ref{thm:abstract-microloc-extend}.
	\end{rmk}
	
	
	\begin{rmk}\label{rmk:optim-lowr:ELp-gene-1body_2}
		In the special case where $V$ is quadratic (meaning that $\partial_x^2 V$ is constant), $u_h$ (more precisely $\gamma_h=\bra{u_h}\ket{u_h}$) saturates also the bound \eqref{eq:spectral-clusters} in Theorem \ref{thm:spectral-clusters} because in this case it satisfies $\Pi_h u_h=u_h$. Notice that in this one-body setting, no logarithm appears in this estimate as we recall in Section \ref{sec:gene}.
	\end{rmk}

	\begin{rmk}\label{rmk:optim-lowr:ELp-gene-1body_3}
		Note that the assumptions of Proposition \ref{prop:optim-lowr:ELp-gene-1body} hold when the potential $V\in\CR^\infty(\R^d,\R)$ is a Morse function, i.e.\ such that all its critical points has a non-degenerate Hessian. Actually, the trapping assumption of polynomial growth forces this critical points to be global minima. Besides, in this case the normalized ground states functions $u_h$ of the resulting Schr\"odinger operator $P=-h^2\Delta+V$ admits a WKB expansion $h^{-d/4}a_he^{-\varphi/h}:=h^{-d/4}\sum_{j=1}^\infty h^j a_je^{-\varphi/h}$ (see for instance \cite[Sec. 3]{helf-sjost1984}), which is a quasimode of $P$ for its lower eigenvalue $\lambda_h$. In particular, one has
		\begin{equation*}
			 e^{\varphi/h}(P-\lambda_h)\left(h^{-d/4}a_he^{-\varphi/h}\right) =\OR_{L^2}(h^\infty)
			.
		\end{equation*}
		and there exists a neighborhood $K$ of the minimum $x_0$ of $V$ such that one has uniformly in $K$
		\begin{equation*}
			(P-\lambda_h)\left(h^{-d/4}a_he^{-\varphi/h}\right) =\OR(h^\infty)e^{-\varphi/h}
			.
		\end{equation*}
		Here, the phase function $\varphi$ is smooth on $\R^d$ and positive in $K$ such that for any $x\in K$
		\begin{equation*}
			\varphi(x)=\prodscal{x-x_0}{\sqrt{\partial_x^2 V(x_0)}(x-x_0)}+\OR(\abs{x-x_0}^3).
		\end{equation*}
		One has also the functions $\{a_j\}_{j\in\N}\subset S$ and $a_0(x_0)=(2\pi)^{-d/2}\det(\partial_x^2 V(x_0))^{-1/4}$. Furthermore, $\lambda_h=E+\OR(h)$.
		In particular, this ansatz $h^{-d/4}a_he^{-\varphi/h}$ behaves like the normalized ground state of $-h^2\Delta+\prodscal{x-x_0}{\sqrt{\partial_x^2 V(x_0)}(x-x_0)}$ in Proposition \ref{prop:optim-lowr:ELp-gene-1body} and then satisfies \eqref{eq:prop:optim-lowr:ELp-gene-1body}. Then, by the triangle inequality, it is also the case for the eigenfunction $u_h$.
	\end{rmk}

	\begin{proof}[\underline{Proof of Proposition \ref{prop:optim-lowr:ELp-gene-1body}}]
		Let $2\leq q\leq\infty$ and $h>0$.
		Let $\varphi_1$ the normalized gaussian on $L^2(\R^d)$
		\begin{equation*}
			\varphi_1(x):=(2\pi)^{-d/2}e^{-\abs{x}^2/2}.
		\end{equation*}
		When we replace the potential $\abs{x}^2$ by $V_{x_0}(x):=\frac1 2 \prodscal{x-x_0}{\partial_x^2V(x_0)(x-x_0)}$ in the harmonic oscillator, the normalized ground state associated to the eigenvalue $\lambda_h=h\tr_{\R^d}\sqrt{\partial_x^2 V(x_0)}$ of $P_0:= -h^2\Delta +V_{x_0}(x)$ is given by the formula
		\begin{equation*}
			u_h(x) = h^{-d/4}(\det(\partial_x^2 V(x_0))^{-1/4}\varphi_1(h^{-1/2}(\partial_x^2 V(x_0))^{1/4}(x-x_0)) . 
		\end{equation*}
		Moreover, for any compact $K$ neighborhood of $x_0$, there exists $C(d,K,h_0)>0$ such that for $0< h\leq h_0$ and any $2\leq q\leq\infty$
		\begin{align*}
			\normLp{u_h}{q}{(K)}
			&= (\det(\partial_x^2 V(x_0)))^{-1/4} h^{-d/4}\normLp{\varphi_1(h^{-1/2})(\partial_x^2 V(x_0))^{1/4}(x-x_0)}{q}{(K)}
			\\&\geq C(d,K) h^{-\frac d2\left(\frac 1 2-\frac 1 q\right)} \underset{>0}{ \underbrace{ \normLp{\varphi_1}{q}{\big(h_0^{-1/2}\:(\partial_x^2 V(x_0))^{-1/4}(K+x_0)\big)} }}
			\\&\geq C(d,K,h_0) h^{-\frac d2\left(\frac 1 2-\frac 1 q\right)}.
		\end{align*}
		We have for any $h_0>0$, $C>0$ such that for any $h\in(0,h_0]$ and any $2\leq q\leq\infty$
		\begin{equation*}
			\normLp{u_h}{q}{(\R^d)} \geq C h^{-\frac d2\left(\frac 1 2-\frac 1 q\right)}\left(\normLp{u_h}{2}{(\R^d)}+\frac 1h\normLp{(P_0-\lambda_h)u_h}{2}{(\R^d)}\right) .
		\end{equation*}
		 It remains to show that $u_h$ is a quasimode of the operator $P-E$
		\begin{equation*}
			(P-E)u_h = \OR_{L^2}(h)
			.
		\end{equation*}
		By Taylor formula of $V$ at $x=x_0$, we have for any $x\in\R^d$
		\begin{equation*}
			V(x)= E +V_{x_0}(x)+ \OR( \abs{x-x_0}^3 )
			.
		\end{equation*}
		Thus, one can estimate $(V-E-V_{x_0})u_h$
		\begin{align*}
			\normLp{(V-E-V_{x_0})u_h}{2}{(\R^d)}^2
			&= 
			\int_{\R^d}\abs{(V-E-V_{x_0})(x) u_h(x)}^2 dx 
			\\&
			= 
			\int_{\R^d}\abs{(V-E-V_{x_0})(x_0+\sqrt{h}x) u_1(x_0+\sqrt{h}x)}^2 dx 
			.
		\end{align*}
		On the one hand, since $\{ y\in\R^d \: : \: y= x_0+\sqrt{h}x \text{ and } x\in\R^d, \:  \abs{x}\leq 1/\sqrt{h} \}
		\subset\{ y\in\R^d \: :\: \abs{y}\leq \abs{x_0}+2 \}$ is a compact set of $\R^d$, we have by Taylor formula and by $\varphi_1\in\schwartz(\R^d)$
		\begin{align*}
			&\int_{\abs{x}\leq\frac1{\sqrt{h}}}\abs{(V-E-V_{x_0})(x_0+\sqrt{h}x) u_1(x_0+\sqrt{h}x)}^2 dx
			\\&\quad
			\leq C h^3\int_{\abs{x}\leq\frac1{\sqrt{h}}}\abs{x}^6 \abs{u_1(x_0+\sqrt{h}x)}^2 dx
			\\&\quad 
			\leq C h^3\int_{\R^d}\abs{x}^6 \abs{\varphi_1((\partial_x^2 V(x_0))^{1/4}x)}^2 (\det((\partial_x^2 V(x_0)))^{-1/2} dx
			\\&\quad
			\leq 	C' h^3 
			\underset{<\infty}{\underbrace{
				\int_{\R^d}\abs{x}^6 \abs{\varphi_1(x)}^2 dx
				}}
			.
		\end{align*}
		On the other hand, by the growth assumption \eqref{eq:cond-potential-symb} on the potential $V$ 
		\begin{align*}
			&\int_{\abs{x}>\frac1{\sqrt{h}}}\abs{(V-E-V_{x_0})(x_0+\sqrt{h}x) u_1(x_0+\sqrt{h}x)}^2 dx
			\\&\quad
			= \int_{\abs{x}>\frac1{\sqrt{h}}} \abs{V(x_0+\sqrt{h}x)-E+  h\prodscal{x}{\partial_x^2 V(x_0)x}}^2 \abs{ u_1(x_0+\sqrt{h}x)}^2  dx
			\\&\quad
			\leq C \int_{\abs{x}>\frac1{\sqrt{h}}} (1+\abs{x_0+\sqrt{h}x}^k+ h\prodscal{x}{\partial_x^2 V(x_0)x})^2 \abs{ u_1(x_0+\sqrt{h}x)}^2  dx
			\\&\quad
			\leq C' \int_{\abs{x}>\frac1{\sqrt{h}}}  \abs{\varphi_1((\partial_x^2 V(x_0))^{1/4}x)}^2 (\det((\partial_x^2 V(x_0)))^{-1/2} dx
			\\&\quad
			\leq C''  \int_{\abs{x}>\frac{c'}{\sqrt{h}}}  \abs{\varphi_1(x)}^2  dx = \OR\big( e^{-c/h} \big) =\OR(h^\infty)
			.
		\end{align*}
		Thus,
		\begin{equation*}
			\normLp{(V-E-V_{x_0})u_h}{2}{(\R^d)} =\OR(h^{3/2}) .
		\end{equation*}
		We finally get
		\begin{align*}
			(P-E)u_h &= (P_0-\lambda_h) u_h + \lambda_h u_h  +(V-E-V_{x_0}) u_h
			%
			\\&= \OR_{L^2}(h).
		\end{align*}
		This concludes the proof.
	\end{proof}
	
	\subsubsection{Zonal-type quasimode}\label{subsec:zonal-quasimode}
	
	We now prove the sharpness of the exponent $s_{\text{Sogge}}(q,d)$ for large values of $q$. We will see that the saturation phenomenon is obtained for a sequence of functions concentrating around a point (with a rate different from the one of the gaussian groundstate of the preceding section).
	For the harmonic potential $V(x)=\abs{x}^2$, such functions have already been constructed in \cite[Sec. 5.2]{koch2005p}. Our example which relies on the Weyl law is valid for a more general class of potentials. It is inspired by the construction in \cite[Eq. (5.1.12)]{sogge2017fourier}.
	
	\begin{prop}\label{prop:optim-highr:ELp-gene-1body}
		Let $d\geq 2$.
		Let $E_0>0$ such that $E_0>\min V$.
		Let $\varepsilon_0 := E_0-\min V$.
		Assume that
		\begin{equation}\label{eq:prop:optim-highr:ELp-gene-1body}
			\lim_{\varepsilon\to 0}\abs{\{x\in\R^d :\: \: \abs{V(x)-\lambda}\leq \varepsilon\}} = 0
		\end{equation}
		uniformly for $\lambda$ in a neighborhood of $E_0$.
		Then, there exist $h_0>0$, energies $\{ E_h\}_{h\in(0,h_0]} \subset[E_0-\varepsilon_0/2,E_0+\varepsilon_0/2]$, $\varepsilon\in(0,\varepsilon_0/4)$ and points $\{x_{0,h}\}_{h\in(0,h_0]}\subset \{V\leq E_h-\varepsilon\} $ such that the family of functions defined by
		\begin{equation*}
			u_h := \Pi_h(x_{0,h},\cdot)
		\end{equation*}
		where $\Pi_h$ denotes the spectral projector
		\begin{equation*}
			\Pi_h := \indicatrice{P\in I_{h,E_h}}
			= \indicatrice{P\in\left[E_h-h,E_h+h\right]}
			,
		\end{equation*}
		satisfies along a sequence $h_n\to 0$ that for any $2\leq q\leq \infty$
		\begin{equation*}
			\normLp{u_h}{2}{(\R^d)} \leq C h^{-\frac{(d-1)}2},\quad
			\normLp{u_h}{q}{(\{x\in\R^d \: :\: V(x)-E_h\leq \varepsilon\})} \geq (1/C) h^{-(d-1)+\frac d q}
			.
		\end{equation*}
	\end{prop}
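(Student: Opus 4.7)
The strategy rests on the kernel identity
\begin{equation*}
u_h(y) = \Pi_h(x_{0,h}, y) = \sum_{E_j \in [E_h-h,\, E_h+h]} u_j(x_{0,h})\, u_j(y),
\end{equation*}
which, together with $\Pi_h^2 = \Pi_h$, immediately yields both $\normLp{u_h}{2}{(\R^d)}^2 = \rho_{\Pi_h}(x_{0,h})$ and $u_h(x_{0,h}) = \rho_{\Pi_h}(x_{0,h})$. Hence the $L^2$ upper bound and the pointwise lower bound on $|u_h(x_{0,h})|$ both reduce to a sharp two-sided estimate of the spectral density in the bulk, while the $L^q$ lower bound will follow from a Bernstein-type regularity statement showing that $|u_h|$ stays of order $h^{-(d-1)}$ on a ball of radius $\sim h$ around $x_{0,h}$.

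The first step is the pointwise Weyl bound $c\, h^{-(d-1)} \leq \rho_{\Pi_h}(x) \leq C\, h^{-(d-1)}$ for $x \in \{V < E_h - \varepsilon\}$, uniformly in $E_h$ close to $E_0$. I would get it by sandwiching the sharp projector between smoothed spectral cut-offs $\psi_\pm((P-E_h)/h)$ with $\psi_-, \psi_+ \in \test{\R}$ satisfying $\psi_- \leq \indicatrice{[-1,1]} \leq \psi_+$. By functional calculus (Theorem~\ref{thm:funct-calculus}), each $\psi_\pm((P-E_h)/h)$ is a pseudodifferential operator whose on-diagonal kernel can be computed by an elementary change of variables in $\xi$, producing the leading Weyl term $\sim h^{-(d-1)}(E_h - V(x))_+^{(d-2)/2}$ plus a lower-order remainder. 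The flatness hypothesis \eqref{eq:prop:optim-highr:ELp-gene-1body} is exactly what makes this remainder uniform in $E_h$ (ruling out accumulation of level sets of $V$), so one may pick any $E_h$ in a small neighborhood of $E_0$ and any $x_{0,h}$ satisfying $V(x_{0,h}) < E_0 - 2\varepsilon$, for a small fixed $\varepsilon > 0$; the sandwich then transfers the two-sided density estimate to $\rho_{\Pi_h}$.

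It remains to transport the large value $u_h(x_{0,h}) \gtrsim h^{-(d-1)}$ to a ball of radius $\sim h$ around $x_{0,h}$. The key point is that $u_h$ is microlocalized in a fixed compact of phase space: for $f \in \test{\R}$ equal to $1$ on a neighborhood of $[E_0 - \varepsilon_0/2, E_0 + \varepsilon_0/2]$ we have $u_h = f(P) u_h$, and by Theorem~\ref{thm:funct-calculus}, $f(P) = \chi^\w(x,hD) + \OR_{L^2 \to L^2}(h^\infty)$ with $\chi \in \test{\R^d \times \R^d}$. For any multi-index $\alpha$, Proposition~\ref{prop:SA-comp-op} shows that $(hD)^\alpha \chi^\w$ is the Weyl quantization of a compactly supported symbol, hence bounded on $L^\infty$ uniformly in $h$ by Lemma~\ref{lemma:SA-basic-ELp} with $p=q=\infty$. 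Taking $|\alpha|=1$ gives the Bernstein inequality $\normLp{h \nabla u_h}{\infty}{} \leq C\, \normLp{u_h}{\infty}{} \leq C'\, h^{-(d-1)}$, hence $\normLp{\nabla u_h}{\infty}{} \leq C''\, h^{-d}$. A mean value argument then yields $|u_h(y)| \geq \tfrac12 |u_h(x_{0,h})| \geq (c/2)\, h^{-(d-1)}$ for $|y - x_{0,h}| \leq \rho h$ with $\rho > 0$ small enough; since $B(x_{0,h}, \rho h) \subset \{V < E_h - \varepsilon\}$ for $h$ small, integration produces $\normLp{u_h}{q}{(\{V < E_h - \varepsilon\})} \geq (1/C)\, h^{-(d-1) + d/q}$.

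The main obstacle is the sharp \emph{lower} bound on $\rho_{\Pi_h}$ for the \emph{sharp} spectral window: a direct count of eigenvalues could fluctuate wildly in $E_h$, and the flatness hypothesis is precisely what makes the leading Weyl term depend continuously on $E_h$ via the smoothed sandwich. The other subtle point is verifying that Lemma~\ref{lemma:SA-basic-ELp} is strong enough to produce an honest $L^\infty$-Bernstein bound of the correct order $h^{-d}$; this is what pins down the concentration scale to $\sim h$ and yields the optimal exponent $-(d-1) + d/q$.
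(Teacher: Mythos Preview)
Your argument has one genuine gap and one genuine improvement over the paper.

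\textbf{The gap: the pointwise lower bound on $\rho_{\Pi_h}$.} You claim that by sandwiching $\indicatrice{[-1,1]}$ between $\psi_\pm\in\test{\R}$ and invoking Theorem~\ref{thm:funct-calculus}, the diagonal kernel of $\psi_\pm((P-E_h)/h)$ is computed by an elementary change of variables. This is not what Theorem~\ref{thm:funct-calculus} says: that result applies to $f(P)$ for a \emph{fixed} $f\in\test{\R}$, not to $\psi((P-E)/h)$, whose effective support in the spectral variable shrinks like $h$. Obtaining the diagonal asymptotics of $\psi((P-E)/h)$ at scale $h^{-(d-1)}$ is precisely the pointwise Weyl law, which requires a short-time parametrix for $e^{itP/h}$ and is not elementary. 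Worse, your claim that one may take ``any $E_h$'' is false in general: if the eigenvalue gap near $E_h$ exceeds $2h$ then $\Pi_h=0$, so no pointwise lower bound can hold for arbitrary $E_h$. The flatness hypothesis~\eqref{eq:prop:optim-highr:ELp-gene-1body} concerns sublevel sets of $V$ in $x$-space, not eigenvalue spacings, and does not by itself rule this out. The paper avoids this entirely: it uses only the \emph{integrated} Weyl law (Proposition~\ref{prop:int-weyl-law}) and a pigeonhole argument (Lemma~\ref{lemma:cor-weyl-law}) to find \emph{some} $E_h$ with $\normLp{\rho_{\Pi_h}}{1}{}\gtrsim h^{-(d-1)}$, then uses the flatness hypothesis together with the $L^\infty$ bound to localize this mass to the bulk $\{V\le E_h-\varepsilon\}$, and finally takes $x_{0,h}$ to be a maximizer of $\rho_{\Pi_h}$ there. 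This is the missing idea in your first step.

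\textbf{The improvement: your Bernstein argument.} Your route to $\normLp{\nabla u_h}{\infty}{}\lesssim h^{-d}$ via $u_h=f(P)u_h$, Theorem~\ref{thm:funct-calculus}, Proposition~\ref{prop:SA-comp-op} and Lemma~\ref{lemma:SA-basic-ELp} with $p=q=\infty$ is correct and more self-contained than the paper's, which at this step compares $\nabla_y\Pi_h$ to $\nabla_y\Pi_h^{(0)}$ for a flat potential and invokes the pointwise Weyl asymptotics of \cite{deleporte2021universality}. So if you replace your first step by the paper's pigeonhole-on-$L^1$ argument, you end up with a proof that is arguably cleaner than the original on the gradient side.
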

	
	\begin{rmk}\label{cor:optim-highr:ELp-gene-1body}
		As a consequence, we get that
		\begin{equation*}
			\limsup_{h\to 0} h^{d\left(1/2-1/q\right)-1/2} \frac{ \normLp{u_h}{q}{(x\in\R^d \: :\: V(x)-E_h\leq \varepsilon)} }{ \normLp{u_h}{2}{(\R^d)}  } >0 ,
		\end{equation*}
		which proves the optimality of the exponent $s_{\text{Sogge}}(q,d)$ (which is equal to $d\left(1/2-1/q\right)-1/2$ when $2(d+1)/(d-1)\leq q\leq \infty$) in the estimate \eqref{eq:spectral-clusters-loc-far-tp}.
	\end{rmk}

	\begin{rmk}\label{rmk:optim-highr:ELp-gene-1body_hyp-V}
		Assumption \eqref{eq:prop:optim-highr:ELp-gene-1body} is satisfied for instance when:
		\begin{itemize}
			\item[$\bullet$] $\nabla_x V(x_0)$ does not vanish for the points $x_0\in\{V=\lambda\}$,
			\item[$\bullet$] or when the Hessian $\partial_x^2 V(x_0)$ is non-degenerate for the points $x_0\in\{V=\lambda\}$,
		\end{itemize}
		for $\lambda$ in a neighborhood of $E_0$.
		In the first case, we have $\abs{\{\abs{V-\lambda}\leq\varepsilon\}}=\OR(\varepsilon)$. We even expect that the condition holds if for any $x_0$ such that $V(x_0)$ belongs to a neighborhood of $E_0$, the Taylor expansion of $V$ at $x_0$ is not identically zero. On the contrary, if $V$ is constant in a neighborhood of $x_0$, then Assumption \eqref{eq:prop:optim-highr:ELp-gene-1body} is not satisfied.
	\end{rmk}
	
	\begin{rmk}\label{rmk:optim-highr:ELp-gene-1body_conc}
		The lower bound on the $L^q$ norm of $u_h$ comes the pointwise estimate
			\begin{equation}\label{eq:prop:optim-highr:ELp-gene-1body_res1}
			\forall x\in\R^d, \: \abs{x-x_{0,h}}\leq c h \quad\implies\quad
			\abs{u_h(x)}\geq Ch^{1-d}.
		\end{equation}
		In other words, the function $u_h$ concentrates around $x_{0,h}$ at a scale $h$ with a height $\sim h^{-(d-1)}$ (such that in Figure \ref{fig:conc_zonal-type-quasim}). This motivates the name \emph{zonal-type quasimode} because they concentrate similarly to the zonal harmonics, which are known to saturate the Sogge $L^q$ estimates in the same regime of $q$. This originally appeared in \cite{sogge1985oscillatory}.
	\end{rmk}

	Before proving Proposition \ref{prop:optim-highr:ELp-gene-1body}, we first provide a lemma which is an easy consequence of the integrated Weyl law, which is well known in the high energy regime and that we state here in the semiclassical setting. This result gives an interval of size $h$ with the maximal number of eigenvalues inside. It will also be useful for the many-body optimality.
	
	For any $I\subset\R$, recall that $N_h(I)$ denotes the number of eigenvalues of $P$ in $I$.
	\begin{lemma}\label{lemma:cor-weyl-law}
		Let $a<b$ such that $\abs{ p^{-1}([a,b]) }>0$. Then
		\begin{equation*}
			\limsup_{h\to 0}\sup_{J_h\subset[a,b] \: ,\: \abs{J_h}=2h} h^{d-1}N_h(J_h) >0 .
		\end{equation*}
	\end{lemma}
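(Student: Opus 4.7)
The plan is to derive the claim by combining the integrated Weyl law with a simple pigeonhole argument. By Proposition \ref{prop:int-weyl-law}, since $p+i$ is elliptic (because $p(x,\xi)=|\xi|^2+V(x)$ with $V$ of polynomial growth and bounded below), we have
\begin{equation*}
N_h([a,b]) = \frac{1}{(2\pi h)^d}\bigl[\,|p^{-1}([a,b])| + o_{h\to 0}(1)\bigr].
\end{equation*}
Setting $c_0 := |p^{-1}([a,b])|/(2\pi)^d > 0$, this yields $N_h([a,b]) \geq \tfrac{c_0}{2}\,h^{-d}$ for all $h$ small enough.

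Next, I partition $[a,b]$ into consecutive disjoint subintervals of length $2h$. For any $h>0$ smaller than $(b-a)/2$, write $M_h := \lceil (b-a)/(2h)\rceil$ and choose subintervals $J_h^{(1)},\ldots,J_h^{(M_h)} \subset [a,b]$ with $|J_h^{(k)}|=2h$ covering $[a,b]$ (allowing a possible overlap only in the last interval near $b$); note that $M_h \leq (b-a)/(2h)+1 \leq (b-a)/h$ for $h$ small. By the pigeonhole principle,
\begin{equation*}
\sup_{k=1,\ldots,M_h} N_h(J_h^{(k)}) \;\geq\; \frac{N_h([a,b])}{M_h} \;\geq\; \frac{(c_0/2)\,h^{-d}}{(b-a)/h} \;=\; \frac{c_0}{2(b-a)}\,h^{1-d}.
\end{equation*}

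Multiplying by $h^{d-1}$ and taking the supremum over all intervals $J_h\subset[a,b]$ of length $2h$, we obtain
\begin{equation*}
\sup_{J_h\subset[a,b],\,|J_h|=2h} h^{d-1}\,N_h(J_h) \;\geq\; \frac{c_0}{2(b-a)} \;>\; 0
\end{equation*}
for all sufficiently small $h$, which in particular gives a strictly positive $\limsup$. The only mild care needed is to ensure the $M_h$ covering intervals all lie inside $[a,b]$, which is trivial for $h$ small enough; there is no substantive obstacle, as the lemma is essentially a direct consequence of the Weyl asymptotic combined with counting.
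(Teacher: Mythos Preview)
Your proof is correct and uses essentially the same approach as the paper---the integrated Weyl law combined with a covering/pigeonhole argument over subintervals of length $2h$---the only difference being that you argue directly while the paper proceeds by contradiction. Your direct argument in fact yields slightly more, namely a positive $\liminf$ rather than just $\limsup$.
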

	\begin{proof}[Proof of Lemma \ref{lemma:cor-weyl-law}]
		Assume by contradiction that
		\begin{equation*}
			\limsup_{h\to 0}\sup_{J_h\subset[a,b] \: ,\: \abs{J_h}=2h} h^{d-1}N_h(J_h) =0
		\end{equation*} i.e.
		for all $\varepsilon>0$, there exists $\tilde{h}>0$ such that for all $h\in(0,\tilde{h}]$ and for all interval $ J_h\subset[a,b]$ such that $\abs{J_h}=2h$
		\begin{equation*}
				h^{d-1}N_h(J_h) \leq\varepsilon   .
		\end{equation*}
		One can cover $[a,b]$ by a finite set of intervals of length $2h$ :
		\begin{equation*}
			\{ J_j \}_{j=1}^{M_h}\subset [a,b], \quad [a,b] \subset \bigcup_{j=1}^{M_h} J_j .
		\end{equation*}
		For example, for $2h<\abs{b-a}$, one can take $M_h = \left\lfloor\frac{\abs{b-a}}{2h}\right\rfloor -1$, define the $M_h-1$ first intervals by $J_j := [a+2(j-1)h,a+2jh]$ and define the $M_h$-th one $J_{M_h} := [b-2h,b]$.
		\begin{center}\begin{tikzpicture}[scale= 2.5]
			\draw[->] (0.,0.) -- (5.0,0.);
			\draw  (0.5,0.) node{$+$};
			\draw (0.5, -0.1) node[below] {$a$};
			\draw  (1,0.) node{$+$};
			\draw  (1.5,0) node{$+$};
			\draw  (2.0,0) node{$+$};
			\draw  (2.5,0) node{$+$};
			\draw  (3.0,0) node{$+$};
			\draw  (3.5,0) node{$+$};
			\draw  (4.0,0) node{$+$};
			\draw  (4.5,0) node{$+$};
			\draw  (4.75,0) node{$+$};
			\draw (4.75, -0.1) node[below] {$b$};
			\draw[<->,very thick] (0.5,0.1)-- (1.0,0.1);
			\draw (.75, 0.1) node[above] {$J_1$};
			\draw[<->,very thick] (1,0.1) -- (1.5,0.1);
			\draw (1.25, 0.1) node[above] {$J_2$};
			\draw[<->,very thick] (2,0.1) -- (2.5,0.1)node[midway,above] {$\ldots$};
			\draw[<->,very thick] (4,0.1) -- (4.5,0.1)node[midway,above] {$J_{M_h-1}$};
			\draw[<->,color=djgreen,very thick] (4.25,-0.1) -- (4.75,-0.1)node[midway,below] {$J_{M_h}$};
			\end{tikzpicture}\end{center}
		Finally,
		\begin{equation*}
			N_h([a,b])\leq\sum_{j=1}^{M_h}N_h(I_j) \lesssim M_h \varepsilon h^{-(d-1)} \sim \varepsilon \abs{b-a} h^{-d}.
		\end{equation*}
		When $\varepsilon$ goes to $0$, we get
		\begin{equation*}
			\lim_{h\to 0} h^d N_h([a,b])=0 .
		\end{equation*}
		However, by the Weyl law (Proposition \ref{prop:int-weyl-law}) $h^d N_h([a,b])$ is equivalent to $ (2\pi)^{-d}\abs{p^{-1}([a,b])}$, which is positive.
		That is in contradiction with the original assumption.
	\end{proof}

	\begin{proof}[\underline{Proof of Proposition \ref{prop:optim-highr:ELp-gene-1body}}]
		Let $R>0$ such that for any $\lambda\in[E_0-R,E_0+R]$, we have \eqref{eq:prop:optim-highr:ELp-gene-1body}.
		Let $c_0 := \min\left(\frac 14 , \frac{R}{2\varepsilon_0}\right)$.
		Define the intervals $I_0 := \left[E_0-c_0\varepsilon_0,E_0+c_0\varepsilon_0\right]$.
		Given the definition of $\varepsilon_0$ and $c_0$, we have $I_0\subset (\min V,\infty)$.
		Thus, $\abs{p^{-1}(I_0)}>0$.
		We begin to take $h_0\in\left(0,c_0\varepsilon_0/2\right)$ (of course, we will lower it afterwards).
		By Lemma \ref{lemma:cor-weyl-law} (up to a sequence $\{h_n\}_{n\in\N}\subset \R^*_+$ with $h_n\to 0$ when $n\to\infty$) there exists $h_0>0$, $\{ E_h\}_{h\in(0,h_0]}\subset I_0$ and $C'>0$ such that for any $h\in(0,h_0]$
		\begin{equation*}
			\normLp{\rho_{\Pi_h}}{1}{(\R^d)} \geq C' h^{-(d-1)}.
		\end{equation*}
		Let $\varepsilon>0$ such that $\varepsilon <\dist(E_0-c_0\varepsilon_0,\min V)/2 = (1-c_0)\varepsilon_0/2$.
		By the $L^\infty$ estimates (see for instance Theorem \ref{thm:spectral-clusters}), there exists $C>0$ such that for any $h\in(0,h_0]$
		\begin{align*}
			\normLp{\rho_{\Pi_h}}{1}{\left(\{\abs{V- E_h}\leq\varepsilon\}\right)}
			&\leq \abs{\{\abs{V-E_h}\leq \varepsilon\}}\normLp{\rho_{\Pi_h}}{\infty}{(\R^d)}
			\\&\leq C h^{-(d-1)} \abs{\{\abs{V-E_h}\leq \varepsilon\}}
			.
		\end{align*}
		By \eqref{eq:prop:optim-highr:ELp-gene-1body}, let us fix $\varepsilon\in(0,(1-c_0)\varepsilon_0/2)$ such that $C\abs{\{\abs{V-E_h}\leq \varepsilon\}}\leq C'/2$.
		Besides, by \eqref{eq:spectral-clusters-loc-forbidd}, there exists $C_\varepsilon>0$ such that for any $h\in(0,h_0]$
		\begin{equation*}
			\normLp{\rho_{\Pi_h}}{1}{(\{V\geq E_h+\varepsilon\})}\leq C_\varepsilon h^{-(d-2)}.
		\end{equation*}
		Finally, by the triangle inequality and the previous estimates there exists $h_0>0$ such that for any $h\in(0,h_0]$
		\begin{equation}\label{eq:optim-lowbound-normL1-Pih}
		\begin{split}
			\normLp{\rho_{\Pi_h}}{1}{\left(\{V\leq E_h-\varepsilon\}\right)}
			&= \normLp{\rho_{\Pi_h}}{1}{(\R^d)}-\normLp{\rho_{\Pi_h}}{1}{\left(\{\abs{V- E_h}<\varepsilon\}\right)}- \normLp{\rho_{\Pi_h}}{1}{\left(\{V\geq E_h+\varepsilon\}\right)}
			\\&
			\geq \frac 14C' h^{-(d-1)}.
		\end{split}
		\end{equation}
		For any $h\in (0,h_0]$, let us define $x_{0,h}$ as a maximizer of the function $\rho_{\Pi_h}$ on the compact set $\{V\leq E_h-\varepsilon\}$.
	We thus have for any $h\in(0,h_0]$
	\begin{align*}\label{eq-demo:prop:optim-highr:ELp-gene-1body_ord0}
		\Pi_h(x_{0,h},x_{0,h})
		&=\rho_{\Pi_h}(x_{0,h}) 
		\geq \frac{1}{\abs{\{V\leq E_h-\varepsilon\}}}\normLp{\rho_{\Pi_h}}{1}{\left(\{V\leq E_h-\varepsilon\}\right)}
		\\&\geq \frac{C'/4}{\abs{\{V\leq E_0+ c_0\varepsilon_0\}}} h^{-(d-1)} = C''h^{-(d-1)}
		.
	\end{align*}
	By the definition of $u_h$ and the $L^\infty$ estimate (see for instance Theorem \ref{thm:ELp-gene-matdens}) there exists $C>0$ such that for any $h\in(0,h_0]$
	\begin{align*}
		\normLp{u_h}{2}{(\R^d)} =\rho_{\Pi_h}(x_{0,h}) 
		\leq \normLp{\rho_{\Pi_h}}{\infty}{(\R^d)}^{1/2} \leq Ch^{-{(d-1)}/2}.
	\end{align*}
	Let us now prove that there exists $C_1>0$ and $h_0>0$ such that for any $h\in(0,h_0]$
	\begin{equation}\label{eq-demo:prop:optim-highr:ELp-gene-1body_ord1}
		\sup_{x,y\in B(x_{0,h},h)}\abs{\nabla_y\Pi_h(x,y)} \leq C_1 h^{-d}
		.
	\end{equation}
		This implies that for any $h\in(0,h_0]$ and for any $x\in B\big(x_{0,h}, \min(1,\frac {C''}{2C_1})  h\big)$
		\begin{align*}
			\abs{u_h(x)} &= \abs{\Pi_h(x_{0,h},x)}
			\\&\geq \Pi_h(x_{0,h},x_{0,h}) -\abs{\Pi_h(x_{0,h},x)-\Pi_h(x_{0,h},x_{0,h})}
			\\&\geq  \Pi_h(x_{0,h},x_{0,h}) - \normLp{\nabla_y\Pi_h(x_{0,h},\cdot)}{\infty}{(B(x_{0,h},h))}\abs{x-x_{0,h}}
			\\&\geq \frac{C''}2 h^{-(d-1)}
			.
		\end{align*}
		That gives us \eqref{eq:prop:optim-highr:ELp-gene-1body_res1}.
		Let us prove now the estimate \eqref{eq-demo:prop:optim-highr:ELp-gene-1body_ord1}.
		Denote
		\begin{equation*}
			\Pi_h^{(0)} := \indicatrice{-h^2\Delta+V(x_{0,h})\in I_{h,E_h}}.
		\end{equation*}
		For all $x,y\in\R^d$, we have
		\begin{align*}
			\Pi_h^{(0)}(x,y) &= \frac{1}{(2\pi h)^d} \int_{E_h-V(x_{0,h})-h\leq \abs{\xi}^2\leq E_h-V(x_{0,h})+h} e^{\frac i h\prodscal{\xi}{x-y}} d\xi .
		\end{align*}
		Hence,
		\begin{align*}
			\abs{\nabla_y\Pi_h^{(0)}(x,y)} 
			&\leq \abs{y}Ch^{-d-1}\left((E_h-V(x_{0,h})+h)^{d/2}-(E_h-V(x_{0,h})-h)_+^{d/2}\right)
			\\&\leq \abs{y}C_\varepsilon h^{-d}
			.
		\end{align*}
		We next prove that
		\begin{equation*}
			\sup_{x,y\in B(x_{0,h},h)}\abs{\nabla_y\Pi_h(x,y)-\nabla_y\Pi_h^{(0)}(x,y)} \leq C h^{-d}
		\end{equation*}
		by showing that
		\begin{equation*}
			\sup_{x,y\in B(x_{0,h},h)}\abs{\nabla_y\Pi_{h,\leq E_h\pm h}(x,y)-\nabla_y\Pi_{h,\leq E_h\pm h}^{(0)}(x,y)} \leq C h^{-d},
		\end{equation*}
		where for any $E\in\R$, $\Pi_{h,\leq E} := \indicatrice{P\leq E}$ and $\Pi^{(0)}_{h,\leq E} := \indicatrice{-h^2\Delta+V(x_{0,h})\leq E}$.
		Introducing
		\begin{equation*}
			K_{h,\leq E}(x,y) := \Pi_{h,\leq E}(x_{0,h}+hx,x_{0,h}+hy),\quad K_{h,\leq E}^{(0)}(x,y) := \Pi_{h,\leq E}^{(0)}(x_{0,h}+hx,x_{0,h}+hy),
		\end{equation*}
		it is enough to show that
		\begin{equation*}
			\sup_{x,y\in B(0,1)}\abs{\nabla_yK_{h,\leq E_h\pm h}(x,y)-\nabla_yK_{h,\leq E_h\pm h}^{(0)}(x,y)} \leq C h^{-(d-1)}.
		\end{equation*}
		This is given by \cite[Rem. 1.2]{deleporte2021universality}.
		Notice that in this work, the parameter $x_{0,h}$ and $E_h$ are independent of $h$. However, the result still holds when $x_{0,h}$ and $E_h$ depend on $h$ in such a way that they belong to a $h$-independent compact set and are such that $V(x_{0,h})\leq E_h-\delta$ for some $h$-independent $\delta>0$.
	\end{proof}

	\subsubsection{Gaussian beams}
	
	We now explain the optimality of the exponent $s_{\text{Sogge}}(q,d)$ for low values of $q$, in the case $V(x)=\abs{x}^2$. The saturating functions already appeared in \cite[Sec. 5.1]{koch2005p}, and we just provide the computational details here. This example only works for the harmonic oscillator since the argument relies on separation of variables. However, we expect that this exponent is also sharp for more general $V$, using for instance the argument of \cite{sogge89remarks}. Here, the saturation phenomenon happens for a family of functions concentrating around a curve, similarly to Gaussian beams on spheres \cite{sogge1985oscillatory}.
	
	\begin{prop}\label{prop:optim-lowr:ELp-sogge-1body}
		Let $d\geq 2$ and $2\leq q\leq\infty$. Let $p(x,\xi)=\abs{\xi}^2+\abs{x}^2$. Let $E_{\text{exc}}>0$. For any $h>0$, define $P:=p^\w(x,hD)$ and $E_h := E_{\text{exc}}+(d-1)h$. 
		For any $n\in\N$, let us denote $h_n:=E_{\text{exc}}/(2n+1)$ and $\varphi_n$ the normalized eigenfunction associated to the eigenvalue $E_{\text{exc}}$ of the scalar harmonic oscillator $-h_n^2\frac{d^2}{dy^2}+y^2$ on $L^2(\R)$.
		For any $n\in\N$, define
		\begin{equation*}
			u_n(x) := (2\pi)^{-(d-1)/2}h_n^{-(d-1)/4}e^{-\frac{\abs{x'}^2}{2h_n}}\varphi_n(x_1) ,\quad x=(x_1,x')\in\R\times\R^{d-1}.
		\end{equation*}
		Then, for all $n\in\N$, we have $\normLp{u_n}{2}{(\R^d)}=1$, $Pu_n = E_{h_n}u_n$, and there exists $\varepsilon>0$ and $C>0$ such that for $n$ large enough
		\begin{equation*}\label{eq:prop:optim-lowr:ELp-sogge-1body}
			 \normLp{u_n}{q}{\left(\{x\in\R^d :\:\:\abs{x}^2-E_{h_n}<\varepsilon\}\right)}\geq C h_n^{-\frac{d-1}{2}\left(\frac 12-\frac 1q\right)}.
		\end{equation*}
	\end{prop}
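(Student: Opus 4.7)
The key structural observation is that $u_n(x) = \varphi_n(x_1)\,G_n(x')$ factorizes as a tensor product, where $G_n(x')=(2\pi)^{-(d-1)/2}h_n^{-(d-1)/4}\exp(-|x'|^2/(2h_n))$ is a Gaussian in the $(d-1)$ transverse directions. The plan is to exploit this tensor structure at every step: first for the normalization and eigenfunction identity, then for the $L^q$ estimate.

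First I would verify (i) $\|u_n\|_{L^2(\R^d)}\sim 1$ from $\|\varphi_n\|_{L^2(\R)}=1$ and a direct Gaussian computation for $\|G_n\|_{L^2(\R^{d-1})}$ (this is just a change of variables $x'=\sqrt{h_n}\,y'$), and (ii) $Pu_n=E_{h_n}u_n$ by writing $P=(-h_n^2\partial_{x_1}^2+x_1^2)+\sum_{i=2}^d(-h_n^2\partial_{x_i}^2+x_i^2)$, noting that $\varphi_n$ is an eigenfunction of the first summand with eigenvalue $E_{\mathrm{exc}}$ (by definition of $h_n$), and that $G_n(x')$ is the ground state (product of $d-1$ scalar ground states) of $\sum_{i=2}^d(-h_n^2\partial_{x_i}^2+x_i^2)$ with eigenvalue $(d-1)h_n$; adding yields $E_{h_n}=E_{\mathrm{exc}}+(d-1)h_n$.

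For the $L^q$ lower bound, fix $R_1\in(0,\sqrt{E_{\mathrm{exc}}})$ and $R_2>0$ small, and consider the rectangle $Q:=\{|x_1|<R_1\}\times\{|x'|<R_2\}$. For $\varepsilon$ small and $n$ large we have $Q\subset\{|x|^2-E_{h_n}<\varepsilon\}$. By tensorization,
\begin{equation*}
\|u_n\|_{L^q(Q)}=\|\varphi_n\|_{L^q(\{|x_1|<R_1\})}\cdot \|G_n\|_{L^q(\{|x'|<R_2\})}.
\end{equation*}
A direct Gaussian scaling $x'=\sqrt{h_n}\,y'$ gives
$\|G_n\|_{L^q(\{|x'|<R_2\})}=C_q\,h_n^{-\frac{d-1}{2}(\frac12-\frac1q)}(1+o(1))$
as $h_n\to 0$, since for $n$ large the ball $\{|x'|<R_2\}$ contains the effective support $\{|y'|\lesssim 1\}$ of the rescaled Gaussian and the tail contribution is exponentially small. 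So it remains to show
\begin{equation*}
\liminf_{n\to\infty}\|\varphi_n\|_{L^q(\{|x_1|<R_1\})}>0.
\end{equation*}

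The main (modest) technical point is this $L^q$ lower bound on the one-dimensional factor. For $q\ge 2$, H\"older's inequality reduces it to a lower bound on $\|\varphi_n\|_{L^2(\{|x_1|<R_1\})}$. Using $\|\varphi_n\|_{L^2(\R)}=1$, one splits the complement into the classically forbidden region $\{|x_1|>\sqrt{E_{\mathrm{exc}}}+\delta\}$ (where Agmon estimates give exponentially small $L^2$ mass) and the annulus $\{R_1<|x_1|<\sqrt{E_{\mathrm{exc}}}+\delta\}$. On the latter one can invoke the one-dimensional WKB description of Hermite functions recalled in Remark \ref{rmk:TP-3_opt} (normalized amplitude $\sim(E_{\mathrm{exc}}-x_1^2)_+^{-1/4}$ in the bulk and $\sim h_n^{-1/6}$ on a neighborhood of size $h_n^{2/3}$ of the turning points), from which the $L^2$ mass of $\varphi_n$ outside $\{|x_1|<R_1\}$ converges to a constant strictly less than $1$ as $n\to\infty$ for $R_1$ close enough to $\sqrt{E_{\mathrm{exc}}}$. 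Combining this with the Gaussian computation yields the stated bound $\|u_n\|_{L^q(\{|x|^2-E_{h_n}<\varepsilon\})}\ge C\,h_n^{-\frac{d-1}{2}(\frac12-\frac1q)}$ for all $q\in[2,\infty]$.
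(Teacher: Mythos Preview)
Your argument is correct and shares the same skeleton as the paper's: exploit the tensor structure $u_n=\varphi_n\otimes G_n$, compute the Gaussian factor by scaling, and reduce everything to a uniform $L^q$ lower bound on the one-dimensional Hermite factor $\varphi_n$ on a fixed interval in the bulk.

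The difference lies in how that last lower bound is obtained. The paper computes $\|\varphi_n\|_{L^q([0,\delta])}$ directly: it invokes the Liouville--Green (WKB) form $\varphi_n(x_1)\sim C_{h_n}(E_{\text{exc}}-x_1^2)^{-1/4}\cos(h_n^{-1}S(x_1))$ with $S'>0$, changes variables, and then averages $|\cos|^q$ via its Fourier series to get a positive limit. You instead use H\"older to reduce to an $L^2$ lower bound, and then argue by mass balance (normalization minus Agmon decay minus WKB control of the annulus near the turning points). Your route avoids the Fourier-series averaging of $|\cos|^q$; at most you need the trivial averaging of $\cos^2$, or simply the pointwise WKB upper bound $|\varphi_n|^2\lesssim (E_{\text{exc}}-x_1^2)^{-1/2}$ on the complement. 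The paper's route is more direct and yields the precise asymptotic of $\|\varphi_n\|_{L^q}$, not just a lower bound.

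Two small cleanups. First, the restriction ``$R_1$ close enough to $\sqrt{E_{\text{exc}}}$'' is unnecessary: from the WKB density $\sim(E_{\text{exc}}-x_1^2)^{-1/2}$ in the bulk (and $O(h_n^{1/3})$ mass near the turning points), any fixed $R_1\in(0,\sqrt{E_{\text{exc}}})$ carries a positive fraction of the $L^2$ mass in the limit. Second, the inclusion $Q\subset\{|x|^2-E_{h_n}<\varepsilon\}$ holds for \emph{any} $\varepsilon>0$ once $R_1^2+R_2^2<E_{\text{exc}}$, since $E_{h_n}>E_{\text{exc}}$; you do not need $\varepsilon$ small or $n$ large for that step.
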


	\begin{rmk}\label{rmk:optim-lowr:ELp-sogge-1body_conc}
		These eigenfunctions concentrate along the curve $\{x'=0\}$ at scale $\sqrt{h}$ in the orthogonal direction (c.f. Figure \ref{fig:conc_oh-gaussian-beams}). Indeed, due to the well-known asymptotics of the Hermite functions, $\varphi_n(x_1)$ is essentially constant (up to oscillations which average out when taking the $L^q$ norm) for $x_1$ in a neighborhood of $0$.
	\end{rmk}

	\begin{center}\begin{tikzpicture}[line cap=round,line join=round,>=triangle 45,x=1.0cm,y=1.0cm,scale=.9]
		\draw[color = gray!90,fill=gray!30,line width=1.5pt](-3,-0.5) -- (-3,0.5) -- (3,0.5) -- (3,-0.5) -- (-3,-0.5);
		%
		\draw[<->](-3,-1.)--(3.,-1.) node[below, midway]{$\sim 1$};
		\draw[<->](-3.5-0.2,-.5)--(-3.5-0.2,.5) node[left, midway]{$\sim h^{1/2}$};
		%
		%
		\draw[->] (-3.5,0.) -- (3.5,0);
It is stricly larger than the one		\draw  (3.5,0.) node[right]{$x_1$};
		\draw[->] (0.,-1.5) -- (0.,1.5);
		\draw  (0.,1.5) node[above]{$x'$};
		\draw  (0,0.) node[below,right] {$0$};
	\end{tikzpicture}\end{center}
	
	\begin{rmk}\label{cor:optim-lowr:ELp-sogge-1body}
		This proves the optimality of the exponent $s_{\text{Sogge}}(q,d)$ for low regime $2\leq q\leq 2(d+1)/(d-1)$ in the classically allowed region and also the optimality of the exponent $s_{\text{TP}}(q,d)=s_{\text{Sogge}}(q,d)$ for lower regime $2\leq q\leq 2(d+3)/(d+1)$ around the turning points $\{ V= E_{h_n} \}$ on Theorem \ref{thm:spectral-clusters} when $V(x)=\abs{x}^2$, by taking $\gamma_{h_n} = \bra{u_n}\ket{u_n}$.		
	\end{rmk}
	
	\begin{proof}[\underline{Proof of Proposition \ref{prop:optim-lowr:ELp-sogge-1body}}]
		Let $\varepsilon=E_{\text{exc}}/2$. First, defining for any $h>0$
		\begin{equation*}
			\varphi_{h,\text{ground}}(x'):= (2\pi)^{-(d-1)/2}h^{-(d-1)/4}e^{-\frac{\abs{x'}^2}{2h}} ,
		\end{equation*}
		 there exists $C_d>0$ such that for any $h>0$
		\begin{equation*}
			\normLp{\varphi_{h,\text{ground}}}{q}{(\{x'\in\R^{d-1} \: :\: \abs{x'}\leq h^{1/2}\})} = C_d h^{-\frac{d-1}4  +\frac{d-1}{2q}} .
		\end{equation*}
		On the other hand, by Liouville-Green asymptotics (see for instance Chapter 6 in \cite{olver1997asymptotics}), we have for any $x_1\in[-\sqrt{E_{\text{exc}}}/2,\sqrt{E_{\text{exc}}}/2]$
		\begin{align*}
			\varphi_n(x_1)
			&
			=  \frac{C_{h_n}}{(E_{\text{exc}}-x_1^2)^{1/4}}
			\begin{cases}
				\cos\left(h_n^{-1}\int_0^{x_1}\sqrt{E_{\text{exc}}-t^2} dt\right)(1+e_{h_n}(x_1)) &\text{ if }\ n\text{ is even},\\
				\sin\left(h_n^{-1}\int_0^{x_1}\sqrt{E_{\text{exc}}-t^2} dt\right)(1+e_{h_n}(x_1)) &\text{ if }\ n\text{ is odd},
			\end{cases}
		\end{align*}
		where $C_{h_n}\in\R$ be a normalization constant such that $\lim_{n\to\infty}C_{h_n}>0$ and
		\begin{equation*}
			\normLp{e_{h_n}}{\infty}{([-\sqrt{E_{\text{exc}}}/2,\sqrt{E_{\text{exc}}}/2])} =\OR(h_n) .
		\end{equation*}
		Let us now show that $\liminf_{n\to\infty}\normLp{\varphi_n}{q}{([0,\delta])}>0$ for $\delta=\sqrt{E_{\text{exc}}}/2$, which then proves the result since for $n$ large enough
		\begin{align*}
			\normLp{u_n}{q}{\left(x\in\R^d \: :\: V(x)-E_{h_n}\leq\varepsilon\right)}
			&\geq \normLp{u_n}{q}{\left([0,\delta]\times B_{\R^{d-1}}(0,\sqrt{h_n})\right)}
			\\&\geq\normLp{\varphi_n}{q}{([0,\delta])} \normLp{\varphi_{h_n,\text{ground}}}{q}{( B_{\R^{d-1}}(0,\sqrt{h_n}) )}
			\\&\gtrsim  h_n^{-\frac{d-1}4 +\frac{d-1}{2q} }.
		\end{align*}
		Suppose for instance that $n$ is even (the cas $n$ odd is similar). For any $n\in\N$ and any $x_1\in[0,\delta]$, define
		\begin{equation*}
			f_{h_n}(x_1) := (E_{\text{exc}}-x_1^2)^{-1/4}\cos\left(h_n^{-1}\int_0^{x_1}\sqrt{E_{\text{exc}}-t^2} dt\right) .
		\end{equation*}
		Given the estimate on the error term $e_{h_n}$, we have
		\begin{equation*}
			\normLp{\varphi_n-f_{h_n}}{q}{([0,\delta])}=\OR(h_n) .
		\end{equation*}
		Let us denote
		\begin{equation*}
			S(x_1):= \int_0^{x_1}\sqrt{E_{\text{exc}}-t^2} dt .
		\end{equation*}
		Since its derivative $S'$ is positive on $[0,\delta]$, we have
		\begin{align*}
			\normLp{f_{h_n}}{q}{([0,\delta])}^q
			&= \int_0^\delta S'(x_1)^{-q/2}\abs{\cos\left(h_n^{-1}S(x_1)\right)}^q  dx_1
			\\&\gtrsim  \int_0^\delta \abs{\cos\left(h_n^{-1}S(x_1) \right)}^q S'(x_1) dx_1
			\\&\quad = \int_0^{S(\delta)} \abs{\cos\left(h_n^{-1} y\right)}^q dy
			= \int_0^{S(\delta)} \left(1+\cos(2h_n^{-1} y)\right)^{q/2} dy
			.
		\end{align*}
		Since $q/2\geq 1$, the function $g=\left(1+\cos(2 \cdot)\right)^{q/2}$ is $\pi$-periodic and $\CR^1$. Hence, its Fourier coefficients 
		\begin{equation*}
			c_k(g) := \frac 1{\pi}\int_0^{h_n\pi} e^{-2i k t} g(t) dt
		\end{equation*}
		are summable over $k\in\Z$. As a consequence,
		\begin{equation*}
			\int_0^{S(\delta)} g(h_n^{-1}y) dy = \sum_{k\in\Z} c_k(g)\int_0^{S(\delta)} e^{2ik y/h_n} dy = S(\delta)c_0(g)+\OR(h_n) ,
		\end{equation*}
		which finishes the proof.
	\end{proof}

	\subsection{Many-body optimality}\label{subsec:optim-manybody}
	
	We now turn to the optimality of the Schatten exponent $\alpha$. We show that the exponent $\alpha=\alpha_{Sogge}$ is sharp in the estimates where it appears together with the exponent $s=s_{\text{Sogge}}$. The saturation scenario here is a family of operators $\gamma$ such that $\rho_\gamma$ is delocalized in the bulk region of the potential. We will see that it happens for the maximal family $\gamma=\Pi_h$, in the same spirit as in \cite[Rem. 11]{frank2017spectral}. The optimality of the other Schatten exponents $\alpha_{\text{gene}}$ or $\alpha_{\text{TP}}$ (when they are not equal to $\alpha_{\text{Sogge}}$) is a very challenging problem since $\gamma=\Pi_h$ does not saturate the inequalities where they appear.
	
	\begin{prop}[Many-body optimality of the Sogge exponent]\label{prop:optim:ELp-sogge-manybody}
		Let $d\geq 2$.
		Let $p(x,\xi)=\abs{\xi}^2+V(x)$ with $V\in\CR^\infty(\R^d,\R)$ satisfying Definition \ref{cond:potential-pol-growth}.
		Let $E_0>\min V$ such that we have \eqref{eq:prop:optim-highr:ELp-gene-1body}.
		Then, there exist $h_0>0$, $\{E_h\}_{h\in(0,h_0])}$ in a compact neighborhood of $E_0$ on $(\inf V,\infty)$, $\varepsilon>0$ and $C>0$ such that for any $h\in(0,h_0]$ and any $2\leq q\leq\infty$ (along a sequence $h_n\to 0$ when $n\to\infty$)
		\begin{equation}\label{eq:optim-highr:ELp-TP-manybody}
			\normLp{\rho_{\Pi_h}}{q/2}{\left(\{ x\in\R^d \ :\: V(x)\leq E_h-\varepsilon \}\right)} \geq C h^{-(d-1)} 
			,
		\end{equation}
		where $\Pi_h$ denotes the spectral projector $\Pi_h := \indicatrice{P\in I_{h,E_h}}$.
	\end{prop}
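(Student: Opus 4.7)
The plan is to reduce the claim to the $L^1$ lower bound already buried inside the proof of Proposition~\ref{prop:optim-highr:ELp-gene-1body}, and then upgrade it to $L^{q/2}$ by a single application of Hölder's inequality on a fixed compact set. The key observation is that the choice $\gamma=\Pi_h$ already saturates the Schatten exponent: since $\normSch{\Pi_h}{\alpha_{\rm Sogge}(q,d)}{}=N_h^{1/\alpha_{\rm Sogge}(q,d)}$ and, by the Weyl law, $N_h\sim h^{-(d-1)}$ for well-chosen $E_h$, a direct check $h^{-2s_{\rm Sogge}(q,d)}\cdot N_h^{1/\alpha_{\rm Sogge}(q,d)}\sim h^{-(d-1)}$ holds both in the range $2\le q\le 2(d+1)/(d-1)$ and in the range $2(d+1)/(d-1)\le q\le \infty$, which is precisely the target order.

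First I would select the energies $E_h$ exactly as in Step~1 of the proof of Proposition~\ref{prop:optim-highr:ELp-gene-1body}: Lemma~\ref{lemma:cor-weyl-law} applied to any non-trivial interval $I_0\subset(\min V,+\infty)$ centered at $E_0$ yields a sequence $h_n\to0$, energies $E_h\in I_0$ and a constant $C'>0$ with $\tr(\Pi_h)=\normLp{\rho_{\Pi_h}}{1}{(\R^d)}\geq C'h^{-(d-1)}$.

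Next I would transfer this total mass into the bulk. The $L^\infty$ bound of Theorem~\ref{thm:spectral-clusters} gives $\normLp{\rho_{\Pi_h}}{\infty}{(\R^d)}\lesssim h^{-(d-1)}$, so the turning-point contribution is controlled by
\[
\normLp{\rho_{\Pi_h}}{1}{(\{\abs{V-E_h}\leq\varepsilon\})}\;\lesssim\; h^{-(d-1)}\abs{\{\abs{V-E_h}\leq\varepsilon\}};
\]
Assumption~\eqref{eq:prop:optim-highr:ELp-gene-1body} lets me fix $\varepsilon>0$, uniformly in $h$, so that this contribution is at most $C'/4\cdot h^{-(d-1)}$. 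The forbidden region contributes only $\OR(h^\infty)$ by the Agmon-type bound~\eqref{eq:spectral-clusters-loc-forbidd_improv}. Combining with the previous step reproduces precisely the lower bound~\eqref{eq:optim-lowbound-normL1-Pih}, namely $\normLp{\rho_{\Pi_h}}{1}{(\{V\leq E_h-\varepsilon\})}\geq (C'/4)h^{-(d-1)}$.

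Finally, since $V$ is confining and the $E_h$ stay in a compact set, the sublevel set $A_h:=\{V\leq E_h-\varepsilon\}$ is contained in a fixed ball, hence $\abs{A_h}\leq C_V$ uniformly in $h$. Hölder's inequality on $A_h$ then yields
\[
\normLp{\rho_{\Pi_h}}{1}{(A_h)}\;\leq\;\abs{A_h}^{1-2/q}\,\normLp{\rho_{\Pi_h}}{q/2}{(A_h)},
\]
and the desired lower bound $\normLp{\rho_{\Pi_h}}{q/2}{(A_h)}\geq Ch^{-(d-1)}$ follows for all $2\leq q\leq\infty$. I do not foresee a serious obstacle: every ingredient is already present in the one-body argument, and the only mildly delicate point is ensuring that the turning-point strip $\{\abs{V-E_h}\leq\varepsilon\}$ has measure made small uniformly in $h\in(0,h_0]$ — which is exactly the content of assumption~\eqref{eq:prop:optim-highr:ELp-gene-1body}.
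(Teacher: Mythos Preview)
The proposal is correct and follows essentially the same approach as the paper: reuse the $L^1$ lower bound \eqref{eq:optim-lowbound-normL1-Pih} established inside the proof of Proposition~\ref{prop:optim-highr:ELp-gene-1body}, then apply H\"older on the compact sublevel set $\{V\leq E_h-\varepsilon\}$ to upgrade to $L^{q/2}$. Your write-up simply spells out in more detail the ingredients (Weyl-law choice of $E_h$, $L^\infty$ bound on the turning-point strip, decay in the forbidden region) that the paper compresses into the single sentence ``we take $\{E_h\}$ and $\varepsilon$ as in Proposition~\ref{prop:optim-highr:ELp-gene-1body}''.
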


	\begin{rmk}
		Due to the $L^\infty$ estimates \eqref{eq:spectral-clusters} in the case $q=\infty$, we always have $\normLp{\rho_h}{\infty}{(\R^d)}\leq Ch^{-(d-1)}$. Together with \eqref{eq:optim-highr:ELp-TP-manybody}, this shows that all the $L^{q/2}$ norms of $\rho_{\Pi_h}$ are of the order $h^{-(d-1)}$ in the bulk region $\{V<E_h-\varepsilon\}$, indicating that $\rho_{\Pi_h}$ behaves like a (large) constant in this region.
	\end{rmk}

	\begin{rmk}
		This result also proves that the Schatten exponent $\alpha_{Sogge}(q,d)$ is optimal for instance in the estimate \eqref{eq:spectral-clusters-loc-far-tp}. Indeed, for $\gamma=\Pi_h$, \eqref{eq:optim-highr:ELp-TP-manybody} shows that the left side of \eqref{eq:spectral-clusters-loc-far-tp} is of order $h^{-(d-1)}$, while the right side is order \[h^{-2s_{\text{Sogge}}(q,d)}\normSch{\Pi_h}{\alpha_{Sogge}(q,d)}{}\lesssim h^{-2s_{\text{Sogge}}(q,d)}h^{-(d-1)/\alpha_{\text{Sogge}}(q,d)}=h^{-(d-1)}.\]
		Here, we used that $\rk(\Pi_h)\leq Ch^{-(d-1)}$ which follows from the fact that
		\begin{equation*}
			\rk(\Pi_h)=\normLp{\rho_{\Pi_h}}{1}{(\R^d)}=\normLp{\rho_{\Pi_h}}{1}{(\{V\leq E_h+\varepsilon\})} + \normLp{\rho_{\Pi_h}}{1}{(\{V> E_h+\varepsilon\})}
		\end{equation*}
		and the estimates \eqref{eq:spectral-clusters} in the case $q=\infty$ and \eqref{eq:spectral-clusters-loc-forbidd}.
	\end{rmk}

	\begin{proof}[\underline{Proof of Proposition \eqref{prop:optim:ELp-sogge-manybody}}]
		Let $\varepsilon_0:=E_0-\min V$.
		Here, we take $\{E_h\}_{h\in(0,h_0]} \subset[E_0-\varepsilon_0/2,E_0+\varepsilon_0/2]$ and $\varepsilon\in(0,\varepsilon_0/4)$ as in Proposition \ref{prop:optim-highr:ELp-gene-1body}.
		They are chosen to satisfy the lower bound \eqref{eq:optim-lowbound-normL1-Pih}. Thus, there exists $C'>0$ such that for any $h\in(0,h_0]$ and any $2\leq q\leq\infty$
		\begin{align*}
			\normLp{\rho_{\Pi_h}}{q/2}{(\{V\leq E_h-\varepsilon\})} 
			&\geq \abs{\{V\leq E_h-\varepsilon\}}^{-1/(q/2)'} \normLp{\rho_{\Pi_h}}{1}{(\{V\leq E_h-\varepsilon\})}
			\\&\geq \abs{\{V\leq E_0+3\varepsilon_0/4\}}^{-1/(q/2)'} \normLp{\rho_{\Pi_h}}{1}{(\{V\leq E_h-\varepsilon\})}
			\\&\geq C' h^{-(d-1)}
			.
		\end{align*}
	\end{proof}


	\bibliographystyle{siam}
	{\small
		\bibliography{biblioNhi.bib}
	}

\end{document}